\renewcommand{\a}{\alpha}
\renewcommand{\b}{\beta}
\newcommand{\e}{\epsilon}
\renewcommand{\l}{\lambda}
\renewcommand{\O}{\Omega}
\newcommand{\la}{\langle}
\newcommand{\ra}{\rangle}
\newcommand{\C}{\mathcal{C}}
\newcommand{\leqs}{\leqslant}
\newcommand{\geqs}{\geqslant}
\newcommand{\normeq}{\trianglelefteqslant}
\newcommand{\vs}{\vspace{3mm}}
\newcommand{\FF}{\mathbb{F}}
\newcommand{\imod}[1]{\allowbreak\mkern4mu({\operator@font mod}\,\,#1)}
\renewcommand{\leq}{\leqs}
\renewcommand{\geq}{\geqs}
\newtheorem{theorem}{Theorem} 
\newtheorem*{conj*}{Conjecture}
\newtheorem{corol}[theorem]{Corollary}
\newtheorem{thm}{Theorem}[section] 
\newtheorem{prop}[thm]{Proposition} 
\newtheorem{lem}[thm]{Lemma}
\newtheorem{cor}[thm]{Corollary}
\theoremstyle{definition}
\newtheorem{rem}[thm]{Remark}
\begin{document}

\title[Maximal overgroups of Sylow subgroups]{On the maximal overgroups of Sylow \\ subgroups of finite groups}

\author{Barbara Baumeister}
\address{B. Baumeister, Fakult\"{a}t f\"{u}r Mathematik, Universit\"{a}t Bielefeld, Postfach 10 01 31, 33501 Bielefeld, Germany}
\email{b.baumeister@math.uni-bielefeld.de}

\author{Timothy C. Burness}
\address{T.C. Burness, School of Mathematics, University of Bristol, Bristol BS8 1UG, UK}
\email{t.burness@bristol.ac.uk}

\author{Robert M. Guralnick}
\address{R.M. Guralnick, Department of Mathematics, University of Southern California, Los Angeles, CA 90089-2532, USA}
\email{guralnic@usc.edu}

\author{Hung P. Tong-Viet}
\address{H.P. Tong-Viet, Department of Mathematics and Statistics, Binghamton University, Binghamton, NY 13902-6000, USA}
\email{htongvie@binghamton.edu}

\renewcommand{\shortauthors}{Baumeister, Burness, Guralnick, Tong-Viet}

\begin{abstract}
In this paper, we determine the finite groups with a Sylow $r$-subgroup contained in a unique maximal subgroup. The proof involves a reduction to almost simple groups, and our main theorem extends earlier work of Aschbacher in the special case $r=2$. Several applications are presented. This includes some new results on weakly subnormal subgroups of finite groups, which can be used to study variations of the Baer-Suzuki theorem.
\end{abstract}

\date{\today}

\maketitle

\setcounter{tocdepth}{1}
\tableofcontents

\section{Introduction}\label{s:intro}

Let $G$ be a finite group, let $r$ be a prime divisor of $|G|$ and let $R$ be a Sylow $r$-subgroup of $G$. Let $\mathcal{M}(R)$ be the set of maximal subgroups of $G$ containing $R$. The main goal of this paper is to determine the triples $(G,r,H)$ with $\mathcal{M}(R) = \{H\}$.

In recent years, several problems of this flavour have been studied by various authors. For example, Aschbacher \cite[Theorem A]{asch80} has described the triples $(G,r,H)$ as above in the special case where $G$ is almost simple and $r=2$ (see \cite[Lemma 5.2]{asch80} for a more general result with $r=2$). Observe that if $r=2$ then $\mathcal{M}(R)$ includes a representative of every conjugacy class of odd-index maximal subgroups of $G$, and we note that the finite groups with a core-free maximal subgroup of odd index have been determined by Liebeck and Saxl \cite{LS85} (and independently Kantor \cite{Kan}). For certain odd primes $r$, the overgroups of Sylow $r$-subgroups of the general linear group ${\rm GL}_n(q)$ have been studied by Guralnick et al. \cite{GPPS}, finding a wide range of applications. There is also related work of Bamberg and Penttila \cite{BP08} in this direction. And in \cite{Ginsberg}, Ginsberg studies the quasisimple groups $G$ with a cyclic Sylow $r$-subgroup $R$ (for odd primes $r$) such that $\O_1(R) = \la x \, : x^r=1\ra$ is contained in a unique maximal subgroup of $G$, with applications to strongly $r$-embedded subgroups.

One of our main motivations stems from an extensive project of Guralnick and Tracey \cite{GT}, which seeks to describe the triples $(G,x,H)$, where $G$ is a finite group, $x \in G$ is an element and $H$ is the unique maximal subgroup of $G$ containing $x$, under the additional assumption that $H$ is core-free.  Of course, if $R = \la x \ra$ is a cyclic Sylow $r$-subgroup of $G$, then $\mathcal{M}(R) = \{H\}$ for some core-free subgroup $H$ if and only if $(G,x,H)$ is one of the triples arising in \cite{GT}. Our work in this paper is independent of \cite{GT}.

Our main results are also related to the study of weakly subnormal $r$-subgroups. Given a finite group $G$, we say that an $r$-subgroup $K$ of $G$ is \emph{weakly subnormal} in $G$ if it is subnormal in every proper overgroup of $K$ in $G$, but $K$ is not subnormal in $G$. For a non-subnormal $r$-subgroup $K$, Wielandt's Zipper Lemma (see \cite[Theorem 2.9]{Is}) implies that this is equivalent to saying that $\mathcal{M}(K) = \{H\}$ and $K \leqs O_r(H)$ for some maximal subgroup $H$ of $G$, where $O_r(H)$ denotes the $r$-core of $H$. In particular, if a Sylow $r$-subgroup $R$ of $G$ is not contained in a unique maximal subgroup, or if $\mathcal{M}(R) = \{H\}$ and $O_r(H) = 1$, then $G$ has no weakly subnormal $r$-subgroups. This observation allows us to use some of the results in this paper to establish various classification theorems on weakly subnormal $r$-subgroups. 

For example, suppose $G$ is an almost simple group (that is to say, the socle of $G$ is a nonabelian finite simple group $T$, which means that $T \normeq G \leqs {\rm Aut}(T)$). In Corollary \ref{c:NGR2}, we identify the groups of this form with a weakly subnormal Sylow $r$-subgroup. This is then extended in Corollary \ref{c:OrH}, which determines the triples $(G,r,K)$ such that $G$ is almost simple and $K$ is a maximal weakly subnormal $r$-subgroup. With some additional work, these results can be generalised to all finite groups (see \cite{GHT}) and this provides a general framework for studying Baer-Suzuki type problems for finite groups, building on earlier work in \cite{GM, GR}, for example. In particular, by combining some of the above results with the analysis in \cite{GT}, several new variations on the Baer-Suzuki theorem are proved in \cite{GHT}. 

Let us briefly outline this application. Let $G$ be a finite group, let $r$ be a prime and let $x \in G$ be an $r$-element. The Baer-Suzuki theorem states that if $\langle x,x^g\rangle$ is an $r$-group for all $g \in G$, then $x\in O_r(G)$. It is natural to consider variations, where the hypothesis is replaced by some other property $\mathcal{P}(x,g)$, which one assumes is satisfied for all elements $g$ in a specified nonempty subset 
$\mathcal{S}$ of $G$ (and we need the hypothesis to be valid in any subgroup of $G$ containing $x$). For instance, we could take $\mathcal{S}$ to be the set of all $r'$-elements in $G$ of prime power order, with $\mathcal{P}(x,g)$ the property that each commutator $[x,g]$ is an $r$-element. In order to explain the relevance of our main results to problems of this type, suppose $(G,\mathcal{S},x)$ is a counterexample with $|G|$ minimal, so our given property $\mathcal{P}(x,g)$ is satisfied for all $g \in \mathcal{S}$, but $x \not\in O_r(G)$. Then $x \in O_r(M)$ for every proper subgroup $M$ of $G$ containing $x$, so $\langle x\rangle$ is subnormal in every such subgroup $M$ and thus Wielandt's Zipper Lemma implies that $x$ is contained in a unique maximal subgroup $H$ of $G$. In particular, if $R$ is a Sylow $r$-subgroup of $G$ containing $x$, then $\mathcal{M}(R)=\{H\}$. In many cases, such a problem can be reduced to the almost simple case and this brings some of the results in this paper into play.

We now present our main results, focussing initially on the case where $G$ is almost simple with socle $T$. Our first main result handles the special case $r=2$, which extends \cite[Theorem A]{asch80} by presenting a precise description of the pairs $(G,H)$ with $\mathcal{M}(R) = \{H\}$. It is easy to see that $|\mathcal{M}(R)| = 1$ only if $G/T$ is a $2$-group (see Corollary \ref{c:easy}), which explains why this condition is included in the statement of Theorem \ref{t:prime2}. Referring to part (iii), Table \ref{tab:prime2} is presented at the end of the paper in Section \ref{s:tables}.

\begin{theorem}\label{t:prime2}
Let $G$ be an almost simple group with socle $T$, let $R$ be a Sylow $2$-subgroup of $G$ and assume $G/T$ is a $2$-group. Then $\mathcal{M}(R) = \{H\}$ if and only if one of the following holds:
\begin{itemize}\addtolength{\itemsep}{0.2\baselineskip}
\item[{\rm (i)}] $T = A_n$, $n = 2^k+1$, $k \geqs 2$ and $H = S_{n-1} \cap G$.
\item[{\rm (ii)}] $G = {}^2B_2(q)$ and $H = N_G(R) = q^{1+1}{:}(q-1)$ is a Borel subgroup.
\item[{\rm (iii)}] $T$ is a classical group and $(G,H)$ is recorded in Table \ref{tab:prime2}.
\end{itemize}
\end{theorem}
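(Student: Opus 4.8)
The plan is to prove Theorem~\ref{t:prime2} by reducing, via the theory of odd-index subgroups, to a case analysis across the families of finite simple groups, treating $T=A_n$, the exceptional groups, and the classical groups separately. The key organising observation is that since $r=2$, every member of $\mathcal{M}(R)$ has odd index in $G$, and conversely the union of the conjugates of any odd-index maximal subgroup contains a full Sylow $2$-subgroup. Thus $\mathcal{M}(R)=\{H\}$ forces $H$ to be the \emph{unique} conjugacy class of maximal subgroups of odd index in $G$, and moreover $H$ must be self-normalising with $N_G(R)\leqs H$; conversely, if there is a unique class of odd-index maximal subgroups and $R$ lies in a single member of that class, we recover the conclusion. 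So the first step is to invoke the Liebeck--Saxl--Kantor classification of core-free odd-index maximal subgroups of almost simple groups to list the candidate pairs $(G,H)$, and then to decide, for each candidate, whether $R$ lies in exactly one conjugate of $H$ rather than several.

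For the alternating and symmetric groups I would argue directly. When $T=A_n$, the odd-index maximal subgroups are intransitive or imprimitive of the form $(S_k\times S_{n-k})\cap G$ or $(S_a\wr S_b)\cap G$, together with possible primitive examples; a counting argument comparing $|G|_2$ with $|H|_2$ (equivalently, using Kummer/Legendre to track the $2$-adic valuation of binomial and multinomial coefficients) pins down exactly when a single odd-index class survives and when $R$ is contained in only one of its conjugates. The expectation is that the arithmetic forces $n=2^k+1$ with $H$ the stabiliser of a point, giving conclusion~(i); the remaining intransitive and imprimitive shapes either have even index or split $R$ across several conjugates. For the exceptional groups the Suzuki groups ${}^2B_2(q)$ stand out because a Sylow $2$-subgroup is a nonabelian group of order $q^2$ normalised by a Borel subgroup, and one checks that the Borel is the unique maximal overgroup of $R$, yielding~(ii); for the other exceptional families one verifies using the known odd-index maximal subgroups that either no suitable $H$ exists or $R$ meets several conjugates.

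The main effort, and the principal obstacle, is the classical-group case~(iii), where the list of odd-index maximal subgroups is substantially richer: these are the parabolic subgroups $P_k$ and the geometric subgroups $C_i$ (stabilisers of nondegenerate or totally singular subspaces, imprimitive decompositions, field extension and subfield subgroups, and so on) of odd index, as classified by Liebeck--Saxl. For each such candidate $H$ I would first confirm $|G:H|$ is odd, then determine whether $R\leqs H$ and, crucially, whether $R$ lies in a \emph{unique} conjugate of $H$. The uniqueness test is the delicate point: one must rule out the possibility that two maximal subgroups (possibly from different Aschbacher classes, or two members of the same class) each contain a common Sylow $2$-subgroup. I would handle this by fixing an explicit Sylow $2$-subgroup adapted to the standard basis and computing the fixed geometric configurations it stabilises, showing that generically $R$ stabilises several subspaces of the relevant type (forcing $|\mathcal{M}(R)|>1$) and isolating the sporadic arithmetic coincidences where it stabilises exactly one; these coincidences are precisely the entries of Table~\ref{tab:prime2}. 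Much of this reduces to $2$-adic valuation comparisons of the orders of classical groups over $\FF_q$ together with a careful bookkeeping of how $R$ acts on the natural module.

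For the converse direction in each case, once $(G,H)$ is identified I would verify $\mathcal{M}(R)=\{H\}$ by checking that $H$ is maximal, that $|G:H|$ is odd (so $R\leqs H^g$ for some $g$, and after conjugation $R\leqs H$), and that no other maximal subgroup contains $R$; the last is exactly the uniqueness established above. I expect the bulk of the paper's technical weight to sit in the classical case, where avoiding an exhaustive but error-prone class-by-class comparison in favour of a uniform valuation argument is the key to keeping the analysis tractable.
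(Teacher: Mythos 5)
Your overall skeleton --- reduce to the classification of odd-index maximal subgroups and then run a family-by-family analysis driven by $2$-adic valuations --- is indeed the paper's strategy, but you are missing the one idea that makes this plan tractable, and its absence causes you to mislocate the difficulty. Writing $R_0 = R \cap T \ne 1$, the paper's Lemma \ref{l:sylow} applies the Frattini argument to get $G = TN_G(R_0)$, hence $N_G(R) \leqs N_G(R_0) \leqs H$ for some core-free maximal subgroup $H$; it follows (Corollary \ref{c:core}) that $\mathcal{M}(R) = \{H\}$ if and only if $G$ has a \emph{unique conjugacy class} of odd-index maximal subgroups. In particular, the "uniqueness test" you single out as the crucial delicate point --- deciding whether $R$ lies in one or several conjugates of $H$ --- never has to be performed: once the class of odd-index maximals is unique, uniqueness of the conjugate containing $R$ is automatic. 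Your expected picture is also backwards. It is not the case that "generically $R$ stabilises several subspaces of the relevant type" with the entries of Table \ref{tab:prime2} arising as coincidences where it stabilises exactly one; rather, every failure of $|\mathcal{M}(R)|=1$ is witnessed by a \emph{second class} of odd-index maximal subgroups, and the table records precisely the cases where no competing class exists. Your proposed computation with an explicit Sylow $2$-subgroup adapted to the standard basis would be a long detour that, case by case, only reproves Corollary \ref{c:core}.

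The second genuine gap is in your candidate list for the classical case. Odd-index maximal subgroups of classical groups are not exhausted by parabolic and geometric subgroups: the non-geometric almost simple irreducible subgroups in the collection $\mathcal{S}$, together with the $\mathcal{C}_6$-subgroups, can have odd index, and they account for many of the precise conditions in Table \ref{tab:prime2}. For instance, for $T = {\rm L}_2(q)$ with $q \equiv 1 \imod{4}$, $f=1$ and $|R_0| = 2^3$, one has $S_4 \in \mathcal{M}(R)$ whenever $G = T$ --- this is exactly why the table demands $|R_0| \geqs 2^4$ or $G = {\rm PGL}_2(q)$; similarly $\mathcal{M}(R)$ contains an $\mathcal{S}$-subgroup with socle $A_6$ when $T = {\rm U}_3(5)$, and in higher dimensions the paper must invoke \cite[Theorem 7.1]{GS}, applied to the involution $(-I_{n-1},I_1)$ with $\nu(x)=1$, to exclude odd-index members of $\mathcal{S}$. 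A valuation bookkeeping run only over parabolic and geometric classes would therefore certify "unique class" in cases where it fails, producing a strictly larger, incorrect table. If you genuinely invoke the full Liebeck--Saxl/Kantor classification \cite{LS85,Kan} these subgroups are included there, but your own summary of that classification omits them, and handling them is where much of the real work in the paper's Section \ref{s:prime2} lies.
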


Note that in the statement of Theorem \ref{t:prime2} we assume $T \ne A_6, {\rm PSp}_4(2)'$ since both groups are isomorphic to ${\rm L}_2(9)$. We also exclude $T = {}^2G_2(3)'$ since it is isomorphic to ${\rm L}_2(8)$.

Next we turn to the case where $r$ is an odd prime and $G$ is almost simple with socle $T$. Once again, we note that $|\mathcal{M}(R)| = 1$ only if $G/T$ is an $r$-group. The tables referred to in parts (ii), (iii) and (iv) are presented in Section \ref{s:tables}, and we refer the reader to Remark \ref{r:main1} for further details on the cases that arise.
 
\begin{theorem}\label{t:main1}
Let $G$ be an almost simple group with socle $T$, let $R$ be a Sylow $r$-subgroup of $G$, where $r$ is an odd prime, and assume $G/T$ is an $r$-group. Then $\mathcal{M}(R) = \{H\}$ if and only if one of the following holds:
\begin{itemize}\addtolength{\itemsep}{0.2\baselineskip}
\item[{\rm (i)}] $G = T = A_n$ is an alternating group and one of the following holds:

\vspace{1mm}

\begin{itemize}\addtolength{\itemsep}{0.2\baselineskip}
\item[{\rm (a)}] $n=r^k+1$, $k \geqs 2$ and $H = A_{n-1}$.
\item[{\rm (b)}] $n=2r$ and $H = (S_r \wr S_2) \cap G$.
\item[{\rm (c)}] $n=r^2$ and $H = (S_r \wr S_r) \cap G$.
\item[{\rm (d)}] $n=r$, $r \neq 11,23$, $H = {\rm AGL}_1(r) \cap G$ and either $r=5$, or $r$ is not of the form $(q^d-1)/(q-1)$ with $q$ a prime power and $d\geqs 2$.
\end{itemize}
\item[{\rm (ii)}] $T$ is a classical group and $(G,r,H)$ is recorded in Table \ref{tab:main1}.
\item[{\rm (iii)}] $T$ is an exceptional group of Lie type and $(G,r,H)$ is recorded in Table \ref{tab:main2}.
\item[{\rm (iv)}] $G = T$ is a sporadic group and $(G,r,H)$ is one of the cases in Table \ref{tab:spor}.
\end{itemize}
\end{theorem}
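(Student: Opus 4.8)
The plan is to prove Theorem \ref{t:main1} by running through the Classification of Finite Simple Groups, treating the alternating, classical, exceptional and sporadic socles in turn, exactly as for the prime $2$ case in Theorem \ref{t:prime2}. The backbone is a fixed-point reformulation. Since $R$ is a Sylow $r$-subgroup, every orbit of $R$ on a transitive $G$-set $G/K$ has $r$-power length, so the number of fixed points satisfies $\mathrm{fix}_{G/K}(R) \equiv |G:K| \pmod{r}$. Hence some $G$-conjugate of a maximal subgroup $K$ contains $R$ if and only if $r \nmid |G:K|$, and the distinct conjugates of $K$ containing $R$ biject with the fixed points of $R$ on $G/K$. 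It follows that $\mathcal{M}(R) = \{H\}$ if and only if $G$ has a unique conjugacy class of maximal subgroups of $r'$-index, represented by $H$, and $\mathrm{fix}_{G/H}(R) = 1$; in particular $N_G(R) \leq H$. This splits the problem into two tasks for each socle $T$: (1) determine the maximal subgroups of index prime to $r$ and isolate the $G$ for which a single class occurs; and (2) for each surviving candidate, verify the fixed-point condition $\mathrm{fix}_{G/H}(R) = 1$, equivalently that $R$ lies in a unique conjugate of $H$.

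It is convenient to separate two regimes. If $T$ is of Lie type in characteristic $p$ and $r = p$, then every parabolic subgroup has $r'$-index and $N_G(R)$ lies in a Borel subgroup; so a group of untwisted Lie rank at least $2$ has at least two classes of maximal parabolics and therefore $|\mathcal{M}(R)| > 1$, leaving only the rank $1$ groups (and a few small twisted cases) to survive in defining characteristic. The bulk of the work is the cross-characteristic case $r \neq p$, where the divisibility of indices by $r$ is controlled by the multiplicative order of $q$ modulo $r$ together with primitive prime divisors of $q^e - 1$ via Zsygmondy's theorem.

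For the families I would argue as follows. For $T = A_n$ (where $G=T$ since $r$ is odd) I use the O'Nan--Scott theorem to list the intransitive, imprimitive and primitive maximal subgroups and compute their indices via Kummer's theorem: the point stabiliser $A_{n-1}$ has $r'$-index precisely when $n \equiv 1 \pmod r$, giving case (i)(a) with $n = r^k+1$; the blockwise wreath products are the unique $r'$-index maximals when $n = 2r$ and $n = r^2$, giving (i)(b),(c); and for $n = r$ the affine group $\mathrm{AGL}_1(r)\cap A_r$ is the candidate, with uniqueness failing exactly when a $2$-transitive projective group $\mathrm{PSL}_d(q)$ of the same degree $(q^d-1)/(q-1) = r$ (or a Mathieu group for $r = 11, 23$) contains an $r$-cycle and supplies a competing $r'$-index maximal, which yields the arithmetic condition in (i)(d). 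For classical groups I invoke Aschbacher's subgroup structure theorem and work through the geometric classes $\C_1,\dots,\C_8$ and the class $\mathcal{S}$, computing for each the index (products of Gaussian binomials and orders of classical groups) and deciding when $r$ divides it, thereby assembling Table \ref{tab:main1}. For exceptional groups I use the known classification of maximal subgroups (parabolics, maximal-rank reductive subgroups, and the bounded list of remaining almost simple maximals) with the same index analysis to produce Table \ref{tab:main2}, and for the sporadic groups I read the maximal subgroups and their indices from the \textsc{Atlas} and check the condition case by case to obtain Table \ref{tab:spor}.

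The main obstacle is the classical groups in cross characteristic, where two difficulties compound. First, one must control the $r$-divisibility of indices across all Aschbacher classes simultaneously, and in particular rule out extra $r'$-index maximals among the irreducible almost simple subgroups in class $\mathcal{S}$; these admit no uniform description and must be excluded using order bounds together with the primitive-prime-divisor structure of $|T|$. Second, even once a unique candidate class $H$ is isolated, establishing $\mathrm{fix}_{G/H}(R) = 1$ is genuinely delicate: it amounts to showing that $R$ normalises no second conjugate of $H$, which requires understanding the intersections $H \cap H^g$ and the fusion of Sylow $r$-subgroups inside $H$, and is the step most likely to demand careful geometric or representation-theoretic case analysis rather than a single uniform argument.
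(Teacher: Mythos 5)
Your reformulation and overall architecture (CFSG, case division by socle type, defining versus cross characteristic, $r$-divisibility of indices controlled by $d_q(r)$ and Zsigmondy primes, Aschbacher classes plus $\mathcal{S}$ for classical groups, the known maximal subgroup classifications for exceptional and sporadic groups) match the paper's strategy. However, there is a genuine gap at precisely the step you single out as the main obstacle: establishing $\mathrm{fix}_{G/H}(R)=1$. You give no method for this beyond saying it requires analysing the intersections $H \cap H^g$ and Sylow fusion inside $H$, and carried out literally this would be an enormous case-by-case burden with no uniform tool. The idea you are missing is that in the almost simple setting this condition is \emph{automatic}, which is the content of the paper's Lemma \ref{l:sylow} and Corollary \ref{c:core}. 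Indeed, provided $R_0 = R\cap T \ne 1$, the subgroup $R_0$ is a Sylow $r$-subgroup of $T$, so the Frattini argument gives $G = TN_G(R_0)$; since $R_0$ cannot be normal in the simple group $T$, the subgroup $N_G(R_0)$ is proper, and any maximal subgroup $H$ containing it is core-free and satisfies $N_G(R) \leqs N_G(R_0) \leqs H$. Once $N_G(R)\leqs H$, uniqueness within the class of $H$ is a two-line Sylow argument: if $R \leqs H^g$, then $R$ and $R^{g^{-1}}$ are both Sylow $r$-subgroups of $H$ (as $|G:H|$ is prime to $r$), hence conjugate by some $h \in H$, so $g^{-1}h \in N_G(R) \leqs H$ and thus $H^g = H$. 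Consequently $\mathcal{M}(R)=\{H\}$ if and only if $G$ has a unique conjugacy class of maximal subgroups of $r'$-index: your second condition is subsumed by the first, and the entire problem reduces to the class-counting task (1), which is what the paper then carries out.

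Two smaller points. First, your framework does not address the degenerate case $R\cap T = 1$, which can occur under the theorem's hypotheses (e.g.\ $G = {}^2B_2(8){:}3$ with $r=3$); the paper disposes of it separately (Lemma \ref{l:trivial}) by a Frattini argument with the Sylow subgroups of $T$ for the various primes dividing $|T|$, showing $|\mathcal{M}(R)|\geqs 2$ always. Second, for $T = A_n$ your sketch of the uniqueness of the class of $r'$-index maximals is too optimistic: for $n = r^k+1$, $n=2r$ and $n=r^2$ one must exclude transitive (in particular primitive) maximal subgroups of $r'$-index, which in the paper requires Jordan's theorem and Jones's classification of primitive groups containing a long cycle, not just index computations via Kummer's theorem; this is the same kind of competing-overgroup analysis you describe only for $n=r$.
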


We obtain the following result as a corollary to Theorems \ref{t:prime2} and \ref{t:main1} (see Section \ref{s:corols} for a proof).

\begin{corol}\label{c:NGR}
Let $G$ be an almost simple group with socle $T$, let $R$ be a Sylow $r$-subgroup of $G$ and assume $G/T$ is an $r$-group. Set $R_0 = R \cap T$. Then $\mathcal{M}(R) = \{N_G(R_0)\}$ if and only if $(G,r)$ is one of the cases recorded in Table \ref{tab:NGR}. 
\end{corol}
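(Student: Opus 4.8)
The plan is to derive Corollary~\ref{c:NGR} as a direct consequence of the two classification theorems, Theorem~\ref{t:prime2} (for $r=2$) and Theorem~\ref{t:main1} (for odd $r$), by inspecting each case listed there and determining precisely when the unique maximal overgroup $H$ of $R$ coincides with the normalizer $N_G(R_0)$, where $R_0 = R \cap T$. Since we already know $\mathcal{M}(R)=\{H\}$ forces $(G,r,H)$ to lie in one of the enumerated families, the task reduces to a case-by-case identification: for each such triple I would compute (or look up in the cited structural references) the subgroup $N_G(R_0)$ and compare it with the given $H$. The desired table $\ref{tab:NGR}$ is then exactly the sublist of cases where equality holds.

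The key steps, carried out in the order of the theorem statements, are as follows. First I would treat $r=2$: running through parts (i)--(iii) of Theorem~\ref{t:prime2}, I would check whether $H = N_G(R_0)$. In case~(ii), the Suzuki groups ${}^2B_2(q)$, we already have $H = N_G(R)$ equal to a Borel subgroup, and since $R_0 = R$ here one expects $N_G(R_0)=N_G(R)=H$, so this case should survive. In case~(i), with $T=A_n$ and $H = S_{n-1}\cap G$ a point stabilizer, $N_G(R_0)$ will generally be much smaller than $H$, so these should be excluded; similarly I would scan Table~\ref{tab:prime2} for the classical cases, retaining only those where $H$ is (the $G$-normalizer of) a Sylow-normalizer. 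Second, for odd $r$, I would repeat this analysis across parts (i)--(iv) of Theorem~\ref{t:main1}. In the alternating-group case~(i), I would examine subcases (a)--(d): a Sylow $r$-subgroup of $A_n$ is noncyclic or cyclic depending on $n$, and $N_G(R_0)$ is a known $r$-local subgroup; for instance in (d) with $n=r$ the group $R_0$ is cyclic of order $r$ and $N_{A_r}(R_0) = {\rm AGL}_1(r)\cap A_r$, so $H = N_G(R_0)$ holds, whereas in (a) the point stabilizer $A_{n-1}$ is not a normalizer of $R_0$. Third, I would work through the classical, exceptional, and sporadic tables (\ref{tab:main1}, \ref{tab:main2}, \ref{tab:spor}), in each case comparing the listed $H$ against the structure of $N_G(R_0)$, frequently using that a Sylow $r$-subgroup of a group of Lie type in the defining characteristic, or with a specific cyclic or abelian structure in cross characteristic, has a well-understood normalizer (often a maximal torus normalizer, a parabolic, or an $r$-local subgroup of known type).

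I expect the main obstacle to be the uniform and accurate identification of $N_G(R_0)$ across the many families appearing in the classical and exceptional tables, together with correctly handling the passage between $N_T(R_0)$ and $N_G(R_0)$ when $G > T$. The subtlety is that $R_0 = R\cap T$ is a Sylow $r$-subgroup of $T$, and while $H$ is maximal in $G$, the normalizer $N_G(R_0)$ need not be maximal and need not contain $R$ unless $R$ normalizes $R_0$ (which it does, since $R_0 \normeq R$); so the genuine content is deciding when the \emph{maximal} overgroup $H$ happens to equal this normalizer rather than strictly contain it. For the groups of Lie type this requires knowing whether a Sylow $r$-subgroup is self-normalizing modulo its obvious local structure and whether $N_G(R_0)$ is maximal; the borderline cases---where $H$ properly contains $N_G(R_0)$ but no intermediate maximal subgroup exists---demand the most care. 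Once each family is resolved, assembling the surviving cases into Table~\ref{tab:NGR} completes the proof, the two directions of the ``if and only if'' being immediate since membership in the table was defined precisely by the equality $H = N_G(R_0)$ among the triples already classified.
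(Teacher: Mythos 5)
Your proposal follows essentially the same route as the paper: inspect each triple $(G,r,H)$ arising in Theorems \ref{t:prime2} and \ref{t:main1} and decide case by case whether $H = N_G(R_0)$, which the paper streamlines via the criterion that this holds if and only if $R_0 = O_r(H_0)$ (equivalently $R_0 \normeq H_0$). One caution for the execution: your expectation that the alternating point stabilizers never survive fails for $n=5$, where $N_{A_5}(R_0) = A_4$ is exactly the point stabilizer (recorded in Table \ref{tab:NGR} as $({\rm L}_2(4),2,P_1)$), so the comparison must genuinely be computed in each case rather than settled heuristically.
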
  

Let $G,T,R$ and $R_0$ be defined as in Corollary \ref{c:NGR} and note that $R$ is weakly subnormal in $G$ if and only if $\mathcal{M}(R) = \{N_G(R)\}$. Since $N_G(R) \leqs N_G(R_0)$, the relevant groups $G$ with a weakly subnormal Sylow $r$-subgroup can be determined via Corollary \ref{c:NGR} since they coincide with the cases in Table \ref{tab:NGR} with $N_G(R)=N_G(R_0)$. In this way, we obtain  the following result (once again, the proof is given in Section \ref{s:corols}).

\begin{corol}\label{c:NGR2}
Let $G$ be an almost simple group with socle $T$ and let $R$ be a Sylow $r$-subgroup of $G$. Then $R$ is weakly subnormal in $G$ if and only if one of the following holds:
\begin{itemize}\addtolength{\itemsep}{0.2\baselineskip}
\item[{\rm (i)}] $G = T$ and $(G,r)$ is one of the cases in Table \ref{tab:NGR}.
\item[{\rm (ii)}] $r = 2$ and $G = {\rm PGL}_2(q)$, where $q \geqs 7$ is a Mersenne or Fermat prime.
\item[{\rm (iii)}] $r = 2$ and $G = {\rm L}_2(9).2$ or ${\rm L}_2(9).2^2$, with $G \ne {\rm P\Sigma L}_2(9)$.
\item[{\rm (iv)}] $r = 2$ and $G = {\rm L}_3(2).2$ or ${\rm L}_3(4).2_3$.
\item[{\rm (v)}] $r = 3$ and $G = {\rm L}_2(8).3$, ${\rm U}_3(8).3$ or ${\rm U}_3(8).3^2$.
\item[{\rm (vi)}] $r=5$ and $G = {}^2B_2(32).5$.
\end{itemize}
\end{corol}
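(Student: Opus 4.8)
The plan is to combine the observation recorded just before the statement with Corollary \ref{c:NGR}. First recall that $R$ is weakly subnormal in $G$ precisely when $\mathcal{M}(R) = \{N_G(R)\}$: by Wielandt's Zipper Lemma, $R$ is weakly subnormal if and only if $\mathcal{M}(R) = \{H\}$ and $R \leqs O_r(H)$ for some maximal subgroup $H$, and since $R$ is a Sylow $r$-subgroup of $H$ the containment $R \leqs O_r(H)$ forces $R = O_r(H)$; hence $R \normeq H$ and $H = N_G(R)$, noting $N_G(R) \neq G$ because $O_r(G) = 1$ in an almost simple group.

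Now set $R_0 = R \cap T$, a Sylow $r$-subgroup of $T$, and observe $N_G(R) \leqs N_G(R_0)$, since any element normalising both $R$ and $T$ normalises $R \cap T$. If $R$ is weakly subnormal then $\mathcal{M}(R) = \{N_G(R)\}$; as $R \leqs N_G(R_0) \neq G$, the subgroup $N_G(R_0)$ lies in some maximal overgroup of $R$, which can only be $N_G(R)$, so $N_G(R_0) \leqs N_G(R)$ and therefore $N_G(R) = N_G(R_0)$. In particular $\mathcal{M}(R) = \{N_G(R_0)\}$, so $(G,r)$ appears in Table \ref{tab:NGR} by Corollary \ref{c:NGR}. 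Conversely, any entry of Table \ref{tab:NGR} for which $N_G(R) = N_G(R_0)$ yields $\mathcal{M}(R) = \{N_G(R)\}$ and hence a weakly subnormal $R$. The task thus reduces to deciding, for each case in Table \ref{tab:NGR}, whether $N_G(R) = N_G(R_0)$.

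For this I would use the following criterion. Since $G/T$ is an $r$-group we have $G = RT$, and a Frattini argument gives $N_G(R) = R\,N_T(R)$ and $N_G(R_0) = R\,N_T(R_0)$; intersecting with $T$ then shows that $N_G(R) = N_G(R_0)$ if and only if $N_T(R) = N_T(R_0)$, that is, if and only if every element of $T$ normalising $R_0$ also normalises $R$ (the inclusion $N_T(R) \leqs N_T(R_0)$ holds automatically). When $G = T$ this is trivial, as $R = R_0$, so every entry of Table \ref{tab:NGR} with $G = T$ contributes, giving part (i). It then remains to run through the finitely many entries with $T < G$ and test the criterion in each; the surviving cases are exactly those listed in (ii)--(vi).

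The main obstacle is this final case-by-case verification. For each almost simple $G > T$ in the table one must understand the precise structure of the Sylow normaliser $N_T(R_0)$ and how the outer elements comprising $G/T$ act on a Sylow $r$-subgroup, in order to decide whether $N_T(R_0) \leqs N_T(R)$. This requires explicit information about the relevant groups (the low-dimensional linear and unitary groups, ${}^2B_2(32)$, and the ${\rm L}_2(9)$ and ${\rm L}_3$ examples), together with control of the action of the graph, field and diagonal automorphisms. By contrast, the reduction and the normaliser criterion above are purely formal.
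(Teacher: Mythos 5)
Your reduction is correct and is exactly the paper's: weak subnormality of $R$ is equivalent to $\mathcal{M}(R)=\{N_G(R)\}$, and combined with $N_G(R)\leqs N_G(R_0)$ and Corollary \ref{c:NGR} this reduces the problem to deciding, for each entry of Table \ref{tab:NGR}, whether $N_G(R)=N_G(R_0)$. Your normaliser criterion $N_T(R_0)\leqs N_T(R)$ is also correct, and is equivalent to the criterion the paper isolates in Lemma \ref{l:equiv}(iv) (namely $[R,H_0]\leqs R_0$ with $H_0=N_T(R_0)$, i.e. $R$ acts trivially on $H_0/R_0$): indeed $H_0\leqs N_T(R)$ iff $[R,H_0]\leqs R\cap T=R_0$.

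The genuine gap is that the proof stops here: parts (ii)--(vi) are simply asserted to be "the surviving cases", and the verification is never carried out. Moreover, your description of what remains is wrong in a way that matters. The entries of Table \ref{tab:NGR} with $T<G$ are \emph{not} finitely many: they include infinite families, e.g. ${\rm L}_2(q).r^a$ with $r=r_2$ and $r\mid f$, ${\rm L}_n(q).r^a$ and ${\rm U}_n(q).r^a$ with $r=r_n$ or $r_{2n}$, and the families with socle ${}^2B_2(q)$, ${}^2G_2(q)$, ${}^3D_4(q)$, ${}^2F_4(q)$ and $E_8(q)$ whenever $r$ divides $f$. So one cannot finish by a finite enumeration; one needs uniform arguments. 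This is where the real content of the paper's proof lies: for the linear/unitary and most exceptional families one takes a Zsigmondy prime $s$ of $p^{nf}-1$ (or $p^{2nf}-1$, $p^{12f}-1$, etc.) dividing $|H_0/R_0|$ and shows a field automorphism of order $r$ inside $R$ cannot centralise the corresponding subgroup of the torus $H_0/R_0$, which kills all but finitely many candidates; and for the Suzuki/Ree torus normalisers one must analyse arithmetically when a ratio such as $(q+\epsilon\sqrt{2q}+1)/(q_0+\epsilon'\sqrt{2q_0}+1)$ (with $q=q_0^r$) is a power of $r$ -- this is precisely how the single exception ${}^2B_2(32).5$ in (vi) is found, and how one rules out exceptions for ${}^2G_2$, ${}^2F_4$, ${}^3D_4$ and $E_8$. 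The small cases ${\rm L}_2(9).2$, ${\rm L}_3(2).2$, ${\rm L}_3(4).2_3$, ${\rm U}_3(8).3$, ${\rm U}_3(8).3^2$, ${\rm L}_2(8).3$ then need direct checks. Without this analysis the lists (ii)--(vi) are unsubstantiated, so the statement is not yet proved.
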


Note that in part (iv) of Corollary \ref{c:NGR2}, the group ${\rm L}_3(4).2_3$ contains involutory graph automorphisms. And in part (v), we can take $G$ to be any of the three groups of the form ${\rm U}_3(8).3$.

\vs

Let $G$ be a finite group and let $K$ be a weakly subnormal $r$-subgroup of $G$  for some prime $r$. Then $\mathcal{M}(K)=\{H\}$ and $K\leq O_r(H)$ for some maximal subgroup $H$ of $G$. Now this implies that $\mathcal{M}(O_r(H))=\{H\}$ and so either $O_r(H)$ is a weakly subnormal $r$-subgroup of $G$ containing $K$, or $O_r(H)$ is subnormal in $G$. In the special case where $G$ is almost simple, it follows that $O_r(H)$ is a maximal weakly subnormal $r$-subgroup of $G$ and this occurs if and only if $\mathcal{M}(O_r(H))=\{H\}$. In particular, the latter property holds only if $\mathcal{M}(R)=\{H\}$ and $O_r(H)\neq 1$, where $R$ is a Sylow $r$-subgroup of $G$ containing $O_r(H)$. In the next corollary, we determine all such triples $(G,r,H)$.

\begin{corol}\label{c:Or}
Let $G$ be an almost simple group with socle $T$ and $\mathcal{M}(R) = \{H\}$, where $R$ is a Sylow $r$-subgroup of $G$. Then $O_r(H) \ne 1$ if and only if one of the following holds:
\begin{itemize}\addtolength{\itemsep}{0.2\baselineskip}
\item[{\rm (i)}] $H = N_G(R \cap T)$ and $(G,r,H)$ is one of the cases in Table \ref{tab:NGR}.
\item[{\rm (ii)}] $(G,r,H)$ is one of the cases in Table \ref{tab:Or}. 
\end{itemize}
\end{corol}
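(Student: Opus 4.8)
The plan is to read off the answer from the complete classification of triples $(G,r,H)$ with $\mathcal{M}(R)=\{H\}$ that is already in hand. Indeed, by Corollary \ref{c:easy} the hypothesis $\mathcal{M}(R)=\{H\}$ forces $G/T$ to be an $r$-group, so Theorems \ref{t:prime2} (for $r=2$) and \ref{t:main1} (for odd $r$) list every possibility for $(G,r,H)$. The corollary then follows by determining $O_r(H)$ for each entry on these lists and sorting the cases with $O_r(H)\neq 1$ according to whether or not $H=N_G(R_0)$, where $R_0=R\cap T$.

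First I would dispose of the case $H=N_G(R_0)$, which accounts for part (i). Here $R_0$ is a nontrivial $r$-group (since $r\mid|T|$) and $R_0\normeq N_G(R_0)=H$, so $R_0\leqs O_r(H)$ and hence $O_r(H)\neq 1$ automatically. By Corollary \ref{c:NGR}, the triples with $H=N_G(R_0)$ are precisely those recorded in Table \ref{tab:NGR}, giving part (i) and showing that every such case genuinely contributes.

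It remains to treat the triples with $H\neq N_G(R_0)$ and to decide for each whether $O_r(H)\neq 1$; the cases where this holds form Table \ref{tab:Or} and yield part (ii). The main tool in defining characteristic is the Borel--Tits theorem: when $T$ is of Lie type over a field of characteristic $r$, a maximal subgroup $H$ satisfies $O_r(H)=O_p(H)\neq 1$ if and only if $H$ is a parabolic subgroup, in which case $O_r(H)$ is its unipotent radical. Thus among the defining-characteristic entries of Tables \ref{tab:main1} and \ref{tab:main2} exactly the parabolic ones contribute. In cross characteristic, and in the alternating and sporadic cases, I would instead analyse the structure of $H$ directly: for an (almost) simple $H$, or for a geometric subgroup of product, field-extension or classical type in which no normal $r$-subgroup survives, one has $O_r(H)=1$, whereas imprimitive or normaliser-type configurations can produce a nontrivial $r$-core for small $r$. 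For instance, in Theorem \ref{t:main1}(i)(c) with $r=3$, $n=9$ and $H=(S_3\wr S_3)\cap A_9$, the base group forces $O_3(H)\supseteq C_3^3\neq 1$, so this case belongs to Table \ref{tab:Or}; by contrast, for $H=A_{r^k}$ in part (i)(a) the group $H$ is simple and $O_r(H)=1$.

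The main obstacle is the exhaustive case-by-case verification underlying this last step, especially for the classical groups of Table \ref{tab:main1}. There one must run through the relevant Aschbacher classes $C_1$--$C_8$ and $\mathcal S$, using the known structure of each maximal subgroup to compute its $r$-core, and confirm both that the listed entries have $O_r(H)\neq 1$ and --- crucially for the ``only if'' direction --- that all remaining entries have $O_r(H)=1$. The sporadic cases in Table \ref{tab:spor} are handled by direct inspection of the structure of $H$ (for example via the \textsc{Atlas}), and the exceptional groups in Table \ref{tab:main2} by combining the Borel--Tits analysis above with explicit information on the non-parabolic maximal subgroups. Assembling the triples with $H\neq N_G(R_0)$ and $O_r(H)\neq 1$ into Table \ref{tab:Or} then completes the proof.
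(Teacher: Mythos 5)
Your overall strategy --- reading the triples off Theorems \ref{t:prime2} and \ref{t:main1} and sorting them according to whether $H = N_G(R_0)$ --- is the same as the paper's, and your treatment of part (i) is correct: $R_0 \normeq N_G(R_0) = H$ forces $R_0 \leqs O_r(H) \ne 1$, and Corollary \ref{c:NGR} identifies these cases with Table \ref{tab:NGR}. The genuine gap is in the ``only if'' direction of part (ii). The tables record the \emph{type} of $H$, which describes $H_0 = H \cap T$; but since $G = TH$ and $G/T$ is an $r$-group (often nontrivial, e.g.\ generated by field automorphisms), we have $H/H_0 \cong G/T$, so $H = H_0.r^a$ with $a \geqs 1$ in many cases. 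Your argument computes $O_r$ of the group named by the type --- that is, of $H_0$ --- and then asserts $O_r(H) = 1$. This does not follow: a priori $H$ could have a nontrivial normal $r$-subgroup $N$ with $N \cap H_0 = 1$, consisting of elements inducing outer automorphisms of $T$. Since $N$ and $H_0$ are both normal in $H$, any such $N$ satisfies $[N,H_0] \leqs N \cap H_0 = 1$, so $N$ centralises $H_0 \geqs R_0$; ruling this out is exactly where the paper invokes Gross's theorem (\cite[Theorem A]{Gross}): for $r$ odd, no nontrivial $r$-element of ${\rm Aut}(T)$ centralises a Sylow $r$-subgroup of $T$. This gives the clean equivalence $O_r(H) \ne 1$ if and only if $O_r(H_0) \ne 1$ for odd $r$, which is what legitimises the table-by-table inspection. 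Without it (or a case-by-case substitute), your step ``for a geometric subgroup of product, field-extension or classical type in which no normal $r$-subgroup survives, one has $O_r(H)=1$'' is unjustified whenever $G \ne T$.

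Note also that this equivalence is prime-sensitive: Gross's theorem requires $r$ odd, and the paper treats $r=2$ separately by verifying directly that $O_2(H_0) \ne 1$ (via central involutions in $H_0$) in \emph{every} case of Table \ref{tab:prime2}, so the problematic direction never arises there. Your appeal to Borel--Tits, while correct, is essentially vacuous for this corollary: every defining-characteristic entry of the classification is a Borel subgroup, hence equals $N_G(R_0)$ and already lies in Table \ref{tab:NGR}, contributing nothing to Table \ref{tab:Or}. Your sample computations ($O_3((S_3 \wr S_3)\cap A_9) \supseteq C_3^3 \ne 1$, and $O_r(A_{r^k}) = 1$ by simplicity) are fine, but as proposed the proof is incomplete without the reduction from $O_r(H)$ to $O_r(H_0)$.
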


As a further corollary, we determine all the maximal weakly subnormal $r$-subgroups of almost simple groups. Note that the cases in part (i) are described in Corollary \ref{c:NGR2}; indeed, for the cases in Table \ref{tab:NGR}, we find that $\mathcal{M}(O_r(H))=\{H\}$ if and only if $H = N_G(R)$ (see Lemma \ref{l:equiv}). In (ii), $\phi$ is an involutory field automorphism and ${\rm L}_2(q).2_3$ is a nonsplit extension. 

\begin{corol}\label{c:OrH}
Let $G$ be an almost simple group with socle $T$ and $\mathcal{M}(R) = \{H\}$, where $R$ is a Sylow $r$-subgroup of $G$. Then $\mathcal{M}(O_r(H))=\{H\}$ if and only if one of the following holds:
\begin{itemize}\addtolength{\itemsep}{0.2\baselineskip}
\item[{\rm (i)}] $R$ is weakly subnormal in $G$.
\item[{\rm (ii)}]  $G= {\rm L}_2(q).2^2 = {\rm PGL}_2(q).\la \phi \ra$ or ${\rm L}_2(q).2_3$, $r= 2$ and $H$ is of type ${\rm GL}_1(q) \wr S_2$, where $q=81$ or $q=p^2$
with $p \geqs 5$ a Fermat prime. 
\item[{\rm (iii)}] $T = {\rm L}_2(q)$, $r=2$, $H$ is of type ${\rm GL}_1(q^2)$, $q \equiv 3 \imod{4}$ is a prime and $|R| \geqs 2^4$.
\item[{\rm (iv)}] $G = {\rm L}_3(3).2$, $r=2$ and $H$ is of type ${\rm GL}_2(3) \times {\rm GL}_1(3)$. 
\end{itemize}
\end{corol}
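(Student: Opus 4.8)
The plan is to deduce this from Corollary \ref{c:Or}, which I regard as the main engine. First observe that $\mathcal{M}(O_r(H)) = \{H\}$ forces $O_r(H) \ne 1$: indeed $1 \leqs O_r(H) \leqs R \leqs H$, so $H \in \mathcal{M}(O_r(H))$ always holds, while the trivial subgroup lies in every maximal subgroup of $G$, so if $O_r(H) = 1$ then $\mathcal{M}(O_r(H))$ is the full set of maximal subgroups, which has size greater than $1$. Hence under the hypothesis $\mathcal{M}(R) = \{H\}$, the triple $(G,r,H)$ must be one of those listed in Corollary \ref{c:Or}, and the whole problem reduces to deciding, for each such triple, whether the automatic containment $H \in \mathcal{M}(O_r(H))$ is an equality. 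Throughout I would use the elementary fact that $N_G(O_r(H)) = H$: since $O_r(H) \normeq H$ and $H$ is maximal, $N_G(O_r(H))$ equals $H$ or $G$, and the latter is impossible because $O_r(G) = 1$ for an almost simple group. Thus a second maximal overgroup $M$ of $O_r(H)$, should it exist, must fail to normalise $O_r(H)$.

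Corollary \ref{c:Or} splits the candidates into two families. The first consists of the triples with $H = N_G(R_0)$, $R_0 = R \cap T$, recorded in Table \ref{tab:NGR}. Here I would simply invoke Lemma \ref{l:equiv}, which asserts that for these triples $\mathcal{M}(O_r(H)) = \{H\}$ holds if and only if $H = N_G(R)$; and $H = N_G(R)$ together with $\mathcal{M}(R) = \{H\}$ is precisely the statement that $R$ is weakly subnormal in $G$, yielding part (i) of the corollary. One direction is transparent: if $R$ is weakly subnormal then $H = N_G(R)$ and $O_r(H) = R$, so $\mathcal{M}(O_r(H)) = \mathcal{M}(R) = \{H\}$; the content of Lemma \ref{l:equiv} is the converse, that the remaining Table \ref{tab:NGR} triples with $H \ne N_G(R)$ do not satisfy $\mathcal{M}(O_r(H)) = \{H\}$.

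The real work lies in the second family, the finite list recorded in Table \ref{tab:Or}. Inspecting that table, every surviving case has $r = 2$ and $T$ equal to ${\rm L}_2(q)$ or ${\rm L}_3(3)$, so I would proceed by explicit analysis of the subgroup structure of these low-rank classical groups. For ${\rm L}_2(q)$ I would use Dickson's classification of subgroups: the maximal subgroups of $G$ are the Borel subgroup, the two torus normalisers (of type ${\rm GL}_1(q) \wr S_2$ and ${\rm GL}_1(q^2)$, i.e. the dihedral groups $D_{q-1}$ and $D_{q+1}$ up to the relevant scalars), the subfield subgroups ${\rm PSL}_2(q_0)$ and ${\rm PGL}_2(q_0)$, and the exceptional subgroups $A_4, S_4, A_5$. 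In each Table \ref{tab:Or} case $O_2(H)$ is a cyclic $2$-group, equal to the $2$-part of the relevant torus, with its precise structure read off from whether $H$ is of type ${\rm GL}_1(q) \wr S_2$ or ${\rm GL}_1(q^2)$ and from the action of the outer automorphisms in $G/T$. I would then determine all maximal overgroups of this cyclic $2$-group by comparing its order with the $2$-parts of $q - 1$ and $q + 1$ and with the orders of the subfield and exceptional subgroups, ruling in or out each potential second overgroup $M$. For $G = {\rm L}_3(3).2$ the maximal subgroup list is short enough to treat directly (with $O_2(H)$ arising from the $Q_8$ inside the ${\rm GL}_2(3)$ factor), and the single surviving case gives part (iv).

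The hard part will be pinning down the exact arithmetic thresholds in parts (ii) and (iii), since these are precisely the conditions under which the cyclic group $O_2(H)$ fails to embed in a second maximal subgroup. Concretely, one must control the $2$-adic valuations of $q - 1$ and $q + 1$ and decide when the $2$-part of one torus can be absorbed into the normaliser of the other torus, or into a subfield subgroup ${\rm PSL}_2(q_0)$, or into an $A_4$, $S_4$ or $A_5$. This is what forces the Fermat-square condition $q = p^2$ (or $q = 81$) in (ii) and the conditions $q \equiv 3 \imod{4}$ prime with $|R| \geqs 2^4$ in (iii): in the former, $q - 1$ and $q + 1$ have disparate $2$-parts so that $O_2(H)$ is too large to fit in the complementary torus normaliser or a proper subfield group, while the bound $|R| \geqs 2^4$ in the latter guarantees $O_2(H)$ exceeds the $2$-part available in any $A_4$, $S_4$ or $A_5$. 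I expect the delicate bookkeeping here — especially distinguishing the several subgroup classes of the same type and tracking how the outer automorphisms in the extensions ${\rm L}_2(q).2^2$ and ${\rm L}_2(q).2_3$ enlarge $O_2(H)$ — to be the main obstacle, whereas the reduction to Table \ref{tab:Or} and the normaliser argument are routine.
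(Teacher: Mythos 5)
Your reduction to Corollary \ref{c:Or}, the observation that $\mathcal{M}(O_r(H))=\{H\}$ forces $O_r(H)\ne 1$, and the use of Lemma \ref{l:equiv} to dispose of the Table \ref{tab:NGR} cases all match the paper's proof, and your auxiliary remark that $N_G(O_r(H))=H$ is correct. However, there is a genuine gap in your treatment of Table \ref{tab:Or}. You assert that ``inspecting that table, every surviving case has $r=2$ and $T$ equal to ${\rm L}_2(q)$ or ${\rm L}_3(3)$'' and then analyse only those cases. That assertion is the \emph{conclusion} of the case analysis, not something that can be read off the table: Table \ref{tab:Or} also contains the triple $(A_9,3,S_3\wr S_3)$; the cases $T={\rm L}_n(q)$ with $r=2$ for \emph{all} $n=2^k+1\geqs 3$ and $q=p\equiv 3\imod{4}$ (not just $(n,q)=(3,3)$); the cases $T={\rm L}_n(q)$ and ${\rm U}_n(q)$ with $r$ odd and $H$ of type ${\rm GL}_1(q)\wr S_n$, resp. ${\rm GU}_1(q)\wr S_n$; the cases $T={\rm U}_n(q)$, $\O_n(q)$ and ${\rm P\O}_n^{\pm}(q)$ with $r=2$; and the cases $T=G_2(q)$, ${}^3D_4(q)$ with $r=3$. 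For each of these one must actually exhibit a second maximal overgroup of $O_r(H)$, and this constitutes the bulk of the paper's proof: e.g.\ for $A_9$ and the ${\rm GL}_1(q)\wr S_n$-type cases $O_r(H)$ lies in an intransitive, respectively parabolic, maximal subgroup; for the orthogonal cases $O_2(H)$ stabilises nondegenerate subspaces of every relevant dimension; and for $G_2(q)$ and ${}^3D_4(q)$ one shows $|O_3(H)|=3$ centralises an involution, hence lies in a parabolic ($p=2$) or a maximal rank subgroup ($p$ odd) not contained in $H$. Likewise, to isolate part (iv) within the ${\rm L}_n(q)$, $r=2$ family you need the paper's observation that $O_2(H)$ has order $2$ and lies in $T$ whenever $(n,q)\ne(3,3)$, which immediately yields $|\mathcal{M}(O_2(H))|\geqs 2$; your proposal never addresses these values of $(n,q)$.

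For the cases you do consider, your plan is broadly sound and consistent with the paper's method: for $T={\rm L}_2(q)$ the paper also works inside the torus normalisers, writing $S=O(S)\times O_2(S)$ and splitting according to whether $O_2(H)\leqs S$ (in which case a Borel subgroup gives a second overgroup) or $O_2(H)$ involves a field automorphism that centralises or inverts $O(S)$, the latter forcing $q_0-1$ to be a $2$-power and hence $q_0=9$ or a Fermat prime. So your instinct about where the arithmetic thresholds in (ii) and (iii) come from is right. But as written the proposal omits the majority of the required case analysis, and the omission is not a routine verification.
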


The above results on weakly subnormal $r$-subgroups of almost simple groups will play a key role in \cite{GHT}, where the general problem for arbitrary finite groups will be studied.

\vs

Let us now turn to the general case, where $G$ is an arbitrary finite group. Let us write $F^*(G) = F(G)E(G)$ for the \emph{generalised Fitting subgroup} of $G$, where $F(G)$ is the \emph{Fitting subgroup} (that is, the largest nilpotent normal subgroup of $G$) and $E(G)$ is the \emph{layer} of $G$, which is the product of the \emph{components} of $G$ (recall that a component is a subnormal \emph{quasisimple} subgroup, and a perfect group $L$ is quasisimple if $L/Z(L)$ is simple).

First note that $O_r(G)$ is contained in every Sylow $r$-subgroup of $G$ and so one can pass to $G/O_r(G)$. Write $\Phi(G/O_r(G))=D/O_r(G)$ and observe that 
$D$ is contained in every maximal subgroup of $G$ containing $R$. In Section \ref{s:red}, we will show that if $\mathcal{M}(R) = \{H\}$ and $R$ is not a normal subgroup of $G$, then $O_r(G/D) = \Phi(G/D) = 1$ (see Theorem \ref{t:main2gen}(i)). So this explains why we state the following result under the assumption $O_r(G)= \Phi(G)=1$  and we refer the reader to Theorem \ref{t:main2gen} for a more general statement.
  
\begin{theorem}\label{t:main2}  
Let $G$ be a finite group, let $r$ be a prime divisor of $|G|$ and let $R$ be a Sylow $r$-subgroup of $G$. Assume that $O_r(G)= \Phi(G)=1$. Then $R$ is contained in a 
unique maximal subgroup  $H$ of $G$ if and only if one of the following holds:
\begin{itemize}\addtolength{\itemsep}{0.2\baselineskip}
\item[{\rm (i)}] $G=P{:}R$ and $\mathcal{M}(R) = \{R\}$, where $P$ is an elementary abelian $p$-group for some prime $p \ne r$ and $R$ acts faithfully and irreducibly on $P$.
\item[{\rm (ii)}] $G$ is almost simple, $\mathcal{M}(R) = \{H\}$ and $(G,r,H)$ is one of the cases appearing in Theorem \ref{t:prime2} or \ref{t:main1}.
\item[{\rm (iii)}] $G=E(G)R$, where $E(G)$ is a direct product of two or more  nonabelian simple groups, and $R$ acts transitively on the set of components of $G$. Moreover, if $L$ is a component of $G$, then $N_R(L)/C_R(L)$ is a Sylow $r$-subgroup of the almost simple group $A = N_G(L)/C_G(L)$ and it is contained in a unique maximal subgroup $H$ of $A$, so $(A,r,H)$ is recorded in Theorem \ref{t:prime2} or \ref{t:main1}.
\end{itemize} 
\end{theorem}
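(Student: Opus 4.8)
The plan is to prove both implications, with the forward direction (from $\mathcal{M}(R)=\{H\}$ to the three cases) carrying the weight. First I would record the structural consequences of the hypothesis $O_r(G)=\Phi(G)=1$. Since $\Phi(N)\leqs\Phi(G)$ for every normal subgroup $N$, the choice $N=F(G)$ gives $\Phi(F(G))=1$, forcing $F(G)$ to be a direct product of elementary abelian groups and hence abelian; by Gasch\"utz's theorem it is moreover a completely reducible $G$-module, a direct product of minimal normal subgroups, and as $O_r(G)=1$ it is an $r'$-group. Likewise, for each component $L$ one has $Z(L)\leqs\Phi(L)\leqs\Phi(E(G))\leqs\Phi(G)=1$, so the components are nonabelian simple, $E(G)$ is their direct product, $F^*(G)=F(G)\times E(G)$, and $C_G(F^*(G))=Z(F(G))=F(G)$. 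Finally $R$ is not normal (else $R\leqs O_r(G)=1$), so $N_G(R)\leqs H$ and every proper subgroup of $G$ containing $R$ lies in $H$.

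Next I would prove $G=F^*(G)R$, equivalently that $G/F^*(G)$ is an $r$-group. This is the point where the uniqueness of $H$ must be combined with $C_G(F^*(G))\leqs F^*(G)$, the reduction results of Section~\ref{s:red}, and the almost simple case (Corollary~\ref{c:easy} applied to the sections $N_G(L)/C_G(L)$), and I expect it to be one of the delicate steps. Granting $G=F^*(G)R$, the engine of the remaining analysis is the elementary observation that if $A,B\trianglelefteq G$ with $AR$ and $BR$ both proper but $\langle AR,BR\rangle=G$, then $\mathcal{M}(R)=\{H\}$ is impossible, since $AR,BR\leqs H$ would give $G\leqs H$. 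Taking $A=F(G)$ and $B=E(G)$: if both are nontrivial then $F(G)R$ and $E(G)R$ are proper (each quotient $F(G)R/F(G)$ and $E(G)R/E(G)$ is an $r$-group, so neither of $E(G)$, $F(G)$ lies in the other subgroup), while $\langle F(G)R,E(G)R\rangle=F^*(G)R=G$. This rules out the mixed case, so exactly one of $F(G)$, $E(G)$ is nontrivial.

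If $E(G)=1$, then $G=VR$ with $V=F(G)$ a completely reducible $r'$-module on which $R$ acts faithfully, because $C_R(V)=R\cap F(G)=1$. The subgroups between $R$ and $G$ are exactly the $WR$ for $R$-submodules $W\leqs V$, and the maximal ones correspond to the maximal submodules; as $V$ is semisimple, $|\mathcal{M}(R)|=1$ forces $V$ to be irreducible, i.e.\ a single minimal normal $p$-subgroup $P$, whence $G=P{:}R$ with $R$ maximal and $H=R$, which is case~(i). If instead $E(G)\neq1$, write $E(G)=L_1\times\cdots\times L_k$ and apply the generation observation to the products over the $R$-orbits on $\{L_1,\dots,L_k\}$: two or more orbits would again produce two proper overgroups of $R$ generating $G$, so $R$ acts transitively. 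When $k=1$ we have $F^*(G)=E(G)=T$ simple and $C_G(T)=F(G)=1$, so $G$ is almost simple with $G/T$ an $r$-group, and Theorems~\ref{t:prime2} and~\ref{t:main1} supply the list, giving case~(ii).

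The remaining case $k\geqs2$ is where I expect the main work. Here I would set $A=N_G(L)/C_G(L)$ for $L=L_1$, an almost simple group with socle $\cong L$, and show that the image of $N_R(L)$ in $A$ equals $N_R(L)/C_R(L)$, a Sylow $r$-subgroup of $A$ with $A/\soc(A)$ an $r$-group. The crux is a correspondence lemma governing the ``product-type'' overgroups: since $R$ surjects onto $G/E(G)$, no maximal subgroup of $G$ containing $E(G)$ can contain $R$, so every member of $\mathcal{M}(R)$ meets $E(G)$ properly and supplements it, and the wreath structure then yields a bijection between $\mathcal{M}(R)$ and the maximal subgroups of $A$ containing $N_R(L)/C_R(L)$. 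Hence $|\mathcal{M}(R)|=1$ if and only if the analogous condition holds in $A$, placing $(A,r,H)$ in Theorem~\ref{t:prime2} or~\ref{t:main1} and giving case~(iii); this same correspondence supplies the converse in case~(iii). The converse is otherwise immediate: cases~(i) and~(ii) already include the statement $\mathcal{M}(R)=\{R\}$, respectively $\mathcal{M}(R)=\{H\}$, as part of their hypotheses, and in each of the three cases one checks directly that $O_r(G)=\Phi(G)=1$. I would single out two steps as the principal obstacles: establishing $G=F^*(G)R$, and proving the wreath-product correspondence lemma underlying case~(iii).
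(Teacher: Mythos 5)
Your overall architecture is viable and is essentially a reorganisation of the paper's own proof: the paper likewise excludes the mixed case, treats the purely nilpotent case as your case (i), and reduces the case $E(G)\ne 1$ to the almost simple sections via a product correspondence (its Lemmas \ref{l:normal} and \ref{l:simpleprod}). The problem is that the two steps you defer as ``the principal obstacles'' are not routine technicalities to be filled in later -- they carry all of the content -- and as sketched each has a concrete defect. For $G=F^*(G)R$ you offer no argument, and the ingredients you name would not produce one: Corollary \ref{c:easy} is a statement about a single almost simple group, and it cannot be applied to the sections $N_G(L)/C_G(L)$ until you already know that $R$ permutes the components transitively and meets each one nontrivially, which lies downstream of the very fact you are trying to prove. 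What actually closes this step (and is how the paper proceeds, in Lemmas \ref{l:case1} and \ref{l:case2}) is a pair of devices you never mention: (a) the Frattini argument applied to $R\cap E(G)$, a Sylow $r$-subgroup of the normal subgroup $E(G)$, giving $G=E(G)N_G(R\cap E(G))$ with $R\leqs N_G(R\cap E(G))$, so that uniqueness forces either $G=E(G)R$ or $R\cap E(G)\leqs O_r(G)=1$; and (b) when $N=O_{r'}(G)\ne 1$, the observation that, since $\Phi(G)=1$, some maximal subgroup $M$ omits $N$, whence $G=NM$ and $M$ has $r'$-index, so a conjugate of $M$ contains $R$ and therefore equals $H$ by uniqueness; then $NR<G$ would force $N\leqs H$, a contradiction, so $G=N{:}R$ and Lemma \ref{l:coprime} (Schur--Zassenhaus plus Frattini) pins down the structure of $N$.

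Second, your correspondence lemma for case (iii) is false as stated. The asserted bijection between $\mathcal{M}(R)$ and the maximal overgroups of $N_R(L)/C_R(L)$ in $A$ requires $R\cap L\ne 1$, and your dichotomy does not supply this: $F(G)=1$ does not force $r$ to divide $|L|$, because $O_{r'}(G)$ need not be nilpotent (its components are components of $G$). If $E(G)$ is an $r'$-group and $R$ permutes the $k\geqs 2$ components transitively, then $E(G)$ has diagonal-type maximal $R$-invariant subgroups (for instance a full diagonal $\{(t,t,\dots,t)\}$ when $R$ permutes coordinates) alongside the product-type ones; these do not arise from maximal subgroups of $A$, and the projection argument behind Lemma \ref{l:simpleprod} -- whose hypothesis $L_1\cap K\ne 1$ is essential, being exactly what yields $[T,K_1]=L_1$ -- collapses. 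So you must first prove $R\cap E(G)\ne 1$ under the uniqueness hypothesis, e.g.\ by the Sylow/Frattini argument of Lemma \ref{l:trivial}: if $E(G)$ were an $r'$-group, then for each prime $p$ dividing $|E(G)|$ one can choose a Sylow $p$-subgroup $P$ of $E(G)$ with $R\leqs N_G(P)<G$ (normality of $P$ in $G$ is impossible as $F(G)=1$); then $H$ contains a Sylow $p$-subgroup of $E(G)$ for every such $p$, so $E(G)\leqs H$ and $G=E(G)N_G(P)\leqs H$, a contradiction. The paper sidesteps this issue entirely by splitting on whether $O_{r'}(G)\leqs \Phi(G)$ rather than on $F(G)$ versus $E(G)$, so that everything of order prime to $r$ is swept into Lemma \ref{l:coprime} at once. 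If you repair these two points your route does go through, with the mild bonus that complete reducibility of $F(G)$ (a consequence of $\Phi(G)=1$) lets you replace Schur--Zassenhaus by Maschke in case (i).
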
 

Note that the condition $O_r(G) = \Phi(G)=1$ in Theorem \ref{t:main2} could be replaced by the assumption that $H$ is core-free (since $H$ contains $O_r(G)\Phi(G)$). And conversely, the theorem implies that if $O_r(G) = \Phi(G)=1$ and $\mathcal{M}(R) = \{H\}$, then $H$ is core-free.

By a celebrated theorem of Thompson \cite{Tho}, a finite group with a nilpotent maximal subgroup of odd order is soluble. In addition, a nilpotent subgroup of a finite insoluble group is maximal only if it is the normaliser of a Sylow $2$-subgroup. For example, if $q = 2^m-1 \geqs 31$ is a Mersenne prime, then the Sylow $2$-subgroup $R = D_{q+1}$ is a maximal subgroup of $G = {\rm L}_2(q)$.
In \cite{Baum}, Baumann determines the structure of insoluble finite groups $G$ with a nilpotent maximal subgroup, showing that $O^2(G/F(G))$ is a direct product of nonabelian simple groups with dihedral Sylow $2$-subgroups (here the simple groups that can arise are of the form ${\rm L}_2(q)$, where $q=9$ or $q = 2^m \pm 1 \geqs 7$ is a prime). 

As an application of Theorem \ref{t:main2}, we can describe the structure of a finite group with a maximal Sylow $r$-subgroup. To do this, first note that a Sylow $r$-subgroup $R$ is maximal in $G$ if and only if $R/O_r(G)$ is maximal in $G/O_r(G)$, so we are free to assume that $O_r(G)=1$. In addition, if $R$ is maximal then it contains $\Phi(G)$, so $\Phi(G) \leqs O_r(G)$
and thus $O_r(G) = \Phi(G) = 1$ as in Theorem \ref{t:main2}.

\begin{corol}\label{c:max}
Let $G$ be a finite group and let $r$ be a prime divisor of $|G|$ with $O_r(G)=1$. Then a Sylow $r$-subgroup $R$ of $G$ is maximal if and only if one of the following holds:
\begin{itemize}\addtolength{\itemsep}{0.2\baselineskip}
\item[{\rm (i)}] $G = P{:}R$, where $P$ is an
elementary abelian $p$-group for some prime $p \ne r$ and $R$ acts faithfully and irreducibly on $P$.
\item[{\rm (ii)}] $r=2$ and $G = E(G)R$, where $E(G) = T^k$ is a minimal normal subgroup of $G$, $k \geqs 1$ and the almost simple group $N_G(T)/C_G(T)$ has a maximal Sylow $2$-subgroup. In particular, $N_G(T)/C_G(T)$ is one of the following:
\[
{\rm PGL}_2(7), \; {\rm PGL}_2(9), \; {\rm M}_{10}, \; {\rm L}_{2}(9).2^2, \; {\rm L}_2(q),\; {\rm PGL}_2(q),
\]
where $q >7$ is a Fermat or Mersenne prime.
\end{itemize}
\end{corol}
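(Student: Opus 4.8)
The plan is to deduce Corollary \ref{c:max} from Theorem \ref{t:main2} together with the classical facts on nilpotent maximal subgroups recalled just before the statement. The first step is to match the hypotheses. Note that a Sylow $r$-subgroup $R$ is maximal in $G$ precisely when $\mathcal{M}(R)=\{R\}$, i.e.\ when the unique maximal overgroup of $R$ is $R$ itself. If $R$ is maximal then $\Phi(G)\leqs R$, so $\Phi(G)$ is a normal $r$-subgroup of $G$ and hence $\Phi(G)\leqs O_r(G)=1$. Thus $O_r(G)=\Phi(G)=1$ and Theorem \ref{t:main2} applies with $H=R$; it remains to determine which of cases (i)--(iii) of that result occur when $H=R$, and to prove the converse.

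The forward direction splits along the three cases of Theorem \ref{t:main2}. Case (i) is exactly part (i) of the corollary, so nothing is needed. In cases (ii) and (iii) the group $G$ is insoluble, and since $R$ is not normal in $G$ (otherwise $R\leqs O_r(G)=1$), maximality gives $N_G(R)=R$; thus $R$ is a self-normalising nilpotent maximal subgroup of an insoluble group. By the structural result on nilpotent maximal subgroups recalled before the corollary, $R$ is then the normaliser of a Sylow $2$-subgroup $S$, and since $S\leqs N_G(S)=R$ with $R$ an $r$-group and $S\neq 1$, this forces $r=2$ and $R=S$. This is the mechanism that pins down $r=2$ in part (ii) of the corollary.

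It then remains to identify the almost simple group $A$. In case (ii) we have $G=A$ almost simple with $C_G(T)=1$, so $N_G(T)/C_G(T)=G$, and with $H=R$ we simply read off from Theorem \ref{t:prime2} the triples in which $H$ is the full Sylow $2$-subgroup; these are exactly the groups in part (ii) with $k=1$. In case (iii), $E(G)=L_1\times\cdots\times L_k$ with $k\geqs 2$ and the $L_i$ permuted transitively by $R$, so the components are pairwise isomorphic to a single simple group $T$, giving $E(G)=T^k$; moreover every $G$-invariant subgroup of $E(G)$ is the product of an $R$-orbit of components, so $E(G)$ is a minimal normal subgroup of $G$. To show that $A=N_G(T)/C_G(T)$ has a maximal Sylow $2$-subgroup, I would use the correspondence, valid since $E(G)\trianglelefteq G$ and $R\leqs K$, between overgroups $R\leqs K\leqs G$ and the $R$-invariant subgroups $K\cap E(G)$ of $E(G)$ containing $R\cap E(G)$ (here $K=(K\cap E(G))R$ by the modular law). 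Under this correspondence, and via the transitivity of $R$ on the components together with Theorem \ref{t:main2}(iii), maximality of $R$ in $G$ transfers to maximality of the Sylow $2$-subgroup $\overline{R}=N_R(T)/C_R(T)$ in $A$: if instead $\overline{R}<\overline{K}<A$, pulling $\overline{K}$ back along the component structure would produce an $R$-invariant subgroup $D$ with $R\cap E(G)<D<E(G)$, hence a subgroup $DR$ strictly between $R$ and $G$. So $A$ again lies in the list of part (ii).

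For the converse, part (i) is immediate, since the faithful irreducible action of $R$ on $P$ realises $R$ as a point stabiliser of a primitive action, hence a maximal subgroup. For part (ii) I would run the correspondence in reverse: given that $\overline{R}$ is maximal in $A$ for each group in the list (by Theorem \ref{t:prime2}) and that $R$ permutes the $k$ components transitively, any $R$-invariant subgroup of $E(G)=T^k$ strictly containing $R\cap E(G)$ must equal $E(G)$, so $R$ has no proper overgroup and is maximal. I expect the main obstacle to be precisely this transfer step in case (iii): making rigorous the equivalence between maximality of the Sylow $2$-subgroup in the single almost simple quotient $A$ and maximality of $R$ in the full product $G=E(G)R$. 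I plan to handle it by the modular-law reduction to $R$-invariant subgroups of $E(G)$ combined with the structural description already supplied by Theorem \ref{t:main2}(iii), rather than by direct computation.
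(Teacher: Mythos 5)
Your proposal is correct, and its skeleton coincides with the paper's proof: both directions run through Theorem \ref{t:main2}, and the transfer between maximality of $R$ in $G=E(G)R$ and maximality of the Sylow $2$-subgroup $\overline{R}=N_R(T)C_G(T)/C_G(T)$ of $A=N_G(T)/C_G(T)$ is carried out exactly as you plan, via Dedekind's modular law and the invariant-subgroup correspondence of Lemmas \ref{l:normal} and \ref{l:simpleprod}. The one genuinely different ingredient is how you force $r=2$: you apply the classical fact recalled before the corollary (a nilpotent maximal subgroup of an insoluble group is the normaliser of a Sylow $2$-subgroup, resting on Thompson's theorem) to the nilpotent maximal subgroup $R$ of the insoluble group $G$ in cases (ii) and (iii) of Theorem \ref{t:main2}. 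The paper instead observes that $\mathcal{M}(R)=\{R\}$ forces $R=N_G(R\cap T)$, so that the pair appears in Table \ref{tab:NGR}, and extracts both $r=2$ and the explicit list \eqref{e:J} from a single inspection of that table; your route decouples the two, obtaining $r=2$ structurally and then reading the list off Theorem \ref{t:prime2}, which is an equivalent inspection. This is a clean simplification of that step. Two points you defer — that $N_R(T)$ is a Sylow $2$-subgroup of $N_G(T)$ (needed so that $\overline{R}$ really is Sylow in $A$, in both directions of the transfer), and the strictness assertions $R\cap E(G)<D<E(G)$ in your pullback (which rest on the identity $T\cap N_R(T)C_G(T)=T\cap R$) — are precisely the modular-law computations that occupy most of the paper's written proof; your plan would still have to carry them out, but they go through exactly as you indicate, so there is no gap in the approach.
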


\vs

Let us briefly outline the structure of the paper. In Section \ref{s:prel} we present a number of preliminary results that will be needed in the proofs of our main theorems. In particular, we briefly discuss the subgroup structure of almost simple groups and we establish Corollary \ref{c:core}, which is a key tool in the almost simple setting. In Section \ref{s:red} we establish a strong form of Theorem \ref{t:main2}, which then allows us to focus on almost simple groups for the remainder of the paper. Theorems \ref{t:prime2} and \ref{t:main1} for groups with socle a sporadic or alternating group are established in Section \ref{s:t0}, and we complete the proof of Theorem \ref{t:prime2} in Section \ref{s:prime2}. We then focus on Theorem \ref{t:main1} in Sections \ref{s:t1} and \ref{s:t2}, treating the classical groups and exceptional groups in turn. Finally, we provide proofs of Corollaries \ref{c:NGR}-\ref{c:OrH} and \ref{c:max} in Section \ref{s:corols} and we finish by presenting the main tables for Theorems \ref{t:prime2} and \ref{t:main1} (and also Corollaries \ref{c:NGR}, \ref{c:Or} and \ref{c:OrH}) in Section \ref{s:tables}.

\vs

\noindent \textbf{Notation.} Our notation is standard. Let $G$ be a finite group and let $n$ be a positive integer. We will write $C_n$, or just $n$, for a cyclic group of order $n$ and $G^n$ will denote the direct product of $n$ copies of $G$. An unspecified extension of $G$ by a group $H$ will be denoted by $G.H$; if the extension splits then we may write $G{:}H$. We use $[n]$ for an unspecified soluble group of order $n$. We adopt the standard notation for simple groups of Lie type from \cite{KL}.

\vs

\noindent \textbf{Acknowledgements.} We thank David Craven for helpful discussions on the maximal subgroups of exceptional groups of Lie type and for providing us with some details from his forthcoming paper \cite{Craven4}. Guralnick was partially supported by the NSF grant DMS-1901595 and a Simons Foundation Fellowship 609771.  We also thank the referee for their careful reading of the paper
and helpful comments.  

\section{Preliminaries}\label{s:prel}

In this section we record some preliminary results that will be needed in the proof of Theorems \ref{t:prime2} and \ref{t:main1}.

\subsection{Prime divisors}\label{ss:pd}

Let $q$ be a prime power and let $d \geqs 2$ be a positive integer. Recall that a prime divisor $r$ of $q^d-1$ is a \emph{primitive prime divisor} if $q^e-1$ is indivisible by $r$ for all $1 \leqs e < d$. By a classical theorem of Zsigmondy \cite{Zsig}, such a prime $r$ exists unless $(d,q) = (6,2)$, or if $d = 2$ and $q$ is a Mersenne prime. Note that $r$ divides $q^{r-1}-1$ by Fermat's Little Theorem, so $r \equiv 1 \imod{d}$. Also note that $d$ is the order of $q$ modulo $r$, which we will sometimes denote by writing $d = d_q(r)$.

The following basic result will be a useful tool in our proof of Theorems \ref{t:prime2} and \ref{t:main1} for groups of Lie type. Here, and throughout the paper, we write $(m)_r$ for the largest power of $r$ dividing the positive integer $m$.  

\begin{lem}\label{l:gk}
Let $q$ be a prime power, let $d$ be a positive integer and let $r$ be a prime divisor of $q-\e$, where $\e=\pm$.
\begin{itemize}\addtolength{\itemsep}{0.2\baselineskip}
\item[{\rm (i)}] If $r=2$, then
\[
(q^d-\e)_r = \left\{\begin{array}{ll}
(q-\e)_r & \mbox{if $d$ is odd} \\
(q^2-1)_r(d/2)_r & \mbox{if $d$ is even and $\e=+$} \\
2 & \mbox{if $d$ is even and $\e=-$}
\end{array}\right.
\]
\item[{\rm (ii)}] If $r$ is odd, then
\begin{align*}
(q^d-\e)_r & = \left\{ \begin{array}{ll}
1 & \mbox{if $d$ is even and $\e=-$} \\
(q-\e)_r(d)_r & \mbox{otherwise}
\end{array}\right. \\
(q^d+\e)_r & = \left\{ \begin{array}{ll}
(q-\e)_r(d)_r & \mbox{if $d$ is even and $\e=-$} \\
1 & \mbox{otherwise.}
\end{array}\right.
\end{align*}
\end{itemize}
\end{lem}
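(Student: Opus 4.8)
My plan is to reduce the entire statement to the Lifting the Exponent Lemma (LTE) together with a couple of elementary congruences; the valuation identities then fall out after a short case analysis on the parity of $d$. Throughout I write $v_r(m)$ for the $r$-adic valuation of $m$, so that $(m)_r = r^{v_r(m)}$, and I record the two forms of LTE I will use: for an odd prime $r$ with $r \mid x-y$ and $r \nmid xy$ one has $v_r(x^n - y^n) = v_r(x-y) + v_r(n)$ for all $n \geqs 1$; and for $r=2$ with $x,y$ odd and $n$ even one has $v_2(x^n - y^n) = v_2(x-y) + v_2(x+y) + v_2(n) - 1$. A preliminary observation is that if $r=2$ divides $q-\e$, then $q$ must be odd (otherwise $q-\e$ is odd), so $q$ is odd throughout part~(i).

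For part~(ii) I would treat the two combinations $q^d - \e^d$ and $q^d + \e^d$ uniformly, and then match them to $q^d \mp \e$ using the parity of $d$. Since $r \mid q - \e$ and $r \nmid q\e$, LTE gives $v_r(q^d - \e^d) = v_r(q-\e) + v_r(d)$, that is, $(q^d - \e^d)_r = (q-\e)_r(d)_r$. On the other hand, reducing modulo $r$ gives $q^d + \e^d \equiv 2\e^d \not\equiv 0 \mod{r}$ (as $r$ is odd), so $(q^d + \e^d)_r = 1$. Now $\e^d = \e$ when $d$ is odd and $\e^d = 1$ when $d$ is even, so for $\e = +$ (any $d$) and for $\e = -$ with $d$ odd one has $q^d - \e^d = q^d - \e$ and $q^d + \e^d = q^d + \e$, whereas for $\e = -$ with $d$ even the two expressions swap. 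Substituting these identifications into the two valuation formulas above yields exactly the four cases listed in~(ii).

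For part~(i) the parity of $d$ again organises the argument, but now I invoke the $r=2$ form of LTE. When $d$ is odd, the factorisation $q^d - \e = (q-\e)\sum_{i=0}^{d-1} q^{i}\e^{d-1-i}$ has a cofactor that is a sum of $d$ odd terms, hence odd, giving $(q^d-\e)_2 = (q-\e)_2$. When $d$ is even and $\e = +$, the $2$-adic LTE with $(x,y,n) = (q,1,d)$ gives $v_2(q^d-1) = v_2(q-1)+v_2(q+1)+v_2(d)-1 = v_2(q^2-1)+v_2(d/2)$, i.e.\ $(q^d-1)_2 = (q^2-1)_2(d/2)_2$. Finally, when $d$ is even and $\e = -$, writing $q^d = (q^{d/2})^2$ and using that an odd square is $\equiv 1 \mod{8}$ gives $q^d + 1 \equiv 2 \mod{8}$, so $(q^d+1)_2 = 2$.

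I do not expect a genuine obstacle here: the lemma is a routine consequence of LTE, and the only points requiring care are purely bookkeeping --- correctly pairing $q^d \mp \e$ with $q^d \mp \e^d$ according to the parity of $d$ in part~(ii), and keeping track of the extra $v_2(x+y)$ term (which produces the $(q^2-1)_2$ factor) in the $r=2$ case. If one prefers to avoid citing LTE, each identity can instead be obtained from the elementary factorisations of $q^d \mp 1$ by an induction on $v_2(d)$ or $v_r(d)$, at the cost of a slightly longer computation.
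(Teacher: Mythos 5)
Your proof is correct, but it takes a genuinely different route from the paper. The paper does not prove the $(q^d-\e)_r$ formulas at all: it simply cites \cite[Lemma A.4]{BG} for them (this covers all of part (i) and the first display in part (ii)), and then devotes its entire argument to the remaining formulas for $(q^d+\e)_r$ with $r$ odd. There it argues by elementary congruences: for $\e=+$ a gcd contradiction (if $r$ divided both $q^d+1$ and $q-1$, it would divide $(q+1,q-1)=(2,q-1)$); for $\e=-$ with $d$ odd a reduction to the quoted formula via $(q^{d-1}+1)_r=1$; and for $\e=-$ with $d$ even an induction on the $2$-part of $d$, using the factorisation $q^d-1=(q^{d/2}-1)(q^{d/2}+1)$. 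You instead derive everything in a self-contained way from the Lifting the Exponent Lemma: the odd-prime form plus the reduction $q^d+\e^d\equiv 2\e^d \imod{r}$ gives both displays of (ii) at once, with the $\e^d$ versus $\e$ parity bookkeeping correctly distributing the four cases, and the $2$-adic form together with two small facts (a sum of an odd number of odd terms is odd; an odd square is $\equiv 1 \imod{8}$) disposes of part (i). What your route buys is uniformity and independence from the external reference; what the paper's route buys is brevity, since the bulk of the statement is delegated to \cite{BG}. The two are morally close: the paper's induction for $\e=-$, $d$ even is precisely the computation one performs in proving the relevant instance of LTE, so your argument can be seen as inlining the content of the cited lemma, exactly as you anticipate in your closing remark.
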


\begin{proof}
The result for $(q^d-\e)_r$ is \cite[Lemma A.4]{BG}, so let us consider $(q^d+\e)_r$ with $r$ odd. 

If $r$ divides $q^d+1$ and $q-1$, then $r$ divides $q^{d-1}+1$ and we deduce that $r$ divides $(q+1,q-1) = (2,q-1)$, which is a contradiction. Therefore, $(q^d+\e)_r = 1$ if $\e=+$. Finally, suppose $\e=-$. If $d$ is odd and $r$ divides $q^d-1$, then $r$ divides $q^{d-1}+1$, but this is incompatible with the fact that $(q^{d-1}+1)_r = 1$ by the first part of the lemma. Now assume $d$ is even, say $d = 2^ab$ with $a \geqs 1$ and $b$ odd. If $a=1$ then
\[
(q^d-1)_r = (q^b-1)_r(q^b+1)_r = (q^b+1)_r = (q+1)_r(b)_r = (q+1)_r(d)_r  
\]
as required, and by induction we deduce that 
\[
(q^d-1)_r = (q^{2^{a-1}b}-1)_r(q^{2^{a-1}b}+1)_r = (q^{2^{a-1}b}-1)_r = (q+1)_r(d)_r 
\]
for $a \geqs 2$. The result follows.
\end{proof}

\subsection{Almost simple groups}\label{ss:as}

Next we present some initial observations in the special case where $G$ is an almost simple group with socle $T$. Let $R$ be a Sylow $r$-subgroup of $G$ and set $R_0 = R \cap T$, where $r$ is a prime divisor of $|G|$. 

First observe that $R_0$ may be trivial. For example, $R_0 = 1$ when $G = {}^2B_2(8){:}3$ and $r=3$ (recall that the order of a Suzuki group is indivisible by $3$), and if $G = {\rm L}_2(32){:}5$ and $r=5$. However, it is easy to show that $|\mathcal{M}(R)| \geqs 2$ in this situation. 

\begin{lem}\label{l:trivial}
Let $G$ be an almost simple group with socle $T$ and let $R$ be a Sylow $r$-subgroup of $G$ such that $R_0 = 1$. Then $|\mathcal{M}(R)| \geqs 2$.
\end{lem}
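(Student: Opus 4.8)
The plan is to show that the hypothesis $R_0 = 1$ is extremely restrictive: it forces $T$ to be of Lie type and $R$ to be generated (up to conjugacy) by a field automorphism, after which two visibly different maximal overgroups of $R$ can be written down.

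First I would record that $R_0 = R \cap T$ is a Sylow $r$-subgroup of $T$, since $T \normeq G$; hence $R_0 = 1$ is equivalent to $r \nmid |T|$. As $R \ne 1$ we have $r \mid |G:T|$, so $r \mid |\Out(T)|$ while $r \nmid |T|$. The next step is to argue that this can only occur when $T$ is a group of Lie type and $r$ divides the order of the field automorphism group. Indeed, if $T$ is alternating or sporadic then $|\Out(T)|$ is divisible only by the prime $2$, which divides $|T|$; and for $T$ of Lie type over $\FF_q$ with $q = p^a$, every prime dividing the diagonal part $|\Inndiag(T):T|$ or the graph part of $\Out(T)$ also divides $|T|$ (for instance a prime dividing $\gcd(n,q-\e)$ divides $q-\e$ and hence $|T|$). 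Therefore $r \mid a$, and a Sylow $r$-subgroup of $G$ is, up to conjugacy, generated by a field automorphism $\phi$ of $r$-power order; in particular $R = \la \phi \ra$ and $C_T(\phi)$ is a subfield subgroup of $T$, namely a group of the same type over $\FF_{p^{a/r}}$.

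With $\phi$ taken to be a standard field automorphism, I would now exhibit two distinct maximal subgroups of $G$ containing $R$. On one hand, $\phi$ normalises a Borel subgroup $B$ of $T$, so $R \leqs N_G(B)$; since every proper overgroup of a Borel subgroup is parabolic (Borel--Tits), $R$ lies in a parabolic maximal subgroup $P$ of $G$. On the other hand, $\phi$ normalises the subfield subgroup $T_\phi = C_T(\phi)$, so $R$ lies in a maximal subgroup $M$ containing $T_\phi$. Because $T_\phi$ is a subfield subgroup it is not contained in any parabolic subgroup of $T$ (for $T$ classical this is because $T_\phi$ acts irreducibly on the natural module, and for $T$ exceptional subfield subgroups form a distinct type in the Liebeck--Seitz classification), so $M$ is not parabolic and thus $M \ne P$. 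Hence $\{P,M\} \subseteq \mathcal{M}(R)$ and $|\mathcal{M}(R)| \geqs 2$, as claimed.

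The main obstacle is the uniformity of the reduction step: one must check across all families of simple groups --- especially the twisted and low-rank groups of Lie type, where $\Out(T)$ and the geometry of subfield subgroups require care --- that $r \nmid |T|$ genuinely forces the field-automorphism conclusion, and that the two exhibited overgroups are always proper, maximal and distinct. The point that makes the final distinctness argument robust is the non-parabolic nature of the subfield subgroup $T_\phi$, which rules out $M = P$ uniformly rather than through a case-by-case comparison of subgroup orders.
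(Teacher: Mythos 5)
Your proposal is a genuinely different route from the paper's, and a vastly heavier one. The paper's proof is three lines and uses no classification at all: assuming $\mathcal{M}(R)=\{H\}$, for each prime $p_i$ dividing $|T|$ let $P_i$ be a Sylow $p_i$-subgroup of $T$; the Frattini argument gives $G=N_G(P_i)T$, and since $|T|$ is coprime to $r$ one may conjugate $P_i$ so that $R \leqs N_G(P_i) < G$. Then $H$ contains $N_G(P_i)$ for every $i$, hence contains a Sylow $p_i$-subgroup of $T$ for every prime divisor $p_i$ of $|T|$, so $T \leqs H$, and then $G = N_G(P_i)T \leqs H$, a contradiction. In particular, your entire reduction step (structure of ${\rm Out}(T)$, oddness of $r$, Schur--Zassenhaus conjugation of $R$ to $\la \phi \ra$) -- which is correct but CFSG-dependent -- is unnecessary for this lemma.

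Beyond the inefficiency, there is a genuine gap at your key distinctness step. You need that $T_\phi = C_T(\phi)$ lies in no proper parabolic subgroup of $T$ (so that $T_\phi \not\leqs P \cap T$). For classical $T$ your irreducibility argument is fine, but for exceptional $T$ the justification ``subfield subgroups form a distinct type in the Liebeck--Seitz classification'' is not a proof: that classification describes \emph{maximal} subgroups, whereas your $\phi$ has order $r^b$ with $b$ possibly at least $2$, so $T_\phi$ corresponds to a field extension of non-prime degree and need not be maximal at all; and even for a maximal subfield subgroup, the fact that it is listed in a class disjoint from the parabolics says nothing about whether a given (possibly non-maximal) subfield subgroup can in addition be contained in some parabolic. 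The statement you need is true, but it requires an actual argument, for instance: take a nontrivial $p$-restricted irreducible module $V$ for the ambient algebraic group; by Steinberg's restriction theorem $V$ remains irreducible for every subfield subgroup, whereas a proper parabolic $P_J = U_J L_J$ always acts reducibly on $V$, since $C_V(U_J)$ is a nonzero proper $P_J$-invariant subspace. (This route also covers the degenerate cases where $T_\phi$ is not simple, e.g. ${}^2B_2(2)$, ${\rm U}_3(2)$, ${\rm L}_2(2)$, ${\rm L}_2(3)$, which would break any argument projecting $T_\phi$ into a Levi factor via simplicity.) A second, smaller point in the same step: Borel--Tits gives parabolicity of overgroups of $B$ \emph{inside} $T$; to conclude that a maximal overgroup $P$ of $N_G(B)$ in $G$ meets $T$ in a \emph{proper} parabolic you must also rule out $T \leqs P$, which follows from the Frattini equality $N_G(B)T = G$ -- and this is not vacuous here, because the lemma does not assume $G/T$ is an $r$-group, so $TR$ may well be a proper subgroup of $G$.
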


\begin{proof}
Seeking a contradiction, suppose $\mathcal{M}(R) = \{H\}$. Let $p_1, \ldots, p_k$ be the distinct prime divisors of $|T|$ and let $P_i$ be a Sylow $p_i$-subgroup of $T$. By the Frattini argument, we have $G = N_G(P_i)T$ and since $T$ is an $r'$-group we may assume that $R \leqs N_G(P_i) < G$. This implies that $H$ contains $N_G(P_i)$ and we deduce that $H = G$, which is a contradiction. We conclude that $|\mathcal{M}(R)| \geqs 2$ as required.
\end{proof}

Next we show that if $R_0 \ne 1$, then $\mathcal{M}(R)$ contains a core-free subgroup $H$ with the additional property that no other conjugate of $H$ contains $R$. 

\begin{lem}\label{l:sylow}
Let $G$ be an almost simple group with socle $T$ and let $R$ be a Sylow $r$-subgroup of $G$ with $R_0 \ne 1$. Then there exists a core-free subgroup $H \in \mathcal{M}(R)$ with $N_G(R) \leqs H$.
\end{lem}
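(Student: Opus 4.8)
The plan is to produce the desired subgroup as a maximal overgroup of the normaliser $N_G(R_0)$, with the whole argument hinging on a Frattini argument. First I would record the elementary containments. Since $T \normeq G$, the intersection $R_0 = R \cap T$ is normal in $R$, so $R \leq N_G(R_0)$; and since any $g \in N_G(R)$ normalises both $R$ and $T$, it normalises $R \cap T = R_0$, giving $N_G(R) \leq N_G(R_0)$. Thus $N_G(R_0)$ already contains everything we want $H$ to contain, and it only remains to pass to a maximal overgroup $H \supseteq N_G(R_0)$ while guaranteeing that $H$ is core-free.

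The key point is that $R_0 = R \cap T$ is a Sylow $r$-subgroup of the normal subgroup $T$, and it is nontrivial by hypothesis. The Frattini argument then yields $G = T\,N_G(R_0)$. I would first note $N_G(R_0) \neq G$: otherwise $R_0 \normeq G$ would force $R_0 \normeq T$, which is impossible since $T$ is nonabelian simple and $1 \neq R_0 \neq T$. Now let $H$ be any maximal subgroup of $G$ with $N_G(R_0) \leq H < G$. Then $R \leq H$, so $H \in \mathcal{M}(R)$, and $N_G(R) \leq H$, by the containments above.

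Finally I would verify that $H$ is core-free. Suppose instead $T \leq H$; combined with $N_G(R_0) \leq H$ and the Frattini identity this gives $G = T\,N_G(R_0) \leq H$, contradicting $H < G$. Hence $T \not\leq H$. Since $G$ is almost simple, $T$ is its unique minimal normal subgroup, so every nontrivial normal subgroup of $G$ contains $T$; as $\operatorname{core}_G(H)$ does not contain $T$, it must be trivial, that is, $H$ is core-free, as required.

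The only real subtlety --- which I would flag as the crux rather than a genuine obstacle --- is the Frattini identity $G = T\,N_G(R_0)$: it is precisely this that prevents $N_G(R_0)$, and hence any maximal subgroup containing it, from lying inside an overgroup of the socle, thereby converting ``maximal overgroup of $N_G(R_0)$'' into ``core-free maximal subgroup''. Everything else is routine normaliser bookkeeping, and the hypothesis $R_0 \neq 1$ enters exactly to make $R_0$ a nontrivial Sylow subgroup of $T$ so that Frattini applies.
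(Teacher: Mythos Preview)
Your proof is correct and follows essentially the same approach as the paper's: use the Frattini argument to get $G = T\,N_G(R_0)$, take $H$ to be any maximal subgroup containing $N_G(R_0)$, and observe that $H$ is core-free because $T \not\leqs H$. Your write-up is in fact slightly more detailed in justifying $N_G(R_0) \neq G$ and in spelling out why $T \not\leqs H$ forces $\operatorname{core}_G(H)=1$, but the underlying argument is identical.
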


\begin{proof}
First note that $R_0$ is a Sylow $r$-subgroup of $T$, so Frattini's argument yields $G = TN_G(R_0)$ and thus $G/T \cong N_G(R_0)/N_T(R_0)$. In particular, $R \leqs N_G(R_0) <G$ and thus  $R \leqs N_G(R_0) \leqs H$ for some maximal subgroup $H$ of $G$. Notice  that $H$ is core-free since it does not contain $T$. In addition, if $g \in N_G(R)$ then 
\[
R_0^g = (R \cap T)^g = R^g \cap T^g = R \cap T = R_0
\]
and we deduce that $R \leqs N_G(R) \leqs N_G(R_0) \leqs H$. The result follows.
\end{proof}

This has the following corollary, which is at the core of our proofs of Theorems \ref{t:prime2} and \ref{t:main1}. 

\begin{cor}\label{c:core}
Let $G$ be an almost simple group with socle $T$ and let $R$ be a Sylow $r$-subgroup of $G$ with $R_0 \ne 1$. Then $|\mathcal{M}(R)| = 1$ if and only if $G$ has a unique conjugacy class of maximal subgroups with $r'$-index.
\end{cor}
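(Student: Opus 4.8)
The plan is to reformulate the condition ``$r'$-index'' group-theoretically: a maximal subgroup $M$ of $G$ has $r'$-index if and only if $M$ contains a Sylow $r$-subgroup of $G$, since $[G:M]$ is coprime to $r$ exactly when $(|M|)_r = (|G|)_r = |R|$. In particular, every member of $\mathcal{M}(R)$ is a maximal subgroup of $r'$-index, and conversely any maximal subgroup of $r'$-index contains some $G$-conjugate of $R$. This dictionary is what links the two sides of the equivalence, and throughout I would use Lemma \ref{l:sylow} to fix a core-free $H \in \mathcal{M}(R)$ with $N_G(R) \leqs H$; here the hypothesis $R_0 \ne 1$ is exactly what is needed to invoke that lemma.

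For the forward implication, suppose $\mathcal{M}(R) = \{H\}$. Given any maximal subgroup $K$ of $r'$-index, it contains a conjugate $R^x$ of $R$, so $R \leqs K^{x^{-1}}$ and hence $K^{x^{-1}} \in \mathcal{M}(R) = \{H\}$. Thus $K$ is conjugate to $H$, so every $r'$-index maximal subgroup lies in the single conjugacy class of $H$, giving uniqueness. Existence of such a class is immediate since $H \in \mathcal{M}(R)$ itself has $r'$-index.

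For the reverse implication, assume $G$ has a unique conjugacy class of $r'$-index maximal subgroups, which is then necessarily the class of the subgroup $H$ furnished by Lemma \ref{l:sylow}. Let $K \in \mathcal{M}(R)$ be arbitrary; then $K$ has $r'$-index, so by hypothesis $K = H^g$ for some $g \in G$. The crux is to upgrade this conjugacy to an equality. Both $R$ and $R^{g^{-1}}$ are Sylow $r$-subgroups of $H$ (as $(|H|)_r = |R|$ and both are contained in $H$), so Sylow's theorem applied inside $H$ yields $h \in H$ with $R^{g^{-1}h} = R$, that is, $g^{-1}h \in N_G(R)$. Since $N_G(R) \leqs H$, it follows that $g^{-1} = (g^{-1}h)h^{-1} \in H$, whence $K = H^g = H$. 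Therefore $\mathcal{M}(R) = \{H\}$.

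The only genuinely delicate point is this last step of the reverse direction: knowing that the members of $\mathcal{M}(R)$ all lie in a single conjugacy class does not by itself force $|\mathcal{M}(R)| = 1$, and it is precisely the additional containment $N_G(R) \leqs H$ supplied by Lemma \ref{l:sylow} that makes distinct conjugates of $H$ containing $R$ collapse to a single subgroup. Everything else is routine Sylow theory, so I expect no further obstacles.
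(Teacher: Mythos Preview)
Your proof is correct and follows essentially the same approach as the paper: both directions rely on the dictionary between $r'$-index and containing a Sylow $r$-subgroup, and the reverse implication hinges on the containment $N_G(R) \leqs H$ from Lemma~\ref{l:sylow} to collapse conjugates of $H$ containing $R$ down to $H$ itself. Your Sylow-inside-$H$ step spells out explicitly what the paper compresses into the single phrase ``$R$ is contained in a unique conjugate of $H$,'' but the mathematical content is identical.
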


\begin{proof}
Let $H$ be a maximal subgroup of $G$ with $r'$-index and suppose $G$ has a unique conjugacy class of such subgroups. Without loss of generality, we may assume $R$ is contained in $H$. Then Lemma \ref{l:sylow} implies that $H$ is core-free and $N_G(R) \leqs H$, so $R$ is contained in a unique conjugate of $H$ and we conclude that $|\mathcal{M}(R)| = 1$. For the converse, suppose $H$ and $K$ represent distinct conjugacy classes of maximal subgroups with $r'$-index and let $R \leqs H$ and $S \leqs K$ be Sylow $r$-subgroups of $G$. Then $R$ and $S$ are $G$-conjugate, say $S = R^g$, and thus $R$ is contained in both $H$ and $ K^{g^{-1}}$. Therefore $|\mathcal{M}(R)| \geqs 2$ as required.
\end{proof}

The following corollary is another consequence.

\begin{cor}\label{c:easy}
Let $G$ be an almost simple group with socle $T$ and let $R$ be a Sylow $r$-subgroup of $G$ with $R_0 \ne 1$. Then $|\mathcal{M}(R)| = 1$ only if $G/T$ is an $r$-group. 
\end{cor}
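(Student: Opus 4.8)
The plan is to prove the contrapositive: assuming that $G/T$ is \emph{not} an $r$-group, I will exhibit two distinct maximal subgroups of $G$ containing $R$, so that $|\mathcal{M}(R)| \geq 2$. Throughout I use that $T \normeq G$ is the socle, so in particular $T \ne 1$.

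First I would record the structural fact, already noted in the proof of Lemma~\ref{l:sylow}, that $R_0 = R \cap T$ is a Sylow $r$-subgroup of $T$; passing to the quotient, the image $\bar R = RT/T$ is then a Sylow $r$-subgroup of $G/T$. Since $G/T$ is assumed not to be an $r$-group, its order is divisible by some prime $s \ne r$, whence $\bar R$ is a \emph{proper} subgroup of $G/T$ and so lies in some maximal subgroup $\bar M$ of $G/T$. By the correspondence theorem, the preimage $M$ of $\bar M$ is a maximal subgroup of $G$ with $T \leq M$ and $R \leq M$; in particular $M \in \mathcal{M}(R)$. (Note that $|G:M| = |G/T : \bar M|$ is coprime to $r$, since $\bar M$ contains the Sylow $r$-subgroup $\bar R$, so $M$ has $r'$-index, consistent with Corollary~\ref{c:core}.) For the second subgroup I would invoke Lemma~\ref{l:sylow}, which applies precisely because $R_0 \ne 1$: it supplies a \emph{core-free} maximal subgroup $H \in \mathcal{M}(R)$.

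It remains to separate $M$ from $H$, and this is the only genuinely delicate point of the argument. Because $H$ is core-free it cannot contain the nontrivial normal subgroup $T$ (otherwise $T$ would lie in the core of $H$), whereas $T \leq M$ by construction. Hence $M \ne H$, and since both belong to $\mathcal{M}(R)$ we obtain $|\mathcal{M}(R)| \geq 2$, completing the contrapositive and hence the corollary. The socle-versus-core-free dichotomy is exactly what forces the two maximal subgroups to be distinct, which is why the hypothesis $R_0 \ne 1$ (needed to apply Lemma~\ref{l:sylow}) is essential. One could alternatively phrase the conclusion through Corollary~\ref{c:core}, noting that $M$ and $H$ represent distinct conjugacy classes of maximal subgroups of $r'$-index, but the direct production of two elements of $\mathcal{M}(R)$ is cleaner and avoids any appeal to conjugacy.
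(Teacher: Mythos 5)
Your proposal is correct and follows essentially the same route as the paper: the paper also lifts a maximal subgroup of $G/T$ containing the Sylow $r$-subgroup $RT/T$ to get a member of $\mathcal{M}(R)$ containing $T$, then invokes Lemma \ref{l:sylow} to produce a core-free member of $\mathcal{M}(R)$, and concludes these are distinct. Your write-up merely makes explicit the details (the correspondence theorem, the $r'$-index remark, and the core-free versus $T \leq M$ separation) that the paper leaves implicit.
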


\begin{proof}
Suppose $G/T$ is not an $r$-group. Then a Sylow $r$-subgroup of $G/T$ is contained in a maximal subgroup $H/T$ of $G/T$ and thus $H \in \mathcal{M}(R)$. By applying Lemma \ref{l:sylow} we conclude that $|\mathcal{M}(R)| \geqs 2$ and the result follows.
\end{proof}

\subsection{Subgroup structure}\label{ss:ss}

In order to prove Theorems \ref{t:prime2} and \ref{t:main1}, we need detailed information on the maximal subgroups of almost simple groups. Here we provide a very brief overview of some of the main results we will use. As before, let 
$G$ be an almost simple group with socle $T$. 

If $T$ is a sporadic simple group, then the maximal subgroups of $G$ have been determined up to conjugacy, with the exception of the Monster group 
$\mathbb{M}$. We refer the reader to Wilson's survey article \cite{Wil} for further details, noting that $\mathbb{M}$ has $44$ known conjugacy classes of maximal subgroups and any additional maximal subgroup must be almost simple with socle ${\rm L}_{2}(8)$,  ${\rm L}_{2}(13)$, ${\rm L}_{2}(16)$ or ${\rm U}_3(4)$. In a recent preprint \cite{DLP}, Dietrich et al. have used a novel computational approach to resolve this ambiguity: $\mathbb{M}$ has unique conjugacy classes of maximal subgroups isomorphic to ${\rm PGL}_{2}(13)$ or ${\rm Aut}({\rm U}_3(4))$, and none with socle ${\rm L}_2(8)$ or ${\rm L}_{2}(16)$. 

If $T$ is an alternating group, then the maximal subgroups of $G$ are described by the Aschbacher-O'Nan-Scott Theorem and our main reference is \cite{LPS}.

Next assume $T$ is an exceptional group of Lie type over $\mathbb{F}_q$, where $q = p^f$ and $p$ is a prime. Here the maximal subgroups of $G$ fall naturally into several collections (see \cite[Theorem 8]{LS03}) such as the parabolic subgroups, maximal rank subgroups (determined in \cite{LSS}), subfield subgroups and the so-called exotic local subgroups (see \cite{CLSS}). By combining  recent work of Craven \cite{Craven,Craven3} with earlier results by Borovik, Liebeck, Seitz and others, the maximal subgroups in each case have been determined up to conjugacy, with the exception of a collection of almost simple maximal subgroups, denoted $\mathcal{S}$ in this paper, that arises when $T = E_7(q)$ or $E_8(q)$. For $T = E_7(q)$, strong restrictions on the latter subgroups are obtained by Craven in \cite{Craven2}, with additional results for $T = E_8(q)$ forthcoming in \cite{Craven4}. These partial results will be sufficient for our purposes.

Finally, let us assume $T$ is a classical group over $\mathbb{F}_q$ with natural module $V$ of dimension $n$. Here the main theorem on the subgroup structure of classical groups is due to Aschbacher. In \cite{asch}, eight collections of subgroups of $G$ are defined, labelled $\mathcal{C}_i$ for $1 \leqs i \leqs 8$, and it is shown that every core-free maximal subgroup $H$ of $G$ is either  contained in one of these subgroup collections, or $H$ belongs to a family of almost simple subgroups which act irreducibly on $V$ (we will write $\mathcal{S}$ to denote the latter collection, which is described in more detail on p.3 of \cite{KL}). We refer the reader to Table \ref{tab0} for a rough description of the $\mathcal{C}_i$ collections, which comprise the \emph{geometric} maximal subgroups of $G$. A detailed analysis of these subgroups is given by Kleidman and Liebeck \cite{KL}, and throughout this paper we adopt the notation therein. In particular, we will refer repeatedly to the \emph{type} of a $\mathcal{C}_i$-subgroup $H$ (see \cite[p.58]{KL}), which provides an approximate description of the structure of $H$. For example, if $T = {\rm L}_n(q)$ and $H \in \mathcal{C}_2$ is the stabiliser in $G$ of a direct sum decomposition $V = V_1 \oplus V_2$ with $\dim V_i = m$, then we say that $H$ is a subgroup of type ${\rm GL}_m(q) \wr S_2$. The maximal subgroups of the low-dimensional classical groups with $n \leqs 12$ are determined up to conjugacy in \cite{BHR} and we will make extensive use of these results.

\begin{table}
\[
\begin{array}{ll} \hline
\C_1 & \mbox{Stabilisers of subspaces, or pairs of subspaces, of $V$} \\
\C_2 & \mbox{Stabilisers of direct sum decompositions $V=\bigoplus_{i=1}^{t}V_i$, where $\dim V_i  = m$} \\
\C_3 & \mbox{Stabilisers of prime index extension fields of $\mathbb{F}_q$} \\
\C_4 & \mbox{Stabilisers of tensor product decompositions $V=V_1 \otimes V_2$} \\
\C_5 & \mbox{Stabilisers of prime index subfields of $\mathbb{F}_q$} \\
\C_6 & \mbox{Normalisers of symplectic-type $r$-groups, $r \ne p$} \\
\C_7 & \mbox{Stabilisers of tensor product decompositions $V=\bigotimes_{i=1}^{t}V_i$, where $\dim V_i  = m$} \\
\C_8 & \mbox{Stabilisers of nondegenerate forms on $V$} \\ \hline
\end{array}
\]
\caption{The geometric subgroup collections of a classical group}
\label{tab0}
\end{table}

In order to prove Theorems \ref{t:prime2} and \ref{t:main1} for classical groups, we will divide the analysis into two cases. First we work closely with \cite{BHR,KL} and our initial goal is to identify two non-conjugate geometric maximal subgroups of $G$ with $r'$-index. Of course, this is not going to be possible in all cases and we will sometimes need to determine whether or not $\mathcal{M}(R)$ contains a subgroup in $\mathcal{S}$. If $n \leqs 12$ then we can do this by inspecting the relevant tables in \cite{BHR}. But a different approach is required for $n>12$. In this situation, the relevant prime $r$ will typically be a primitive prime divisor of $q^i-1$ and either $i \in \{1,2\}$ is small, or $i \in \{n,2n-2,2n\}$ is large.

If $i \in \{1,2\}$ then $T$ (and hence $R$) will contain an element $x$ of order $r$ with $\nu(x) \leqs 2$, where 
\begin{equation}\label{e:nu}
\nu(x) = \min\{\dim [\bar{V},\lambda \hat{x}]\,:\, \l \in \bar{\mathbb{F}}_q^{\times}\}
\end{equation}
is the codimension of the largest eigenspace of a lift $\hat{x} \in {\rm GL}(\bar{V})$ of $x$, where $\bar{V} = V \otimes \bar{\mathbb{F}}_q$ and $\bar{\mathbb{F}}_q$ is the algebraic closure of $\mathbb{F}_q$. We can then appeal to a theorem of Guralnick and Saxl \cite[Theorem 7.1]{GS} to severely restrict the  subgroups in $\mathcal{S}$ that contain such an element. Similarly, if $i \in \{n,2n-2,2n\}$ is large then we can work with the main theorem of \cite{GPPS} to study the relevant subgroups in $\mathcal{S}$. 

\section{The reduction theorem}\label{s:red}

In this section we prove Theorem \ref{t:main2}, which is our main result on the maximal overgroups of a Sylow subgroup of an arbitrary finite group. In particular, this result reduces the general problem to almost simple groups, which will be our main focus for the remainder of the paper.

First we recall some standard notation. For a finite group $G$, we write $\Phi(G)$ for the Frattini subgroup (the intersection of all the maximal subgroups of $G$) and $O_r(G)$ for the $r$-core of $G$, which is the largest normal $r$-subgroup of $G$. In addition, $F^*(G) = F(G)E(G)$ is the generalised Fitting subgroup of $G$, where $F(G)$ is the Fitting subgroup (the largest nilpotent normal subgroup of $G$) and $E(G)$ is the layer of $G$ (the product of the components of $G$, which coincide with the subnormal quasisimple subgroups of $G$).

We begin by presenting some general results. Our first lemma is due to Gasch\"utz \cite{Gas} (see \cite[Lemma 2.1(vii)]{AB}).

\begin{lem}\label{l:gas}
Let $G$ be a finite group and let $A$ be an abelian normal subgroup of $G$ with $A \cap \Phi(G) = 1$. Then there exists subgroup $B$ of $G$ such that $G = AB$ and $A \cap B = 1$. 
\end{lem}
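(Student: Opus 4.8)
The plan is to construct the complement as a \emph{minimal} supplement to $A$ and then exploit minimality to force its intersection with $A$ to be trivial. Since $G$ is finite and $G=AG$, I can choose a subgroup $B\leqs G$ that is minimal subject to $G=AB$. It then suffices to show $A\cap B=1$, and I will reduce the entire problem to proving that $C=A\cap B$ is contained in $\Phi(G)$: once this is established, the hypothesis gives $C\leqs A\cap\Phi(G)=1$, so $B$ is the desired complement.

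First I would record that $C\normeq G$, which is where the abelianness of $A$ enters. Indeed $C\leqs A$ and $A$ is abelian, so $A$ normalises $C$; and $A\normeq G$ gives $C=A\cap B\normeq B$. Hence $C$ is normalised by $\langle A,B\rangle=G$. This normality is what guarantees that $CM$ is a subgroup for any $M\leqs G$, a fact used repeatedly below.

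Next I would show $C\leqs\Phi(B)$ using the minimality of $B$. If this fails, pick a maximal subgroup $M_0<B$ with $C\not\leqs M_0$, so that $B=CM_0$; since $C\leqs A$ this yields $G=AB=ACM_0=AM_0$ with $M_0<B$, contradicting the minimal choice of $B$. Therefore $C\leqs\Phi(B)$, i.e. the elements of $C$ are non-generators of $B$.

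The crux of the argument, and the step I expect to be the main obstacle, is upgrading ``$C\leqs\Phi(B)$'' to ``$C\leqs\Phi(G)$'', since in general the Frattini subgroup of a subgroup need not lie inside that of the ambient group. Here I would argue directly with a Frattini-type computation: let $M$ be any maximal subgroup of $G$ and suppose $C\not\leqs M$. As $C\normeq G$, the product $CM$ is a subgroup strictly containing $M$, whence $CM=G$. Because $C\leqs B$, Dedekind's modular law gives $B=B\cap CM=C(B\cap M)$, and since $C\leqs\Phi(B)$ the non-generator property forces $B=B\cap M$, that is $B\leqs M$; but then $C\leqs B\leqs M$, contradicting $C\not\leqs M$. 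So $C$ lies in every maximal subgroup of $G$, giving $C\leqs\Phi(G)$ and completing the proof as indicated above. Beyond elementary manipulation, the only ingredients are Dedekind's law and the standard fact that a subgroup contained in $\Phi(B)$ consists of non-generators (equivalently, $B=H\Phi(B)$ implies $B=H$).
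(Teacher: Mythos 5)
Your proof is correct. Note that the paper itself gives no argument for this lemma at all: it simply attributes the result to Gasch\"utz, citing \cite{Gas} and \cite[Lemma 2.1(vii)]{AB}, so there is no in-paper proof to compare against. What you have written is the standard self-contained proof of Gasch\"utz's splitting theorem, and every step checks out: the minimal supplement $B$ exists by finiteness; $C = A \cap B$ is normal in $G$ because $B$ normalises both $A$ and $B$ while $A$ centralises $C$ (this is the one place abelianness is used); minimality of $B$ forces $C \leqs \Phi(B)$ via the maximality argument with $CM_0 = B$ and $G = AM_0$; and your Dedekind computation $B = B \cap CM = C(B \cap M)$ combined with the non-generator property correctly upgrades $C \leqs \Phi(B)$ to $C \leqs \Phi(G)$ (this last step is itself a standard lemma: a normal subgroup of $G$ contained in the Frattini subgroup of some subgroup of $G$ lies in $\Phi(G)$). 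The hypothesis $A \cap \Phi(G) = 1$ then kills $C$. Your proof in fact only uses abelianness of $A$ to get $C \normeq G$, which is exactly the economy of the classical argument; a reader of the paper who wants a proof rather than a citation would be well served by precisely this.
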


The proof of the following result is an easy exercise. 

\begin{lem} \label{l:normal}   
Let $G=NK$ be a group, where $N$ is a normal subgroup of $G$. Then the map sending $H$ to $N \cap H$ defines a bijection from the set of maximal subgroups of $G$ containing $K$, to the set of maximal $K$-invariant subgroups of $N$ containing $N \cap K$.
\end{lem}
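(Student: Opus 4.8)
The plan is to avoid any abstract counting and instead exhibit an explicit two-sided inverse. Write $\mathcal{A}$ for the set of maximal subgroups $H$ of $G$ with $K \leq H$, and $\mathcal{B}$ for the set of maximal $K$-invariant subgroups $U$ of $N$ with $N \cap K \leq U$. I would define $\Psi \colon \mathcal{A} \to \mathcal{B}$ by $\Psi(H) = N \cap H$ (the map in the statement) and $\Phi \colon \mathcal{B} \to \mathcal{A}$ by $\Phi(U) = UK$, and then verify that each is well defined and that $\Phi\Psi = \Psi\Phi = \mathrm{id}$. The single tool driving every step is Dedekind's modular law: since $K \leq H$ and $U \leq N$, it yields $H \cap NK = (H \cap N)K$ and $N \cap UK = U(N \cap K)$.

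First I would check that $\Psi$ is well defined. Given $H \in \mathcal{A}$, the inclusions $K \leq H \leq N_G(H)$ together with $N \normeq G$ show that $K$ normalises both $N$ and $H$, hence normalises $U := N \cap H$; so $U$ is $K$-invariant, and $N \cap K \leq U$ since $K \leq H$. Moreover $U \neq N$, for $N \leq H$ would force $H \supseteq NK = G$, against maximality. For maximality of $U$ among proper $K$-invariant subgroups, suppose $U \leq U' < N$ with $U'$ being $K$-invariant. Then $U'K$ is a genuine subgroup (as $K$ normalises $U'$) containing $K$, and the modular law gives $N \cap U'K = U'(N \cap K) = U'$, so $U'K \neq G$. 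Since $UK \leq U'K$ and $UK = (N \cap H)K = H$ by the modular law, maximality of $H$ forces $U'K = H$, whence $U' = N \cap U'K = N \cap H = U$. Thus $U \in \mathcal{B}$, and we have simultaneously recorded $\Phi(\Psi(H)) = (N \cap H)K = H$.

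Conversely I would check that $\Phi$ is well defined. For $U \in \mathcal{B}$, the $K$-invariance makes $UK$ a subgroup containing $K$, and the modular law gives $N \cap UK = U(N \cap K) = U < N$, so $UK \neq G$. If $UK \leq M < G$, then $K \leq M \leq N_G(M)$ makes $N \cap M$ a $K$-invariant subgroup of $N$ containing $U$; it is proper (else $N \leq M$ gives $M = G$), so maximality of $U$ forces $N \cap M = U$, and then $M = M \cap NK = (M \cap N)K = UK$ by the modular law. Hence $UK$ is maximal, so $\Phi(U) \in \mathcal{A}$, and $\Psi(\Phi(U)) = N \cap UK = U$.

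The argument is purely formal, so there is no genuine obstacle; the only point requiring care is the maximality verification in each direction, where one repeatedly passes between a subgroup of $N$ and a subgroup of $G$ by multiplying or intersecting with $K$ and must confirm at each stage that properness survives. This always reduces to the single observation that a subgroup of $G$ containing $K$ equals $G$ exactly when its intersection with $N$ equals $N$ — so keeping the two applications of Dedekind's law straight is essentially the whole content.
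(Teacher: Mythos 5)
Your proof is correct and complete: the two applications of Dedekind's modular law ($H = (H\cap N)K$ for $K \leq H$, and $N \cap UK = U(N\cap K)$ for $U \leq N$) are exactly what is needed, and your verification that properness and maximality transfer in both directions is sound. The paper itself omits the proof, calling it "an easy exercise," and your explicit two-sided inverse is precisely the standard argument intended there, so nothing further is required.
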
 
 
Next we consider the case of coprime subgroups. The proof of the following result depends on the Schur-Zassenhaus Theorem (which in turn relies on the Feit-Thompson Theorem). However, it is worth noting that we only use
this in the proof of Lemma \ref{l:case1} when $K$ is an $r$-group and this only requires Sylow's theorem. 
  
\begin{lem} \label{l:coprime}   
Let $G = NK$ be a finite group, where $N$ and $K$ are subgroups of $G$ with $N \ne 1$ normal and $(|N|, |K|)=1$. Then $K$ is contained in a unique maximal subgroup of $G$ if and only if $N$ is a $p$-group for some prime $p$ and $K$ acts irreducibly on $N/\Phi(N)$.
\end{lem}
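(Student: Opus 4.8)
The plan is to translate the statement about maximal overgroups of $K$ in $G$ into a statement about maximal $K$-invariant subgroups of $N$, and then analyse the latter using coprime action. Since $N$ is normal and $(|N|,|K|)=1$ we have $N \cap K = 1$, so Lemma \ref{l:normal} gives a bijection between the maximal subgroups of $G$ containing $K$ and the maximal $K$-invariant proper subgroups of $N$. Hence the whole lemma reduces to showing that $N$ has a unique maximal $K$-invariant subgroup if and only if $N$ is a $p$-group on whose Frattini quotient $K$ acts irreducibly. I would record at the outset the elementary but crucial observation that, when there is a unique maximal $K$-invariant subgroup $M$, every proper $K$-invariant subgroup of $N$ is contained in $M$, since any proper $K$-invariant subgroup lies in some maximal one and $M$ is the only candidate.

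For the forward implication, first I would show $N$ is a $p$-group. Invoking coprime action (this is where the Schur--Zassenhaus/Feit--Thompson input enters, exactly as flagged before the statement), for each prime $p \mid |N|$ there is a $K$-invariant Sylow $p$-subgroup of $N$, and these Sylow subgroups together generate $N$. If $N$ were not a $p$-group, then each such Sylow subgroup would be a proper $K$-invariant subgroup, hence contained in $M$ by the observation above; but then $N = M$, a contradiction. So $N$ is a $p$-group. Next, since $\Phi(N)$ is characteristic and hence $K$-invariant, a standard Frattini (non-generator) argument shows that every maximal $K$-invariant subgroup of $N$ contains $\Phi(N)$; thus the maximal $K$-invariant subgroups of $N$ correspond to the maximal proper $K$-submodules of the $\mathbb{F}_p K$-module $V = N/\Phi(N)$. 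As $(p,|K|)=1$, Maschke's theorem makes $V$ semisimple, and a nonzero semisimple module with only one maximal proper submodule must be irreducible (two or more simple summands produce two distinct maximal submodules). Hence $K$ acts irreducibly on $N/\Phi(N)$.

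For the converse, suppose $N$ is a $p$-group with $K$ acting irreducibly on $V = N/\Phi(N)$. The same correspondence shows the maximal $K$-invariant subgroups of $N$ are exactly the preimages in $N$ of the maximal proper $K$-submodules of $V$; irreducibility forces the only proper $K$-submodule to be $0$, so $\Phi(N)$ is the unique maximal $K$-invariant subgroup, and the bijection then gives a unique maximal subgroup of $G$ containing $K$.

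I expect the main obstacle to be the first half of the forward direction, namely pinning down that $N$ is a $p$-group: this genuinely requires the existence of $K$-invariant Sylow subgroups under coprime action rather than purely formal lattice reasoning. The remaining steps are routine module theory (Maschke together with the semisimple/unique-maximal-submodule argument) and the Frattini-quotient reduction. One point I would handle carefully is the verification that the bijection of Lemma \ref{l:normal} really does transport maximal subgroups to maximal $K$-invariant subgroups in this setting, i.e.\ that $N \cap K = 1$, so that the auxiliary condition of containing $N \cap K$ imposes nothing.
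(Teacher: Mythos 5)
Your proof is correct and follows essentially the same route as the paper's: both rest on the correspondence of Lemma~\ref{l:normal} between maximal overgroups of $K$ and maximal $K$-invariant subgroups of $N$, the Frattini-argument/Schur--Zassenhaus production of $K$-invariant Sylow subgroups of $N$, a generation argument forcing $N$ to be a $p$-group, and Maschke's theorem to pass from a unique maximal submodule to irreducibility. The only difference is bookkeeping: the paper quotients by $\Phi(N)$ at the outset (using $\Phi(N) \leqs \Phi(G)$), whereas you keep $\Phi(N)$ and show directly, via the non-generator property, that every maximal $K$-invariant subgroup of $N$ contains it.
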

 
\begin{proof}  
First observe that $N$ is a $p$-group if and only if $N/\Phi(N)$ is a $p$-group. In addition, note that $K$ is contained in a unique maximal subgroup of $G$ if and only if $K\Phi(N)/\Phi(N)$ is contained in a unique maximal subgroup of $G/\Phi(N)$. Therefore, without any loss of generality, we may assume that $\Phi(N) = 1$. 

It is clear that if $N$ is a $p$-group and $K$ acts irreducibly on $N$, then $K$ is a maximal subgroup of $G$. So for the remainder, we may assume $K$ is contained in a unique maximal subgroup $H$ of $G$. 

Let $r_1, \ldots, r_t$ be the distinct prime divisors of $|N|$ and let
$R_i$ be a Sylow $r_i$-subgroup of $N$. Then for each $i$ we have
$G=NN_G(R_i)$ by the Frattini argument, which implies that 
$G/N = NN_G(R_i)/N \cong N_G(R_i)/N_N(R_i)$. By applying
the Schur-Zassenhaus Theorem we see that $N_G(R_i)$ contains a complement $L$ to $N_N(R_i)$, which is also a complement to
$N$ in $G$. Therefore, by another application of Schur-Zassenhaus, we may assume that $L = K$ and thus $K \leqs N_G(R_i)$. Note that $N = \la R_1, \ldots, R_t \ra$, so we have $G = \la KR_1, \ldots,  KR_t \ra$.

If $G=KR_i$ for some $i$,  then $N=R_i$ is an $r_i$-group. Otherwise, $t > 1$ and $K$ is contained in at least two maximal subgroups (since no proper subgroup of $G$ can contain all $KR_i$). Therefore, we may assume $N=R_1$, which is an elementary abelian $r_1$-group since $\Phi(N)=1$. In addition, since $|K|$ is indivisible by $r_1$,  $K$ acts completely reducibly on $N$.  Finally, we note that the maximal subgroups of $G$ containing $K$ are precisely 
the subgroups of the form $KJ$, where $J$ is a maximal $K$-invariant subgroup of $N$, whence $K$ acts irreducibly on $N$ and the proof is complete.
\end{proof}
 
In the next lemma we consider the case where $N$ is a direct product of simple groups. This result essentially follows from the proof of the Aschbacher-O'Nan-Scott Theorem \cite{AS}.  
 
\begin{lem}\label{l:simpleprod} 
Let $G=NK$ be a finite group, where $N=L_1 \times \cdots \times L_t$ is normal in $G$, $K$ is a subgroup of $G$ not containing $N$, and each $L_i$ is a nonabelian simple group. Assume that $K$ permutes the $L_i$ transitively and that $K_1 = L_1 \cap K$ is nontrivial. Then the map $T \mapsto L_1 \cap T$ defines a bijection from the set of maximal $K$-invariant subgroups of $N$ containing $N \cap K$ to the set of  maximal $N_K(L_1)$-invariant subgroups of $L_1$ containing $K_1$.
 \end{lem}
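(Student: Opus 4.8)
The plan is to exhibit an explicit inverse-type correspondence between the two families of subgroups and check that the maps respect maximality and the containment conditions. Write $N = L_1 \times \cdots \times L_t$ with $K$ permuting the factors transitively, and set $K_1 = L_1 \cap K$ and $D = N_K(L_1)$, so that $D$ stabilises $L_1$ and acts on it. Given a maximal $K$-invariant subgroup $T$ of $N$ with $N \cap K \leqs T$, I would define $\Theta(T) = L_1 \cap T$; conversely, given a maximal $D$-invariant subgroup $W$ of $L_1$ with $K_1 \leqs W$, I would reconstruct a subgroup of $N$ by using the transitive $K$-action to ``spread'' $W$ across all factors. Concretely, if $\{g_1,\dots,g_t\} \subseteq K$ is a transversal with $L_1^{g_i} = L_i$, then $W$ determines $W^{g_i} \leqs L_i$, and I would set $\Psi(W) = \prod_i W^{g_i}$, checking that this is well-defined (independent of the transversal, using that the choices differ by elements of $D$ which normalise $W$) and $K$-invariant.

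\textbf{Key steps.}
First I would show that $\Theta(T) = L_1 \cap T$ is $D$-invariant (immediate, since $T$ is $K$-invariant and $D \leqs K$ stabilises $L_1$) and contains $K_1 = L_1 \cap K \leqs L_1 \cap T$. Second, I would verify that $L_1 \cap T$ is a \emph{proper} subgroup of $L_1$: if $L_1 \leqs T$ then $K$-invariance and transitivity of the factor-permutation force all $L_i \leqs T$, whence $N \leqs T$, contradicting the maximality (properness) of $T$ in $N$. Third, and this is the technical heart, I would prove that $T$ is recovered as $\Psi(\Theta(T))$, i.e.\ that a $K$-invariant $T$ with $N\cap K \leqs T$ is the ``diagonal-free'' product $\prod_i (L_i \cap T)$ of its projections/intersections with the factors. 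This is exactly the subdirect-product analysis underlying the Aschbacher-O'Nan-Scott Theorem: one shows that $T$ cannot involve a genuine diagonal subgroup across two or more factors, because $N \cap K \leqs T$ already supplies the ``diagonal part'' and maximality together with $K$-invariance forces $T$ to split as a product over the factors. Finally I would check that maximality is preserved in both directions: a proper $K$-invariant product $\prod_i W_i$ (with all $W_i$ conjugate via $K$) is maximal among $K$-invariant subgroups iff each $W_i$ is maximal among $D$-invariant subgroups of the corresponding factor, since enlarging one factor forces, by $K$-invariance, the simultaneous enlargement of all of them, and any intermediate $K$-invariant subgroup corresponds to an intermediate $D$-invariant subgroup of $L_1$.

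\textbf{The main obstacle.}
The hard part will be the third step: ruling out diagonal subgroups and establishing that every relevant $K$-invariant $T$ is a genuine direct product $\prod_i(L_i \cap T)$ across the factors, rather than a subdirect product that mixes several $L_i$ diagonally. This is where the hypotheses $N \cap K \neq 1$ (so $K_1 \neq 1$) and $N \not\leqs K$ are essential, and where one invokes the structure theory from \cite{AS}: the projections of $T$ to the factors are $D$-conjugate, and the only way a $K$-invariant subgroup containing $N \cap K$ can fail to be a direct product is through a diagonal, which the presence of the nontrivial $K_1 \leqs L_1 \cap T$ combined with maximality excludes. Once the product structure is secured, the bijection and the preservation of maximality and of the containment $K_1 \leqs {}\cdot{} \leqs L_1$ follow routinely, and I would close by noting that $\Theta$ and $\Psi$ are mutually inverse, completing the proof.
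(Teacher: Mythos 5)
Your proposal is correct and follows the same basic strategy as the paper: in both arguments the entire content is to show that every maximal $K$-invariant subgroup $T$ of $N$ containing $N \cap K$ splits as the direct product of its intersections (equivalently, projections) with the factors, after which the bijection, with inverse given by your spreading map $\Psi$, is routine. The difference is how the splitting is proved. You appeal to the subdirect-product/diagonal analysis behind the Aschbacher--O'Nan--Scott theorem \cite{AS}; this does work, and can be made precise without the full machinery: if $\pi_1(T) = L_1$ then $T \cap L_1 \normeq \pi_1(T) = L_1$, and since $1 \ne K_1 \leq T \cap L_1$ and $L_1$ is simple, we get $L_1 \leq T$, hence $T = N$ by $K$-transitivity, a contradiction; while if $\pi_1(T) < L_1$ then $T \leq \prod_i \pi_i(T)$ is a proper $K$-invariant subgroup and maximality forces equality. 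The paper dispatches the same dichotomy with a one-line commutator computation in place of the first case: since $K_1 \leq N \cap K \leq T$ and $[L_i, K_1] = 1$ for $i \geq 2$, one has $[T,K_1] = [\pi_1(T),K_1] \leq T$, so $\pi_1(T) = L_1$ would give $L_1 = [L_1,K_1] \leq T$ by simplicity. So the step you flag as ``the technical heart'' and ``main obstacle'' has a two-line self-contained solution, and no appeal to the structure theory of primitive groups is needed. Two cautions about your sketch: the slogan that a subgroup can only fail to be a direct product ``through a diagonal'' is literally true only when all projections are full (fibre products are the general obstruction), so maximality must be invoked exactly where you invoke it, for the proper-projection case; and for surjectivity of $T \mapsto L_1 \cap T$ you should also check that any maximal $N_K(L_1)$-invariant $W \geq K_1$ automatically contains $\pi_1(N\cap K)$, so that $\Psi(W) \supseteq N \cap K$ -- this holds because the $N_K(L_1)$-action on $L_1$ includes inner automorphisms by $\pi_1(N \cap K)$, so $\pi_1(N\cap K)$ normalises $W$, and $W\pi_1(N\cap K) = L_1$ would make $W$ a nontrivial proper normal subgroup of the simple group $L_1$.
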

 
 \begin{proof}  
Since $K$ permutes the $L_i$ transitively, we have $N \cap K \geqs  K_1 \times \cdots \times K_t$, where each $K_i = L_i \cap K$ is a proper subgroup of $L_i$. Let $T$ be a maximal $K$-invariant subgroup of $N$ containing $N \cap K$. Let $\pi: N \to L_1$ be the projection map and set $T_1 = \pi(T) \geqs K_1$. If $T_1 = L_1$,  then $[T, K_1]=L_1 \leqs T$ and so the $K$-invariance of $T$ implies that $T=N$, a contradiction.  
Therefore, $K$ normalises $T_1 \times \cdots \times T_t$, where the $T_i$ are the $K$-conjugates of $T_1$, and so by maximality we have  
$T = T_1 \times \cdots \times T_t$. This shows that the map $T \mapsto L_1 \cap T$ is the desired bijection.
 \end{proof}  
 
In Theorem \ref{t:rfrattini} below we present a general result, which describes the intersection of all maximal subgroups with $r'$-index in an arbitrary finite group $G$. In particular, if $O_r(G)=1$, then this intersection coincides with the Frattini subgroup of $G$. To do this, we need a preliminary lemma.   This is well known in the case where $\pi$ consists of a single prime since we have $F(G/\Phi(G))=F(G)/\Phi(G)$.

\begin{lem} \label{l:aux}  
Let $G$ be a finite group and let $\pi$ be a set of prime numbers with $O_{\pi}(G)=1$. Then $O_{\pi}(G/\Phi(G))=1$. 
\end{lem}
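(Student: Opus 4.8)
The plan is to pull the $\pi$-core of the quotient back to $G$ and show it cannot be strictly larger than $\Phi(G)$. Write $N/\Phi(G) = O_{\pi}(G/\Phi(G))$, so that $N \normeq G$, $\Phi(G) \leqs N$, and $N/\Phi(G)$ is a $\pi$-group; the goal is to prove $N = \Phi(G)$. The key structural input is that $\Phi(G)$ is nilpotent, hence splits as the direct product of its Hall $\pi$- and $\pi'$-subgroups. Let $K = O_{\pi'}(\Phi(G))$ be the $\pi'$-part. Since $K$ is characteristic in $\Phi(G)$ and $\Phi(G)$ is characteristic in $G$, we get $K \normeq G$. Moreover $\Phi(G)/K$ is a $\pi$-group, and combined with $N/\Phi(G)$ being a $\pi$-group, this shows that $N/K$ is a $\pi$-group while $K$ is a normal $\pi'$-subgroup of $N$.

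First I would observe that $N$ is $\pi$-separable, since $1 \leqs K \leqs N$ is a normal series with a $\pi'$-factor and a $\pi$-factor. Hence $N$ possesses a Hall $\pi$-subgroup $P$: as $(|K|,|N/K|)=1$, the Schur--Zassenhaus theorem supplies a complement to $K$ in $N$, so $N = KP$ with $K \cap P = 1$. By Hall's theorem for $\pi$-separable groups, all Hall $\pi$-subgroups of $N$ are $N$-conjugate. This conjugacy is exactly what licenses the Frattini argument: since $N \normeq G$, conjugation by any $g \in G$ carries $P$ to another Hall $\pi$-subgroup of $N$, whence $G = N\,N_G(P)$.

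Finally I would exploit the non-generator property of the Frattini subgroup. Using $N = KP$ and $P \leqs N_G(P)$, the Frattini identity becomes $G = K\,N_G(P)$; but $K \leqs \Phi(G)$, so $G = N_G(P)$, that is, $P \normeq G$. Then $P \leqs O_{\pi}(G) = 1$, so $P = 1$, giving $N = K \leqs \Phi(G)$ and hence $N = \Phi(G)$, as required. The main obstacle—and the point where the single-prime argument quoted before the lemma does not transfer verbatim—is guaranteeing the conjugacy of Hall $\pi$-subgroups needed for the Frattini argument. This is secured not by any hypothesis on $G$ itself, but by the $\pi$-separability of the particular subgroup $N$, which follows automatically from the nilpotency of $\Phi(G)$.
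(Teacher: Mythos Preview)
Your proof is correct and follows the same Frattini-argument template as the paper, but you miss one simplification that the paper makes at the outset: the hypothesis $O_{\pi}(G)=1$ already forces $\Phi(G)$ to be a $\pi'$-group. Indeed, $\Phi(G)$ is nilpotent, so $O_{\pi}(\Phi(G))$ is characteristic in $\Phi(G)$ and hence normal in $G$, giving $O_{\pi}(\Phi(G)) \leqs O_{\pi}(G)=1$. Thus your $K$ equals $\Phi(G)$ from the start, and the normal series $1 \leqs K \leqs N$ is just $1 \leqs \Phi(G) \leqs N$ with $\Phi(G)$ a solvable normal Hall $\pi'$-subgroup of $N$. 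With that observation in hand, the existence \emph{and} conjugacy of complements both come straight from Schur--Zassenhaus (the solvable case, no Feit--Thompson needed), so the appeal to Hall's theorem for $\pi$-separable groups is unnecessary. The ``main obstacle'' you flag therefore dissolves: the $\pi$-version is no harder than the single-prime version once one notices that $\Phi(G)$ is $\pi'$.
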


\begin{proof} 
Set $N=\Phi(G)$ and note that $N$ is a $\pi'$-group. Seeking a contradiction, suppose that $O_{\pi}(G/N) \ne 1$. Then by the Schur-Zassenhaus Theorem, $G$ has a normal subgroup $NS$ for some nontrivial $\pi$-subgroup $S$ of $G$. By the Frattini argument and a second application of Schur-Zassenhaus, we deduce that $G=NN_G(S)$. But since $N=\Phi(G)$, we must have $G=N_G(S)$ and thus $S$ is a normal $\pi$-subgroup of $G$, which forces $S=1$ as $O_\pi(G)=1$. 
\end{proof}  

Let $D = D(G,r)$ be the subgroup of $G$ containing $O_r(G)$ such that $\Phi(G/O_r(G)) = D/O_r(G)$. We now show that $D$ is the intersection of all the maximal subgroups of $G$ with $r'$-index. 

\begin{thm}\label{t:rfrattini}
Let $G$ be a finite group and let $r$ be a prime divisor of $|G|$.  Then
\begin{itemize}\addtolength{\itemsep}{0.2\baselineskip}
\item[{\rm (i)}]  $O_r(G/D)=\Phi(G/D)=1$; and 
\item[{\rm (ii)}]  $D$ is the 
intersection of all   maximal subgroups of $G$ with $r'$-index.
\end{itemize} 
\end{thm}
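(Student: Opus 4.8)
The plan is to handle the two parts separately: part (i) follows quickly from Lemma~\ref{l:aux}, while part (ii) will be reduced to the case of a group with trivial $r$-core and trivial Frattini subgroup. Throughout, write $\bar G = G/O_r(G)$, so that $D/O_r(G) = \Phi(\bar G)$ and $G/D = \bar G/\Phi(\bar G)$. For part (i), the Frattini assertion is immediate from the standard fact $\Phi(X/\Phi(X)) = 1$ applied with $X = \bar G$. For the $r$-core, I would first note that $O_r(\bar G) = O_r(G/O_r(G)) = 1$, since an extension of an $r$-group by an $r$-group is an $r$-group, so any normal $r$-subgroup of $\bar G$ lifts to a normal $r$-subgroup of $G$ and hence lies in $O_r(G)$. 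Lemma~\ref{l:aux} with $\pi = \{r\}$ then gives $O_r(\bar G/\Phi(\bar G)) = 1$, that is, $O_r(G/D) = 1$.

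For part (ii), let $I$ denote the intersection of all maximal subgroups of $G$ of $r'$-index; as this family is closed under conjugation, $I \trianglelefteq G$. Every such maximal subgroup contains a Sylow $r$-subgroup of $G$, hence contains $O_r(G)$, so these subgroups correspond bijectively and index-preservingly to the $r'$-index maximal subgroups of $\bar G$. The inclusion $D \leqs I$ is then clear, since $D/O_r(G) = \Phi(\bar G)$ lies in every maximal subgroup of $\bar G$, in particular every one of $r'$-index. To prove $I \leqs D$, I would apply the same correspondence twice to reduce to the case $O_r(G) = \Phi(G) = 1$: reducing modulo $O_r(G)$ replaces the claim by ``$I \leqs \Phi(G)$ when $O_r(G) = 1$'', and then reducing modulo $\Phi(G)$ (legitimate because Lemma~\ref{l:aux} keeps the $r$-core trivial while $\Phi$ becomes trivial) replaces it by the assertion that $I = 1$ whenever $O_r(G) = \Phi(G) = 1$.

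So assume $O_r(G) = \Phi(G) = 1$ and, for contradiction, $I \neq 1$; choose a minimal normal subgroup $N \leqs I$, which then lies in every $r'$-index maximal subgroup. The strategy is to exhibit an $r'$-index maximal subgroup avoiding $N$. If $N$ were an $r$-group then $N \leqs O_r(G) = 1$, so $N$ is either elementary abelian of exponent $p \ne r$, or a direct product $N = S^k$ of nonabelian simple groups. If $N$ is abelian, Gasch\"utz's Lemma~\ref{l:gas} (using $N \cap \Phi(G) = 1$) yields a complement $B$ with $G = NB$ and $N \cap B = 1$, so $[G:B] = |N|$ is coprime to $r$; any maximal $M \geqs B$ then has $r'$-index and satisfies $NM = G$, whence $N \not\leqs M$. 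If $N = S^k$ with $r \mid |N|$, I would take a nontrivial Sylow $r$-subgroup $P$ of $N$ and apply the Frattini argument $G = N\, N_G(P)$; here $[G:N_G(P)] = [N:N_N(P)] \equiv 1 \pmod r$ is coprime to $r$, and $N \not\leqs N_G(P)$ (otherwise $P \trianglelefteq N$ would be characteristic, hence normal in $G$, forcing $P \leqs O_r(G) = 1$), so a maximal overgroup of $N_G(P)$ again has $r'$-index and does not contain $N$.

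The main obstacle is the remaining case $N = S^k$ with $r \nmid |N|$: here $N$ is an $r'$-group, so Gasch\"utz does not apply and the Sylow-normaliser argument is vacuous because the relevant Sylow $r$-subgroup of $N$ is trivial. In this situation I would instead construct an $r'$-index maximal subgroup of diagonal type that avoids $N$, using Lemma~\ref{l:normal} to pass between $r'$-index maximal subgroups of $G$ and maximal invariant subgroups of $N$, together with the subdirect-product analysis of Lemma~\ref{l:simpleprod} to realise a proper maximal invariant subgroup of $N$ (a full diagonal across the simple factors) normalised by a Sylow $r$-subgroup. Verifying that such a subgroup has genuinely $r'$-index and is maximal is the delicate point, and it is precisely the O'Nan--Scott-type input packaged in Lemma~\ref{l:simpleprod}. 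Once an $r'$-index maximal subgroup not containing $N$ has been produced in every case, the contradiction with $N \leqs I$ forces $I = 1$, which completes part (ii).
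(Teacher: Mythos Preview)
Your argument for part~(i), the reduction to $O_r(G)=\Phi(G)=1$, and the treatment of the abelian case and the case $r \mid |N|$ are all correct and essentially match the paper. The gap is in your final case, $N = S^k$ nonabelian with $r \nmid |N|$: the proposed diagonal construction via Lemmas~\ref{l:normal} and~\ref{l:simpleprod} does not go through as stated. Lemma~\ref{l:normal} requires a decomposition $G = NK$, and you never specify $K$; if you intend $K$ to contain a Sylow $r$-subgroup $R$, then Lemma~\ref{l:simpleprod} fails to apply because its hypothesis $K_1 = L_1 \cap K \ne 1$ would force $r \mid |N|$. So the ``delicate point'' you flag is in fact a genuine obstruction, not merely a verification.

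The paper avoids this difficulty entirely by treating \emph{all} $r'$-groups $N$ (abelian or not) with a single one-line argument: since $\Phi(G)=1$ and $N \ne 1$, there exists a maximal subgroup $M$ with $N \not\leqs M$; then $G = NM$ by maximality, so $[G:M] = [N:N\cap M]$ divides $|N|$ and is therefore coprime to $r$. This gives the required $r'$-index maximal subgroup not containing $N$ with no appeal to Gasch\"utz, diagonals, or the structure of $N$. Your Gasch\"utz argument in the abelian case is correct but is a special instance of this; replacing both your abelian and nonabelian $r'$-cases by the paper's uniform observation closes the gap and shortens the proof.
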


\begin{proof}   
By Lemma \ref{l:aux}, $O_r(G/D)=1$.    Since $D/O_r(G)$ is the Frattini subgroup of $G/O_r(G)$, it follows that $\Phi(G/D) = 1$ and thus (i) holds.

Now let us turn to (ii). Let $A$ be the intersection of all the maximal subgroups of $G$ with $r'$-index. Clearly, $D$ is contained in every maximal subgroup with $r'$-index and so by (i), it is enough to show that $A=1$ if $D=1$. Let $R$ be a Sylow $r$-subgroup of $G$. Seeking a contradiction, assume that $D=1$ and $A \ne 1$. Let $B$ be a minimal normal subgroup of $G$ contained in $A$.

Since $O_r(G)=1$,  $B$ is not an $r$-group.  If $B$ is an $r'$-group, then $G=BM$ for some maximal subgroup $M$ (as $\Phi(G)=1$).  Then $M$ has $r'$-index but $M$ does not contain $B$,
a contradiction.    Thus, $B$ is a direct product of nonabelian simple groups of order divisible by $r$.   
 By the Frattini argument, $G=BN_G(R \cap B)$.  Since $R \cap B$ is not normal in $G$, it follows that $N_G(R \cap B) \leqs M$ for some maximal  subgroup $M$ of $G$. But $M$ has $r'$-index and it does not contain $B$, and so once again we have reached a contradiction and (ii) follows.  
 \end{proof} 

For the remainder of this section, let $G$ be a finite group, let $r$ be a prime divisor of $|G|$ and let $R$ be a Sylow $r$-subgroup of $G$. 
Let $\mathcal{M}(R)$ be the set of maximal subgroups of $G$ containing $R$.

Let us assume $R$ is contained in a unique maximal subgroup $H$ of $G$ (at this point, we are making no further assumptions on $G$, other than finiteness). The next result is a noteworthy observation.

\begin{lem}\label{l:gen}  
If $R$ is not normal in $G$, then $G = \langle R^g \,:\,  g \in G \rangle$.   
\end{lem}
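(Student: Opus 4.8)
The plan is to consider the subgroup $W = \langle R^g : g \in G\rangle$ generated by all $G$-conjugates of $R$ and to derive a contradiction from the assumption that $W$ is a proper subgroup of $G$. First I would observe that $W$ is a normal subgroup of $G$, since for any $h \in G$ the conjugate $W^h = \langle R^{gh} : g \in G\rangle = \langle R^g : g \in G\rangle = W$, as conjugation by $h$ merely permutes the set of all conjugates of $R$. Thus $W \normeq G$, and by hypothesis $R$ is not normal in $G$, so $R \ne W$ forces $W$ to be a \emph{proper} normal subgroup strictly containing $R$ only if $W \ne G$; the goal is precisely to rule out $W < G$.

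Assume for contradiction that $W$ is a proper subgroup of $G$. Then $W$ lies inside some maximal subgroup of $G$, and since $R \leqs W$, that maximal subgroup belongs to $\mathcal{M}(R) = \{H\}$, so $W \leqs H$. The key point is that $W$ is normal in $G$, and in particular $R \leqs W \leqs H$ with $W \normeq G$. Since $W$ contains a full Sylow $r$-subgroup $R$ of $G$, by Sylow's theorem every Sylow $r$-subgroup of $G$ is of the form $R^g$ for some $g$, and each such conjugate lies in $W$ by construction. Hence $W$ contains \emph{every} Sylow $r$-subgroup of $G$.

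Now I would apply the Frattini argument to the normal subgroup $W \normeq G$: since $R$ is a Sylow $r$-subgroup of $G$ contained in $W$, it is in particular a Sylow $r$-subgroup of $W$, and the Frattini argument gives $G = W N_G(R)$. Because $W \leqs H$ and $H$ is a proper subgroup, for $G = W N_G(R)$ to hold we would need $N_G(R) \not\leqs H$. But $N_G(R) \leqs G$ is itself a proper subgroup whenever $R$ is not normal (as $R \ne G$), so $N_G(R)$ lies in some maximal subgroup of $G$ containing $R$, which again must be $H$ by uniqueness; thus $N_G(R) \leqs H$. Combining, $G = W N_G(R) \leqs H$, contradicting the fact that $H$ is a proper subgroup of $G$. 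This contradiction shows $W = G$, as claimed.

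The step I expect to require the most care is the very last one, ensuring that both factors in the Frattini decomposition $G = W N_G(R)$ are trapped inside the unique maximal overgroup $H$: one must confirm that $N_G(R)$ is a proper subgroup so that it too lies in $\mathcal{M}(R) = \{H\}$. This is where the hypothesis that $R$ is not normal in $G$ is used crucially, since $N_G(R) = G$ would hold precisely when $R \normeq G$. Everything else is routine: the normality of $W$ and the containment of all Sylow $r$-subgroups in $W$ are immediate from the definition of $W$ as the normal closure of $R$.
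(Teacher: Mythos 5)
Your proof is correct and follows essentially the same route as the paper: form the normal closure $W=\langle R^g : g\in G\rangle$, apply the Frattini argument to get $G = W\,N_G(R)$, and use the uniqueness of $H$ together with $N_G(R)\neq G$ (from non-normality of $R$) to trap both factors inside $H$, a contradiction. The only difference is cosmetic ordering — the paper invokes Frattini before assuming $W\neq G$ — so there is nothing to add.
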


\begin{proof}
Let $A= \langle R^g \,:\, g \in G \rangle$ and note that $A$ is normal in $G$, so $G = N_G(R)A$ by the Frattini argument. If $A \ne G$ then $R$ is contained in every maximal subgroup of $G$ containing $A$, so we must have $A \leqs H$. But we also have $N_G(R) \leqs H$ (since $N_G(R) \ne G$) and thus $H = G$, which is a contradiction.
\end{proof}

We are now in a position to prove Theorem \ref{t:main2}, under the assumption that we have already established Theorems \ref{t:prime2} and \ref{t:main1}. Indeed, the theorem follows by combining Lemmas \ref{l:case1} and \ref{l:case2} below, which consider separately the cases where $O_{r'}(G) \not\leqs \Phi(G)$ and $O_{r'}(G) \leqs \Phi(G)$. 

Note that if $R$ is normal in $G$, then $R$ is contained in a unique maximal subgroup of $G$ if and only if $G/R$ is a cyclic $p$-group for some prime $p \ne r$, and $R$ itself is maximal if and only if it has index $p$. So for the remainder of the section, we will assume that our Sylow $r$-subgroup $R$ is not normal in $G$.   

\begin{lem}\label{l:case1}
Suppose $O_r(G)=1$ and $O_{r'}(G) \not\leqs \Phi(G)$. Then $R$ is contained in a unique maximal subgroup $H$ of $G$ if and only if $G = P{:}R$, where $P$ is a $p$-group for some prime $p \ne r$ and $R$ acts faithfully and irreducibly on $P/\Phi(P)$. Moreover, $H=\Phi(P)R$ and the core of $H$ is $\Phi(G)=\Phi(P)$. 
\end{lem}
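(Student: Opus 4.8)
The plan is to prove both directions through \textbf{Lemma \ref{l:coprime}}, the real work being to manufacture the coprime factorisation $G=O_{r'}(G){:}R$ to which that lemma applies. Throughout I will use the standing hypotheses $O_r(G)=1$, $O_{r'}(G)\not\leq\Phi(G)$, and $R$ not normal. A useful preliminary observation is that $\Phi(G)$ is nilpotent, so $O_r(\Phi(G))\leq O_r(G)=1$ forces $\Phi(G)$ to be an $r'$-group; hence $\Phi(G)\leq O_{r'}(G)$, and in particular $N:=O_{r'}(G)$ is a nontrivial proper-over-$\Phi(G)$ normal $r'$-subgroup. The easy direction is immediate: if $G=P{:}R$ with $P$ a $p$-group, $p\ne r$, and $R$ acting faithfully and irreducibly on $P/\Phi(P)$, then $(|P|,|R|)=1$ and Lemma \ref{l:coprime} (applied with $N=P$, $K=R$) shows at once that $R$ lies in a unique maximal subgroup.

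For the converse, assume $\mathcal{M}(R)=\{H\}$. The first key step is the conjugacy observation: \emph{every maximal subgroup $M$ of $G$ of $r'$-index is $G$-conjugate to $H$}. Indeed, such an $M$ contains a Sylow $r$-subgroup $R^g$, which lies in the unique maximal subgroup $H^g$ (the conjugate of the hypothesis $\mathcal{M}(R)=\{H\}$), so maximality gives $M=H^g$. Now since $N\not\leq\Phi(G)$ there is a maximal subgroup $M_0$ with $G=NM_0$; as $N$ is an $r'$-group, $|G:M_0|$ divides $|N|$ and so $M_0$ has $r'$-index. By the observation $M_0=H^g$, and because $N\trianglelefteq G$ we conclude $N\not\leq H$.

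The crux of the argument is then to show $G=NR$, i.e.\ that $\bar G:=G/N$ is an $r$-group. Suppose a maximal subgroup $\bar M$ of $\bar G$ had $r'$-index; lifting to the maximal subgroup $M\supseteq N$ of $G$ of the same (hence $r'$-) index gives $M=H^g$ for some $g$, whence $N=N^{g^{-1}}\leq H$, contradicting $N\not\leq H$. So \emph{no} maximal subgroup of $\bar G$ has $r'$-index. But if $\bar R=RN/N$ were proper in $\bar G$ it would lie in some maximal subgroup, which (containing a full Sylow $r$-subgroup) would have $r'$-index --- a contradiction. Hence $\bar R=\bar G$, that is $G=NR$. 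This is the step I expect to be the main obstacle, since it is where the global uniqueness of $H$ must be leveraged, via the conjugacy observation, to kill all primes of $G/N$ other than $r$. With $G=NR$ established, Lemma \ref{l:coprime} gives that $N$ is a $p$-group $P$ for some prime $p\ne r$ and that $R$ acts irreducibly on $P/\Phi(P)$; faithfulness follows because $C:=C_R(P)$ (which equals the kernel of the coprime action on $P/\Phi(P)$) is normal in $G=PR$ and is an $r$-group, so $C\leq O_r(G)=1$.

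It remains to verify the ``Moreover'' clause, for which I may now assume $G=P{:}R$ with the stated properties. By Lemma \ref{l:normal} the members of $\mathcal{M}(R)$ are the subgroups $RJ$ with $J$ a maximal $R$-invariant subgroup of $P$; any such $J$ contains $\Phi(P)$ (else $J\Phi(P)$ would strictly enlarge it, using that $\Phi(P)$ consists of non-generators), and irreducibility of $R$ on $P/\Phi(P)$ forces $J=\Phi(P)$, so $H=\Phi(P)R$. Next, every maximal subgroup of $G$ contains $\Phi(P)$: those of $r'$-index are conjugates $H^g=\Phi(P)R^g\supseteq\Phi(P)$ (as $\Phi(P)\trianglelefteq G$), while a maximal subgroup $M$ of $r$-divisible index cannot satisfy $MP=G$ (that would give $p$-power index), hence contains $P\supseteq\Phi(P)$; thus $\Phi(P)\leq\Phi(G)$. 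Conversely $\Phi(G)$ is an $r'$-group contained in $H=\Phi(P)R$, and since $\Phi(P)$ is the unique Sylow $p$-subgroup of $\Phi(P)R$ all of whose $r'$-elements are $p$-elements, we get $\Phi(G)\leq\Phi(P)$, so $\Phi(G)=\Phi(P)$. Finally, for the core $C$ of $H$, passing to $\bar G=G/\Phi(P)=\bar P{:}\bar R$ with $\bar P$ elementary abelian and $\bar R$ faithful, the image $\bar C\trianglelefteq\bar G$ lies in $\bar H=\bar R$, so $[\bar C,\bar P]\leq\bar C\cap\bar P=1$ and faithfulness gives $\bar C=1$; thus $C=\Phi(P)=\Phi(G)$, completing the proof.
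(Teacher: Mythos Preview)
Your proof is correct and follows essentially the same approach as the paper's: both establish $N\not\leq H$ from $N\not\leq\Phi(G)$, deduce $G=NR$ from the uniqueness of $H$, and then invoke Lemma~\ref{l:coprime}. The paper reaches $G=NR$ in one line (if $NR<G$ then $NR\leq H$, contradicting $N\not\leq H$) where you pass to $G/N$, and the paper asserts the ``Moreover'' clause without proof whereas you verify it carefully; these are differences of exposition rather than of method.
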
 

\begin{proof}    
If $G=P{:}R$ and $R$ acts faithfully and irreducibly on $P/\Phi(P)$, then it is clear that $H=\Phi(P)R$ is the unique maximal subgroup of $G$ containing $R$ and the core of $H$ is $\Phi(P) = \Phi(G)$.  

For the converse, let us assume $R$ is contained in a unique maximal subgroup of $G$. Set $N=O_{r'}(G)$.  Since $N$ is not contained in $\Phi(G)$, there exists a maximal subgroup $H$ of $G$ that does not contain $N$. Then $G = NH$ and we may assume that $H$ contains $R$. Since $|\mathcal{M}(R)| = 1$, it follows that 
$G=N{:}R$ and thus Lemma \ref{l:coprime} tells us that $N$ is a $p$-group for some prime $p \ne r$ and $R$ acts irreducibly on $N/\Phi(N)$. It just remains to show that $R$ acts faithfully on $N/\Phi(N)$. To see this, set $C=C_R(N/\Phi(N))$ and note that $C$ centralises $N$ (indeed, we have $N=[C,N]C_N(C)$ and $[C,N] \leqs \Phi(N)$). Therefore, $C$ is trivial and the result follows.
\end{proof}

\begin{lem}\label{l:case2}
Suppose $O_r(G)=1$ and $O_{r'}(G) \leqs \Phi(G)$.  Let  $S=R\Phi(G)/\Phi(G)$ and define $J=G/\Phi(G)$ and $E = E(J)$. 
Then $R$ is contained in a unique maximal subgroup $H$ of $G$ if and only if the following hold:
\begin{itemize}\addtolength{\itemsep}{0.2\baselineskip}
\item[{\rm (i)}] $J=ES$ and $S$ acts transitively on the set of components of $J$. 
\item[{\rm (ii)}] Moreover, if $L$ is a component of $J$, then 
\[
N_S(L)C_J(L)/C_J(L) \cong N_S(L)/C_S(L)
\]
is a Sylow $r$-subgroup of the almost simple group $A = N_J(L)/C_J(L)$ and it is contained in a unique maximal subgroup of $A$.    
\end{itemize}
In addition, the core of $H$ is $\Phi(G)$.  
\end{lem}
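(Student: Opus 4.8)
\emph{Setup and reduction to $J$.} The plan is first to simplify the hypotheses. Since $\Phi(G)$ is nilpotent, its $r$-part is a normal $r$-subgroup of $G$ and hence trivial (as $O_r(G)=1$); thus $\Phi(G)$ is an $r'$-group, and the inclusion $O_{r'}(G)\leqs\Phi(G)$ forces $\Phi(G)=O_{r'}(G)$. In particular $R\cap\Phi(G)=1$, so $S=R\Phi(G)/\Phi(G)\cong R$ is a Sylow $r$-subgroup of $J$. As $\Phi(G)$ lies in every maximal subgroup, the correspondence $H\mapsto H/\Phi(G)$ shows that $R$ lies in a unique maximal subgroup of $G$ if and only if $S$ lies in a unique maximal subgroup of $J$, so throughout I would work in $J$. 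Next I would pin down $F^*(J)$: Lemma \ref{l:aux} gives $O_r(J)=1$; the full preimage in $G$ of $O_{r'}(J)$ is a normal $r'$-subgroup (as $\Phi(G)$ is an $r'$-group), hence lies in $O_{r'}(G)=\Phi(G)$, so $O_{r'}(J)=1$; and $\Phi(J)=1$. Since $F(J)$ is nilpotent this forces $F(J)=1$, so $F^*(J)=E=E(J)=L_1\times\cdots\times L_t$ is a direct product of nonabelian simple groups with $Z(E)=1$ and $C_J(E)=1$, i.e. $E\normeq J\leqs\Aut(E)$.

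\emph{The reverse direction.} Assuming (i) and (ii), I would run a clean chain of the structural lemmas. Since $J=ES$, Lemma \ref{l:normal} (with $N=E$, $K=S$) identifies $\mathcal{M}(S)$ with the set of maximal $S$-invariant subgroups of $E$ containing $E\cap S$. Because $S$ is transitive on the components and $L_1\cap S$ is a nontrivial Sylow $r$-subgroup of $L_1$, Lemma \ref{l:simpleprod} converts this into the set of maximal $N_S(L_1)$-invariant subgroups of $L_1$ containing $L_1\cap S$. Writing $A=N_J(L_1)/C_J(L_1)$ and noting that $N_S(L_1)$ acts on $L_1$ through its image $\bar{S}_1=N_S(L_1)/C_S(L_1)$, Corollary \ref{c:easy} together with (ii) gives $A=L_1\bar{S}_1$, so a final application of Lemma \ref{l:normal} inside $A$ matches these with the maximal subgroups of $A$ containing $\bar{S}_1$. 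Hypothesis (ii) says there is exactly one, whence $|\mathcal{M}(S)|=1$.

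\emph{The forward direction (the crux).} Here I assume $\mathcal{M}(S)=\{M\}$ and must recover (i) and (ii). The main work is to prove that $J=ES$ and that $S$ is transitive on the components; once these hold, reversing the chain of Lemmas \ref{l:normal} and \ref{l:simpleprod} transfers the uniqueness of $M$ to the almost simple group $A$, and Corollary \ref{c:easy} applied to $A$ both yields (ii) and forces $A/L_1$ to be an $r$-group. To obtain transitivity and the $r$-group quotient I would argue by contradiction, exhibiting two maximal subgroups of $J$ above $S$ whenever they fail: if $S$ has $k\geqs 2$ orbits on $\{L_1,\dots,L_t\}$ one builds distinct ``product-type'' maximal subgroups by reducing on one orbit-block while keeping the others full, and if $J/E$ is not an $r$-group then distinct maximal subgroups of $J/E$ above the Sylow image $ES/E$ pull back to distinct members of $\mathcal{M}(S)$. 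The delicate point, and what I expect to be the main obstacle, is that $J$ may act on the components more transitively than $S$ does, so these two failure modes interact through the permutation action $J\to S_t$; controlling it requires forcing both the ``top'' permutation group and the ``local'' outer part to be $r$-groups, and it is precisely here that Lemma \ref{l:gen} (which gives $J=\langle S^j : j\in J\rangle$) and the almost simple input of Corollary \ref{c:easy} must be combined carefully, noting also that $E\leqs O^r(J)$ since $E$ is perfect.

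\emph{The core.} For the final assertion, with $J=ES$ and $S$ transitive, the unique maximal subgroup $M$ satisfies $E\not\leqs M$, and Lemma \ref{l:simpleprod} shows $M\cap E$ contains no full component $L_i$. Any normal subgroup $C$ of $J$ with $C\leqs M$ meets $E$ in a normal subgroup $C\cap E\normeq J$ contained in $M\cap E$; such a subgroup of the direct product $E$ is a subproduct $\prod_{i\in I}L_i$ with $I$ a $J$-invariant set, so by transitivity $I=\emptyset$ and $C\cap E=1$, whence $C\leqs C_J(E)=1$. Thus the core of $M$ in $J$ is trivial, and pulling back to $G$ shows that the core of $H$ is exactly $\Phi(G)$.
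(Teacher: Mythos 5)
Your setup, your reverse direction, and your closing argument for the core are all correct, and they run along essentially the same lines as the paper: the reduction to $J$ with $F(J)=1$, then Lemmas \ref{l:normal} and \ref{l:simpleprod} with Corollary \ref{c:easy} handling the almost simple bookkeeping (for the core, the paper simply quotes Theorem \ref{t:rfrattini}, while your direct argument via $C\cap E=1$ is a valid alternative). The problem is the forward direction, which you rightly call the crux but do not actually carry out: you never prove $J=ES$. Your proposed mechanism --- ``if $J/E$ is not an $r$-group then distinct maximal subgroups of $J/E$ above the Sylow image $ES/E$ pull back to distinct members of $\mathcal{M}(S)$'' --- presupposes that such distinct maximal subgroups exist, and that is exactly what cannot be assumed: a Sylow $r$-subgroup of a group which is not an $r$-group may perfectly well lie in a unique maximal subgroup (this is the phenomenon the whole paper classifies; already $J/E \cong C_6$ with $r=2$ defeats the implication, and $C_6$ does occur inside $\mathrm{Out}(E)$). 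Deferring to a ``careful combination'' of Lemma \ref{l:gen} and Corollary \ref{c:easy} does not close this hole: Lemma \ref{l:gen} only says the normal closure of $S$ is $J$, which gives no information about the (non-normal) subgroup $ES$. Note also that your transitivity argument, fine in itself, must come after $J=ES$ is known, since otherwise the orbit products $E_1,E_2$ need not be normal and placing both $SE_1$ and $SE_2$ inside the unique maximal subgroup yields no contradiction.

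The missing step is short. The paper's route is a Frattini argument: $E\cap S$ is a Sylow $r$-subgroup of the normal subgroup $E$, so $J=E\,N_J(E\cap S)$, and $S\leqs N_J(E\cap S)$. If $ES<J$ then $ES\leqs M$, the unique maximal overgroup of $S$; now either $N_J(E\cap S)<J$, whence also $N_J(E\cap S)\leqs M$ and $J=E\,N_J(E\cap S)\leqs M$, a contradiction, or $N_J(E\cap S)=J$, forcing $E\cap S\leqs O_r(J)=1$ --- a degenerate case that must be excluded separately, and which you in any case need to exclude before invoking Lemma \ref{l:simpleprod} in (ii), since that lemma requires $L_i\cap S\neq 1$. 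Alternatively, Theorem \ref{t:rfrattini} gives $J=ES$ even more directly: every maximal subgroup of $J$ of $r'$-index contains a conjugate of $S$ and hence is conjugate to $M$, so the intersection of all such subgroups equals the core of $M$ in $J$, which is trivial by Theorem \ref{t:rfrattini}; thus $E\not\leqs M$, and $ES<J$ would force $E\leqs ES\leqs M$. Either repair completes your proof.
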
 

\begin{proof}    
First note that the final assertion follows from Theorem \ref{t:rfrattini}. Let us also note that the hypotheses imply that $F(G)=\Phi(G)=O_{r'}(G)$, so $F(J)=1$ and  $E \ne 1$.

Next observe that $R$ is contained in a unique maximal subgroup of $G$ if and only if $S$ is contained in a unique maximal subgroup of $J$. Therefore, without loss of generality, we may assume that $\Phi(G)=1$, so $G=J$ and $R = S$.  

Suppose that (i) and (ii) hold. Then $E$ is a direct product of isomorphic nonabelian simple groups and thus $R$ is contained in a unique maximal subgroup $H$ of $G$ by Lemmas \ref{l:normal} and \ref{l:simpleprod}. 

For the converse (still assuming that $G=J$), let us assume $\mathcal{M}(R) = \{H\}$. Since $F(G) = 1$ we have $E \ne 1$ and we claim that $G=ER$. To see this, first note that $R \leqs ER$ and the Frattini argument gives  $G=EN_G(E \cap R)$. Since $R \leqs N_G(E \cap R)$ and $R$ is contained in a unique maximal subgroup of $G$, it follows that $G=ER$.  

Let $X = \{L_1, \ldots, L_s\}$ be the set of components of $G$ and suppose $R$ acts intransitively on $X$. Then we can write $E = E_1E_2$, where the $E_i$ are proper normal subgroups of $G$. Then $R \leqs RE_i < G$, so $RE_1,RE_2 \leqs H$ and thus $G = \la R, E_1, E_2 \ra \leqs H$, which is a contradiction. Therefore (i) holds. Finally, in order to show that (ii) holds, we can pass to $G/Z(E)$, which means that we may assume $O_{r'}(G)=1$ and each $L_i$ is simple. In addition, $R \cap L_i$ is nontrivial for each $i$ and we conclude by applying  Lemma \ref{l:simpleprod}.    
\end{proof}

This completes the proof of Theorem \ref{t:main2}, modulo the proofs of Theorems \ref{t:prime2} and \ref{t:main1} in Sections \ref{s:t0} - \ref{s:t2}. We can also describe the general case.  

\begin{thm} \label{t:main2gen} 
Let $G$ be a finite group,  let $r$ be a prime divisor of $|G|$, let $R$ be a Sylow $r$-subgroup of $G$ and write $\Phi(G/O_r(G)) = D/O_r(G)$. Assume that $\mathcal{M}(R) = \{H\}$ and that $R$ is not normal in $G$. Then the following hold:  
\begin{itemize}\addtolength{\itemsep}{0.2\baselineskip}
\item[{\rm (i)}] $D$ is the core of $H$ in $G$ and we have $O_r(G/D) = \Phi(G/D)=1$.
\item[{\rm (ii)}] $G/D$ and $H/D$ are as in Theorem \ref{t:main2}.
\item[{\rm (iii)}] If $G$ is $r$-soluble, then $|G|$ is divisible by precisely two primes.
\item[{\rm (iv)}] If $E(G) \ne 1$, then $G=E(G)R$.  
\end{itemize}
\end{thm}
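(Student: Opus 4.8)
My plan is to dispatch the four parts in order, using Theorem \ref{t:rfrattini} for (i), the reduction to $G/D$ together with Theorem \ref{t:main2} for (ii), and the dichotomy of Lemmas \ref{l:case1}--\ref{l:case2} for (iii). Throughout I keep in mind the standing hypotheses that $\mathcal{M}(R)=\{H\}$ and that $R$ is not normal in $G$.

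For part (i), I would first observe that $R\leqs H$ forces $|G:H|$ to be an $r'$-number, and conversely that any maximal subgroup $M$ of $r'$-index contains a full Sylow $r$-subgroup, so some $G$-conjugate of $M$ contains $R$ and hence equals $H$ by uniqueness. Thus the conjugates of $H$ are exactly the maximal subgroups of $r'$-index, and Theorem \ref{t:rfrattini}(ii) identifies their intersection $D$ with the core of $H$ in $G$; the relations $O_r(G/D)=\Phi(G/D)=1$ are Theorem \ref{t:rfrattini}(i). For part (ii) I would pass to $\bar G=G/D$ with $\bar R=RD/D$. A maximal subgroup of $\bar G$ containing $\bar R$ pulls back to a maximal subgroup of $G$ containing $RD=R$, hence equals $H/D$, so $\mathcal{M}(\bar R)=\{H/D\}$. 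Part (i) gives $O_r(\bar G)=\Phi(\bar G)=1$, and $\bar R\neq 1$: otherwise $R\leqs D$, but $\Phi(G/O_r(G))=D/O_r(G)$ is an $r'$-group (being nilpotent, its Sylow $r$-subgroup is characteristic and so maps into $O_r(G/O_r(G))=1$), so the Sylow $r$-subgroup of $D$ is $O_r(G)$ and $R\leqs D$ would give $R=O_r(G)\normeq G$. Hence Theorem \ref{t:main2} applies verbatim to $(\bar G,\bar R)$ and yields part (ii).

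Part (iii) carries the real content. Assuming $G$ is $r$-soluble, I would work in $\hat G=G/O_r(G)$, where $O_r(\hat G)=1$ and (as above) $\hat R$ is a non-normal Sylow $r$-subgroup with $\mathcal{M}(\hat R)=\{\hat H\}$. I then split via Lemmas \ref{l:case1} and \ref{l:case2} according to whether $O_{r'}(\hat G)\leqs\Phi(\hat G)$. If it is, Lemma \ref{l:case2} produces a nontrivial layer in $J=\hat G/\Phi(\hat G)$; for a component $L$, the group $A=N_J(L)/C_J(L)$ is almost simple with a Sylow $r$-subgroup contained in a unique maximal subgroup, so Corollary \ref{c:easy} forces $r\mid|L|$. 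Then $L$ is a nonabelian simple composition factor of $G$ of order divisible by $r$, contradicting $r$-solubility. Hence $O_{r'}(\hat G)\not\leqs\Phi(\hat G)$, and Lemma \ref{l:case1} gives $\hat G=P{:}\hat R$ with $P$ a $p$-group for some prime $p\neq r$; therefore $|G|=|O_r(G)|\,|\hat G|$ is divisible by exactly the two primes $r$ and $p$. I expect this exclusion of the Lemma \ref{l:case2} alternative to be the main obstacle: one must verify that $r\mid|\hat G|$ survives the reduction and that $r$-solubility genuinely rules out every configuration with a nontrivial layer.

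Finally, for part (iv), suppose $E(G)\neq 1$ and, for a contradiction, $E(G)R\neq G$. Then $E(G)R$ lies in some maximal subgroup, necessarily $H$, so $E(G)\leqs H$; as $E(G)$ is characteristic in $G$ it is normal, whence $E(G)\leqs D$ by part (i). But $D/O_r(G)=\Phi(G/O_r(G))$ is nilpotent and $O_r(G)$ is an $r$-group, so $D$ is soluble and $E(D)=1$; since a component of $G$ lying in the normal subgroup $D$ would be a component of $D$, this forces $E(G)=1$, a contradiction. Hence $G=E(G)R$.
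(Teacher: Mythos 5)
Your proof is correct. For parts (i)--(iii) it follows essentially the same route as the paper, whose proof is just a citation chain: (i) from Theorem \ref{t:rfrattini}, (ii) from Theorem \ref{t:main2}, (iii) from Lemma \ref{l:case1}; you supply exactly the details behind those citations (the identification of the maximal subgroups of $r'$-index with the conjugates of $H$, the check that $RD/D \neq 1$ via the observation that $D/O_r(G)$ is an $r'$-group, and the exclusion of the Lemma \ref{l:case2} alternative under $r$-solubility). Part (iv) is where you genuinely diverge: the paper argues ``as in the proof of Lemma \ref{l:case2}'', i.e.\ via the Frattini argument $G = E(G)N_G(E(G)\cap R)$ combined with uniqueness of $H$, whereas you observe that $E(G)R \neq G$ would force the characteristic subgroup $E(G)$ into the core $D$ of $H$, which by part (i) is soluble (an $r$-group extended by a nilpotent group), and a soluble group contains no quasisimple subgroup. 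Your version is arguably cleaner: it reuses part (i) and sidesteps the degenerate subcase $N_G(E(G)\cap R) = G$ that the Frattini-argument route has to confront. Two small slips, neither fatal: in (iii) you cite Corollary \ref{c:easy} to force $r \mid |L|$, but that corollary assumes $R \cap T \neq 1$ and so cannot deliver this; the statement you need is Lemma \ref{l:trivial} (a Sylow $r$-subgroup of an almost simple group meeting the socle trivially lies in at least two maximal subgroups). And in (ii) the passing equation $RD = R$ is false in general; what you need, and what your argument actually uses, is only that the preimage in $G$ of a maximal overgroup of $RD/D$ contains $R$.
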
  

\begin{proof}    
First note that $D$ is the core of $H$ by Theorem \ref{t:rfrattini}, so $H/D$ is core-free and thus $O_r(G/D) = \Phi(G/D)=1$ as in (i). Then Theorem \ref{t:main2} applies, giving (ii). If $G$ is $r$-soluble, then Lemma \ref{l:case1} yields (iii). Finally, we note that (iv) holds as in the proof of Lemma \ref{l:case2}.   
\end{proof}  

\begin{rem}
If we make the assumption $\Phi(G) \cap O_r(G)=1$, then Lemma \ref{l:gas} implies that $G = O_r(G){:}L$ is a semidirect product, where $O_r(L)=1$ and $L$ is described in Lemmas \ref{l:case1} and \ref{l:case2}.  
\end{rem}

\section{Sporadic and alternating groups}\label{s:t0}

In this section we prove Theorems \ref{t:prime2} and \ref{t:main1} for almost simple groups whose socle is a sporadic simple group or an alternating group.

\begin{prop}\label{p:spor}
The conclusion to Theorems \ref{t:prime2} and \ref{t:main1} hold when $T$ is a sporadic simple group.
\end{prop}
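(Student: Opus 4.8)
The plan is to reduce the statement to a finite computation and then invoke the classification of maximal subgroups of the sporadic groups together with Corollary~\ref{c:core}. Since $T$ is sporadic we have $G = T$ or $G = T.2$ (when $\Out(T) = 2$), and by Corollary~\ref{c:easy} we only need to consider $G/T$ an $r$-group, so the relevant cases are $G=T$ for all primes $r \mid |T|$, together with $G = T.2$ when $r=2$. In every such case $R_0 = R \cap T \neq 1$ (as $r \mid |T|$), so Lemma~\ref{l:trivial} does not intervene and Corollary~\ref{c:core} applies directly: we have $\mathcal{M}(R) = \{H\}$ if and only if $G$ has a \emph{unique} conjugacy class of maximal subgroups of $r'$-index. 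Thus the entire problem becomes the purely arithmetic task of counting, for each pair $(G,r)$, the conjugacy classes of maximal subgroups $H$ with $r \nmid [G:H]$, equivalently those $H$ whose order is divisible by the full $r$-part $|R|$ of $|G|$.

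First I would assemble, for each sporadic $T$ and each $G \in \{T, T.2\}$, the list of conjugacy classes of maximal subgroups from Wilson's survey \cite{Wil} (and \cite{DLP} to settle the outstanding Monster ambiguities flagged in the excerpt). For each prime $r$ dividing $|G|$ I would compute $|R| = (|G|)_r$ and then scan the maximal subgroup list, selecting exactly those classes whose order is divisible by $|R|$. The claim holds for a given $(G,r)$ precisely when this selection consists of a single class; the outcome of running this over all sporadic groups and all relevant primes is what populates Table~\ref{tab:spor}, and the proposition is the assertion that Theorems~\ref{t:prime2} and~\ref{t:main1} correctly record exactly these surviving pairs. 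For the overwhelming majority of pairs the $r$-part $|R|$ is large enough that at most one maximal subgroup can accommodate it, so the counting terminates quickly; most effort concentrates on the small primes $r \in \{2,3\}$ where several large maximal subgroups compete.

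The main obstacle is the Monster $\mathbb{M}$, whose maximal subgroups were only recently completed; here I would lean on \cite{Wil,DLP} for the full list of $46$ classes and carefully verify the order-divisibility condition for the small primes, where $|R|$ is enormous but several maximal subgroups (e.g.\ the large local and almost simple subgroups) have comparably divisible orders. A secondary subtlety is the passage from $G=T$ to $G=T.2$ at $r=2$: adding the outer automorphism can fuse two classes of $r'$-index subgroups of $T$ into one class in $G$, or can enlarge a subgroup's index by a factor that destroys the $r'$-index property, so I would track how each candidate class of $T$ extends to or splits in $T.2$ using the information in \cite{Wil}. Throughout, the arguments are case-checks rather than general theory, and the verification is most naturally carried out with the aid of the character-table and maximal-subgroup data in \textsc{GAP} or \textsc{Magma}, recording the finitely many surviving triples $(G,r,H)$ in Table~\ref{tab:spor}.
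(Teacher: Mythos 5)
Your proposal is correct, but it takes a genuinely different route from the paper's in two places. First, the paper never runs the $r=2$ computation at all: it invokes Aschbacher's classification \cite[Theorem A]{asch80}, which already shows that no sporadic socle occurs when a Sylow $2$-subgroup lies in a unique maximal subgroup; this disposes of both $G=T$ and $G=T.2$ at once, and the remainder of the proof can then assume $r$ odd, whence $G=T$ since $|\Out(T)|\leqs 2$. Your direct scan of odd-index maximal subgroups of $T$ and $T.2$ is a legitimate replacement, but it is where most of the hidden labour sits: you must source complete maximal subgroup lists for the groups $T.2$ (including novelties, i.e.\ maximal subgroups of $T.2$ meeting $T$ in a non-maximal subgroup), which is exactly the bookkeeping the paper avoids. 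Second, for the Monster the paper deliberately does \emph{not} use the full list of $46$ classes from \cite{DLP}: it argues only with the $44$ classes known from \cite{Wil}, checking that at least two of them have $r'$-index whenever $r\notin\{47,59,71\}$, so $|\mathcal{M}(R)|\geqs 2$ regardless of any undiscovered maximal subgroups, and for $r\in\{47,59,71\}$ observing that any hypothetical extra maximal subgroup would be almost simple with socle $S\in\{{\rm L}_2(8),{\rm L}_2(13),{\rm L}_2(16),{\rm U}_3(4)\}$ and $|\Aut(S)|$ coprime to $r$, hence could not contain a (nontrivial) Sylow $r$-subgroup. This makes the Monster case independent of the then-very-recent preprint \cite{DLP}, whereas your argument leans on it; conversely, granting \cite{DLP}, your version is a clean uniform computation. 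Both routes rest on the same pivot, Corollary \ref{c:core} (with $R_0\neq 1$ automatic since $r$ divides $|T|$), and both terminate in the finite check that populates Table \ref{tab:spor}.
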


\begin{proof} 
In view of \cite[Theorem A]{asch80}, we may assume $r$ is odd, in which case $G = T$ since $|{\rm Out}(T)| \leqs 2$. If $G \ne \mathbb{M}$, then the character table of both $G$ and a representative of each conjugacy class of maximal subgroups of $G$ is available in the \textsf{GAP} Character Table Library \cite{GAPCTL}. So in these cases, noting  Corollary \ref{c:core}, it is an entirely straightforward exercise to determine the relevant triples $(G,r,H)$.

Finally, let us assume $G = \mathbb{M}$ is the Monster group. As discussed in \cite{Wil}, $G$ has $44$ known conjugacy classes of maximal subgroups and any additional maximal subgroup is almost simple with socle $S$ in the following list: 
\begin{equation}\label{e:list}
{\rm L}_{2}(8), \; {\rm L}_{2}(13), \; {\rm L}_{2}(16), \; {\rm U}_{3}(4).
\end{equation}
(As noted in Section \ref{ss:ss}, this ambiguity has very recently been resolved in \cite{DLP}.) By inspecting the list of known maximal subgroups, it is easy to check that $|\mathcal{M}(R)| \geqs 2$ if $r \not\in \{47,59,71\}$. And for each of the remaining primes we deduce that $|\mathcal{M}(R)| = 1$ since $|{\rm Aut}(S)|$ is indivisible by $r$ for each simple group $S$ in \eqref{e:list}. The result follows. 
\end{proof}

\begin{prop}\label{p:alt}
The conclusion to Theorems \ref{t:prime2} and \ref{t:main1} hold when $T$ is an alternating group.
\end{prop}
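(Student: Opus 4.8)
The plan is to apply Corollary \ref{c:core} and thereby reduce the entire problem to counting conjugacy classes of maximal subgroups of $r'$-index, using the explicit description of the maximal subgroups of $A_n$ and $S_n$ furnished by the Aschbacher--O'Nan--Scott theorem (our reference being \cite{LPS}). First I would carry out the group-theoretic reduction: by Corollary \ref{c:easy} we may assume $G/T$ is an $r$-group, so $G = A_n$ or $S_n$ when $r=2$, while $G = T = A_n$ when $r$ is odd, since ${\rm Out}(A_n)$ is a $2$-group (of order $2$, recalling $n \ne 6$). As $R_0 = R \cap T \ne 1$ whenever $r \mid |T|$, Lemma \ref{l:sylow} guarantees $\mathcal{M}(R) \ne \emptyset$, so by Corollary \ref{c:core} it suffices to decide for which pairs $(n,r)$ the group $G$ has exactly one class of maximal subgroups $M$ with $[G:M]$ coprime to $r$, equivalently exactly one class of maximal subgroups containing a full Sylow $r$-subgroup. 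A finite list of small degrees will be settled by direct computation, and the case $r=2$ can be extracted from the classification of odd-index maximal subgroups of $A_n$ and $S_n$ due to Liebeck and Saxl \cite{LS85}: imposing uniqueness of the class leaves precisely the point stabiliser $S_{n-1} \cap G$ with $n = 2^k+1$, as in Theorem \ref{t:prime2}(i).

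For the main case, $r$ odd, I would run through the O'Nan--Scott types in turn, in each case comparing the $r$-part $(|M|)_r$ of a candidate maximal subgroup $M$ with $(|A_n|)_r = r^{\sum_{i\geqs 1}\lfloor n/r^i\rfloor}$ via Legendre's formula. For the intransitive subgroups $(S_k \times S_{n-k}) \cap G$ with $1 \leqs k < n/2$ the index is $\binom{n}{k}$, and Kummer's theorem tells us this is coprime to $r$ exactly when the base-$r$ digits of $k$ are dominated by those of $n$; analysing the admissible $k$ shows there is a single such class precisely when $n = r^k+1$, giving case (i)(a). For the imprimitive subgroups $(S_k \wr S_m) \cap G$ with $n = km$, a short valuation computation singles out the block structures $(k,m) = (r,2)$ with $n = 2r$ and $(k,m) = (r,r)$ with $n = r^2$ as the configurations in which $(|M|)_r = (|A_n|)_r$, yielding cases (i)(b) and (i)(c); here I would be careful at the boundary $k = n/2$, where the intransitive subgroup is not maximal but is absorbed into the imprimitive one.

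The primitive maximal subgroups are handled by size: a primitive subgroup $M \leqs A_n$ of non-large-base type has order bounded subexponentially in $n$, whereas $(n!)_r$ grows like $r^{n/(r-1)}$, so for fixed $r$ only finitely many degrees can admit such an $M$ of $r'$-index, and these I would check directly. The remaining primitive possibilities are the large-base product-action groups (already accounted for in the imprimitive analysis, or excluded) and the affine groups of prime degree $n = r$, which is where the real work lies. For $n = r$ prime, a maximal subgroup has $r'$-index if and only if it is transitive, equivalently contains an $r$-cycle, so I would invoke the classification of transitive permutation groups of prime degree: the relevant maximal subgroups are ${\rm AGL}_1(r) \cap A_r = N_G(R_0)$ together with the $2$-transitive almost simple groups of degree $r$. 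Uniqueness of the class then fails exactly when an extra such group exists, namely when $r = (q^d-1)/(q-1)$ for a prime power $q$ and $d \geqs 2$ (the projective groups ${\rm PSL}_d(q)$ on points of projective space) or when $r \in \{11,23\}$ (the Mathieu groups $M_{11}, M_{23}$); the exceptional survival of $r = 5$ is explained by the coincidence ${\rm PSL}_2(4) \cong A_5$, which produces no new subgroup. Assembling these strands recovers exactly case (i)(d).

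The step I expect to be the main obstacle is this prime-degree analysis: pinning down precisely which primes $r$ yield a second class of $r'$-index maximal subgroups requires the full classification of $2$-transitive groups of prime degree together with the delicate number-theoretic condition $r = (q^d-1)/(q-1)$ and the Mathieu exclusions, and one must take care that degenerate isomorphisms (such as ${\rm PSL}_2(4)\cong A_5$) neither spuriously create nor destroy a class.
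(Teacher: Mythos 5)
Your overall skeleton (reduce via Corollary \ref{c:core} to counting conjugacy classes of maximal subgroups of $r'$-index, then compare $r$-parts of orders type by type) is reasonable, and your intransitive, imprimitive and prime-degree analyses would, with care, recover cases (i)(a)--(d); the paper organises things differently, by the orbit structure of $R$ itself rather than by the O'Nan--Scott type of the candidate overgroup. But there is a genuine gap in your treatment of the primitive maximal subgroups, and it sits exactly where the condition $k \geqs 2$ in case (i)(a) comes from. Your argument is that non-large-base primitive groups have subexponential order while $(n!)_r$ grows like $r^{n/(r-1)}$, so for fixed $r$ only finitely many degrees admit a primitive subgroup of $r'$-index, ``and these I would check directly.'' That is not a proof: the theorem quantifies over all pairs $(n,r)$, and the exceptional pairs form infinite families as $r$ varies. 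Concretely, for every prime $r \geqs 5$ the group ${\rm L}_2(r)$ acts $2$-transitively on $n = r+1$ points and satisfies $|{\rm L}_2(r)|_r = r = (n!)_r$ (since $n < 2r$), and for every Mersenne prime $r = 2^d-1$ the affine group ${\rm AGL}_d(2)$ of degree $n = 2^d = r+1$ likewise has $r'$-index. For $n = r+1$ your intransitive analysis produces exactly one class (the point stabiliser) and there is no imprimitive class (as $r \nmid n$), so your assembly would conclude $|\mathcal{M}(R)| = 1$ for $n = r+1$ --- which is false: ${\rm L}_2(r)$ is transitive and lies in some maximal subgroup, which is then transitive, contains a Sylow $r$-subgroup, and is therefore a second class not conjugate to $S_{n-1} \cap G$. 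This is precisely why (i)(a) requires $k \geqs 2$. The paper closes this case uniformly with Jones's theorem \cite[Theorem 1.2]{Jones} on primitive groups containing a cycle of length $n-1$, combined with the Catalan-type result \cite[Lemma 2.6]{BTV} and maximality from \cite{LPS}, and it excludes primitive overgroups for $n = r^k+1$ ($k \geqs 2$), $n = 2r$ and $n = r^2$ by Jordan's theorem, since there $R$ contains an $r$-cycle fixing at least three points. Some such uniform tool must replace your finiteness-plus-direct-checking step; your appeal to the $2$-transitive classification is made only for degree $n = r$, where it is indeed needed, but not for degree $r+1$, where it (or Jones/Jordan) is equally essential.

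A secondary, related inaccuracy: the claim that a valuation computation ``singles out $(k,m)=(r,2)$ and $(r,r)$ as the configurations in which $(|M|)_r = (|A_n|)_r$'' is false as stated. In fact $(S_r \wr S_m) \cap G$ has $r'$-index for \emph{every} $m$ with $n = rm$, since $((r!)_r)^m (m!)_r = r^m(m!)_r = (n!)_r$; what is true is that $n = 2r$ and $n = r^2$ are the only degrees for which such an imprimitive class is the \emph{unique} class overall, and indeed the ubiquity of these subgroups is what forces non-uniqueness for most $n$ divisible by $r$. Similarly, ``a single intransitive class precisely when $n = r^k+1$'' fails in both directions as a statement about intransitive subgroups alone (e.g.\ $n = r^a + r^b$ with $a > b \geqs 1$ also gives a single intransitive class); uniqueness only emerges after combining all types, including the primitive analysis above. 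These two points are repairable bookkeeping, but the primitive gap is not: as written, the proposal yields the wrong classification at $n = r+1$.
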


\begin{proof}
Here $T = A_n$ with $n \geqs 5$ and we note that $G = T$ if $r$ is odd. The groups with $n \leqs 24$ can be handled very easily using {\sc Magma} \cite{magma}, so we will assume $n \geqs 25$ for the remainder of the proof. Set $\O = \{1, \ldots, n\}$ and $R_0 = R \cap T$. We refer to \cite{LPS} for detailed information on the maximal subgroups of $G$. 

\vs

\noindent \emph{Case 1. $R$ is intransitive.}

\vs

To begin with, let us assume $R$ has $m \geqs 2$ orbits on $\O$ of length $\ell_1, \ldots, \ell_m$, where $\ell_i \geqs \ell_{i+1}$ for all $i$. Note that each $\ell_i$ is an $r$-power.

First assume $m \geqs 3$. Then $\mathcal{M}(R)$ contains $(S_{\ell_1} \times S_{n-\ell_1}) \cap G$ (or $(S_{\ell_1}\wr S_2) \cap G$ if $n = 2\ell_1$). In addition, $\mathcal{M}(R)$ also contains $(S_{\ell_1} \wr S_m) \cap G$ if $\ell_1 = \ell_m$, and $(S_{\ell_m} \times S_{n-\ell_m}) \cap G$ if $\ell_1 > \ell_m$, whence $|\mathcal{M}(R)| \geqs 2$ in this case.

Now suppose $m=2$, so $n=\ell_1+\ell_2$. As above, $\mathcal{M}(R)$ contains $(S_{\ell_1} \times S_{n-\ell_1}) \cap G$ (or $(S_{\ell_1}\wr S_2) \cap G$ if $n = 2\ell_1$). If $\ell_2>1$ then $n=tr$ for some $t \geqs 2$ and thus $(S_r \wr S_t) \cap G$ is a maximal subgroup with $r'$-index. Therefore, to complete the analysis of Case 1 we may assume that either $\ell_2 =1$ or $\ell_1 = \ell_2 = r$ (with $r$ odd in the latter case).

First assume $\ell_2=1$. Write $n = r^k+1$ for some $k \geqs 1$ and note that 
\[
|R_0| = \frac{r^{\sum_{i} \left\lfloor n/r^i \right\rfloor}}{1+\delta_{2,r}}  = r^{\frac{n-2-\delta_{2,r}}{r-1}},
\]
where $\delta_{2,r} = 1$ if $r=2$, otherwise $\delta_{2,r} = 0$. Plainly, $\mathcal{M}(R)$ contains an intransitive subgroup $H = S_{n-1} \cap G$ and we need to determine whether or not there are any additional subgroups in $\mathcal{M}(R)$. To this end, suppose $\mathcal{M}(R)$ contains a subgroup $L$ that is not conjugate to $H$. Then $L$ must be transitive on $\O$ and so there are two cases to consider. 

Suppose $L$ acts imprimitively on $\O$, say $L = (S_a \wr S_b) \cap G$ for some $a,b \geqs 2$ with $n = ab = r^k+1$. Suppose that $R$ fixes $n \in \O$ and let $\Delta \subseteq \Omega$ be an imprimitive block for $L$ of size $a$. Since $\{\Delta^x \, : \, x \in L\}$ is a partition of $\Omega$, we may assume that $n \in \Delta$. If $y \in R$ then $n^y = n \in \Delta^y \cap \Delta \ne \emptyset$ and thus  $\Delta^y = \Delta$. Therefore, $\Delta$ is a union of $R$-orbits. Finally, since  $R$ contains a cycle of length $n-1>a$, we deduce that $|\Delta| = a = 1$ and we have reached a contradiction.

We have now reduced to the case where $L$ acts primitively on $\O$ (we continue to assume that $\ell_2=1$). Since $R$, and hence $L$, contains a cycle of length $r^k = n-1$, we can use \cite[Theorem 1.2]{Jones} to determine the possibilities for $L$. As a consequence, either 

\begin{itemize}\addtolength{\itemsep}{0.2\baselineskip}
\item[(a)] ${\rm AGL}_d(q) \leqs L \leqs {\rm A\Gamma L}_d(q)$, where $n = q^d$, $d \geqs 1$ and $q = s^a$ is a prime power; or  
\item[(b)] $L = {\rm L}_2(s)$ or ${\rm PGL}_2(s)$, where $n=s+1$ and $s \geqs 5$ is a prime.
\end{itemize}
If (a) holds then $n = r^k+1 = s^{ad}$ and \cite[Lemma 2.6]{BTV} implies that $s=2$, $k=1$ and $ad$ is a prime. Similarly, if (b) holds then $n=r^k+1=s+1$, so $r^k=s$ and thus $r=s$ and $k=1$. So in both cases we have $n = r+1$, hence $r$ is odd, $G=T$ and $R$ is contained in a maximal subgroup ${\rm L}_2(r)$ of $G$ (this is maximal by the main theorem of \cite{LPS}, noting that $r \ne 23$ since $n \geqs 25$). It follows that $|\mathcal{M}(R)| \geqs 2$. 

In conclusion, if $m=2$ and $\ell_2 = 1$ then $n = r^k+1$ and $|\mathcal{M}(R)| = 1$ if and only if $k \geqs 2$ (see Theorem \ref{t:prime2}(i) and Theorem \ref{t:main1}(i)(a)).

To complete the analysis of Case 1, let us assume $m=2$ and $\ell_1 = \ell_2 = r$. Here $n=2r$, $|R| = r^2$, $r$ is odd and $\mathcal{M}(R)$ contains an imprimitive subgroup $H = (S_r \wr S_2) \cap G$. Suppose $L$ is another subgroup in $\mathcal{M}(R)$, which is not conjugate to $H$. Then $L$ must act primitively on $\O$ and it contains an $r$-cycle, so a classical theorem of Jordan (see \cite[Theorem 1.1]{Jones}, for example) implies that $L = G$, which is a contradiction. Therefore, $|\mathcal{M}(R)| = 1$ whenever $n = 2r$ (see case (i)(b) in Theorem \ref{t:main1}).

\vs

\noindent \emph{Case 2. $R$ is transitive and imprimitive.}

\vs

Here $n = r^k$ with $k \geqs 2$. If $k\geqs 3$, then $\mathcal{M}(R)$ contains $(S_r \wr S_{n/r}) \cap G$ and $(S_{n/r} \wr S_r) \cap G$, so we may assume $k=2$ and $r \geqs 5$. Note that $\mathcal{M}(R)$ contains the imprimitive maximal subgroup $H = (S_r \wr S_r) \cap G$. Suppose $L \in \mathcal{M}(R)$ and note that $L$ acts transitively on $\O$. Since $n=r^2>r+2$, it follows that $L$ contains an $r$-cycle fixing at least $n-r \geqs 3$ points. So if $L$ is primitive on $\O$, then Jordan's theorem implies that $L = G$, which is a contradiction. Therefore, $L$ acts imprimitively on $\Omega$ and the blocks in a nontrivial block system have size $r$. It follows that $L$ is conjugate to $H$ and by applying Corollary \ref{c:core} we conclude that $\mathcal{M}(R) = \{H\}$.

\vs

\noindent \emph{Case 3. $R$ is primitive.}

\vs

Finally, let us assume $R$ acts primitively on $\Omega$, in which case $|R| = n = r$ and $N_G(R) = {\rm AGL}_1(r) \cap G$ is a maximal subgroup of $G$. By a classical theorem of Burnside, it follows that either $\mathcal{M}(R) = \{ N_G(R) \}$, or $R< L < G$ for some $2$-transitive almost simple group $L$ with socle $S$. By inspecting \cite{GPP}, using the fact that $n \geqs 25$, we deduce that $S = {\rm L}_d(q)$ with $r = (q^d-1)/(q-1)$ is the only possibility. Such a subgroup $L$ does not contain $N_G(R)$ and thus $|\mathcal{M}(R)| = 1$ if and only if $n=r$ is not of this form. This completes the proof of the proposition.
\end{proof}

\section{Proof of Theorem \ref{t:prime2}}\label{s:prime2}

In this section we complete the proof of Theorem \ref{t:prime2}. In view of Propositions \ref{p:spor} and \ref{p:alt}, we may assume $T$ is a simple group of Lie type over $\mathbb{F}_q$, where $q=p^f$ and $p$ is a prime. Let $R$ be a Sylow $2$-subgroup of $G$ and set $R_0 = R \cap T$. In view of Corollary \ref{c:easy}, we may assume throughout that $G/T$ is a $2$-group. 

Recall that \cite[Theorem A]{asch80} describes the possibilities for $(G,H)$ with $\mathcal{M}(R) = \{H\}$ and our goal is to obtain a precise classification of the pairs $(G,H)$ that arise.

\begin{prop}\label{p:prime2_ex}
The conclusion to Theorem \ref{t:prime2} holds when $T$ is an exceptional group of Lie type.
\end{prop}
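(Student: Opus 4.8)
The plan is to read off the answer from Corollary~\ref{c:core}. Since $T$ is a group of Lie type we have $2\mid|T|$, so $R_0=R\cap T\neq1$, and Corollary~\ref{c:core} tells us that $\mathcal{M}(R)=\{H\}$ if and only if $G$ has a \emph{unique} conjugacy class of maximal subgroups of odd index. Thus the whole statement reduces to counting conjugacy classes of odd-index maximal subgroups of $G$, and I must show that this number is at least $2$ unless $G={}^2B_2(q)$, in which case the single class is represented by the Borel subgroup $N_G(R)=q^{1+1}{:}(q-1)$ (giving part (ii) of Theorem~\ref{t:prime2}).

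First I would dispose of the case $p=2$ (equivalently $q$ even). Here $R$ is a Sylow $p$-subgroup, so $R$ lies in a Borel subgroup $B$ and hence in every parabolic subgroup containing $B$; each such parabolic therefore has odd index. The number of conjugacy classes of maximal parabolic subgroups equals the twisted Lie rank of $G$, which is at least $2$ for every exceptional group except ${}^2B_2(q)$. Consequently, for types $G_2,F_4,E_6,{}^2E_6,E_7,E_8,{}^3D_4,{}^2F_4$ there are two non-conjugate odd-index parabolics, so $|\mathcal{M}(R)|\geqs2$; the Tits group ${}^2F_4(2)'$ is handled by inspecting its two $2$-local maximal subgroups directly. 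For $G={}^2B_2(q)$ the only parabolic is $B=N_G(R)$, and a short computation with $|{}^2B_2(q)|=q^2(q^2+1)(q-1)$ shows that $|R|=q^2$ whereas every other maximal subgroup (the dihedral group $D_{2(q-1)}$, the two Frobenius groups of order $4(q\pm\sqrt{2q}+1)$, and the subfield subgroups) has $2$-part of order at most $4$, hence even index. So $B$ is the unique odd-index class; since $|{\rm Out}(T)|$ is odd we have $G=T$, as required.

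Next I would treat $p$ odd, where $R$ is no longer unipotent and $B$ has even index. My main tool is the Liebeck--Saxl classification of odd-index maximal subgroups \cite{LS85}, combined with the description of maximal-rank subgroups in \cite{LSS} and the arithmetic of Lemma~\ref{l:gk} for computing $2$-parts of indices. For each exceptional type I would exhibit two non-conjugate odd-index maximal subgroups, taken from the subsystem (maximal-rank) subgroups produced by the Borel--de~Siebenthal procedure. In $G_2(q)$, for example, the subgroup of type $A_1\tilde A_1$ has odd index $q^4(q^4+q^2+1)$ for all $q$, while exactly one of $\mathrm{SL}_3(q){:}2$ (if $q\equiv1\imod{4}$) and $\mathrm{SU}_3(q){:}2$ (if $q\equiv3\imod{4}$) also has odd index, yielding two classes. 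The twisted rank-one group ${}^2G_2(q)$ is settled by noting that both the involution centraliser $2\times\mathrm{L}_2(q)$ and a $2$-local maximal subgroup $(2^2\times D_{(q+1)/2}){:}3$ contain a full Sylow $2$-subgroup (of order $8$) and are non-conjugate, so again there are two classes. The larger types $F_4,E_6,{}^2E_6,E_7,E_8$ are handled in the same way using two suitable subsystem subgroups, while ${}^3D_4(q)$ uses $G_2(q)$ (of index $q^6(q^8+q^4+1)$) together with a subgroup of type $A_1(q^3)A_1(q)$; the finitely many small $q$ are checked directly with {\sc Magma}.

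The main obstacle is exactly this odd-characteristic bookkeeping: one must run through every exceptional type, splitting $G_2$, $F_4$ and ${}^2G_2$ into cases according to $q\bmod4$ (or $q\bmod8$), and certify in each case that two of the candidate maximal-rank or involution-type subgroups really do have odd index and are non-conjugate. A convenient simplification is that Corollary~\ref{c:core} only asks for a \emph{lower} bound of $2$ on the number of odd-index classes, so I never need the (incomplete) classification of the almost simple $\mathcal{S}$-type maximal subgroups of $E_7(q)$ and $E_8(q)$ mentioned in Section~\ref{ss:ss}; the two required witnesses always appear among the fully classified maximal-rank subgroups.
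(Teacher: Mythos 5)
Your route is fundamentally different from the paper's: the paper proves this proposition in two lines by citing \cite[Theorem A]{asch80}, which already classifies the almost simple groups whose Sylow $2$-subgroup lies in a unique maximal subgroup; you are attempting to re-derive that classification for exceptional groups via Corollary \ref{c:core}. That is a legitimate programme, but as written it contains a genuine gap in the characteristic $2$ case. Your counting principle --- ``the number of conjugacy classes of maximal parabolic subgroups of $G$ equals the twisted Lie rank, hence is at least $2$'' --- breaks down when $G$ contains graph or graph-field automorphisms, and such $G$ are in scope because $G/T$ is only assumed to be a $2$-group. Concretely, for $T = F_4(q)$ with $q=2^f$ we have ${\rm Out}(T) \cong C_{2f}$, generated by the exceptional graph automorphism, so for instance $G = F_4(2).2$ satisfies the hypotheses; there the graph-field automorphism fuses the four classes of maximal parabolics of $T$ in pairs ($P_1$ with $P_4$, $P_2$ with $P_3$), so \emph{no} maximal parabolic of $T$ extends to a maximal subgroup of $G$ and your two claimed witnesses simply do not exist. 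The bound $|\mathcal{M}(R)|\geqs 2$ is still true, but proving it requires the novelty subgroups $N_G(P_J)$ attached to the two graph-stable \emph{non-maximal} parabolics (of Levi types $A_1\tilde A_1$ and $B_2$), together with the fact that every overgroup of a Sylow $2$-subgroup in characteristic $2$ is parabolic. This is not pedantry: applying your count to ${\rm L}_3(q).2$ or ${\rm PSp}_4(q).2$ in characteristic $2$ gives the answer $2$, whereas the true answer is $1$ --- these are exactly the $P_{1,2}$ entries of Table \ref{tab:prime2} --- so fusion by graph automorphisms is precisely the mechanism that can destroy your lower bound, and it must be analysed rather than assumed away. (For $E_6(q)$ with a graph involution the fix is easy, since the classes of $P_2$ and $P_4$ remain stable.)

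The odd-characteristic half has the same weakness plus a substantial deferral. First, graph automorphisms again: for $T=G_2(3^f)$ one has ${\rm Out}(T)\cong C_{2f}$, so $G$ may contain a graph-type involution; this involution interchanges the two classes of subgroups of type $A_2^{\e}$ (the paper itself notes in Proposition \ref{p:g2} that there are two such classes when $p=3$), so your second witness ${\rm SL}_3^{\e}(q).2$ does not yield a maximal subgroup of such a $G$, and a different subgroup (for example the local subgroup $2^3.{\rm L}_3(2)$, whose index is odd only when $q \equiv \pm 3 \imod{8}$, or a torus-normaliser novelty) must be produced. Second, for $F_4$, $E_6$, ${}^2E_6$, $E_7$, $E_8$ you exhibit no witnesses at all, and the bookkeeping you defer is genuinely delicate because the natural candidates fail: a Lemma \ref{l:gk} computation shows that the involution centralisers of type $A_1C_3$ in $F_4(q)$, $A_1A_5$ in $E_6(q)$ and $A_1E_7$ in $E_8(q)$ all have \emph{even} index, so the correct pairs are less obvious ones such as $B_4(q)$ together with $D_4(q).S_3$ in $F_4(q)$, or $D_8(q)$ together with a maximal torus normaliser $(q-\e)^8.W$ in $E_8(q)$. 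In its present form, then, the characteristic $2$ argument contains a false step and the odd-characteristic argument is largely unproven, whereas the paper closes every one of these cases at a stroke by quoting \cite[Theorem A]{asch80}.
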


\begin{proof}
From \cite[Theorem A]{asch80} we see that $|\mathcal{M}(R)| = 1$ only if $T = {}^2B_2(q)$ and $H$ is a Borel subgroup. Here $G = T$ since we are assuming $G/T$ is a $2$-group, and it is clear that $\mathcal{M}(R) = \{H\}$ in this case.
\end{proof}

\begin{rem}
In the statement of Proposition \ref{p:prime2_ex} we are excluding the groups with socle $T = {}^2G_2(3)' \cong {\rm L}_2(8)$. Here it is easy to show that $|\mathcal{M}(R)| = 1$ in this case if and only if $G=T$.
\end{rem}

For the remainder of this section we may assume $T$ is a classical group and we divide the proof into several cases.

\begin{lem}\label{l:2}
The conclusion to Theorem \ref{t:prime2} holds when $T = {\rm L}_2(q)$.
\end{lem}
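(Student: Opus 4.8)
The plan is to classify the Sylow $2$-subgroups $R$ of $G$ with $T={\rm L}_2(q)$ satisfying $\mathcal{M}(R)=\{H\}$ by invoking Corollary \ref{c:core}: since $R_0\ne 1$, this reduces to deciding exactly when $G$ has a \emph{unique} conjugacy class of maximal subgroups of odd index. So the whole argument becomes a search through the well-understood maximal subgroup structure of ${\rm L}_2(q)$ and its almost simple extensions (with $G/T$ a $2$-group), retaining only those classes of odd index and counting them. First I would recall, following \cite{BHR} or the classical Dickson list, the maximal subgroups of ${\rm L}_2(q)$: the Borel subgroup (a point stabiliser of type $[q]{:}((q-1)/d)$ with $d=(2,q-1)$), the dihedral subgroups $D_{q-1}$ and $D_{q+1}$ of type ${\rm GL}_1(q)\wr S_2$ and ${\rm GL}_1(q^2)$, the subfield subgroups ${\rm L}_2(q_0)$ and ${\rm PGL}_2(q_0)$, and the handful of exceptional subgroups $A_4,S_4,A_5$.

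The core computation will be to determine the $2$-part of the index of each such subgroup, which amounts to comparing $2$-adic valuations of orders. Writing $q=p^f$, the key split is between $p=2$ and $p$ odd. When $p=2$, the Sylow $2$-subgroup is the unipotent group $[q]$, so the Borel subgroup is the unique maximal subgroup of odd index and thus contains $R$ uniquely — I expect this to land in case (ii) of Table \ref{tab:prime2} in a clean way, and indeed one should check that $R=R_0$ sits only in the Borel. When $p$ is odd, I would use Lemma \ref{l:gk}(i) to compute $(q^2-1)_2$, $(q-1)_2$ and $(q+1)_2$ and thereby the $2$-part of $|{\rm L}_2(q)|=\frac{1}{d}q(q^2-1)$; the two dihedral subgroups $D_{q-1}$ and $D_{q+1}$ are the natural candidates for odd index, and exactly one of them contains a full Sylow $2$-subgroup of $T$ according to whether $q\equiv 1$ or $q\equiv 3\pmod 4$ (the other then has even index). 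The delicate point is that $|\mathcal{M}(R)|=1$ requires that the \emph{surviving} dihedral class be the only odd-index class, so I must rule out a second odd-index class: the other dihedral group, the subfield subgroups, and the exceptional $A_4,S_4,A_5$ must all be shown to have even index (or to be non-maximal, or to fuse into a single class) under the precise congruence conditions on $q$. This is where the Fermat/Mersenne prime conditions and the $|R|\geqslant 2^4$ type restrictions seen elsewhere in the paper enter, forcing $(q^2-1)_2$ to be controlled so that no competing odd-index subgroup exists.

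Concretely, the main steps in order are: (1) apply Corollary \ref{c:core} to reframe the problem as counting odd-index classes; (2) tabulate the maximal subgroups from \cite{BHR}, carefully accounting for how they extend (or split into two classes under ${\rm PGL}_2(q)$-fusion) in the almost simple overgroup $G$, since whether two $T$-classes fuse in $G$ changes the class count; (3) compute $2$-parts of indices using Lemma \ref{l:gk}, separating $p=2$ from $p$ odd and the residues $q\equiv\pm 1\pmod 4$; (4) in each residue class, isolate the unique odd-index maximal and show all others have even index precisely when $q$ is constrained to be a Fermat/Mersenne prime (or $q=9$), reading off the resulting pairs $(G,H)$; and (5) match these against the entries of Table \ref{tab:prime2}, remembering the exclusions $T\ne A_6,{\rm PSp}_4(2)'$ noted after Theorem \ref{t:prime2}. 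The main obstacle will be the bookkeeping of conjugacy-class fusion and splitting as one passes between ${\rm L}_2(q)$, ${\rm PGL}_2(q)$ and their field-automorphism extensions: the two dihedral subgroups and the $S_4$-type subgroups behave differently in $T$ versus $G$, and a miscount there directly produces a wrong value of $|\mathcal{M}(R)|$. I would handle this by treating $G={\rm L}_2(q)$, $G={\rm PGL}_2(q)$, and the proper field-extension cases $G\leqslant {\rm P\Gamma L}_2(q)$ as separate subcases, in each invoking the explicit class data of \cite{BHR} rather than attempting a uniform argument, since the low uniformity of ${\rm L}_2(q)$ makes a case analysis both unavoidable and ultimately short.
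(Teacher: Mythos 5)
Your proposal takes essentially the same route as the paper's proof: reduce via Corollary \ref{c:core} to counting conjugacy classes of odd-index maximal subgroups, read the candidates off from \cite{BHR} (Dickson's list), compute $2$-parts of indices with Lemma \ref{l:gk}, and run the case analysis over $p=2$, $q \equiv 1 \imod{4}$ and $q \equiv 3 \imod{4}$, keeping track of how classes fuse or fail to be maximal as one passes from $T$ to ${\rm PGL}_2(q)$ and to field-automorphism extensions (the paper simply dispatches $q \leqs 11$ by {\sc Magma}).

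One concrete correction before you execute step (4), which as stated aims at a false characterization: Fermat/Mersenne primality conditions play no role in this lemma. They arise only later, in Corollaries \ref{c:NGR2} and \ref{c:max}, where one needs $q \pm 1$ to be a power of $2$ so that the dihedral subgroup is itself a $2$-group. In the present lemma the competing exceptional subgroups $A_4$, $S_4$, $A_5$ have Sylow $2$-subgroups of order at most $2^3$, so they are eliminated by purely $2$-adic conditions: the relevant entries of Table \ref{tab:prime2} require $q \geqs 13$ with $|R_0| \geqs 2^4$, or $|R_0| = 2^3$ together with $G = {\rm PGL}_2(q)$ (in which case the subgroups $S_4 < T$ are no longer maximal in $G$), plus the subfield constraints ($f$ a $2$-power with $G \not\leqs {\rm P\Sigma L}_2(q)$ when $q \equiv 1 \imod{4}$, and $f=1$ when $q \equiv 3 \imod{4}$). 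For example, $q = 47$ satisfies the conclusion with $H$ of type ${\rm GL}_1(q^2)$ although $47$ is not a Mersenne prime. Your index computations, carried out faithfully, would land on exactly these conditions, so this is a wrong guess at the shape of the answer rather than a flaw in the method; but if you imposed Fermat/Mersenne primality as a hypothesis you would obtain a strictly smaller (hence incorrect) list of pairs $(G,H)$.
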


\begin{proof}
If $p=2$ then $\mathcal{M}(R) = \{H\}$ with $H = N_G(R)$, so we can assume $q$ is odd. Here the groups with $q \leqs 11$ can be handled using {\sc Magma}, so we will assume $q \geqs 13$. 

Suppose $q \equiv 1 \imod{4}$ and note that $|R_0| = (q-1)_2$. Since $q \geqs 13$, we observe that a $\C_2$-subgroup of type ${\rm GL}_1(q) \wr S_2$ is maximal and has odd index. We now divide the analysis into two cases according to $f$.

Suppose $f=1$, so $G=T$ or $T.2 = {\rm PGL}_2(q)$. By inspecting \cite[Tables 8.1, 8.2]{BHR} we deduce that $|\mathcal{M}(R)| = 1$ if $|R_0| \geqs 2^4$. On the other hand, if $|R_0| = 2^3$ then $S_4 \in \mathcal{M}(R)$ if $G = T$, whereas $|\mathcal{M}(R)| = 1$ if $G = T.2$. Finally, suppose $q \equiv 5 \imod{8}$, so $q \equiv \pm 1, \pm 3 \imod{10}$. If $G=T$ then either $q \equiv \pm 1 \imod{10}$ and $A_5 \in \mathcal{M}(R)$, or $q \equiv \pm 3 \imod{10}$ and $q \equiv 13,37 \imod{40}$, which means that $A_4 \in \mathcal{M}(R)$. Similarly, if $G = T.2$ then $S_4 \in \mathcal{M}(R)$ since $q \equiv 13,21,29,37 \imod{40}$.

Now assume $f>1$. Here we just need to determine whether or not $\mathcal{M}(R)$ contains a subfield subgroup $H$ of type ${\rm GL}_2(q_0)$, where $q = q_0^k$ with $k$ a prime. If $k$ is odd then $q_0 \equiv 1 \imod{4}$ and by applying Lemma \ref{l:gk} we deduce that $H$ has odd index. Similarly, if $k=2$ then $H_0 = {\rm PGL}_2(q_0)$ and thus $|H_0|_2 = (q_0^2-1)_2 = |R_0|$. Here $H$ is maximal in $G$ if and only if $G \leqs {\rm P\Sigma L}_2(q) = T.\la \phi \ra$, where $\phi$ is a field automorphism of order $f$. In summary, if $q \equiv 1 \imod{4}$ and $f>1$, then $|\mathcal{M}(R)| = 1$ if and only if $f$ is a $2$-power and $G \not\leqs {\rm P\Sigma L}_2(q)$.

To complete the proof, we may assume $q \equiv 3 \imod{4}$. Here $p \equiv 3 \imod{4}$, $f$ is odd, $|R_0| = (q+1)_2$ and a $\C_3$-subgroup of type ${\rm GL}_1(q^2)$ has odd index. If $f >1$ then it is easy to see that every maximal subfield subgroup has odd index, so $|\mathcal{M}(R)| = 1$ only if $f=1$. Then by  arguing as in the case $q \equiv 1 \imod{4}$ we can show that $|\mathcal{M}(R)| = 1$ if and only if $|R_0| \geqs 2^4$, or if $|R_0| = 2^3$ and $G = {\rm PGL}_2(q)$.
\end{proof}

\begin{lem}\label{l:3}
The conclusion to Theorem \ref{t:prime2} holds when $T = {\rm L}_3^{\e}(q)$.
\end{lem}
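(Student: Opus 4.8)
The plan is to apply Corollary \ref{c:core}. Since $T = {\rm L}_3^{\e}(q)$ always has even order (as $q^2-1$ is even), the Sylow $2$-subgroup $R_0 = R \cap T$ is nontrivial, so $\mathcal{M}(R) = \{H\}$ if and only if $G$ has a unique conjugacy class of maximal subgroups of odd index. The task therefore reduces to enumerating the odd-index maximal subgroups of $G$ and tracking how their $T$-classes fuse under the outer automorphisms comprising $G/T$ (recall $G/T$ is a $2$-group by Corollary \ref{c:easy}). Throughout I would compute $2$-parts using Lemma \ref{l:gk}; in particular a routine application gives $|T|_2 = (q-\e)_2^2\,(q+\e)_2$. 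Since $n = 3 \leqs 12$, I can read off the maximal subgroups of $G$, together with their structures and orders, from \cite{BHR,KL}, which makes the bookkeeping explicit. I would first clear the small cases (say $q$ below an explicit bound, together with the degenerate isomorphism $T = {\rm L}_3(2) \cong {\rm L}_2(7)$ and the non-simple group $U_3(2)$) directly in {\sc Magma}, and then split the main argument according to the parity of $q$.

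For $q$ even, $R_0$ is a Sylow $p$-subgroup lying in every Borel subgroup, and the only odd-index maximal subgroups are the parabolics (a subfield or $\mathcal{S}$-subgroup has $2$-part strictly below $q^3 = |T|_2$). For $\e=-$ there is a single class of parabolics, giving $\mathcal{M}(R) = \{H\}$ already for $G=T$; for $\e=+$ there are two classes (point and hyperplane stabilisers), interchanged by a graph automorphism, so $G=T$ yields $|\mathcal{M}(R)| \geqs 2$, whereas if $G$ induces a graph automorphism the two classes fuse and the unique odd-index maximal becomes the Borel normaliser $N_G(R_0)$.

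For $q$ odd the candidate odd-index geometric subgroups are identified by comparing $2$-parts: for $\e=+$ the two parabolics are odd-index, for $\e=-$ it is the $\C_1$ stabiliser of a nondegenerate $1$-space (the isotropic-point parabolic now has even index $q^3+1$), and in both cases the monomial $\C_2$-subgroup of type ${\rm GL}_1^{\e}(q)\wr S_3$ is odd-index precisely when $(q+\e)_2 = 2$, i.e. $q \equiv \e \imod{4}$; the remaining geometric families $\C_3,\C_5,\C_8$ have even index. I would then count $G$-classes: for $\e=-$ the nondegenerate-point stabiliser already supplies a unique class when $q \equiv 1 \imod{4}$, and two incomparable classes when $q \equiv 3 \imod{4}$ (since no automorphism can fuse a $\C_1$ with a $\C_2$ subgroup, and ${\rm U}_3(q)$ has no graph automorphism), so $\e=-$ yields $\mathcal{M}(R)=\{H\}$ exactly when $q \not\equiv 3 \imod 4$. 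For $\e=+$ one needs a graph automorphism to fuse the two parabolic classes, after which the unique odd-index maximal is the $\C_1$ novelty stabilising a non-incident point–hyperplane pair (of odd index $(q^2+q+1)q^2$); the monomial subgroup then obstructs uniqueness exactly when $q \equiv 1 \imod{4}$. Matching the surviving configurations against Table \ref{tab:prime2} finishes the argument.

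The main obstacle will be the fusion analysis in the extensions $G > T$ and the correct identification of the resulting unique odd-index maximal subgroup when a graph automorphism merges the two parabolic classes of ${\rm L}_3(q)$: this is a Borel normaliser in even characteristic but a $\C_1$ point–hyperplane novelty in odd characteristic, and one must also check that no further novel odd-index maximals appear in these extensions. A secondary difficulty is that for small $q$ the $\C_6$-subgroups (of type $3^{1+2}$, arising when $q \equiv \e \imod 3$) and certain $\mathcal{S}$-subgroups can have odd index and perturb the generic count; this is precisely why those cases are disposed of computationally at the outset.
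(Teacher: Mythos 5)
Your overall framework (Corollary \ref{c:core}, reduction to counting $G$-classes of odd-index maximal subgroups via \cite{BHR,KL}, small cases by computer) is the same as the paper's, and your parity bookkeeping for the parabolic, $N_1$ and monomial subgroups is correct, as is your identification of the Borel novelty $N_G(R_0)$ for $p=2$ and the point--hyperplane novelty of type ${\rm GL}_2(q)\times {\rm GL}_1(q)$ for $p$ odd. However, there is a genuine gap: your claim that the $\C_5$-subgroups always have even index is false, and it fails precisely in the congruence classes $q \equiv -\e \imod{4}$ that survive your monomial-subgroup analysis. Suppose $\e=+$ and $q \equiv 3 \imod{4}$, so that $f$ is odd, and write $q=q_0^k$ with $k$ an odd prime divisor of $f$. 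The index of the subfield subgroup has $2$-part
\[
\left(\frac{q^3(q^2-1)(q^3-1)}{q_0^3(q_0^2-1)(q_0^3-1)}\right)_{\!2} = \frac{(q-1)_2^{2}(q+1)_2}{(q_0-1)_2^{2}(q_0+1)_2} = 1,
\]
since Lemma \ref{l:gk}(i) gives $(q\mp 1)_2=(q_0\mp 1)_2$ when $k$ is odd. Thus the subfield subgroup has odd index, and since it is normalised (up to conjugacy) by graph automorphisms it remains maximal in $G \not\leqs {\rm P\Gamma L}_3(q)$, producing a second class of odd-index maximal subgroups. The identical computation with $\e=-$, $q\equiv 1\imod{4}$ and $q=q_0^k$, $k$ an odd prime, shows that a subfield subgroup of type ${\rm GU}_3(q_0)$ has odd index in ${\rm U}_3(q)$.

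Consequently your stated conclusions are wrong for infinitely many $q$: for $\e=+$ you must add the condition $f=1$ (for example, if $G={\rm L}_3(27).\la \gamma\ra$ with $\gamma$ a graph automorphism, then both the point--hyperplane novelty and a subfield subgroup of type ${\rm GL}_3(3)$ lie in $\mathcal{M}(R)$), and for $\e=-$ you must add $f=2^a$ (for example, ${\rm U}_3(125)$ contains both $N_1$ and a subfield subgroup of type ${\rm GU}_3(5)$ with odd index). These are exactly the conditions ``$q=p\equiv 3\imod{4}$'' and ``$q\equiv 1 \imod{4}$ and $f=2^a\geqs 1$'' appearing in Table \ref{tab:prime2}. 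The paper's proof imports the restriction $q\equiv-\e\imod{4}$ from \cite[Theorem A]{asch80} and then devotes its main effort precisely to the subfield subgroups you discarded; any repair of your argument must do the same. (Your deferral of the case $q=5$, $\e=-$, where the $\mathcal{S}$-subgroup ${\rm M}_{10}$ with socle $A_6$ has odd index, to the computational stage is fine provided your explicit bound covers it.)
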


\begin{proof}
First assume $q$ is even. If $\e=+$ then by inspecting \cite[Tables 8.3, 8.4]{BHR} we see that $|\mathcal{M}(R)| = 1$ if and only if $G \not\leqs {\rm P\Gamma L}_3(q)$, in which case a Borel subgroup $H = P_{1,2}$ has odd index.  Similarly, if $\e=-$ then $\mathcal{M}(R) = \{H\}$ with $H = P_1$ (the stabiliser in $G$ of a totally singular $1$-space).

For the remainder, we may assume $q \equiv -\e \imod{4}$ (see \cite[Theorem A]{asch80}), which means that $|R_0| = 4(q+\e)_2$. Suppose $\e=+$, so $p \equiv 3 \imod{4}$ and $f$ is odd. By \cite[Theorem A]{asch80}, $|\mathcal{M}(R)| = 1$ only if $G \not\leqs {\rm P\Gamma L}_3(q)$ and we see that a $\C_1$-subgroup of type ${\rm GL}_2(q) \times {\rm GL}_1(q)$ has odd index. In addition, every maximal subfield subgroup also has odd index and we quickly deduce that $|\mathcal{M}(R)| = 1$ if and only if $f=1$ and $G \not\leqs {\rm P\Gamma L}_3(q)$. Finally, suppose $\e=-$ and $q \equiv 1 \imod{4}$, in which case $N_1$ has odd index (this is the stabiliser of a nondegenerate $1$-space). If $q=5$ then $|R_0| = 2^4$ and $\mathcal{M}(R)$ contains a subgroup in $\mathcal{S}$ with socle $A_6$, so we can assume $q \geqs 9$. Now every maximal subfield subgroup has odd index, so $|\mathcal{M}(R)| = 1$ only if $f$ is a $2$-power and by inspecting \cite[Tables 8.5, 8.6]{BHR} we conclude that $|\mathcal{M}(R)|$ is indeed equal to $1$ in this case.
\end{proof}

\begin{lem}\label{l:4}
The conclusion to Theorem \ref{t:prime2} holds when $T = {\rm L}_n^{\e}(q)$ with $n \geqs 4$.
\end{lem}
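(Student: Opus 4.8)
The plan is to invoke Corollary \ref{c:core}: since $r=2$ and $R_0 = R \cap T \ne 1$, having $\mathcal{M}(R) = \{H\}$ is equivalent to $G$ possessing a unique conjugacy class of maximal subgroups of odd index. Aschbacher's \cite[Theorem A]{asch80} already supplies a coarse list of the admissible pairs $(G,H)$, so the task is to pin these down exactly and to certify uniqueness in each surviving case. Following the strategy of Section \ref{ss:ss}, I would first try to exhibit two non-conjugate geometric ($\C_i$) maximal subgroups of odd index: whenever this succeeds, Corollary \ref{c:core} gives $|\mathcal{M}(R)| \geqs 2$ and $(G,r)$ contributes nothing to Table \ref{tab:prime2}, and only when at most one geometric class of odd index survives would I analyse the members of $\mathcal{S}$. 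Throughout, Lemma \ref{l:gk} provides the $2$-adic valuations needed to decide which indices are odd, and I would treat the cases $q$ even and $q$ odd separately.

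When $q$ is even, $R_0$ is the unipotent radical of a Borel subgroup and every parabolic has odd index, since each factor of the relevant index has the form $q^j \mp 1$ and is odd. Hence $\mathcal{M}(R)$ contains a maximal $(G/T)$-invariant parabolic for each orbit of the graph automorphisms in $G/T$ acting on the maximal parabolic classes. For $T = {\rm L}_n(q)$ duality reverses the nodes of the $A_{n-1}$ diagram, leaving $\lceil (n-1)/2 \rceil \geqs 2$ orbits (hence at least two non-conjugate odd-index parabolics) as soon as $n \geqs 4$; for $T = {\rm U}_n(q)$ the stabilisers of totally isotropic $1$- and $2$-spaces already give two such subgroups. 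In either case $|\mathcal{M}(R)| \geqs 2$, so no example with $q$ even arises.

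The substance lies in the case $q$ odd. Here I would use Lemma \ref{l:gk} together with the index formulae of \cite{KL} to enumerate the $\C_i$-subgroups of odd index; the candidates are subspace and nondegenerate-subspace stabilisers in $\C_1$, imprimitive subgroups of type ${\rm GL}_{n/2}(q) \wr S_2$ in $\C_2$, extension-field and subfield subgroups in $\C_3$ and $\C_5$, and the classical subgroups ${\rm Sp}_n(q)$, ${\rm O}_n^{\pm}(q)$ and ${\rm U}_n(q_0)$ in $\C_8$, the exact list being governed by the residue of $n$ modulo the relevant power of $2$ and by $q \imod{4}$. The problem then reduces to careful bookkeeping: for each congruence class of $q$ and residue of $n$ one must decide when a single class remains after accounting for fusion under the diagonal, field and graph automorphisms recorded in $G/T$ (and whether the relevant $T$-subgroup actually extends to $G$), and then read off the surviving $(G,H)$ for Table \ref{tab:prime2}. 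For $n \leqs 12$ this is a finite inspection of the tables in \cite{BHR}.

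The principal obstacle is the range $n > 12$, where \cite{BHR} no longer applies and one must independently exclude a second odd-index maximal subgroup lying in $\mathcal{S}$. Such a subgroup would have order divisible by the full $2$-part $|T|_2$, which is highly restrictive for an irreducible almost simple group (indeed the Liebeck--Saxl classification \cite{LS85} of odd-index maximal subgroups already severely limits the possibilities). Concretely, I would locate in $R$ an involution $x$ with $\nu(x) \leqs 2$ and apply \cite[Theorem 7.1]{GS} to confine any member of $\mathcal{S}$ containing $R$ to a short explicit list, or, where a $2$-element of larger support is more convenient, invoke the main theorem of \cite{GPPS}. Checking that none of the resulting candidates simultaneously contains $R$ and has odd index should leave precisely the configurations recorded in Table \ref{tab:prime2}, and Corollary \ref{c:core} then completes the proof.
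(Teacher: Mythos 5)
Your proposal is correct and follows essentially the same route as the paper: Corollary \ref{c:core} together with Aschbacher's \cite[Theorem A]{asch80} to restrict to $n = 2^k+1$ with $q \equiv -\e \imod{4}$, Lemma \ref{l:gk} and \cite{BHR,KL} to control the geometric subgroups (with subfield subgroups governing the conditions on $f$), and \cite[Theorem 7.1]{GS} applied to an involution with $\nu(x)=1$ to rule out odd-index subgroups in $\mathcal{S}$, exactly as in the paper's proof. The only real differences are that you spend effort on the case $q$ even (and suggest \cite{GPPS} as a fallback, which is not applicable for $r=2$), whereas the paper reads the exclusion of $q$ even, and indeed of all $n \ne 2^k+1$, directly off Aschbacher's theorem.
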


\begin{proof}
Here \cite[Theorem A]{asch80} states that $|\mathcal{M}(R)| = 1$ only if $n = 2^k+1$ and $q \equiv -\e \imod{4}$ (with the additional condition $G \not\leqs {\rm P\Gamma L}_n(q)$ if $\e=+$), in which case a $\C_1$-subgroup of type ${\rm GL}_{n-1}^{\e}(q) \times {\rm GL}_1^{\e}(q)$ has odd index. Using Lemma \ref{l:gk} we compute   
\[
|R_0| = 2^{n-1}(q+\e)_2^{(n-1)/2}\left((n-1)/2\right)!_2 = 2^{(3n-5)/2}(q+\e)_2^{(n-1)/2}.
\]

First assume $\e=+$, so $p \equiv 3 \imod{4}$ and $f$ is odd. Since every maximal subfield subgroup of $G$ has odd index, we may assume $f=1$. By inspecting \cite{BHR,KL}, it is straightforward to check that every geometric maximal subgroup has even index (apart from the $\C_1$-subgroups of type ${\rm GL}_{n-1}(q) \times {\rm GL}_1(q)$, as noted above), so it remains to determine if there are any odd-index maximal subgroups in the collection $\mathcal{S}$. By inspecting \cite[Table 8.19]{BHR} we see that there are no such subgroups when $n=5$. And we reach the same conclusion for $n \geqs 9$ by applying \cite[Theorem 7.1]{GS}, noting that $R_0$ contains an involution of the form $x = (-I_{n-1},I_1)$ with $\nu(x) = 1$ (see \eqref{e:nu}). In conclusion, if $T = {\rm L}_n(q)$ and $n \geqs 4$, then $|\mathcal{M}(R)| = 1$ if and only if $n=2^k+1$, $q=p \equiv 3 \imod{4}$ and $G \not\leqs {\rm P\Gamma L}_n(q)$.

Finally, suppose $\e=-$. Once again we find that there are no $\mathcal{S}$ collection subgroups in $\mathcal{M}(R)$ and every maximal subfield subgroup has odd index. From here, it is straightforward to deduce that 
$|\mathcal{M}(R)| = 1$ if and only if $n=2^k+1$, $q \equiv 1 \imod{4}$ and $f$ is a $2$-power.
\end{proof}

\begin{lem}\label{l:6}
The conclusion to Theorem \ref{t:prime2} holds when $T = {\rm PSp}_n(q)$ with $n \geqs 4$.
\end{lem}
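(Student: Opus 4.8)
The plan is to follow the same template used in Lemmas \ref{l:2}--\ref{l:4}: invoke \cite[Theorem A]{asch80} to pin down the necessary conditions on $(G,H)$ for $\mathcal{M}(R)=\{H\}$, and then refine Aschbacher's conclusion into the precise list recorded in Table \ref{tab:prime2} by a careful bookkeeping of odd-index maximal subgroups. Since we assume throughout that $G/T$ is a $2$-group, Corollary \ref{c:core} tells us that $|\mathcal{M}(R)|=1$ is equivalent to $G$ having a \emph{unique} conjugacy class of maximal subgroups of odd index. So the entire proof reduces to enumerating the odd-index maximal subgroups of each $G$ with socle $T={\rm PSp}_n(q)$ and checking when exactly one class survives. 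First I would dispose of the small cases $n\leqs 12$ (and in particular $n=4$, bearing in mind the isomorphisms ${\rm PSp}_4(2)'\cong {\rm L}_2(9)$ and the exceptional graph automorphism issues when $p=2$) by direct inspection of \cite{BHR}, leaving $n\geqs 6$ even, $n\geqs 8$, etc., for the generic argument.

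For the generic case I would split on the parity of $q$. If $q$ is even, then $R$ is a Sylow $2$-subgroup inside the defining characteristic, a Borel subgroup (or more precisely the stabiliser of a suitable flag) has odd index, and the task is to verify via Lemma \ref{l:gk} and the order formulae in \cite{KL} that no second class of odd-index maximal subgroup occurs unless a graph/field automorphism is present; one expects $|\mathcal{M}(R)|=1$ precisely when $G$ is not too large over $T$ (analogous to the ${\rm P\Gamma L}$ conditions in Lemmas \ref{l:3}, \ref{l:4}). If $q$ is odd, the relevant odd-index geometric subgroups are the $\mathcal{C}_2$-stabilisers of orthogonal decompositions and the $\mathcal{C}_1$-stabilisers of nondegenerate subspaces; here I would compute $|R_0|=|{\rm PSp}_n(q)|_2$ from Lemma \ref{l:gk} and then, for each candidate type ${\rm Sp}_a(q)\times{\rm Sp}_{n-a}(q)$, ${\rm Sp}_{n/2}(q)\wr S_2$, ${\rm GL}_{n/2}(q).2$, ${\rm GU}_{n/2}(q).2$, etc., test whether the $2$-part of its index vanishes. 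Subfield subgroups of type ${\rm Sp}_n(q_0)$ with $q=q_0^k$ must be checked too: as in the linear cases, these force $f$ to be a $2$-power and impose a containment condition on $G$ relative to ${\rm P\Sigma Sp}_n(q)$.

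The step requiring the most care, and the likely main obstacle, is the treatment of the $\mathcal{S}$-collection subgroups for $n>12$, since these are not covered by \cite{BHR}. Following the strategy outlined in Section \ref{ss:ss}, I would identify an involution $x\in R_0$ of small support --- typically $\nu(x)\leqs 2$, arising from a reflection-type or $(-I_{a},I_{n-a})$-type element --- and invoke \cite[Theorem 7.1]{GS} to show that no subgroup in $\mathcal{S}$ of odd index can contain $R$; in the large-$i$ regime where $r=2$ is replaced by a primitive prime divisor behaviour is not at issue here since $r=2$, so the small-$\nu$ argument via \cite{GS} should suffice. The remaining subtlety is the careful handling of the two classes of $\mathcal{C}_2$-subgroups of type ${\rm GL}_{n/2}(q).2$ and ${\rm GU}_{n/2}(q).2$ (stabilisers of a pair of complementary totally singular, respectively a single nondegenerate-free decomposition), which can both have odd index simultaneously and thereby \emph{rule out} $|\mathcal{M}(R)|=1$; distinguishing exactly which congruence of $q$ and which $G/T$ leaves only one surviving class is where the precise entries of Table \ref{tab:prime2} are determined.

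Finally, I would assemble the outcomes of all cases, reconcile them with the explicit rows of Table \ref{tab:prime2}, and confirm via Corollary \ref{c:core} that in each surviving case the unique odd-index class $H$ indeed yields $\mathcal{M}(R)=\{H\}$, while in every excluded case a second odd-index class produces $|\mathcal{M}(R)|\geqs 2$. I expect the symplectic analysis to be somewhat more delicate than the linear/unitary cases handled in Lemmas \ref{l:2}--\ref{l:4} precisely because of the interaction between the $\mathcal{C}_1$ nondegenerate-subspace stabilisers and the $\mathcal{C}_2$ linear/unitary-type stabilisers, both of which routinely have odd index for symplectic groups.
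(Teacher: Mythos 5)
Your proposal gets the shape of the argument wrong in a way that matters. The paper's entire proof of Lemma \ref{l:6} is two sentences: by \cite[Theorem A]{asch80}, $|\mathcal{M}(R)| = 1$ can hold for $T = {\rm PSp}_n(q)$, $n \geqs 4$, \emph{only} when $n=4$, $q$ is even and $G \not\leqs {\rm P\Gamma Sp}_4(q)$ (i.e.\ $G$ contains an exceptional graph or graph-field automorphism), and in that single case one reads off from \cite[Table 8.14]{BHR} that a Borel subgroup is the unique maximal overgroup of $R$. You say at the outset that you will invoke Theorem A, but your actual plan then ignores it and re-derives the classification of odd-index maximal subgroups from scratch (case analysis on the parity of $q$, subfield subgroups, $\mathcal{S}$-collection analysis via \cite[Theorem 7.1]{GS} for $n > 12$, etc.). That programme is not wrong in principle — it would amount to re-proving the symplectic case of Aschbacher's theorem, or of the Liebeck--Saxl/Kantor odd-degree classification \cite{LS85,Kan} — but it is doing an enormous amount of work that the cited result already does, and none of the delicate $q$ odd or large-$n$ analysis you anticipate actually occurs: for symplectic groups there are \emph{no} examples with $q$ odd, and none with $n \geqs 6$.

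More seriously, your predicted answer in the $q$ even case is backwards. You expect ``no second class of odd-index maximal subgroup occurs unless a graph/field automorphism is present'' and that $|\mathcal{M}(R)|=1$ ``precisely when $G$ is not too large over $T$.'' The truth is the opposite. In defining characteristic the Borel subgroup has odd index, so \emph{every} maximal parabolic has odd index; for $n \geqs 6$ this gives $n/2 \geqs 3$ classes of odd-index maximal subgroups, and for $n = 4$ with $G \leqs {\rm P\Gamma Sp}_4(q)$ it gives the two classes $P_1$ and $P_2$, so $|\mathcal{M}(R)| \geqs 2$ in all these cases by Corollary \ref{c:core}. Uniqueness occurs precisely when $G$ \emph{does} contain the exceptional graph(-field) automorphism of ${\rm Sp}_4(2^f)$: this automorphism interchanges long and short root subgroups, hence fuses $P_1$ and $P_2$, and only the Borel $P_{1,2}$ survives as a (unique) class of odd-index maximal subgroups — exactly the condition $G \not\leqs {\rm P\Gamma Sp}_4(q)$ recorded in Theorem \ref{t:prime2} and Table \ref{tab:prime2}. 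Note also that an ordinary field automorphism does not help here, since it fixes each parabolic class. As written, executing your plan with your stated expectations would produce an inverted table entry.
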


\begin{proof}
Since ${\rm PSp}_4(2)' \cong {\rm L}_2(9)$, we may assume $(n,q) \ne (4,2)$. 
By \cite[Theorem A]{asch80} we have $|\mathcal{M}(R)| = 1$ only if $n=4$, $q$ is even and $G \not\leqs {\rm P\Gamma Sp}_4(q)$. Indeed, by inspecting \cite[Table 8.14]{BHR} we see that a Borel subgroup is the unique maximal overgroup of $R$.
\end{proof}

\begin{lem}\label{l:7}
The conclusion to Theorem \ref{t:prime2} holds when $T = \O_n(q)$ with $n \geqs 7$ odd.
\end{lem}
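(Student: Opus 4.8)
The plan is to apply Corollary \ref{c:core}: since $2 \mid |T|$ we have $R_0 = R \cap T \neq 1$, so $\mathcal{M}(R) = \{H\}$ if and only if $G$ has a unique conjugacy class of maximal subgroups of odd index. Throughout I may assume $q$ is odd (in even characteristic $\O_{2m+1}(q) \cong {\rm Sp}_{2m}(q)$, already handled in Lemma \ref{l:6}), and since $G/T$ is a $2$-group the field automorphisms occurring in $G$ have $2$-power order. Writing $n = 2m+1$, the first task is to run through the Aschbacher classes $\C_1,\dots,\C_8$ and $\mathcal{S}$, using \cite{KL,BHR} for the subgroup structure and Lemma \ref{l:gk} to extract the $2$-parts of indices, and thereby to determine exactly which geometric maximal subgroups have odd index.

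The core of the argument is a parity computation. Every parabolic has even index, since the number of totally singular subspaces of a fixed dimension carries factors of the form $q^i+1$. For the nondegenerate subspace stabilisers $N_k$ (class $\C_1$) I would compute $|G:N_k^{\pm}|_2$ via Lemma \ref{l:gk}. The stabiliser of a nonsingular $1$-space always contributes exactly one odd-index class $N_1^{\epsilon_0}$, because the total number $q^{2m}$ of nonsingular points is odd and splits into the two type-orbits, forcing one orbit to have odd length. The decisive point is the parity of $m$: a valuation calculation shows that a second odd-index stabiliser $N_2^{\eta_0}$ exists precisely when $m$ is odd (with $\eta_0$ determined by $q \imod{4}$), whereas for $m$ even all $N_k$ with $k \geqs 2$ have even index. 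I would then record the remaining sources of odd-index geometric subgroups: the monomial $\C_2$-subgroup of type ${\rm O}_1 \wr S_n$, which has odd index if and only if $(q^2-1)_2 = 8$, i.e. $q \equiv \pm 3 \imod{8}$; and the subfield subgroups of type $\O_n(q_0)$ with $q = q_0^k$, which have odd index exactly when $k$ is odd, their maximality in $G$ depending on which field automorphisms are present.

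Assembling these gives a dichotomy. If $m$ is odd (so $n \equiv 3 \imod{4}$), then $N_1^{\epsilon_0}$ and $N_2^{\eta_0}$ are non-conjugate odd-index subgroups, so $|\mathcal{M}(R)| \geqs 2$ and no examples occur. If $m$ is even (so $n \equiv 1 \imod{4}$), then $N_1^{\epsilon_0}$ is the only universally present odd-index geometric subgroup, and $|\mathcal{M}(R)| = 1$ can hold only when the monomial and subfield subgroups fail to be maximal of odd index, which should pin down the relevant lines of Table \ref{tab:prime2} (roughly $q \equiv \pm 1 \imod{8}$ with $q$ prime, and $G$ free of field automorphisms). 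The final ingredient is to eliminate the collection $\mathcal{S}$: for $n \leqs 12$ this is read off from \cite{BHR}, while for $n \geqs 13$ I would note that $R_0$ contains an involution $x$ with $\nu(x) \leqs 2$ (for instance the image of $[-I_{2m},I_1]$, for which $\nu(x)=1$; see \eqref{e:nu}) and invoke \cite[Theorem 7.1]{GS}, supplemented by \cite{GPPS}, to show that no member of $\mathcal{S}$ of odd index contains $R$.

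The main obstacle is the $2$-adic bookkeeping for the stabilisers $N_k^{\pm}$. Establishing the parity dichotomy in $m$ cleanly requires careful use of Lemma \ref{l:gk} and, more delicately, control of the index of $({\rm O}_k \times {\rm O}_{n-k}) \cap \O_n(q)$ inside ${\rm O}_k \times {\rm O}_{n-k}$ — the determinant and spinor-norm ``gluing'', which can shift a $2$-part by a factor of two and hence flip a parity. Getting this gluing right in every configuration, and then matching the surviving cases (together with the $G$ versus $T$ distinction arising from diagonal and field automorphisms) against the precise entries of Table \ref{tab:prime2}, is where the real work lies.
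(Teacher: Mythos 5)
Your overall strategy (Corollary \ref{c:core} plus a class-by-class parity analysis) is viable, and several of your ingredients coincide with the paper's: the analysis of $N_1^{\epsilon_0}$, the criterion $(q^2-1)_2 = 8$ (i.e. $q \equiv \pm 3 \imod{8}$) for the monomial $\C_2$-subgroup, the observation that subfield subgroups have odd index exactly for odd-degree subfields, and the elimination of $\mathcal{S}$ via \cite[Theorem 7.1]{GS} using an involution with $\nu(x)=1$. However, your decisive claim — that for $m = (n-1)/2$ even, every $N_k$ with $k \geqs 2$ has even index — is false, and this breaks the classification. Computing $2$-parts with Lemma \ref{l:gk}, the stabiliser of a nondegenerate $2j$-space of the appropriate type ($+$ if $j$ is even; the sign $\eta$ with $(q-\eta)_2 = \tfrac{1}{2}(q^2-1)_2$ if $j$ is odd) has index whose $2$-part equals $\binom{m}{j}_2$. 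Hence an odd-index stabiliser $N_{2j}$ with $0 < j < m$ exists precisely when some $\binom{m}{j}$ is odd, which by Lucas' theorem happens if and only if $m$ is \emph{not} a power of $2$. Concretely, for $n = 13$ ($m=6$) the $\C_1$-subgroups of type ${\rm O}_4^{+}(q) \perp {\rm O}_{9}(q)$ and ${\rm O}_8^{+}(q) \perp {\rm O}_{5}(q)$ both have odd index, since $\binom{6}{2} = \binom{6}{4} = 15$, so $|\mathcal{M}(R)| \geqs 2$; yet your dichotomy would declare $n = 13$ with $q \equiv \pm 1 \imod{8}$ a valid example. The same failure occurs for every $n \equiv 1 \imod{4}$ not of the form $2^k+1$ (e.g. $n = 21, 25, 37$). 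The correct necessary condition is that $m$ is a $2$-power, i.e. $n = 2^k+1$, and this is exactly what the paper obtains at the outset by quoting \cite[Theorem A]{asch80}: Aschbacher's theorem supplies both the restriction $n = 2^k+1$ and the identification of the odd-index $\C_1$-subgroup of type ${\rm O}_{n-1}(q) \perp {\rm O}_1(q)$, after which the paper (like the latter part of your proposal) only has to handle subfield subgroups (forcing $f$ to be a $2$-power), the $\C_2$-subgroup of type ${\rm O}_1(q) \wr S_n$ (forcing $q \equiv \pm 1 \imod{8}$ when $f=1$), and the collection $\mathcal{S}$.

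A secondary discrepancy: your guessed final conditions ("$q$ prime, $G$ free of field automorphisms") do not match Table \ref{tab:prime2}, which allows any $f = 2^a \geqs 1$ — so field automorphisms of $2$-power order are permitted — with the congruence $q \equiv \pm 1 \imod{8}$ imposed only when $f=1$. That part would be recoverable from a corrected $\C_1$ analysis; the binomial-coefficient error above is the substantive gap.
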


\begin{proof}
By \cite[Theorem A]{asch80} we have $|\mathcal{M}(R)| = 1$ only if $n = 2^k+1$, in which case $\mathcal{M}(R)$ contains a $\C_1$-subgroup of type ${\rm O}_{n-1}(q) \perp {\rm O}_1(q)$. Note that 
\[
|R_0| = \frac{1}{2}(q^2-1)_{2}^{(n-1)/2}((n-1)/2)!_2 = 2^{(n-5)/2}(q^2-1)_{2}^{(n-1)/2}.
\]
If $q = q_0^k$ and $k$ is a prime, then it is easy to see that a subfield subgroup of type ${\rm O}_{n}(q_0)$ has odd index if and only if $k$ is odd, so we may assume $f$ is a $2$-power. Now, if $f=1$ and $q \equiv \pm 3 \imod{8}$ then $(q^2-1)_2 = 2^3$, $|R_0| = 2^{2n-4}$ and one can check that $\mathcal{M}(R)$ contains a $\C_2$-subgroup of type ${\rm O}_1(q) \wr S_n$. Therefore, if $f=1$ then $|\mathcal{M}(R)| = 1$ only if $q \equiv \pm 1 \imod{8}$. 

By inspecting \cite{BHR,KL}, it remains to determine whether or not $\mathcal{M}(R)$ contains a subgroup in $\mathcal{S}$. But this possibility can be ruled out by applying \cite[Theorem 7.1]{GS}, using the fact that $T$ contains an involution of the form $(-I_{n-1},I_1)$.
\end{proof}

Finally, we handle the even-dimensional orthogonal groups.

\begin{lem}\label{l:8}
The conclusion to Theorem \ref{t:prime2} holds when $T = {\rm P\O}_n^{\e}(q)$ with $n \geqs 8$ even.
\end{lem}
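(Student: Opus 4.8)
The plan is to follow the template of the preceding lemmas: first extract necessary conditions from Aschbacher's theorem, then pin down the odd-index maximal subgroups of $G$ and apply Corollary \ref{c:core}. So I would begin by invoking \cite[Theorem A]{asch80} to cut down to a short list of admissible configurations $(n,q,\e,G)$ for which $|\mathcal{M}(R)|=1$ is conceivable; as in Lemmas \ref{l:4} and \ref{l:7}, this should impose a $2$-power condition on the dimension together with congruences on $q$ modulo $4$ and a restriction on the image of $G$ in ${\rm P\Gamma O}_n^{\e}(q)$. For each surviving case I would compute $|R_0| = |T|_2$ via Lemma \ref{l:gk}, distinguishing $\e = \pm$ and $q \equiv \pm 1 \imod{4}$.

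If $q$ is even, the odd-index maximal subgroups are exactly the maximal parabolic subgroups, so the question reduces to counting $G$-classes of maximal parabolics: the Dynkin diagram of type $D_{n/2}$ (or ${}^2D_{n/2}$) has inequivalent nodes when $n \geqs 8$, and only the graph automorphisms available in $G/T$ can fuse them. I expect this to force $|\mathcal{M}(R)| \geqs 2$ except possibly for $n=8$, $\e = +$, where the triality automorphisms of $D_4$ may identify the three end-node classes; this borderline case I would settle by hand.

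If $q$ is odd, the odd-index candidates are geometric: $\C_1$-subgroups stabilising nondegenerate or totally singular subspaces, together with imprimitive $\C_2$-subgroups of type ${\rm O}_{n/2}^{\e_0}(q)\wr S_2$ or ${\rm O}_1(q)\wr S_n$. The main work is to determine, for each admissible $(n,q,\e)$, precisely which of these have odd index and how they fuse under $G/T$, keeping careful track of the two classes of maximal totally singular subspace stabilisers and of the discriminant and spinor-norm invariants that can split the nondegenerate stabilisers into several $G$-classes. Subfield ($\C_5$) subgroups are eliminated by the standard parity argument used in Lemma \ref{l:7}: a subfield subgroup of type ${\rm O}_n^{\e}(q_0)$ with $q=q_0^k$ and $k$ prime has odd index precisely when $k$ is odd, so $|\mathcal{M}(R)|=1$ forces $f$ to be a $2$-power and only the degree-$2$ subfield subgroup need be analysed. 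To dispose of the $\mathcal{S}$-collection I would split on $n$: for $n \leqs 12$ the possibilities are read directly from the tables in \cite{BHR}, while for $n>12$ I would exhibit an involution $x\in R_0$ of small support, for instance $x=(-I_{n-2},I_2)$ with $\nu(x)=2$ (see \eqref{e:nu}), and apply \cite[Theorem 7.1]{GS} to rule out any odd-index member of $\mathcal{S}$. With all odd-index geometric, subfield and $\mathcal{S}$-subgroups accounted for, Corollary \ref{c:core} reduces the lemma to checking that $G$ has a unique class of odd-index maximal subgroups in exactly the configurations listed in Table \ref{tab:prime2}.

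The hard part will be the geometric bookkeeping in the $q$ odd case. Unlike the linear, symplectic and odd-dimensional orthogonal groups, the even-dimensional orthogonal groups carry the richest family of subspace stabilisers, and several of these can simultaneously have odd index while splitting into more than one $G$-class. Showing that such multiple classes genuinely force $|\mathcal{M}(R)| \geqs 2$ outside the listed exceptions --- rather than being mistakenly fused under $G/T$ --- is where the delicate $2$-part computations and the fusion analysis concentrate; the $\mathcal{S}$ and subfield subgroups, by contrast, are handled cleanly by Guralnick--Saxl and the parity argument.
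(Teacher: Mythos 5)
Your plan coincides with the paper's proof in all essentials: \cite[Theorem A]{asch80} immediately forces $n = 2^k+2$ and $q \equiv -\e \imod{4}$ (so $q$ is odd and your even-$q$ parabolic/triality discussion is vacuous --- note in any case that triality cannot lie in $G$ because $G/T$ is a $2$-group, and the middle-node parabolic would survive regardless), subfield subgroups are removed by the same odd-degree parity argument, the geometric and $\mathcal{S}$ collections are dispatched via \cite{BHR,KL} together with \cite[Theorem 7.1]{GS}, and Corollary \ref{c:core} finishes. The fusion analysis you correctly flag as the crux (but leave unexecuted) is resolved in the paper by a single clean dichotomy: the two $T$-classes of odd-index subgroups of type ${\rm O}_{n-1}(q) \perp {\rm O}_1(q)$ are fused precisely by the diagonal automorphisms, so $|\mathcal{M}(R)| = 1$ forces $G \not\leqs T.\la \gamma \ra = {\rm PO}_{n}^{+}(q)$ when $\e = +$ (resp. $G \not\leqs T.\la \varphi \ra$ when $\e = -$), and in that case the unique class of odd-index maximal subgroups is of type ${\rm O}_{n-2}^{+}(q) \perp {\rm O}_2^{\e}(q)$.
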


\begin{proof}
Here \cite[Theorem A]{asch80} gives $n = 2^k+2$ and $q \equiv -\e \imod{4}$, so $T = \O_n^{\e}(q) = {\rm PSO}_n^{\e}(q)$ and we compute
\[
|R_0| = 2^{n/2-1}(q^2-1)_2)^{n/2-1}.
\]

First assume $\e=+$, so $p \equiv 3 \imod{4}$ and $f$ is odd. If $q = q_0^k$ for some prime $k$, then it is straightforward to show that $\mathcal{M}(R)$ contains a subfield subgroup of type ${\rm O}_{n}^{+}(q_0)$. Therefore, we may assume $f=1$ and thus ${\rm Aut}(T) = {\rm PGO}_{n}^{+}(q) = T.\la \delta, \gamma\ra$. If $G$ is contained in $T.\la \gamma \ra = {\rm PO}_{n}^{+}(q)$, then it has two conjugacy classes of odd index maximal subgroups of type ${\rm O}_{n-1}(q) \perp {\rm O}_1(q)$ and thus $|\mathcal{M}(R)| \geqs 2$. On the other hand, if $G \not\leqs T.\la \gamma \ra$ then $\mathcal{M}(R)$ contains a $\C_1$-subgroup of type ${\rm O}_{n-2}^{+}(q) \perp {\rm O}_2^{+}(q)$, and by combining \cite{BHR,KL} with \cite[Theorem 7.1]{GS} we deduce that $|\mathcal{M}(R)|=1$.

Now assume $\e=-$, so $q \equiv 1 \imod{4}$ and ${\rm Aut}(T) = T.\la \delta,\varphi \ra$, where $|\delta|=2$ and $|\varphi| = 2f$. We note that $\mathcal{M}(R)$ contains a $\C_1$-subgroup of type ${\rm O}_{n-2}^{+}(q) \perp {\rm O}_{2}^{-}(q)$. If $G \leqs T.\la \varphi \ra$ then $G$ has two classes of odd index maximal subgroups of type ${\rm O}_{n-1}(q) \perp {\rm O}_1(q)$, so we may assume $G \not\leqs T.\la \varphi \ra$. If $q=q_0^k$ and $k$ is an odd prime, then a subfield subgroup of type ${\rm O}_{n}^{-}(q_0)$ has odd index, so we may assume $f$ is a $2$-power. Finally, by inspecting \cite{BHR,KL} and applying \cite[Theorem 7.1]{GS} we deduce that $|\mathcal{M}(R)|=1$ if and only if $G \not\leqs T.\la \varphi \ra$ and $f$ is a $2$-power. 
\end{proof}

This completes the proof of Theorem \ref{t:prime2}.

\section{Proof of Theorem \ref{t:main1}: Classical groups}\label{s:t1}

In this section we prove Theorem \ref{t:main1} in the case where $T$ is a finite simple classical group over $\mathbb{F}_q$ with natural module $V$. As before, write $q=p^f$ with $p$ a prime and set $n = \dim V$. We use $P_m$ to denote the stabiliser in $G$ of a totally singular $m$-dimensional subspace of $V$ (if $T = {\rm L}_n(q)$, then we adopt the convention that every subspace of $V$ is totally singular). In addition, we set $H_0 = H \cap T$ for any subgroup $H$ of $G$. 

Throughout this section, $r$ is an odd prime divisor of $|G|$ and $R$ is a Sylow $r$-subgroup of $G$. In view of Lemma \ref{l:trivial} and Corollary \ref{c:easy}, we may assume $r$ divides $|T|$ and $G/T$ is an $r$-group. If $r \ne p$, then we write $d_q(r)$ for the order of $q$ modulo $r$ (in which case, $r$ is a primitive prime divisor of $q^i-1$ for $i=d_q(r)$). We will also adopt the following notation (see Remark \ref{r:main1}(b)): if $m$ is a positive integer and $\e \in \{\pm 1\}$, then we will use $\a(m,\e)$ and $\b(m,\e)$ to denote the following conditions on $q$:
\begin{align}\label{e:ab}
\begin{split}
\a(m,\e)\!:  & \;\; \mbox{$(q^{1/k})^m \not\equiv \e \imod{r}$ for all $k \in \pi(f) \setminus \{r\}$} \\
\b(m,\e)\!: & \;\; \mbox{$(q^{1/k})^m \not\equiv \e \imod{r}$ for all $k \in \pi(f) \setminus \{2,r\}$} 
\end{split}
\end{align}
where $\pi(f)$ is the set of prime divisors of $f$. For an integer $s$, we also write $s = \square \imod{p}$ and $s = \boxtimes \imod{p}$ to denote that $s$ is a square or nonsquare modulo $p$, respectively. 

We now divide the proof into several cases according to the possibilities for $T$. 
As discussed in Section \ref{ss:ss}, we will refer to subgroups in the $\mathcal{C}_i$ and $\mathcal{S}$ collections, working with the definitions presented in \cite{KL}, which differ slightly from Aschbacher's set up in \cite{asch}.

\subsection{Linear groups}

\begin{prop}\label{p:psl2}
The conclusion to Theorem \ref{t:main1} holds when $T = {\rm L}_2(q)$.
\end{prop}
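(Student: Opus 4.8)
The plan is to classify, for $T = \mathrm{L}_2(q)$ and odd prime $r \mid |G|$ with $G/T$ an $r$-group, exactly when $\mathcal{M}(R) = \{H\}$, by applying Corollary \ref{c:core}: since $R_0 \ne 1$ (as $r \mid |T|$), this reduces to deciding when $G$ has a \emph{unique} conjugacy class of maximal subgroups of $r'$-index. So the whole proof is really an index computation across the known maximal subgroups of $\mathrm{L}_2(q)$ and its almost simple overgroups, read off from \cite[Tables 8.1, 8.2]{BHR}. First I would split into the defining-characteristic case $r = p$ and the cross-characteristic case $r \ne p$.

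If $r = p$, then $R_0$ is a Sylow $p$-subgroup of $\mathrm{L}_2(q)$, which is the unipotent radical of a Borel subgroup; the Borel $B = N_G(R_0)$ has $r'$-index $q+1$, and by the standard $\mathrm{BN}$-pair structure every other maximal subgroup meets $R_0$ properly, hence has index divisible by $r$. So $\mathcal{M}(R) = \{H\}$ with $H = N_G(R) = N_G(R_0)$ a Borel subgroup, and this case always yields a positive (this should land in Table \ref{tab:NGR} via Corollary \ref{c:NGR}). If $r \ne p$, then $r$ divides exactly one of $q-1$, $q+1$ (since $(q-1,q+1) = 2$ and $r$ is odd), so $d_q(r) \in \{1,2\}$. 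The torus normaliser containing $R$ is a $\mathcal{C}_2$-subgroup of type $\mathrm{GL}_1(q) \wr S_2$ (dihedral, order $q-1$ up to scalars) when $d_q(r)=1$, or a $\mathcal{C}_3$-subgroup of type $\mathrm{GL}_1(q^2)$ (dihedral of order $q+1$) when $d_q(r)=2$; using Lemma \ref{l:gk} I would confirm that the relevant dihedral subgroup has $r'$-index, so $\mathcal{M}(R)$ is nonempty with the expected candidate. The work is then to check there is no \emph{second} class of $r'$-index maximal subgroup.

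The main obstacle — and the bulk of the argument — is ruling out (or identifying) competing maximal subgroups of $r'$-index: the subfield subgroups of type $\mathrm{GL}_2(q_0)$ (where $q = q_0^k$, $k$ prime), the other dihedral torus normaliser, and the $\mathcal{S}$-type exceptional subgroups $A_4, S_4, A_5$ that occur for small $q$. For subfield subgroups I would use Lemma \ref{l:gk} to compute $(|H_0|)_r$ versus $(|T|)_r$: when $k$ is odd and $k \ne r$ the index is typically $r'$, producing a second class and forcing $|\mathcal{M}(R)| \ge 2$ unless $f$ is an $r$-power (this is where conditions $\alpha(m,\e), \beta(m,\e)$ from \eqref{e:ab} enter, tracking which subfield subgroups survive modulo $r$). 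The other dihedral subgroup has index divisible by $r$ (since $r$ divides only one of $q\mp 1$), so it is harmless. The sporadic $\mathcal{S}$-subgroups divide $r \in \{3,5\}$ and small $q$; these I would dispatch by direct inspection of \cite[Tables 8.1, 8.2]{BHR} and {\sc Magma} for $q$ below some explicit bound, exactly as in the $r=2$ analogue Lemma \ref{l:2}. Assembling these index comparisons yields the precise list of positive $(G,r,H)$, which populates the relevant rows of Table \ref{tab:main1} and Table \ref{tab:NGR}.
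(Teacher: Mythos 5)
Your overall strategy (reduce via Corollary \ref{c:core} to counting conjugacy classes of maximal subgroups of $r'$-index, then compare $r$-parts of orders using Lemma \ref{l:gk} and \cite[Tables 8.1, 8.2]{BHR}) is the same as the paper's, and your handling of $r=p$ and of the subfield subgroups when $r \mid q+1$ matches the paper's argument. However, there is a genuine gap in your list of competing subgroups in the case $d_q(r)=1$: you omit the Borel subgroup $P_1$. When $r$ is odd, $r \ne p$ and $r$ divides $q-1$, the Borel subgroup has index $q+1$, which is coprime to $r$, so it contains a Sylow $r$-subgroup of $G$; together with the $\C_2$-subgroup of type ${\rm GL}_1(q)\wr S_2$ this gives two non-conjugate maximal subgroups of $r'$-index for all $q \geqs 13$, and the paper's Case 1 concludes $|\mathcal{M}(R)| \geqs 2$ for \emph{every} such $(G,r)$ — which is precisely why Table \ref{tab:main1} has no entries for $T = {\rm L}_2(q)$ with $r \mid q-1$. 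Your enumeration of competitors (subfield subgroups, the other dihedral torus normaliser, and $A_4,S_4,A_5$) would let the $\C_2$-dihedral survive as an apparently unique class whenever those families fail to have $r'$-index, producing spurious positive cases in the classification.

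A second, related misstep is your plan to dispatch $A_4$, $S_4$, $A_5$ ``by Magma for $q$ below some explicit bound.'' These subgroups are maximal for infinitely many $q$ (for instance $A_5$ is maximal in ${\rm L}_2(p)$ for every prime $p \equiv \pm 1 \imod{10}$), and in the relevant case $r \mid q+1$ they have $r'$-index exactly when $r \in \{3,5\}$ and $|R| = r$ — a condition on the Sylow $r$-subgroup, not on the size of $q$. This is how the paper arrives at the exceptions ``$r>5$ or $|R|>r$'' (for $f \leqs 2$) in Table \ref{tab:main1}: e.g.\ for $f=1$, $r=5$ and $|R|=5$ one has $q = p \equiv -1 \imod{10}$ and $A_5 \in \mathcal{M}(R)$, for arbitrarily large $p$. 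No computation restricted to small $q$ can settle these cases; one must read off the maximality and order conditions for $A_4$, $S_4$, $A_5$ from \cite[Tables 8.1, 8.2]{BHR} uniformly in $q$, as the paper does.
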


\begin{proof}
The groups with $q \leqs 11$ can be handled using {\sc Magma}, so we will assume $q \geqs 13$. If $r=p$ then $\mathcal{M}(R) = \{H\}$, where $H = N_G(R_0) = P_1$ is a Borel subgroup of $G$. For the remainder, we may assume $r \ne p$, so $r$ divides $q-1$ or $q+1$.

\vs

\noindent \emph{Case 1. $r$ divides $q-1$.}

\vs

Here $R$ is contained in a Borel subgroup and a $\C_2$-subgroup of type ${\rm GL}_1(q) \wr S_2$, both of which are maximal since $q \geqs 13$ (see \cite[Table 8.1]{BHR}). Therefore $|\mathcal{M}(R)| \geqs 2$.

\vs

\noindent \emph{Case 2. $r$ divides $q+1$.}

\vs

In this case $R$ is contained in a maximal $\C_3$-subgroup $H = N_G(R)$ of type ${\rm GL}_1(q^2)$. We now need to inspect \cite[Tables 8.1, 8.2]{BHR} in order to determine whether or not $G$ has another maximal subgroup with $r'$-index. There are several cases to consider.

First assume $f=1$, in which case $G = T$ since $G/T$ is an $r$-group. If $r \geqs 7$ then it is easy to see that $H$ is the only maximal subgroup with $r'$-index and thus $|\mathcal{M}(R)| = 1$. The same conclusion holds if $r \in \{3,5\}$ and $|R| >r$, so we may assume $r \in \{3,5\}$ and $|R| = r$. If $r=5$ then $q = p \equiv -1 \imod{10}$ and thus $\mathcal{M}(R)$ contains a subgroup $A_5$ in the collection $\mathcal{S}$. Similarly, if $r=3$ then we find that $\mathcal{M}(R)$ contains one of $A_4$, $S_4$ (both in $\mathcal{C}_6$) or $A_5$ (in $\mathcal{S}$).

Next assume $f=2$, so $G=T$ once again. If $r=3$ then $r$ divides $q-1 = (p-1)(p+1)$, so we must have $r \geqs 5$. The order of a subfield subgroup of type ${\rm GL}_2(p)$ is indivisible by $r$, so $|\mathcal{M}(R)| = 1$ if $r \geqs 7$, or if $r = 5$ and $|R| >r$. On the other hand, if $|R| = r = 5$, then $p \equiv \pm 3 \imod{10}$ and $\mathcal{M}(R)$ contains a subgroup $A_5$ in the collection $\mathcal{S}$.

Finally, suppose $f>2$. We need to determine whether or not 
$\mathcal{M}(R)$ contains a subfield subgroup $L$ of type ${\rm GL}_2(q_0)$, where $q=q_0^k$ for some prime divisor $k$ of $f$ (with the additional condition $k \ne f$ when $p=2$; see \cite[Table 8.1]{BHR}). If $k=2$ then $|L_0|$ is indivisible by $r$, so we may assume $k$ is odd and thus $L$ has $r'$-index if and only if $(q_0+1)_r = (q+1)_r$. By Lemma \ref{l:gk}, this condition holds if and only if $r \ne k$ and $r$ divides $q_0+1$. Therefore, $|\mathcal{M}(R)|=1$ if and only if $\a(1,-1)$ holds (see \eqref{e:ab}), or if $(r,p) = (3,2)$ and $q^{1/k} \not\equiv -1 \imod{3}$ for all $k \in \pi(f) \setminus \{3,f\}$.
\end{proof}

\begin{prop}\label{p:psln}
The conclusion to Theorem \ref{t:main1} holds when $T = {\rm L}_n(q)$ and $n \geqs 3$.
\end{prop}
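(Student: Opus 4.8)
My plan is to exploit Corollary \ref{c:core}: since $r$ is odd, $r \mid |T|$, and $G/T$ is an $r$-group, I have $|\mathcal{M}(R)| = 1$ if and only if $G$ has a \emph{unique} conjugacy class of maximal subgroups of $r'$-index. So the entire problem reduces to finding all maximal subgroups of $r'$-index and deciding when exactly one class of these exists. The first step is to separate the defining-characteristic case $r = p$ from the cross-characteristic case $r \ne p$. When $r = p$, the Sylow $r$-subgroup $R_0$ is the full unipotent radical of a Borel subgroup, and its maximal overgroups of $r'$-index are precisely the parabolic subgroups $P_m$; since there are several classes of these for $n \geqs 3$, one quickly sees $|\mathcal{M}(R)| \geqs 2$, so this case contributes nothing. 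For $r \ne p$, I set $i = d_q(r)$, the order of $q$ modulo $r$, so that $r$ is a primitive prime divisor of $q^i - 1$, and the analysis is governed by the value of $i$ relative to $n$.

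The heart of the proof is a case division on $i$, organised exactly as outlined in Section \ref{ss:ss}. A geometric maximal subgroup $H$ has $r'$-index precisely when $(|G|/|H|)_r = 1$, which I compute using Lemma \ref{l:gk} together with the known orders of the $\mathcal{C}_i$-subgroups from \cite{KL,BHR}. The key structural fact is that $(|T|)_r$ is controlled by the largest power of $q^i - 1$ dividing $|{\rm GL}_n(q)|$, and a Sylow $r$-subgroup sits inside a torus normaliser of type $({\rm GL}_i(q))^{\lfloor n/i\rfloor}$ extended by a symmetric group permuting the factors. This pins down which $\mathcal{C}_1$ (parabolic/reducible), $\mathcal{C}_2$ (imprimitive), and $\mathcal{C}_3$ (field-extension) subgroups can have $r'$-index. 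For small $i \in \{1,2\}$, the element of order $r$ in $R$ has $\nu(x) \leqs 2$, so I invoke \cite[Theorem 7.1]{GS} to rule out (or identify) subgroups in the $\mathcal{S}$ collection; for large $i \in \{n, 2n-2, 2n\}$ (the cases where $R$ is nearly a Singer-type cyclic or torus), I instead apply the main theorem of \cite{GPPS} to control $\mathcal{S}$ and the remaining geometric families. Throughout, the conditions $\a(m,\e)$ and $\b(m,\e)$ from \eqref{e:ab} will encode precisely when competing subfield subgroups ($\mathcal{C}_5$) acquire $r'$-index, since whether $(q_0^i - 1)_r = (q^i-1)_r$ depends on congruences of the form $(q^{1/k})^m \equiv \e \pmod r$.

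I expect the main obstacle to be the bookkeeping in the intermediate range of $i$ and the elimination of the $\mathcal{S}$ collection: one must show that the single geometric class of $r'$-index (when it exists) has \emph{no} companion of $r'$-index in any other family, and this requires verifying that subfield subgroups, field-extension subgroups, and almost simple irreducible subgroups all fail the index condition except in the recorded exceptional configurations. The genuinely delicate point is the interaction between the diagonal/graph automorphisms in $G/T$ and the fusion of maximal subgroups: a subgroup class that is maximal in $T$ may split or merge in $G$, and two $T$-classes of $r'$-index can fuse into one $G$-class (restoring $|\mathcal{M}(R)| = 1$) or stay separate. I would handle the low-dimensional cases $n \leqs 12$ by direct inspection of the tables in \cite{BHR}, and for $n > 12$ reduce everything to the $\nu(x) \leqs 2$ analysis via \cite[Theorem 7.1]{GS} or the primitive-prime-divisor analysis via \cite{GPPS}, with the surviving cases collected into Table \ref{tab:main1}.
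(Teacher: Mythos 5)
Your proposal follows essentially the same route as the paper's proof: reduction via Corollary \ref{c:core}, separation of $r=p$ (where parabolics give $|\mathcal{M}(R)|\geqs 2$) from $r \ne p$, a case division on $i = d_q(r)$, Lemma \ref{l:gk} plus \cite{KL,BHR} to decide which geometric subgroups have $r'$-index, \cite[Theorem 7.1]{GS} for small $i$ and the main theorem of \cite{GPPS} for $i=n$ to control the collection $\mathcal{S}$, and the conditions $\a(m,\e)$ to encode the subfield-subgroup comparisons. The only point worth adding is that the class-fusion issue you flag as delicate is resolved at the outset: since $G/T$ is an $r$-group with $r$ odd, $G \leqs {\rm P\Gamma L}_n(q)$ contains no graph or graph-field automorphisms, so classes such as $P_1$ and $P_{n-1}$ never fuse in $G$.
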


\begin{proof}
Recall that $G/T$ is an $r$-group, so we have $G \leqs {\rm P\Gamma L}_n(q)$. In particular, $G$ does not contain any  graph or graph-field automorphisms of $T$.

If $r=p$ then every parabolic subgroup of $G$ has $r'$-index and hence $|\mathcal{M}(R)| \geqs 2$. For the remainder, we may assume $r \ne p$ divides $|T|$. Let $i = d_q(r)$ be the order of $q$ modulo $r$ and note that $i$ divides $r-1$.

\vs

\noindent \emph{Case 1. $i=1$}

\vs

Here $q \geqs 4$ and we note that 
\[
|R_0| = \frac{1}{(d)_r}(q^2-1)_r(q^3-1)_r \cdots (q^n-1)_r = \frac{1}{(d)_r}((q-1)_r)^{n-1}(n!)_r
\]
by Lemma \ref{l:gk}, where $d=(n,q-1)$. If $n$ is indivisible by $r$ then $(q^n-1)_r=(q-1)_r$ and it is easy to see that the parabolic subgroups $P_1$ and $P_{n-1}$ have $r'$-index, whence $|\mathcal{M}(R)| \geqs 2$. So for the remainder of Case 1 we may assume that $r$ divides $n$. 

Let $L$ be a $\C_2$-subgroup of type ${\rm GL}_{1}(q) \wr S_{n}$, which is maximal if $q \geqs 5$ (see \cite{BHR,KL}). The order of $L_0$ can be computed via \cite[Proposition 4.2.9]{KL} and we quickly deduce that $L$ has $r'$-index. In addition, if $r<n$ then a $\C_2$-subgroup of type ${\rm GL}_{r}(q) \wr S_{n/r}$ also has $r'$-index (and such a subgroup is always maximal). Therefore, we may assume that either $q=4$  or $n=r$. 

Suppose $q=4$, so $r=3$ and $n=3m$ for some $m \geqs 1$. Here it is easy to check that a $\C_8$-subgroup $H$ of type ${\rm GU}_{n}(2)$ has $3'$-index. Indeed, we have 
\begin{align*}
|R_0| & = \frac{1}{3}(4^2-1)_3(4^3-1)_3 \cdots (4^{3m}-1)_3 \\
& = \frac{1}{3}(2^2-1)_3(2^3+1)_3 \cdots (2^{3m}-(-1)^{3m})_3 = |H_0|_3.
\end{align*}
In addition, if $m \geqs 2$ then $\mathcal{M}(R)$ also contains a $\C_2$-subgroup of type ${\rm GL}_{3}(q) \wr S_{m}$. Finally, if $m=1$ then a {\sc Magma} computation shows that $|\mathcal{M}(R)| = 1$ if and only if $G = {\rm PGL}_3(4)$ (for example, if $G = T$ then $R = 3^2$ is contained in maximal subgroups of type ${\rm GU}_{3}(2)$ and $A_6$, where the latter is in the collection $\mathcal{S}$).

To complete the analysis of the case $i=1$, we may assume that $n=r$ and $q \geqs 7$, so $|R_0| = ((q-1)_r)^{r-1}$. As noted above, $\mathcal{M}(R)$ contains a $\C_2$-subgroup of type ${\rm GL}_{1}(q) \wr S_n$. There are now several cases to consider.

First assume $q=q_0^2$ is a square. Here $(q-1)_r = (q_0-\e)_r$ for some $\e=\pm$ and it is easy to check that the corresponding $\C_5$ or $\C_8$ subgroup of type ${\rm GL}_{n}^{\e}(q_0)$ has $r'$-index (both subgroups are always maximal). Therefore, for the remainder we may assume $f$ is odd. We will handle the cases $n \in \{3,5\}$ separately.

Suppose $n=r=3$. If $q=p \equiv 4,7\imod{9}$ then $\mathcal{M}(R)$ contains a $\C_6$-subgroup of type $3^{1+2}.{\rm Sp}_2(3)$. On the other hand, if $q=p \equiv 1\imod{9}$ then $|R_0| \geqs 3^4$ and by inspecting \cite[Tables 8.3, 8.4]{BHR} we deduce that $|\mathcal{M}(R)|=1$. Now assume $q=q_0^k$ for some odd prime $k$ and note that $q_0 \equiv 1 \imod{3}$ since $q \equiv 1 \imod{3}$. If $k \geqs 5$ then $(q-1)_3=(q_0-1)_3$ and $\mathcal{M}(R)$ contains a subfield subgroup of type ${\rm GL}_{3}(q_0)$. However, if $f = 3^a$ is a $3$-power and $q=q_0^3$, then $|R_0| = 3^2((q_0-1)_3)^2$ and we deduce that $|\mathcal{M}(R)|=1$. 

Next assume $n=r=5$, so $|R_0|=((q-1)_5)^4$. By inspecting \cite[Tables 8.18, 8.19]{BHR}, we see that we only need to consider $\C_5$-subgroups $H$ of type ${\rm GL}_5(q_0)$, where $q=q_0^k$ and $k$ is an odd prime. Now $q_0 \equiv 1 \imod{5}$ and it is easy to check that $H$ has $5'$-index if and only if $k \ne 5$. Therefore, $|\mathcal{M}(R)|=1$ if and only if $f = 5^a$ for some $a \geqs 0$. That is to say, $|\mathcal{M}(R)|=1$ if and only if $\a(1,1)$ holds.

Now assume $n=r \geqs 7$ and suppose $\mathcal{M}(R)$ contains a subgroup $H$, in addition to the $\C_2$-subgroup of type ${\rm GL}_1(q) \wr S_n$. By inspecting  \cite{BHR,KL}, we deduce that $H \in \C_5 \cup \mathcal{S}$ and so there are two possibilities to consider.

First assume $H \in \C_5$ is a subfield subgroup of type ${\rm GL}_n(q_0)$, where $q=q_0^k$ for some odd prime $k$ (recall that $f$ is odd). Then $H$ has $r'$-index if and only if $(q-1)_r = (q_0-1)_r$, and by Lemma \ref{l:gk} this is equivalent to the conditions $k \ne r$ and $q_0 \equiv 1 \imod{r}$. In conclusion, there are no $\C_5$-subgroups in $\mathcal{M}(R)$ if and only if $\a(1,1)$ holds.

Finally, suppose $H \in \mathcal{S}$ and let $S$ denote the socle of $H$. Up to conjugacy, we observe that $R_0$ contains elements of the form ${\rm diag}(\lambda, \lambda^{-1}, I_{n-2})$ (modulo scalars), where $\l \in \mathbb{F}_{q}^{\times}$ has order $r$. Therefore, $H_0$ is an irreducible subgroup of $T$ containing an element $x$ with $\nu(x) = 2$ (see \eqref{e:nu}) and this allows us to apply \cite[Theorem 7.1]{GS} in order to determine the possibilities for $H$ (also see \cite[Theorem 2.12]{BGS} for a convenient version of this result). By inspecting \cite[Table 5]{BGS}, we see that $(T,S) = ({\rm L}_7(p), {\rm U}_3(3))$ is the only possibility with $n \geqs 7$ prime, where $p$ is a prime such that $p \equiv 1 \imod{3}$. Here $|H|_7 = 7$ and the condition $i=1$ implies that $H$ does not have $7'$-index. Therefore, there are no $\mathcal{S}$ collection subgroups in $\mathcal{M}(R)$ and our analysis of the case $i=1$ is complete.

\vs

\noindent \emph{Case 2. $1 < i < n$}

\vs

If $i$ does not divide $n$, then the maximal parabolic subgroups $P_1$ and $P_{n-1}$ have $r'$-index and thus $|\mathcal{M}(R)| \geqs 2$. Now assume $i$ divides $n$, say $n = \ell i$. Suppose $\ell$ is indivisible by $r$. Then the maximal parabolic subgroups $P_i$ and $P_{n-i}$ have $r'$-index, so we may assume $n$ is even and $i=n/2$. Here we can take $P_{n/2}$ and a $\C_2$-subgroup of type ${\rm GL}_{n/2}(q) \wr S_2$, noting that the $\C_2$-subgroup is maximal unless $n=4$ and $q \leqs 3$ (see \cite{BHR,KL}). In the latter case, we have $G = {\rm L}_4(2) \cong A_8$, $r=3$ and it is easy to check that $|\mathcal{M}(R)| \geqs 2$. 

So to complete the analysis of this case, we may assume $n = \ell i$ and $\ell = mr$ for some $m \geqs 1$. Here we observe that a $\C_2$-subgroup of type ${\rm GL}_i(q) \wr S_{\ell}$ has $r'$-index, and this is maximal unless $i = q=2$. In the latter case we have $G=T$, $r=3$ and one can check that a $\C_8$-subgroup of type ${\rm Sp}_n(q)$ is maximal with $r'$-index. Now, if $m \geqs 2$ then a $\C_2$-subgroup of type ${\rm GL}_{ir}(q) \wr S_m$ is maximal and contains a Sylow $r$-subgroup, so we have reduced to the case where $n = ri$. Let $k$ be the smallest prime divisor of $i$. Then a $\C_3$-subgroup of type ${\rm GL}_{n/k}(q^k)$ is maximal and has $r'$-index, which implies that $|\mathcal{M}(R)| \geqs 2$.

\vs

\noindent \emph{Case 3. $i = n$}

\vs

Here $|R_0| = (q^n-1)_r$ and $n$ divides $r-1$. In particular, $r$ does not divide $n$ and so our assumption that $G/T$ is an $r$-group implies that $G \leqs {\rm P\Sigma L}_{n}(q) = T.\la \phi \ra$. If $t$ is a prime divisor of $n$, then a $\C_3$-subgroup of type ${\rm GL}_{n/t}(q^t)$ is maximal and has $r'$-index, so we may assume $n = t^a$ is a prime power. If $t=2$ then $\mathcal{M}(R)$ also contains a $\C_8$-subgroup of type ${\rm Sp}_n(q)$, so we may assume $t$ is odd. Note that $r \geqs 2n+1$.

First assume $q=q_0^2$ is a square and fix $\e=\pm$ so that $(q^n-1)_r = (q_0^n-\e)_r$. If $\e=+$ then $\mathcal{M}(R)$ contains a $\C_5$-subgroup of type ${\rm GL}_{n}(q_0)$, and similarly a $\C_8$-subgroup of type ${\rm GU}_{n}(q_0)$ if $\e=-$. 

For the remainder, we may assume $f$ is odd. Since $r \geqs 2n+1$, it is easy to see that the order of a $\C_6$-subgroup of type $t^{1+2a}.{\rm Sp}_{2a}(t)$ is indivisible by $r$. Therefore, it remains to determine if there are any subgroups $H \in \mathcal{M}(R)$ belonging to the $\C_5$ or $\mathcal{S}$ collections.

Suppose $H \in \C_5$ is a subfield subgroup of type ${\rm GL}_{n}(q_0)$, where $q=q_0^k$ and $k$ is an odd prime. Then $H$ has $r'$-index if and only if $(q_0^n-1)_r = (q^n-1)_r$, which is equivalent to the conditions $q_0^n \equiv 1 \imod{r}$ and $k \ne r$. In other words, if $f$ is odd then there are no subfield subgroups in $\mathcal{M}(R)$ if and only if $\a(n,1)$ holds.

Finally, let us assume $H \in \mathcal{S}$ has socle $S$. Here our key tool is the main theorem of \cite{GPPS}, which we can use to determine the possibilities for $H$ that contain an element of order $r$. Indeed, by carefully inspecting \cite[Tables 2-8]{GPPS}, we deduce that $S = {\rm L}_{2}(r)$ and $n=(r-1)/2$ is the only possibility, where $r \geqs 7$ is a prime with $r \equiv 3 \imod{4}$ (see \cite[Table 8]{GPPS} and \cite[Table 2(c)]{HM}). Note that $r \in \{7,11,19,23,47,59, \ldots\}$ since $n = t^a$ is an odd prime power. By considering the irrationalities of the corresponding Brauer character, we see that $q=p$ (hence $G = T$) and we require $\mathbb{F}_{p}$ to contain the ${\rm b}_r$ irrationality in the notation of \cite[Section 4.2]{BHR}. Since $r \equiv 3 \imod{4}$ we have 
${\rm b}_r = \frac{1}{2}(-1+\sqrt{-r})$, whence ${\rm b}_r \in \mathbb{F}_p$ if and only if $-r = \square \imod{p}$ (that is, $-r$ is a square modulo $p$). Clearly, $H$ has $r'$-index if and only if $|R| = r$. 

So let us assume $H$ has $r'$-index. We claim that $H$ is a maximal subgroup of $G$. Suppose otherwise. Then by our above analysis, we must have $H<K<G$, where $K$ is a $\C_3$-subgroup of type ${\rm GL}_{n/t}(q^t)$. But this implies that some quasisimple cover $L < {\rm GL}_n(q)$ of $S$ is contained in ${\rm GL}_{n/t}(q^t)$, which is not possible because $L$ acts absolutely irreducibly on the natural module (as a condition of the membership of $H$ in $\mathcal{S}$), whereas ${\rm GL}_{n/t}(q^t)$ does not. Alternatively, we can rule out the containment of $H$ in $K$ by observing that any nontrivial representation of $S = {\rm L}_2(r)$ in non-defining characteristic has dimension at least $n = (r-1)/2$. 

We conclude that $H$ is maximal as claimed and so in this situation we deduce that $|\mathcal{M}(R)| \geqs 2$.
\end{proof}

\subsection{Unitary groups}

\begin{prop}\label{p:psu3}
The conclusion to Theorem \ref{t:main1} holds when $T = {\rm U}_3(q)$.
\end{prop}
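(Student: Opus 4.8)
The plan is to prove Proposition \ref{p:psu3} by following the same strategy used for the linear groups, namely by identifying the relevant prime $r$ via its order $i = d_q(r)$ modulo $q$ and then invoking Corollary \ref{c:core} to reduce the question to counting conjugacy classes of maximal subgroups of $r'$-index. Since $G/T$ is an $r$-group and $T = {\rm U}_3(q)$ with $|T| = q^3(q^2-1)(q^3+1)/(3,q+1)$, the prime $r \neq p$ must divide $q^2-1$ (so $i \in \{1,2\}$) or $q^3+1$ (with $r$ dividing $q+1$ forcing $i=2$, and otherwise $i=6$). First I would dispose of the case $r=p$, where every maximal parabolic $P_1$ has $r'$-index and one checks $|\mathcal{M}(R)| \geqs 2$; and the small cases with $q$ bounded can be handled directly in {\sc Magma}. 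For the generic analysis I would split into the cases according to $i$, exactly mirroring the casework in Proposition \ref{p:psln}.

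The main work is then organised by the value of $i = d_q(r)$. When $i=1$ (so $r \mid q-1$, $r$ odd) one expects $R$ to lie in both a parabolic $P_1$ and a $\C_2$-subgroup of type ${\rm GL}_1(q) \wr S_3$ (or the relevant torus normaliser), forcing $|\mathcal{M}(R)| \geqs 2$ in most cases; the borderline subcases where $r=3$ divides $(3,q+1)$ or $|R|$ is small require separate attention, possibly producing genuine examples recorded in Table \ref{tab:main1}. When $i=2$ (so $r \mid q+1$) the Sylow subgroup is typically contained in a $\C_2$-subgroup of type ${\rm GU}_1(q) \wr S_3$ and in the subfield or reducible subgroups, and one must determine precisely when a second class of $r'$-index subgroups exists. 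When $i=6$ (so $r$ is a primitive prime divisor of $q^6-1$, forcing $r \mid q^2-q+1$ and hence $r \geqs 7$), the Sylow $r$-subgroup is cyclic and $R \leqs N_G(R)$ is a maximal torus normaliser; here I would argue that $N_G(R)$ is the unique maximal overgroup unless $\mathcal{M}(R)$ contains a subfield subgroup of type ${\rm GU}_3(q_0)$ (with $q = q_0^k$) or an $\mathcal{S}$-collection subgroup. The subfield condition is governed by whether $(q_0^3+1)_r = (q^3+1)_r$, which by Lemma \ref{l:gk} translates into the congruence conditions encoded by $\a(3,-1)$ in \eqref{e:ab}.

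The hard part will be ruling out, or precisely identifying, the $\mathcal{S}$-collection subgroups in $\mathcal{M}(R)$, since $n=3$ is too small for the asymptotic machinery of \cite{GPPS} and one must instead work with the explicit list of irreducible almost simple subgroups of ${\rm U}_3(q)$. For $i=6$ the element of order $r$ acts irreducibly with $\nu(x) = 3$, and the candidate subgroups $H \in \mathcal{S}$ have socle among a short list (such as ${\rm L}_2(7)$, $A_7$, ${\rm PSU}_3(3)$ or similar small groups arising for particular $q$); for each I would compare $|H|_r$ against $|R|$ to decide whether $H$ has $r'$-index, and verify maximality by the same absolute-irreducibility argument used at the end of Proposition \ref{p:psln} to exclude containment in a $\C_3$-subgroup. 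For $i \in \{1,2\}$ the element has $\nu(x) \leqs 2$, so here I can appeal to \cite[Theorem 7.1]{GS} (via \cite[Theorem 2.12]{BGS}) to bound the relevant subgroups in $\mathcal{S}$. Assembling these subcases and cross-checking against the low-dimensional tables in \cite[Tables 8.5, 8.6]{BHR} should produce the final classification, with the genuine examples (if any) recorded in Table \ref{tab:main1}.
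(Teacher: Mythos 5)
Your treatment of the defining-characteristic case $r=p$ contains a genuine error that would corrupt the classification. You claim that every maximal parabolic $P_1$ has $r'$-index and that ``one checks $|\mathcal{M}(R)| \geqs 2$'', but ${\rm U}_3(q)$ has twisted Lie rank $1$ (Witt index $1$), so it has exactly \emph{one} conjugacy class of maximal parabolic subgroups, namely the Borel subgroup $P_1 = N_G(R_0)$; there is no second parabolic class to produce a second member of $\mathcal{M}(R)$, in contrast with ${\rm L}_n(q)$ and ${\rm U}_n(q)$ for $n \geqs 4$, where that argument does apply. In fact $\mathcal{M}(R) = \{P_1\}$ here: by the Borel--Tits theorem, every proper overgroup of a Sylow $p$-subgroup of a finite group of Lie type in characteristic $p$ lies in a proper parabolic subgroup, so every maximal subgroup of $p'$-index is conjugate to $P_1$, and Corollary \ref{c:core} gives uniqueness. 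Thus $r=p$ yields a genuine family of examples, recorded as the entry $({\rm U}_3(q), p, P_1)$ in Table \ref{tab:main1}, exactly as in the other rank-one cases ${\rm L}_2(q)$, ${}^2B_2(q)$ and ${}^2G_2(q)$; your proposal would delete this family from the theorem.

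Apart from this, your plan follows the same route as the paper: split by $i = d_q(r) \in \{1,2,6\}$, apply Corollary \ref{c:core}, settle subfield subgroups via Lemma \ref{l:gk}, and handle the collection $\mathcal{S}$ via \cite[Tables 8.5, 8.6]{BHR}. The $i=2$ and $i=6$ sketches are essentially right, and the discrepancy between your condition $\a(3,-1)$ and the paper's $\b(3,-1)$ is harmless: if $q = q_0^2$ and $d_q(r)=6$ then $d_{q_0}(r)=12$, so the $k=2$ clause of $\a(3,-1)$ holds automatically and the two conditions agree. Two smaller corrections for $i=1$: the companion to $P_1$ should be the reducible subgroup $N_1$ of type ${\rm GU}_2(q) \times {\rm GU}_1(q)$ rather than a subgroup of type ${\rm GU}_1(q) \wr S_3$, whose order has $r$-part dividing $(3)_r$ and so generically cannot contain $R$ when $r \mid q-1$; and there are no borderline subcases producing examples, since $P_1$ and $N_1$ both have $r'$-index for every odd $r \mid q-1$ (note also that $r=3$ with $i=1$ forces $3 \nmid q+1$, so the interaction with $(3,q+1)$ you worry about cannot occur).
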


\begin{proof}
The groups with $q \in \{3,4,5\}$ can be handled using {\sc Magma}, so we will assume $q \geqs 7$. If $r=p$ then $H = N_G(R_0) = P_1$ is a maximal parabolic subgroup and $\mathcal{M}(R) = \{H\}$. For the remainder, we may assume $r \ne p$. As before, set $i = d_q(r)$ and note that $i \in \{1,2,6\}$. 

If $i=1$ then $|R_0| = (q-1)_r$ and we observe that the $\C_1$-subgroups of type $P_1$ and $N_1$ both have $r'$-index and thus $|\mathcal{M}(R)| \geqs 2$ (here $N_1$ denotes the stabiliser in $G$ of a nondegenerate $1$-space).

Next assume $i=2$, so $|R_0| = ((q+1)_r)^2$. Since $q \geqs 7$, we first observe that $\mathcal{M}(R)$ contains a $\C_2$-subgroup of type ${\rm GU}_1(q) \wr S_3$. If $r \geqs 5$ then $N_1$ also has $r'$-index, so we may assume $r=3$. If $f=1$ then by inspecting \cite[Tables 8.5, 8.6]{BHR} we see that either $|R_0| = 3^2$ and $\mathcal{M}(R)$ contains a $\C_6$-subgroup of type $3^{1+2}.{\rm Sp}_{2}(3)$, or $|R_0| >  3^2$ and $|\mathcal{M}(R)| = 1$. Now assume $f>1$ and note that $f$ is odd since $q \equiv -1 \imod{3}$. Then $|\mathcal{M}(R)| \geqs 2$ if and only if $\mathcal{M}(R)$ contains a subfield subgroup $H$ of type ${\rm GU}_3(q_0)$ with $q=q_0^k$ for some prime divisor $k$ of $f$. Now $q_0 \equiv -1 \imod{3}$ and we calculate that $H$ has $3'$-index if and only if $k \ne 3$. In conclusion, if $f>1$ then $|\mathcal{M}(R)| = 1$ if and only if $f = 3^a$ is a $3$-power (for example, $|\mathcal{M}(R)| = 1$ if $G = {\rm U}_3(8)$ and $r=3$).

To complete the proof, let us assume $i=6$, so $r$ divides $q^3+1$. Note that $|R_0| = (q^3+1)_r$ and $r \geqs 7$. Since $q \geqs 7$, we see that $\mathcal{M}(R)$ contains a $\C_3$-subgroup of type ${\rm GU}_1(q^3)$. If $f=1$ then either $|\mathcal{M}(R)| = 1$, or $|R| = r=7$, $q=p \equiv 3,5 \imod{7}$ and $\mathcal{M}(R)$ contains an $\mathcal{S}$ collection subgroup with socle ${\rm L}_2(7)$. Note that if $r=7$ and $i=6$ then $q \equiv 3,5 \imod{7}$ (for example, if $q \equiv 2,4 \imod{7}$, then $7$ divides $q^3-1$). Therefore, $|\mathcal{M}(R)| \geqs 2$ if $f=1$ and $|R| = r=7$. 

Finally, let us assume $f>1$. Here we need to determine whether or not $G$ has a subfield subgroup with $r'$-index, and in the usual manner we deduce that $|\mathcal{M}(R)|=1$ if and only if $\b(3,-1)$ holds (see \eqref{e:ab}).
\end{proof}

\begin{prop}\label{p:psun}
The conclusion to Theorem \ref{t:main1} holds when $T = {\rm U}_n(q)$ and $n \geqs 4$.
\end{prop}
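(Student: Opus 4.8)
The plan is to follow closely the template of the proof of Proposition \ref{p:psln}, invoking Corollary \ref{c:core} throughout: as $R_0 \neq 1$, we have $|\mathcal{M}(R)| = 1$ precisely when $G$ possesses a unique conjugacy class of maximal subgroups of $r'$-index. The low-dimensional groups, together with the finitely many small $q$ that surface as genuine exceptions, can be settled directly with {\sc Magma} using \cite{BHR}, so I would assume $n$ and $q$ are suitably large. The case $r = p$ is immediate: for $n \geq 4$ the totally singular dimension $\lfloor n/2 \rfloor$ is at least $2$, so $G$ has at least two classes of maximal parabolic subgroups $P_m$, all of $p'$-index, forcing $|\mathcal{M}(R)| \geq 2$. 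Hence I may assume $r \neq p$.

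For $r \neq p$, the key device is the integer $e$, the least positive integer with $r \mid q^e - (-1)^e$; this is the dimension of the smallest nondegenerate $R$-invariant subspace of $V$ and plays exactly the role that $i = d_q(r)$ played for linear groups. Computing $r$-parts with Lemma \ref{l:gk} applied to the factors $q^j - (-1)^j$ of $|{\rm GU}_n(q)|$, one sees that $R$ stabilises an orthogonal decomposition of $V$ into $\lfloor n/e \rfloor$ nondegenerate $e$-spaces together with a fixed nondegenerate complement, and $|R_0|$ is read off accordingly. I would then divide into the cases $e < n$ and $e = n$.

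When $e < n$ the strategy is to produce two non-conjugate geometric subgroups of $r'$-index, whence $|\mathcal{M}(R)| \geq 2$. Typically one takes a $\C_1$-subgroup stabilising a nondegenerate subspace (of type ${\rm GU}_e(q) \perp {\rm GU}_{n-e}(q)$, or $N_m$ for the fixed complement) alongside a $\C_2$-subgroup of type ${\rm GU}_e(q) \wr S_{n/e}$, coarsening to type ${\rm GU}_{er}(q) \wr S_{n/(er)}$ when $r \mid n/e$; in the few tight configurations where these candidate overgroups are forced to coincide — principally when $n/e$ is $1$ or a prime — equality $|\mathcal{M}(R)| = 1$ can survive and must be matched against Table \ref{tab:main1}. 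For the small values $e \in \{1,2\}$ the Sylow subgroup contains an element $x$ with $\nu(x) = e$, so any lurking subgroup in $\mathcal{S}$ is pinned down by \cite[Theorem 7.1]{GS}; cases with $n \leq 12$ are instead checked against the tables of \cite{BHR}.

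The main obstacle, as in the linear case, is $e = n$. Here $R$ is cyclic and contained in a field-extension (Coxeter-torus) subgroup of type ${\rm GU}_1(q^n)$, and everything hinges on deciding when $\mathcal{M}(R)$ contains a further subgroup. A subfield subgroup in $\C_5$ of type ${\rm GU}_n(q_0)$ occurs exactly when the condition $\b(n,-1)$ of \eqref{e:ab} fails, which Lemma \ref{l:gk} makes explicit. The delicate part is the collection $\mathcal{S}$: since $R$ contains a primitive prime divisor element, the main theorem of \cite{GPPS} applies, and inspecting \cite[Tables 2--8]{GPPS} reduces the possible socles to a short list (of type ${\rm L}_2(r)$ together with a few cross-characteristic exceptions). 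For each survivor one decides, via a Brauer-character irrationality argument over $\mathbb{F}_p$, exactly when such an $H$ embeds with $r'$-index, and then confirms its maximality — it cannot lie inside the ${\rm GU}_1(q^n)$ overgroup, since its quasisimple preimage acts absolutely irreducibly on $V$ whereas the field-extension subgroup does not. Assembling these determinations produces the ${\rm U}_n(q)$ entries of Table \ref{tab:main1}.
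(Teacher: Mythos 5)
Your overall architecture matches the paper's (reduce to $r\neq p$, parametrise by the minimal $e$ with $r\mid q^e-(-1)^e$, exhibit two geometric overgroups generically, and settle the tight cases by a subfield analysis plus a \cite{GPPS}/Brauer-character analysis of $\mathcal{S}$), but there are two concrete gaps. First, your list of candidate geometric subgroups for $e<n$ — nondegenerate-subspace stabilisers ${\rm GU}_e(q)\perp{\rm GU}_{n-e}(q)$ and imprimitive subgroups ${\rm GU}_e(q)\wr S_{n/e}$ with their coarsenings — is too narrow. It omits the parabolic subgroups $P_m$, the $\C_2$-subgroups of type ${\rm GL}_{n/2}(q^2)$ (stabilisers of dual pairs of totally singular subspaces) and the subgroups of type ${\rm Sp}_n(q)$, ${\rm O}_n^{\e}(q)$. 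These are exactly what the paper uses when $r\mid q-1$ (your $e=2$) and $n$ is even, and when $2<i\leqs n$: for instance, if $r\mid q-1$ and $n=2r$, then among your candidates only ${\rm GU}_2(q)\wr S_r$ has $r'$-index (every $N_m$ and every other wreath product picks up an extra factor of $r$ in its index), yet the correct conclusion is $|\mathcal{M}(R)|\geqs 2$, obtained in the paper from ${\rm Sp}_n(q)$ and ${\rm GL}_{n/2}(q^2)$. As written, your method cannot close this configuration; it would wrongly flag it as a possible new entry of Table \ref{tab:main1}.

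Second, the family $e=n-1$ (equivalently $r=r_{2n-2}$, $n$ even), which is one of the three genuine entries of Table \ref{tab:main1} for unitary groups — with $H$ of type ${\rm GU}_{n-1}(q)\times{\rm GU}_1(q)$ and the delicate conditions $\b(n-1,-1)$, $|R|>r$ or $r\neq 2n-1$ or $-r=\square\imod{p}$ — is never actually analysed. Your \cite{GPPS}-plus-irrationality machinery is explicitly confined to $e=n$, and \cite[Theorem 7.1]{GS} (which you reserve for $e\in\{1,2\}$) does not apply since elements of order $r$ here have $\nu(x)=n-1$; moreover $n/(n-1)$ is not an integer, so this case does not even fall under your stated criterion for a ``tight configuration''. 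The paper's Case 4 carries out precisely this analysis, finding the socle ${\rm L}_2(r)$ with $n=(r+1)/2$, $f=1$, $r\equiv 3\imod{4}$, $-r=\boxtimes\imod{p}$, together with a maximality argument. Finally, a lesser imprecision: for $e=n$ the unique geometric overgroup is of type ${\rm GU}_{n/t}(q^t)$ with $n=t^a$ an odd prime power (not ${\rm GU}_1(q^n)$ unless $n$ is prime, and one must first rule out $n$ having two distinct prime divisors), and when $n$ is even with $e=n$ no such $\C_3$-subgroup exists at all — that case again needs $P_{n/2}$ and ${\rm GL}_{n/2}(q^2)$, which your candidate list lacks.
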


\begin{proof}
If $r=p$ then 
every parabolic subgroup of $G$ has $r'$-index and hence $|\mathcal{M}(R)| \geqs 2$. For the remainder, we may assume $r \ne p$. Let $i = d_q(r)$ and note that $i$ divides $r-1$. The groups with socle
\[
T \in \{ {\rm U}_4(2), {\rm U}_4(3), {\rm U}_5(2), {\rm U}_6(2) \}
\]
can be handled using {\sc Magma} and we find that $|\mathcal{M}(R)|=1$ if and only if $G = {\rm U}_5(2)$ and $r = r_{10} = 11$, in which case $\mathcal{M}(R) = \{{\rm L}_{2}(11)\}$. We will exclude these cases for the remainder of the proof.

\vs

\noindent \emph{Case 1. $i=2$}

\vs

Here $r$ divides $q+1$ and in view of Lemma \ref{l:gk} we observe that
\[
(q^d - (-1)^d)_r = (q+1)_r(d)_r
\]
for every positive integer $d$. As a consequence, if $n$ is indivisible by $r$ then 
\[
|R_0| = ((q+1)_r)^{n-1}(n!)_r
\]
and we deduce that $\mathcal{M}(R)$ contains subgroups of type $N_1$ and ${\rm GU}_1(q) \wr S_n$ (by inspecting \cite{BHR,KL}, recalling that $T \ne {\rm U}_4(3), {\rm U}_6(2)$, we see that both subgroups are maximal in $G$). Therefore, we may assume $n = mr$. If $m \geqs 2$, then $\mathcal{M}(R)$ contains $\C_2$-subgroups of type ${\rm GU}_1(q) \wr S_n$ and ${\rm GU}_r(q) \wr S_{m}$, so we have reduced to the case where $n=r$. Here $\mathcal{M}(R)$ contains a subgroup of type ${\rm GU}_1(q) \wr S_n$ and we need to determine if $G$ has any additional maximal subgroups with $r'$-index.

Suppose $H \in \mathcal{M}(R)$ and $H$ is not of type ${\rm GU}_1(q) \wr S_n$. Then by carefully considering the geometric maximal subgroups of $G$ we deduce that $H \in \C_5 \cup \mathcal{S}$, so there are two possibilities to consider.

Suppose $H$ is a subfield subgroup of type ${\rm GU}_n(q_0)$, where $q=q_0^k$ and $k$ is an odd prime divisor of $f$. Then $H$ has $r'$-index if and only if $k \ne r$ and $q_0 \equiv -1 \imod{r}$. It follows that there are no subfield subgroups in $\mathcal{M}(R)$ if and only if $\b(1,-1)$ holds.

Finally, let us assume $H \in \mathcal{S}$. The case $n=5$ can be eliminated by inspecting \cite[Table 8.21]{BHR} (here we find that the socle of $H$ is either ${\rm L}_2(11)$ or ${\rm U}_4(2)$). For $n \geqs 7$ we can appeal to \cite[Theorem 7.1]{GS}, noting that $T$ contains an element $x$ with $\nu(x) = 2$. By inspecting \cite[Table 5]{BGS}, it is straightforward to rule out the existence of any $\mathcal{S}$ collection subgroups in $\mathcal{M}(R)$. 

\vs

\noindent \emph{Case 2. $i=1$}

\vs

Next suppose $r$ divides $q-1$, so $q \geqs 4$ and $(n,q+1)$ is indivisible by $r$. Note that $(q^d+1)_r = 1$ if $d$ is odd (see Lemma \ref{l:gk}). If $n$ is odd, then $\mathcal{M}(R)$ contains $\C_1$-subgroups of type $P_{(n-1)/2}$ and $N_1$, so $|\mathcal{M}(R)| \geqs 2$. Similarly, if $n$ is even then $\mathcal{M}(R)$ contains a $\C_2$-subgroup of type ${\rm GL}_{n/2}(q^2)$ and a $\C_8$-subgroup of type ${\rm Sp}_n(q)$. 

\vs

\noindent \emph{Case 3. $i=2n$}

\vs

Here $n$ is odd and $r$ divides $q^n+1$. In particular, $2n$ divides $r-1$ and we note that $|R_0| = (q^n+1)_r$. If $t$ is a prime divisor of $n$, then $\mathcal{M}(R)$ contains a $\C_3$-subgroup of type ${\rm GU}_{n/t}(q^t)$, so we may assume $n = t^a$ is a prime power. It is easy to see that $r$ does not divide the order of a $\C_6$-subgroup of type $t^{1+2a}.{\rm Sp}_{2a}(t)$ and so it remains to determine whether or not $\mathcal{M}(R)$ contains a subgroup $H \in \C_5 \cup \mathcal{S}$. 

In the usual manner, we deduce that there are no subfield subgroups in $\mathcal{M}(R)$ if and only if $\b(n,-1)$ holds, 
so let us turn to the subgroups in $\mathcal{S}$. As in the proof of Proposition \ref{p:psln} (see Case 3), we can use \cite{GPPS} to determine the subgroups $H \in \mathcal{S}$ containing an element of order $r$. Here the only possibility arises when $n=(r-1)/2$ and $H$ has socle $S = {\rm L}_2(r)$, where $r \geqs 7$ is a prime and $r \equiv 3 \imod{4}$. By considering the irrationalities of the corresponding Brauer character, we see that $q=p$ (so $G=T$) and $-r = \boxtimes \imod{p}$ (note that if $-r$ is a square modulo $p$, then $S$ embeds in ${\rm L}_n(q)$, rather than ${\rm U}_n(q)$). Clearly, $H$ has $r'$-index if and only if $|R|=r$. Moreover, by arguing as in the final paragraph of the proof of Proposition \ref{p:psln}, we deduce that such a subgroup $H$ is maximal (this relies on the observation that the extension field subgroup ${\rm GU}_{n/t}(q^t)<{\rm GU}_n(q)$ is not absolutely irreducible on the natural module, or we can use the fact that the dimension of any nontrivial representation of ${\rm L}_2(r)$ in non-defining characteristic is at least $n = (r-1)/2$).

\vs

\noindent \emph{Case 4. $i = 2n-2$}

\vs

Here $n$ is even, $r$ divides $q^{n-1}+1$ and we have $|R_0| = (q^{n-1}+1)_r$. Clearly, $\mathcal{M}(R)$ contains a $\C_1$-subgroup of type $N_1$, and we see that any additional subgroup $H \in \mathcal{M}(R)$ must be in one of the collections $\C_5$ or $\mathcal{S}$. There are no subfield subgroups in $\mathcal{M}(R)$ if and only if $\b(n-1,-1)$ holds.

Finally, suppose $H \in \mathcal{S}$ has $r'$-index. By inspecting \cite{GPPS}, we deduce that $n = (r+1)/2$ and $H$ has socle $S = {\rm L}_2(r)$, where $f=1$,   $r \equiv 3 \imod{4}$ and $-r = \boxtimes \imod{p}$. If all of these conditions are satisfied, then such a subgroup $H$ is maximal because it cannot be contained in a reducible $\C_1$-subgroup of type $N_1$.

\vs

\noindent \emph{Case 5. $2 < i < 2n-2$}

\vs

If $i>n$ then $i \equiv 2 \imod{4}$, $|R_0| = (q^{i/2}+1)_r$ and it is easy to see that the $\C_1$-subgroups $P_1$ and $N_1$ have $r'$-index. Now assume $2 < i \leqs n$, noting that $i \leqs n/2$ if $i$ is odd. If $n$ is even, then $P_{n/2}$ and a $\C_2$-subgroup of type ${\rm GL}_{n/2}(q^2)$ have $r'$-index. Similarly, if $n$ is odd then $P_{(n-1)/2}$ and $N_1$ have $r'$-index.
\end{proof}

\subsection{Symplectic groups}

\begin{prop}\label{p:psp}
The conclusion to Theorem \ref{t:main1} holds when $T = {\rm PSp}_n(q)'$ and $n \geqs 4$.
\end{prop}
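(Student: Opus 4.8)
The plan is to follow the template of Propositions \ref{p:psln} and \ref{p:psun}, applying Corollary \ref{c:core} throughout: since $R_0 = R \cap T \ne 1$, we have $|\mathcal{M}(R)| = 1$ if and only if $G$ has a unique conjugacy class of maximal subgroups of $r'$-index. Write $n = 2m$, and recall that $r$ is odd with $G/T$ an $r$-group. If $r = p$ then every maximal parabolic subgroup of $G$ has $r'$-index and there are at least two such classes (as $m \geqs 2$), so $|\mathcal{M}(R)| \geqs 2$; thus I may assume $r \ne p$. The groups of small rank or over small fields (in particular ${\rm Sp}_4(q)$ and a short explicit list of pairs $(n,q)$) can be dispatched with {\sc Magma} \cite{magma}, so I would treat $m$ and $q$ as large in the generic analysis. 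Set $i = d_q(r)$. Then $r$ divides $|T|$ precisely when $r$ divides some factor $q^{2j}-1$ with $1 \leqs j \leqs m$, and Lemma \ref{l:gk} yields the exact value of $|R_0|$ from the relevant $(q^{2j} \mp 1)_r$ contributions.

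The bulk of the argument is to show that in all but a few extremal configurations of $i$ one can exhibit two non-conjugate maximal subgroups of $r'$-index, whence $|\mathcal{M}(R)| \geqs 2$. Here I would work systematically through the geometric collections of \cite{KL,BHR}: the parabolic subgroups $P_j$ (stabilisers of totally isotropic $j$-spaces); the reducible $\C_1$-subgroups $N_{2j}$ of type ${\rm Sp}_{2j}(q) \perp {\rm Sp}_{2m-2j}(q)$; the $\C_2$-subgroups of type ${\rm Sp}_{2j}(q) \wr S_{m/j}$ and ${\rm GL}_m(q).2$; and the field-extension $\C_3$-subgroups of type ${\rm Sp}_{2m/k}(q^k)$ and ${\rm GU}_m(q)$. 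The key point is that unless $r$ divides a single factor $q^{2j}-1$ (so that $R_0$ is cyclic), at least two of these survive with $r'$-index; and even when $R_0$ is cyclic, any proper divisor of the governing exponent produces an intermediate $\C_3$-subgroup of $r'$-index in addition to the torus normaliser. This should reduce the problem to the two Coxeter-type cases $i = 2m$ (so $r \mid q^m+1$ and $|R_0| = (q^m+1)_r$) and $i = 2m-2$ (so $r \mid q^{m-1}+1$), together with the requirement that $m$, respectively $m-1$, be a prime power.

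In these extremal cases $R_0$ is cyclic and lies in a single maximal torus, normalised by a $\C_3$-subgroup of type ${\rm Sp}_m(q^2)$ or ${\rm GU}_m(q)$ when $i = 2m$ (according to the parity of $m$), respectively an $N_2$-subgroup of type ${\rm Sp}_2(q) \perp {\rm Sp}_{2m-2}(q)$ when $i = 2m-2$. I would then determine the remaining maximal subgroups of $r'$-index containing $R$. As in Propositions \ref{p:psln} and \ref{p:psun}, the subfield $\C_5$-subgroups of type ${\rm Sp}_{2m}(q_0)$ are governed by the arithmetic conditions $\a(m,-1)$ and $\b(m,-1)$ of \eqref{e:ab}, while the almost simple subgroups in the collection $\mathcal{S}$ are handled by \cite[Theorem 7.1]{GS} together with \cite[Table 5]{BGS} in the sub-cases where $R_0$ contains an element $x$ with $\nu(x) \leqs 2$ (see \eqref{e:nu}), and by the main theorem of \cite{GPPS} for the genuinely large exponents $i \in \{2m,2m-2\}$. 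I expect the surviving $\mathcal{S}$-candidates to have socle of the shape ${\rm L}_2(r)$, whose maximality can be established exactly as in Proposition \ref{p:psln}, using absolute irreducibility to exclude containment in a $\C_3$-subgroup. A subtlety peculiar to the symplectic groups is that when $q$ is even the $\C_8$-subgroups ${\rm O}_{2m}^{\pm}(q)$ enter the count: since $r \mid q^m+1$ forces $r \nmid q^m-1$, the index $|{\rm Sp}_{2m}(q):{\rm O}_{2m}^{-}(q)| = \tfrac{1}{2} q^m(q^m-1)$ is coprime to $r$ in the case $i = 2m$, so the orthogonal subgroup ${\rm O}_{2m}^{-}(q)$ (which also contains the Coxeter torus) must be incorporated, and the even- and odd-characteristic cases have to be separated.

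The step I expect to be the main obstacle is the $\mathcal{S}$-collection analysis in the two extremal cases, and in tandem with it the task of pinning down the exact arithmetic conditions characterising $|\mathcal{M}(R)| = 1$: the $\a(m,-1)$ and $\b(m,-1)$ conditions for subfield subgroups, the prime-power constraint on $m$ (or $m-1$), and the even/odd characteristic dichotomy created by the $\C_8$ subgroups. By contrast, the reductions in the non-extremal cases should be routine bookkeeping of $r$-parts of indices across the geometric collections, given Lemma \ref{l:gk} and the tables of \cite{KL,BHR}.
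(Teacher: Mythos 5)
Your overall strategy (case division on $i = d_q(r)$, exhibiting two non-conjugate geometric maximal subgroups of $r'$-index in generic cases, then a $\C_5$/$\mathcal{S}$ analysis via \cite{GPPS} and \cite[Theorem 7.1]{GS} in the extremal ones) is the same as the paper's. However, there are two concrete gaps. First, your reduction to the \emph{two} extremal cases $i=2m$ and $i=2m-2$ is wrong: the case $i = 2m-2$ never survives the geometric stage. If $r$ is a primitive prime divisor of $q^{2m-2}-1$ then $|R_0| = (q^{m-1}+1)_r$, and the parabolic $P_1$, whose Levi factor is ${\rm GL}_1(q) \times {\rm Sp}_{n-2}(q)$, has $r'$-index alongside $N_2$ of type ${\rm Sp}_2(q) \perp {\rm Sp}_{n-2}(q)$; so $|\mathcal{M}(R)| \geqs 2$ always, by Corollary \ref{c:core}. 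Your claim that $R_0$ lies in a torus normalised essentially only by $N_2$ imports the unitary picture (where for $i=2n-2$ the parabolic Levis ${\rm GL}_j(q^2)\times {\rm GU}_{n-2j}(q)$ have order coprime to $r$, leaving $N_1$ as the sole reducible candidate), but it fails for symplectic groups; carrying out the $\mathcal{S}$-analysis there would produce spurious uniqueness entries. In fact the only surviving case is $i = n$ with $n = 2^a$ and $q$ odd (note: $n/2$ must be a $2$-power, not merely a prime power, since for odd $t \mid m$ the subgroups of types ${\rm GU}_{n/2}(q)$ and ${\rm Sp}_{n/t}(q^t)$ give two classes), plus the isolated case $({\rm Sp}_6(2),3)$.

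Second, your list of subgroup collections omits $\C_6$ entirely. In the genuine extremal case $n = 2^a$, $q=p$ odd, the local subgroups $2^{1+2a}_{-}.{\rm O}_{2a}^{-}(2)$ are maximal, and when $r = n+1$ is a Fermat prime with $|R| = r$ they have $r'$-index; together with the subfield subgroups (which all have $r'$-index when $|R|=r=n+1$ and $f \geqs 2$, since $r \mid p^n-1$), this forces $|\mathcal{M}(R)| \geqs 2$ whenever $|R| = r = n+1$. Without this, the arithmetic conditions you would extract for uniqueness (and hence the entries of Table \ref{tab:main1}) come out wrong. Two smaller points: the correct subfield condition is $\a(n,1)$, not the variant $\b$ excluding $k=2$ (a subfield subgroup ${\rm Sp}_n(q_0)$ with $q = q_0^2$ is a genuine competitor here, unlike the unitary situation); and the case $q=2$, $r=3$, $i=2$ concerns infinitely many $n$, so it cannot be delegated to {\sc Magma} — it needs the ad hoc subgroups $N_2$ and ${\rm Sp}_6(2) \wr S_{n/6}$ (the usual $\C_2$- and $\C_3$-type candidates are non-maximal over $\mathbb{F}_2$), and it is exactly where the exceptional case $G = {\rm Sp}_6(2)$, $H$ of type ${\rm O}_6^{-}(2)$ with $|\mathcal{M}(R)|=1$ arises.
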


\begin{proof}
The groups with $n=4$ and $q \in \{2,3\}$ can be handled using {\sc Magma} and we find that $|\mathcal{M}(R)| = 1$ if and only if $G = {\rm PSp}_4(2)' \cong A_6$ and $r = r_2 = 3$ (here $|R|=3^2$ and $\mathcal{M}(R) = \{H\}$ with $H$ of type ${\rm Sp}_2(2) \wr S_2$). For the remainder, we will assume $(n,q) \ne (4,2), (4,3)$. If $r=p$ then the maximal parabolic subgroups $P_1$ and $P_2$ have $r'$-index. Now assume $r \ne p$ and set $i = d_q(r)$.

\vs

\noindent \emph{Case 1. $i \in \{1,2\}$}

\vs

Here $q \geqs 4$ and a maximal parabolic subgroup $P_{n/2}$ has $r'$-index. In addition, $\mathcal{M}(R)$ contains a subgroup of type ${\rm GL}_{n/2}(q)$ if $q$ is odd, and one of type ${\rm O}_{n}^{+}(q)$ if $q$ is even. Therefore, $|\mathcal{M}(R)| \geqs 2$ when $i=1$.

Now assume $i=2$, so $r$ divides $q+1$. If $n=4$ then $q \geqs 4$, $|R_0| = ((q+1)_r)^2$ and $\mathcal{M}(R)$ contains a $\C_2$-subgroup of type ${\rm Sp}_2(q) \wr S_2$, together with a $\C_3$-subgroup of type ${\rm GU}_2(q)$ if $q$ is odd and a $\C_8$-subgroup of type ${\rm O}_4^{+}(q)$ if $q$ is even. Now assume $n \geqs 6$. If $q$ is odd, then $\mathcal{M}(R)$ contains a $\C_2$-subgroup of type ${\rm Sp}_2(q) \wr S_{n/2}$ and a $\C_3$-subgroup of type ${\rm GU}_{n/2}(q)$. For $q$ even we find that $\mathcal{M}(R)$ contains a $\C_8$-subgroup of type ${\rm O}_n^{\e}(q)$, where $\e = (-)^{n/2}$, and it also contains a $\C_2$-subgroup of type ${\rm Sp}_2(q) \wr S_{n/2}$ if $q \geqs 4$. Finally, suppose $q=2$ and $r=3$. If $n=6$ then one can check that $|\mathcal{M}(R)|=1$, so we may assume $n \geqs 8$. If $n$ is indivisible by $3$, then a $\C_1$-subgroup $N_2 = {\rm Sp}_{n-2}(2) \times {\rm Sp}_2(2)$ has $3'$-index. Similarly, if $n \geqs 12$ is divisible by $3$ then a $\C_2$-subgroup ${\rm Sp}_6(2) \wr S_{n/6}$ has $3'$-index.

\vs

\noindent \emph{Case 2. $i=n$}

\vs

Here $r \equiv 1 \imod{n}$ and $|R_0| = (q^n-1)_r$. First assume $n \equiv 2 \imod{4}$ and let $t$ be an odd prime divisor of $n$. Then a $\C_3$-subgroup of type ${\rm Sp}_{n/t}(q^t)$ has $r'$-index. In addition, $\mathcal{M}(R)$ contains a $\C_3$-subgroup of type ${\rm GU}_{n/2}(q)$ if $q$ is odd, and a $\C_8$-subgroup of type ${\rm O}_{n}^{-}(q)$ if $q$ is even.

Now assume $n \equiv 0 \imod{4}$ and observe that $\mathcal{M}(R)$ contains a $\C_3$-subgroup of type ${\rm Sp}_{n/2}(q^2)$. If $q$ is even, then $\mathcal{M}(R)$ also contains a $\C_8$-subgroup of type ${\rm O}_{n}^{-}(q)$, so we may assume $q$ is odd. As above, if $n$ is divisible by an odd prime $t$, then a $\C_3$-subgroup of type ${\rm Sp}_{n/t}(q^t)$ has $r'$-index, so we have reduced to the case where $n = 2^a$ is a $2$-power and $q$ is odd. To complete the analysis of this case, we need to consider the existence of a subgroup $H \in \C_5 \cup \C_6 \cup \mathcal{S}$ with $r'$-index.

Suppose $H \in \C_5$ is a subfield subgroup of type ${\rm Sp}_n(q_0)$, where $q=q_0^k$ for some prime $k$. Then $|H_0|_r = (q_0^n-1)_r$ and thus $H$ has $r'$-index if and only if $(q_0^n-1)_r = (q^n-1)_r$. We deduce that there are no subfield subgroups in $\mathcal{M}(R)$ if and only if $\a(n,1)$ holds.

Next suppose $H$ is a $\C_6$-subgroup of type $2^{1+2a}.{\rm O}_{2a}^{-}(2)$, so $f=1$ and $G=T$. If $r > n+1$, then $|H|$ is indivisible by $r$, so let us assume $r = n+1 = 2^a+1$, in which case $r$ is a Fermat prime. Here $|H|_r = r$ and thus $H$ has $r'$-index if and only if $|R| = r$. Let us also observe that if $|R| = r = n+1$ and $f \geqs 2$, then $r$ divides $p^n-1$ and thus every subfield subgroup of $G$ has $r'$-index. This shows that $|\mathcal{M}(R)| \geqs 2$ if $|R| = r=n+1$.

Finally, let us consider the subgroups in $\mathcal{S}$. First assume $n=4$, so $r,q \geqs 5$ and the relevant subgroups are recorded in \cite[Table 8.13]{BHR} (recall that $q$ is odd). The groups with $q \in \{5,7\}$ can be handled using {\sc Magma} (we find that $|\mathcal{M}(R)| = 1$ if and only if $(q,r) = (5,13)$ or $(7,5)$), so let us assume $q \geqs 9$. Then by inspecting \cite[Table 8.13]{BHR} we see that $q=p \equiv \pm 1, \pm 5 \imod{12}$ and there is a subgroup $H \in \mathcal{S}$ with socle $A_6$. Therefore, we must have $G=T$, $r=5$ and we deduce that $H$ has $r'$-index if and only if $|R| = 5$. As noted above, $\mathcal{M}(R)$ also contains a $\C_6$-subgroup in this situation.

For the remainder we may assume $n = 2^a \geqs 8$. Here \cite{GPPS} describes the subgroups $H \in \mathcal{S}$ that contain an element of order $r$; by carefully studying \cite[Tables 2-8]{GPPS} we deduce that $n \in \{s,s-1,(s-1)/2\}$ and $H$ has socle $S = {\rm L}_2(s)$ with $s \geqs 7$. Now $H$ has $r'$-index if and only if $|R_0| = r$, so we may as well assume $|R_0| = r$. Therefore, in view of our above observations, we may assume that $r \geqs 2n+1$, which in turn implies that $n = (s-1)/2$ and $r=s = 2n+1$. By inspecting \cite[Table 2(b)]{HM} we see that this situation does arise, subject to the irrationality $\sqrt{r}$ of the corresponding Brauer character. This means that either $q=p$ and $r = \square \imod{p}$, or $q=p^2$ and $r = \boxtimes \imod{p}$, so in both cases we have $G=T$. Note that such a subgroup $H$ is maximal since it is not contained in a $\C_3$-subgroup of type ${\rm Sp}_{n/2}(q^2)$ (for example, because ${\rm Sp}_{n/2}(q^2) < {\rm Sp}_{n}(q)$ is not absolutely irreducible on the natural module) nor a subfield subgroup if $q=p^2$ (by definition, a subgroup in $\mathcal{S}$ cannot be realised over a proper subfield of $\mathbb{F}_q$). 

In conclusion, if $n = 2^a \geqs 8$, $q$ is odd, $r = 2n+1$ and $\a(n,1)$ holds, then $|\mathcal{M}(R)|=1$ if and only if $|R|>r$, or $f>2$, or $f=1$ and $r = \boxtimes \imod{p}$, or $f=2$ and $r = \square \imod{p}$.

\vs

\noindent \emph{Case 3. $2<i<n$}

\vs

Finally, let us assume $2<i<n$ and note that $n \geqs 6$. If $n$ is indivisible by $i$, then $\mathcal{M}(R)$ contains $\C_1$-subgroups of type $P_1$ and $N_2$. Now assume $n = \ell i$ with $\ell \geqs 2$. If $i$ is odd, then $\ell$ is even and it is easy to see that $P_{n/2}$ has $r'$-index, together with a $\C_3$-subgroup of type ${\rm Sp}_{n/t}(q^t)$, where $t$ is any prime divisor of $i$. Similarly, if $i \equiv 0 \imod{4}$ then $\mathcal{M}(R)$ contains subgroups of type ${\rm Sp}_{i}(q) \wr S_{n/i}$ and ${\rm Sp}_{n/2}(q^2)$. Finally, if $i \equiv 2 \imod{4}$ then a subgroup of type ${\rm Sp}_{i}(q) \wr S_{n/i}$ has $r'$-index, as does one of type ${\rm GU}_{n/2}(q)$ (if $q$ is odd) and ${\rm O}_n^{\e}(q)$ (if $q$ is even), where $\e = (-)^{n/i}$.
\end{proof}

\subsection{Odd dimensional orthogonal groups}

In this section we prove Theorem \ref{t:main1} when $T = \O_n(q)$ and $n \geqs 7$ is odd. In order to handle this case, we require a couple of preliminary lemmas.

\begin{lem}\label{l:new1}   
Let $r \geqs 5$ be a prime and suppose $x \in {\rm GL}_n(q)={\rm GL}(V)$ acts irreducibly on a $d$-dimensional subspace of $V$, where $|x| = r^a$ and $a \geqs 2$. Then $|x| \geqs d + r^{a-1} \geqs d + 5$.
\end{lem}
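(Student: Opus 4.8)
The plan is to translate the hypothesis ``$x$ acts irreducibly on a $d$-dimensional subspace $W$'' into the language of finite fields, where the bound becomes an immediate divisibility statement. Restricting $x$ to $W$ gives an element $\bar x = x|_W \in {\rm GL}(W)$ whose minimal polynomial over $\mathbb{F}_q$ is irreducible of degree $d$ (this is exactly what irreducibility of $W$ as an $\mathbb{F}_q\la x\ra$-module means). Hence $W$ may be identified with the field $\mathbb{F}_{q^d}$, with $\bar x$ acting as multiplication by a scalar $\l \in \mathbb{F}_{q^d}^{\times}$ that generates $\mathbb{F}_{q^d}$ over $\mathbb{F}_q$. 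Since $\bar x$ is the image of $x$ under the restriction homomorphism $\la x \ra \to {\rm GL}(W)$, its order divides $|x| = r^a$, so $|\l| = r^b$ for some $0 \leqs b \leqs a$.

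The key step is then the standard fact that the degree of a root of unity over $\mathbb{F}_q$ is the order of $q$ in the relevant unit group: since $\l$ has multiplicative order $r^b$ and generates $\mathbb{F}_{q^d}$, the degree $d = [\mathbb{F}_q(\l) : \mathbb{F}_q]$ equals the order of $q$ modulo $r^b$, and in particular $d$ divides the order $r^{b-1}(r-1)$ of $(\mathbb{Z}/r^b\mathbb{Z})^{\times}$. (Note this forces $r \neq p$, since $\mathbb{F}_{q^d}^{\times}$ has order coprime to $p$.) Using $b \leqs a$ I would then estimate
\[
d \leqs r^{b-1}(r-1) \leqs r^{a-1}(r-1) = r^a - r^{a-1},
\]
which rearranges to $|x| = r^a \geqs d + r^{a-1}$. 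The second inequality $d + r^{a-1} \geqs d+5$ is immediate, since $a \geqs 2$ and $r \geqs 5$ give $r^{a-1} \geqs r \geqs 5$.

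The only points requiring a moment of care — and these are the closest thing to an obstacle, though none is genuinely hard — are the degenerate possibilities hidden in the range of $b$. If $b = 0$ then $\bar x$ is trivial, so irreducibility forces $d = 1$, and the bound holds directly because $r^a - r^{a-1} = r^{a-1}(r-1) \geqs 1$. If $1 \leqs b < a$, i.e.\ $x$ restricts to $W$ with order strictly smaller than $r^a$, the displayed chain still applies verbatim thanks to the inequality $r^{b-1}(r-1) \leqs r^{a-1}(r-1)$; no sharper information about the full order of $x$ is needed. Thus the argument is uniform once the field-theoretic identification is in place, and I expect the write-up to be short, with the substantive content being the divisibility $d \mid r^{b-1}(r-1)$ recorded in the discussion of primitive prime divisors.
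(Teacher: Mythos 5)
Your proof is correct and takes essentially the same route as the paper's, whose entire argument is the observation that irreducibility on a $d$-dimensional space forces $q$ to have order $d$ modulo the order of the acting element, so that $d$ divides $|(\mathbb{Z}/r^a\mathbb{Z})^{\times}| = r^a - r^{a-1}$. If anything, your explicit treatment of the possibility that $x$ restricts to $W$ with order $r^b < r^a$ (including $b=0$) is more careful than the paper's one-line proof, which tacitly works with the full order $r^a$; your inequality $d \leqs r^{b-1}(r-1) \leqs r^{a-1}(r-1)$ closes that small gap.
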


\begin{proof}  
Since $x$ acts irreducibly on a $d$-dimensional space, we see that $q$ has order $d$ modulo $r^a$ and thus $d$ divides $|(\mathbb{Z}/r^a \mathbb{Z})^*|=r^a - r^{a-1}$. The result follows. 
\end{proof}

Recall that $F^*(L) = F(L)E(L)$ is the generalised Fitting subgroup of the finite group $L$, where $F(L)$ and $E(L)$ denote the Fitting subgroup and layer of $L$, respectively. Also recall that a perfect group $Q$ is quasisimple if $Q/Z(Q)$ is simple.

\begin{lem}\label{l:new2} 
Let $L$ be a finite group such that $Q=F^*(L)$ is quasisimple, and let $V$ be a nontrivial faithful irreducible $\mathbb{F}_qL$-module. Let $r \geqs 5$ be a prime and let $a \geqs 1$ be an integer.    Assume that $L$ contains an element $x$ of order $r^a$. 
\begin{itemize}\addtolength{\itemsep}{0.2\baselineskip}
\item[{\rm (i)}] If $Q/Z(Q) = A_n$ and $n \geqs \max\{9,r^a\}$, then $\dim V \geqs r^a-2$.
\item[{\rm (ii)}] If $Q/Z(Q)$ is a sporadic group and $a \geqs 2$, then $\dim V \geqs r^a+1$.
\item[{\rm (iii)}] Suppose $Q/Z(Q)$ is a simple group of Lie type in characteristic $s$ with $(q,s) = 1$. Then either $L/Z(L) = {\rm L}_2(r)$ and $a=1$, or $\dim V  \geqs r^a -2$. 
\end{itemize}
\end{lem}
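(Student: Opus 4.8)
The plan is to handle all three parts through one reduction and then three separate degree estimates. First I would use Clifford theory to pass from $L$ to $Q = F^*(L)$: since $V$ is a faithful irreducible $\mathbb{F}_q L$-module and $Q \normeq L$ is quasisimple, the restriction $V|_Q$ is semisimple with only nontrivial constituents, so $\dim V \geq m(Q)$, the minimal dimension of a nontrivial irreducible $\mathbb{F}_q Q$-module. Thus in every case it suffices to bound $m(Q)$ below and compare with $r^a$; the element $x$ serves only to force $r^a \mid |L|$, and in (i) to make $n \geq r^a$ a genuine constraint. For (i), I would invoke the known minimal faithful degrees of the alternating groups and their covers: for $n \geq 9$ the only quasisimple covers are $A_n$ and $2.A_n$, every nontrivial irreducible has dimension at least $n-2$ (attained by the fully deleted permutation module in characteristic dividing $n$, the basic spin modules being strictly larger once $n \geq 9$), and hence $\dim V \geq n-2 \geq r^a - 2$. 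For (ii), the point is that $a \geq 2$, $r \geq 5$ force $|x| \geq 25$, and only a short explicit list of sporadic groups actually possesses an element of such prime-power order; this must be read off genuine element-order data rather than from $|Q|$, since e.g. $\mathrm{Co}_1$ has $5^4, 7^2 \mid |Q|$ but no element of order $25$ or $49$, which matters because $2.\mathrm{Co}_1$ has a $24$-dimensional faithful module. For each group that truly occurs (essentially $\mathrm{HN}, \mathrm{Ly}, \mathrm{Th}, \mathrm{Fi}_{24}', \mathrm{B}, \mathbb{M}$ and covers), I would quote the tabulated minimal faithful degree and check it exceeds $r^a + 1$; these degrees run into the hundreds or more, so the verification is immediate once the list is fixed.

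Part (iii) is the substantial one. Here I would write $Q/Z(Q)$ as a group of Lie type over $\mathbb{F}_{q_0}$ with $q_0 = s^b$, and combine the Landazuri--Seitz--Zalesskii cross-characteristic lower bounds for $m(Q)$ with an upper bound for $r^a$ read off the element structure of $Q$. If $r = s$ then $x$ is unipotent, and its order is controlled by the natural dimension (equivalently the Coxeter number), which is negligible next to the LSZ bound outside rank one. If $r \neq s$ then $x$ is semisimple, so $r^a$ divides the exponent of a maximal torus, bounded above by the Singer-type quantities $\tfrac{q_0^{N}-1}{q_0-1}$, $q_0^{m}+1$, $\tfrac{q_0^{N}-(-1)^{N}}{q_0+1}$, and their analogues. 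For each family I would check that the LSZ bound and the relevant maximal torus exponent differ by at most $2$, which gives $m(Q) \geq r^a - 2$; Lemma \ref{l:new1} enters precisely here, constraining the dimensions of the subspaces on which a power of $x$ acts irreducibly and thereby pinning down how $r^a$ embeds in a torus. A crucial arithmetic input is that for $r \geq 5$ and $a \geq 2$ the Catalan--Mih\u{a}ilescu theorem (or just Zsigmondy) prevents $r^a$ from equalling a full torus order such as $q_0^m + 1$, so the $r'$-cofactor of that order is at least $2$ and the comparison closes.

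Finally I would isolate the single exception. In defining characteristic and rank one, namely $Q/Z(Q) = {\rm L}_2(s)$ with $x$ unipotent of order $s = r$ (which forces $a=1$, as the Sylow $s$-subgroups have exponent $s$), the minimal cross-characteristic degree is $(r-1)/2 < r-2$, and this is exactly the stated exception $L/Z(L) = {\rm L}_2(r)$, $a=1$. The main obstacle in part (iii) is the \emph{uniformity} of the matching: the LSZ bounds and maximal torus exponents must be compared family by family, with the low-rank classical groups (${\rm L}_2$, ${\rm L}_3^{\pm}$, ${\rm PSp}_4$, small-rank ${\rm P\Omega}_n^{\pm}$) and the exceptional groups needing individual attention; I would also check the low-rank linear case directly to confirm that ${\rm L}_2(q_0)$ with $q_0 \neq r$ and an element of order $r$ still satisfies $\dim V \geq r-2$, so that no spurious exceptions appear. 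One further point to dispatch is the possibility that $x$ lies outside $Q$ (say in a field-automorphism coset): a large $r$-part of $|x|$ then forces a correspondingly large defining field or rank, and hence a correspondingly large LSZ bound, so the inequality is preserved.
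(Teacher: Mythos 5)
Your proposal is correct and follows essentially the same route as the paper: part (i) via the $n-2$ bound for alternating groups, part (ii) by inspecting which sporadic groups genuinely contain elements of order $r^a$ (the paper finds only $r^a=25$, with ${\rm HN}$, ${\rm Ly}$, $\mathbb{B}$, $\mathbb{M}$ — your list over-includes ${\rm Th}$ and ${\rm Fi}_{24}'$, which have no element of order $25$, but this is harmless), and part (iii) by playing prime-power element-order bounds (torus exponents and Coxeter numbers, with field-automorphism cosets treated separately) against cross-characteristic minimal-degree bounds, which is exactly the alternative argument the paper spells out alongside its citation of DiMuro, including the same isolation of the ${\rm L}_2(r)$, $a=1$ exception. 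The only caution is your opening claim that $x$ ``serves only to force $r^a \mid |L|$'': as your own ${\rm Co}_1$ example shows, the argument genuinely requires an element of order $r^a$ rather than mere divisibility, a slip that your later text corrects.
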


\begin{proof}  
Without loss of generality, we may assume that $L = \langle Q, x \rangle$. In addition, since we are seeking a lower bound on $\dim V$, we may work over the algebraic closure of $\mathbb{F}_q$. We may also assume that $Q$ acts homogeneously on $V$ (otherwise, we can pass to an irreducible $Q$-submodule and take the stabiliser of that module). Since $L/Q$ is cyclic, we can also assume that $Q$ acts irreducibly on $V$. Finally, since $r \geqs 5$, we may assume that $L=Q$ in cases (i) and (ii).  

For (i), it is well known that if $n \geqs 9$ then the dimension of any nontrivial irreducible $\mathbb{F}_qL$-module (in any characteristic) is at least $n-2$ (see \cite[Section 2]{T}, for example).  

Turning to (ii), we first observe that there are very few cases where $L$ contains an element of order $r^a$ with $r \geqs 5$ a prime and $a \geqs 2$. Indeed, by inspection we find that $r^a=25$ is the only possibility and
$L/Z(L) = {\rm HN}$, ${\rm Ly}$, $\mathbb{B}$ or $\mathbb{M}$. In each case, it is easy to check that $\dim V \geqs 26$. 

Now consider (iii). First assume $Q/Z(Q) = {\rm L}_2(s^e)$, where $e \geqs 1$ and $s$ is a prime with $(q,s) = 1$. Suppose that $s$ is odd, which means that $\dim V \geqs (s^e-1)/2$. If $r=s$ then every $r$-element in $L$ has order $r$ and we note that $(r^e-1)/2 \geqs r-2$ if and only if $e \geqs 2$. And if $e=1$ and $\dim V < r-2$, then $\dim V  = (r \pm 1)/2$ and $V$ is not a module for ${\rm PGL}_2(r)$, which leads to the special case $L/Z(L) = {\rm L}_2(r)$, $a=1$ in the statement of part (iii). On the other hand, if $r \ne s$ then every $r$-element in $L$ has order at most $(s^e+1)/2$ and the result follows. Finally, suppose $s=2$. Here $\dim V \geqs s^e-1$ and the maximal order of an $r$-element in $L$ is at most $s^e+1$. Therefore $r^a \leqs s^e+1$ and the result follows.

For the remainder, we may assume $Q/Z(Q) \ne {\rm L}_2(s^e)$. Here the desired bound $\dim V \geqs r^a-2$ essentially follows from \cite[Theorem 1.1]{DiMuro}, where a stronger bound is established (with several exceptions, which one has to check). Alternatively, one can combine upper bounds on the orders of elements of prime power order in $L$ (see \cite{GMPS}) with lower bounds from \cite{T} on the minimal dimension of faithful irreducible cross-characteristic representations of $L$. 
Note that we also have to consider central extensions of groups of the form ${\rm SL}_d(t)$ and ${\rm SU}_d(t)$, but in this situation we may assume the prime $r$ divides $t \pm 1$ and the corresponding bounds are much better.  

For example, suppose $Q/Z(Q)$ is a classical group defined over $\mathbb{F}_{t}$, where $t = s^e$ for a prime $s$. Let $d \geqs 3$ be the dimension of the natural module $W$ and suppose $x \in L$ has order $r^a$.  If $x$ is semisimple then it induces an inner-diagonal
automorphism on $Q/Z(Q)$ and it acts faithfully on a subspace of $W$, which is either totally singular or nondegenerate if $L$ preserves a form on $W$. This implies that $|x| \leqs (t^{d} -1)/(t-1)$. If $x$ is unipotent, then $|x| \leqs s^b$, where $b$ is minimal such that $s^b \geqs d$. And if $x$ is in the coset of a field automorphism, then we can obtain a much better bound on $|x|$ using Shintani descent. The desired result then follows by applying the relevant bound on $\dim V$ from  \cite{T}.  For example, if $Q={\rm SL}_d(t)$, then we immediately deduce that the minimal dimension of a nontrivial faithful irreducible representation of $L$ is at least $(t^d-1)/(t-1) - 2$.  

We can use a very similar argument to handle the case where $Q/Z(Q)$ is an exceptional group over $\mathbb{F}_t$, where $t = s^e$ as above. For example, if $x$ is semisimple then we can bound $|x|$ by considering the orders of maximal tori (or we can just use the trivial bound $(t+1)^m$, where $m$ is the rank of the group). And we have $|x| \leqs s^b$ if $x$ is unipotent, where $b$ is minimal such that $s^b \geqs h$ and $h$ is the Coxeter number of $Q$. As noted above, even better bounds hold when $x$ is in the coset of a field automorphism, and in all cases we can play these bounds off against the lower bounds on $\dim V$ in \cite{T}. We leave the reader to check the details.  
\end{proof}

We now use the previous lemmas to establish the following technical result, which will be needed in the proof of Proposition \ref{p:pso_odd} below. Recall that $R_0 = R \cap T$.

\begin{prop}\label{p:new}
Let $G$ be an almost simple group with socle $T = \O_{n}(q)$, where $n \geqs 7$ is odd and $q$ is odd. Let $R$ be a Sylow $r$-subgroup of $G$, where $r$ is a primitive prime divisor of $q^i-1$ with $2 < i < n-1$. Assume $|R_0|>r$ and $R_0$ acts irreducibly on a nondegenerate $(n-1)$-dimensional subspace of the natural module for $T$. Then $R$ is not contained in a maximal subgroup in the collection $\mathcal{S}$. 
\end{prop}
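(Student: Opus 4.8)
The plan is to argue by contradiction: suppose $R \leqs H$ for some $H \in \mathcal{S}$, and let $S$ be the socle of $H$. Since $r$ is a primitive prime divisor of $q^i-1$ we have $r \equiv 1 \imod{i}$ with $i \geqs 3$, and so $r \geqs 5$. Write $V = W \perp \langle v\rangle$, where $W$ is the given nondegenerate $(n-1)$-dimensional subspace and $\langle v\rangle$ spans the $1$-dimensional complement. As $n$ is odd and $R_0$ is an $r$-group with $r$ odd, $R_0$ acts trivially on $\langle v\rangle$, hence acts faithfully on $W$. The first goal is to produce an element $x \in R_0$ of order $r^a$ with $a \geqs 2$ that acts irreducibly on $W$.

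To do this, I would first analyse the structure of $R_0$. Since $R_0$ acts faithfully and irreducibly on $W$, the algebra $\mathrm{End}_{\FF_q R_0}(W)$ is a finite field, and over a suitable extension $R_0$ admits a faithful absolutely irreducible representation of $r$-power degree $\delta$. If $R_0$ is abelian, then $\delta = 1$ and $R_0$ is cyclic; otherwise $R_0$ is nonabelian with $\delta = r^b$ ($b \geqs 1$), which forces $R_0$ to normalise a symplectic-type $r$-group as in the description of $\mathcal{C}_6$-subgroups in \cite{KL}. The plan is to exclude the nonabelian possibility using the hypothesis that $R_0$ is a Sylow $r$-subgroup of $T$: the $r$-part of the relevant $\mathcal{C}_6$-normaliser (and, in the imprimitive case, of the associated wreath product) does not match $|T|_r$ in the ppd range $2 < i < n-1$, so $R_0$ cannot be Sylow. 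Hence $R_0$ is cyclic, and a generator $x$ acts irreducibly on $W$. Since an element of order $r$ has all irreducible constituents of dimension $i$, whereas $\dim W = n-1 > i$, irreducibility forces $|x| = r^a$ with $a \geqs 2$ and $i\, r^{a-1} = n-1$; the hypothesis $|R_0| > r$ is exactly what guarantees $a \geqs 2$.

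With $x$ produced, the two preparatory lemmas combine to give a numerical contradiction. Applying Lemma \ref{l:new1} with $d = \dim W = n-1$ yields $r^a = |x| \geqs (n-1) + r^{a-1} \geqs n + 4$, using $r^{a-1} \geqs r \geqs 5$. On the other hand, let $Q$ be a quasisimple group with $Q/Z(Q) \cong S$ acting absolutely irreducibly on $V$ (as required for membership of $\mathcal{S}$), and set $L = \langle Q, x\rangle$, so that $V$ is a faithful irreducible $\FF_q L$-module of dimension $n$ and $L$ contains the element $x$ of order $r^a$ with $a \geqs 2$. Now invoke Lemma \ref{l:new2}. If $S = A_m$, then since $r$ is odd, $\mathrm{Out}(A_m)$ is a $2$-group and $r \nmid |Z(Q)|$, so $x$ projects to an element of order $r^a$ in $A_m$, forcing $m \geqs r^a \geqs 25 > 9$; part (i) then gives $n = \dim V \geqs r^a - 2 \geqs n+2$, a contradiction. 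If $S$ is sporadic, part (ii) applies as $a \geqs 2$ and gives $n \geqs r^a + 1 \geqs n+5$, again a contradiction. If $S$ is of Lie type, part (iii) gives either $L/Z(L) = {\rm L}_2(r)$ with $a=1$, contradicting $a \geqs 2$, or $n \geqs r^a - 2 \geqs n+2$, a contradiction. Thus no such $H$ exists and $R$ lies in no member of $\mathcal{S}$.

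I expect the main obstacle to be the structural step in the second paragraph, namely showing that a Sylow $r$-subgroup acting irreducibly on the nondegenerate $(n-1)$-space must be cyclic (equivalently, producing the irreducible element $x$ of order $r^a$ with $a \geqs 2$). Ruling out the nonabelian symplectic-type and imprimitive configurations requires a careful comparison of $|T|_r$ with the $r$-parts of the corresponding $\mathcal{C}_6$-normalisers and wreath products, exploiting that $r$ is a primitive prime divisor with $2 < i < n-1$. Once cyclicity is established, the numerical part driven by Lemmas \ref{l:new1} and \ref{l:new2} is uniform across all three families and essentially mechanical.
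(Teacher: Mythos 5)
Your proposal has two genuine gaps, and the second one is fatal. The first is the structural claim in your second paragraph: under the hypotheses of the proposition, $R_0$ is \emph{never} cyclic, so the dichotomy you propose resolves the wrong way. The paper shows that the hypotheses force $i$ to be even and $n-1 = ir^a$ with $a \geqs 1$, and then $R_0 \cong C_{r^c} \wr P$, where $r^c = (q^i-1)_r$ and $P$ is a Sylow $r$-subgroup of $S_{r^a}$: indeed, by Lemma \ref{l:gk} the $r$-part of $|T|$ equals $((q^i-1)_r)^{r^a}(r^a)!_r$, which is exactly the $r$-part of the $\C_2$-stabiliser of a decomposition of $W$ into $r^a$ pairwise orthogonal minus-type $i$-spaces, and this nonabelian wreath product does act irreducibly (imprimitively) on $W$. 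So your plan to eliminate the imprimitive configuration by showing its $r$-part ``does not match $|T|_r$'' refutes itself --- the $r$-parts match identically, and it is the cyclic configuration that fails the order test (a cyclic subgroup irreducible on $W$ lies in a torus of order $q^{(n-1)/2}+1$, whose $r$-part $r^{c+a}$ is strictly smaller than $|T|_r$). The element you want does exist, but it must be manufactured inside the wreath product, as the paper does: take $x \in R_0$ cyclically permuting the $r^a$ summands with $x^{r^a}$ irreducible on each, so $|x| = r^b$ with $b > a \geqs 1$ and $x$ is irreducible on $W$. With that replacement your numerical endgame via Lemmas \ref{l:new1} and \ref{l:new2} is sound for the alternating, sporadic and cross-characteristic families, so this gap is repairable --- but note the nonabelianness of $R_0$ is not an obstruction to be excluded; the paper later relies on it.

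The fatal gap is that your case analysis never treats socles of Lie type in the \emph{defining} characteristic $p$. Lemma \ref{l:new2}(iii) carries the hypothesis $(q,s)=1$, so your sentence ``If $S$ is of Lie type, part (iii) gives \dots'' only covers cross-characteristic embeddings; no part of Lemma \ref{l:new2} says anything when $S \in {\rm Lie}(p)$, and in that case the conclusion $\dim V \geqs r^b - 2$ is simply unavailable (defining-characteristic representations can have small dimension relative to element orders). This is the hardest case and occupies most of the paper's proof: one uses the nonabelianness of $R_0$ to force $r$ to divide the order of the Weyl group of $S$; for $a \geqs 2$ an element $z$ of order $r$ with $\nu(z) = i < \frac{1}{2}\sqrt{n}$ puts $H$ in the scope of \cite[Theorem 7.1]{GS}, reducing to $a=1$ and $n = ir+1 < r^2$; then exceptional socles are eliminated by dimension bounds, classical socles over $\FF_{q'}$ with $q' > q$ by the tensor-decomposition result \cite[Proposition 5.4.6]{KL}, and classical socles with $q' \leqs q$ by the rank bound $\geqs (n-1)/2$ coming from the irreducibility of $x$ on $W$, combined with L\"ubeck's tables \cite{Lu}. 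None of this machinery (or any substitute for it) appears in your proposal, so as written the proof does not go through.
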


\begin{proof} 
Let $V$ be the natural module for $T$ and let $W$ be a nondegenerate hyperplane of type $\e = \pm$ on which $R_0$ acts irreducibly. Write $q=p^f$, where $p$ is a prime, and note that $r \equiv 1 \imod{i}$. In particular, $r \geqs 5$ since $i>2$. Also note that any irreducible subgroup of $T$ containing $R_0$ must be absolutely irreducible, since the fixed space of $R_0$ on $V$ is $1$-dimensional. 

Fix an element $y \in Z(R_0)$ of order $r$ and let $U$ be a minimal $\la y \ra$-invariant subspace of $W$ on which $y$ acts nontrivially. Note that $\dim U = i$. Since $R_0$ acts irreducibly on $W$, it follows that any two irreducible $\la y\ra$-submodules of $W$ are isomorphic and thus $i$ is even (since the action of $y$ on $W$, and hence $U$, is self-dual). In particular, this implies that $C_V(y) = W^{\perp}$ is $1$-dimensional.

Note that we can decompose $V = V_0 \perp V_1 \perp \ldots \perp V_s$ as a direct sum of pairwise orthogonal nondegenerate spaces, where $\dim V_0<i$ and each $V_j$ for $j \geqs 1$ is a minus-type space. For each $j \in \{1, \ldots, s\}$, there exists an element $x_j \in T$ of order $r$ acting irreducibly on $V_j$ and trivially on $V_j^{\perp}$. Then $R_1 = \langle x_1, \ldots, x_s \rangle$ is an elementary abelian $r$-subgroup, which we may assume is contained in $R_0$.
Since $y$ centralises $R_1$, it follows that each $V_j$ with $j \geqs 0$ is $y$-invariant and thus $y$ is trivial on $V_0$. Therefore, $V_0 = W^{\perp}$ and 
$W = V_1 \perp \ldots \perp V_s$.  

Now $R_0$ acts irreducibly on $W$, which we may view as a space over $\FF_{q^i}$. In this setting, $R_0$ contains an element acting on $W$ as a pseudoreflection, which implies that $R_0$ acts absolutely irreducibly on $W$. Therefore, $\dim_{\FF_{q^i}} W = r^a$ with $a \geqs 1$ and thus $n-1 = i r^a$. It follows that $s = r^a$ and $W$ is a minus-type space. Since $R_0$ acts irreducibly on $W$, there is an element $x \in R_0$ cyclically permuting the summands $\{V_1, \ldots, V_s\}$, with $x^{r^a}$ acting irreducibly on each one. Therefore, $x$ has order $r^b$ with $b>a$, and $x$ acts irreducibly on $W$. We also note that $R_0$ is nonabelian. Indeed, we have $R_0 \cong C_{r^c} \wr C_r$ for some $c \geqs 1$.

We can now complete the proof. Seeking a contradiction, suppose $\mathcal{M}(R)$ contains a subgroup $H \in \mathcal{S}$. Since $T = \O_n(q) < {\rm SO}(V)$ acts on $V$ with trivial centre, we see that $H_0= H \cap T$ is an almost simple group acting absolutely irreducibly, primitively and tensor indecomposably on $V$. In particular, we may assume that $G=T$.  Let $S$ denote the socle of $H$.

Since the above element $x \in R_0$ acts irreducibly on $W$, Lemma \ref{l:new1} implies that 
\[
|x| = r^b \geqs \dim W + 5 = n+4
\]
and thus $n \leqs r^b-4$. However, if $S$ is a sporadic group, or a simple group of Lie type in cross-characteristic, then Lemma \ref{l:new2} implies that $n \geqs r^b-2$ and we have reached a contradiction. Similarly, if $S = A_m$ is an alternating group, then $m \geqs r^b \geqs 25$ and once again this possibility is ruled out by Lemma \ref{l:new2}.  

To complete the proof, we may assume $H$ is a group of Lie type over $\mathbb{F}_{q'}$, where $q' = p^e$ for some $e \geqs 1$. Since $R_0$ is nonabelian, the Weyl group of $S$ must contain an element of order $r$.

Recall that $n = i r^a + 1$ and let $z \in H$ be an element of order $r$ such that $\nu(z) = i$. If $a \geqs 2$ then $n \geqs 101$ and
\[
\nu(z) = i < \frac{1}{2}\sqrt{4(i+1)^2+1} \leqs \frac{1}{2}\sqrt{ir^2+1} \leqs \frac{1}{2}\sqrt{n},
\]
which means that the possibilities for $H$ are recorded in \cite[Theorem 7.1]{GS}. It is easy to see that no examples arise with $S$ a group of Lie type in the defining characteristic, so we may assume that $a=1$ and $n = ir+1 \geqs 21$. Note that $i \leqs r-1$ and thus $n \leqs (r-1)r+1 < r^2$.

If $S$ is an exceptional group of Lie type, then the fact that the Weyl group of $S$  contains an element of order $r \geqs 5$ implies that $S = E_6(q')$, $E_7(q')$ or $E_8(q')$, with $r =5$ in the first two cases and $r \in \{5,7\}$ in the latter. Now $n \geqs 27$ if $S = E_6(q')$ or $E_7(q')$, and we have $n \geqs 248$ if $S = E_8(q')$. So in every case we see that $n > r^2$, which is a contradiction.

Finally, suppose that $S$ is a classical group over $\mathbb{F}_{q'}$. First assume $q' > q$. Here \cite[Proposition 5.4.6]{KL} implies that $n = \ell^t$, where $\ell \geqs 3$, $t \geqs 2$ and $S$ is contained in an orthogonal group $J$ of dimension $\ell$.    
As noted above,  we see that $r$ divides the order of the Weyl group of $J$, so $\ell \geqs 2r$ and thus $n  = \ell^t \geqs 4r^2$, which is a contradiction.  Finally, let us assume $q' \leqs q$.  The irreducibility of $x$ on $W$ implies that $r^b$ is a primitive prime power divisor of $q^{n-1}-1$ and this means that the rank of $S$ is at least $(n-1)/2 \geqs 10$. But by inspecting \cite{Lu}, we conclude that $G$ does not have such a subgroup $H \in \mathcal{S}$, so this final possibility is also eliminated.
\end{proof}

\begin{prop}\label{p:pso_odd}
The conclusion to Theorem \ref{t:main1} holds when $T = \O_n(q)$ with $nq$ odd and $n \geqs 7$. 
\end{prop}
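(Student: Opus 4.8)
The plan is to apply Corollary \ref{c:core}, so the real task is to decide exactly when $G$ has a unique conjugacy class of maximal subgroups of $r'$-index. By Lemma \ref{l:trivial} and Corollary \ref{c:easy} I may assume $r$ divides $|T|$ and $G/T$ is an $r$-group; since $\Out(T)=\langle\delta\rangle\times\langle\phi\rangle$ with $\delta$ of order $2$, this forces $G\leqs T.\langle\phi\rangle$ with $[G:T]$ a power of $r$ dividing $f$. The groups with $n$ or $q$ small are settled directly with {\sc Magma} \cite{magma}, so I assume $n$ and $q$ are large. If $r=p$ then every maximal parabolic $P_m$ has $r'$-index, and since $n\geqs 7$ there are at least two classes of these (for instance $P_1$ and $P_2$), giving $|\mathcal{M}(R)|\geqs 2$. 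Hence for the rest I take $r\ne p$ and set $i=d_q(r)$.

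The analysis then splits according to whether or not $R_0$ acts irreducibly on a nondegenerate hyperplane of $V$. Arguing as in the proof of Proposition \ref{p:new}, such an irreducible action occurs precisely when $i$ is even and $n-1=ir^a$ for some $a\geqs 0$, in which case the hyperplane $W$ has minus type and $R_0$ is either cyclic (if $a=0$, i.e.\ $i=n-1$) or of the form $C_{r^c}\wr C_r$ (if $a\geqs 1$). In every other configuration I claim $\mathcal{M}(R)$ contains two non-conjugate geometric maximal subgroups of $r'$-index. To prove this I would run through the possible values of $i$ and, using Lemma \ref{l:gk} to compute $r$-parts of subgroup orders and indices, exhibit two suitable members of $\mathcal{M}(R)$: a pair of parabolics $P_a,P_b$ when $i$ fails to divide the relevant dimensions; the two nondegenerate hyperplane stabilisers together with an imprimitive $\C_2$-subgroup of type ${\rm O}_j(q)\wr S_{n/j}$ when $i\in\{1,2\}$; or a $\C_2$-subgroup and a $\C_3$-subgroup of type ${\rm O}_{n/t}(q^t)$ when $i$ is an intermediate even divisor of $n-1$ with $n-1\neq ir^a$. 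Throughout I would appeal to \cite{BHR} for $n\leqs 12$ and to \cite{KL} for the structure and maximality of these subgroups in general.

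The heart of the proof is the irreducible case $n-1=ir^a$. Here Lemma \ref{l:gk} shows that the stabiliser $H=N_1$ of type ${\rm O}_{n-1}^{-}(q)$ has $r'$-index, and the irreducibility of $R_0$ on $W$ rules out every parabolic and every other nondegenerate-space stabiliser, so $H$ is the unique geometric candidate apart from possible subfield and $\C_3$-subgroups. I would eliminate a subfield subgroup of type ${\rm O}_n(q_0)$ with $q=q_0^k$ by the usual computation with Lemma \ref{l:gk}: such a subgroup has $r'$-index if and only if $k\ne r$ and $q_0^{(n-1)/2}\equiv -1\imod{r}$, so none occurs precisely when the condition $\a((n-1)/2,-1)$ of \eqref{e:ab} holds; the $\C_3$-subgroups of type ${\rm O}_{n/t}(q^t)$ (with $t$ an odd prime dividing $n$) are excluded by a similar order comparison. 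It then remains to show $\mathcal{M}(R)$ contains no member of $\mathcal{S}$, and this is exactly where the preparatory results are used: Proposition \ref{p:new} disposes of the range $2<i<n-1$; for $i\in\{1,2\}$ the central element $y\in R_0$ of order $r$ satisfies $\nu(y)=i\leqs 2$ (see \eqref{e:nu}), so \cite[Theorem 7.1]{GS} leaves only a short list of candidates, none of which can contain the large irreducible element of $R_0$; and when $i=n-1$ is large, $R_0$ is generated by a primitive prime divisor element and the main theorem of \cite{GPPS} restricts the $\mathcal{S}$-subgroups so tightly that none survives. With $\mathcal{S}$ excluded, Corollary \ref{c:core} gives $\mathcal{M}(R)=\{H\}$, and I would record the resulting triple in Table \ref{tab:main1}.

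The main obstacle is the elimination of the $\mathcal{S}$-subgroups in the irreducible-hyperplane case, since these cannot be read off from the tables of \cite{BHR} once $n>12$; this is precisely the difficulty Proposition \ref{p:new} was engineered to overcome, while the residual ranges $i\in\{1,2\}$ and $i=n-1$ force the separate inputs \cite{GS} and \cite{GPPS}. A secondary but fiddly task is pinning down the exact arithmetic condition on $q$ (the $\a$-condition of \eqref{e:ab}) that governs the presence of subfield subgroups of $r'$-index, together with the bookkeeping needed to confirm that in all remaining configurations at least two non-conjugate geometric subgroups of $r'$-index are indeed present.
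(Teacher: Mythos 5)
Your outline reproduces the paper's architecture almost exactly: reduction via Corollary \ref{c:core}, two parabolics when $r=p$, pairs of non-conjugate geometric subgroups of $r'$-index in the generic configurations, and isolation of the hard case where $R_0$ acts irreducibly on a minus-type hyperplane (equivalently $i$ even and $n-1=ir^a$), which is then attacked through the $\a$-conditions of \eqref{e:ab} for subfield subgroups, Proposition \ref{p:new} for $2<i<n-1$, \cite[Theorem 7.1]{GS} for $i=2$, and \cite{GPPS} for $i=n-1$. The genuine gap is in the final step, and it is not a removable technicality: you assert that the candidates left by \cite{GS} can never "contain the large irreducible element of $R_0$" and that after \cite{GPPS} "none survives". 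Both assertions are false, and deciding exactly which members of $\mathcal{S}$ do survive is where most of the work in the paper's proof lies. Concretely: for $i=2$, $n=2r+1=7$, $r=3$, the maximal subgroup ${\rm Sp}_6(2)<\O_7(p)$ has $3'$-index whenever $p\not\equiv -1 \imod{9}$, so it contains a conjugate of $R_0\cong C_3\wr C_3$ and in particular elements of order $9$; for $i=n-1=6$, the maximal subgroup $G_2(q)<\O_7(q)$ has $r'$-index for every primitive prime divisor $r=r_6$, so $|\mathcal{M}(R)|\geqs 2$ always and $n=7$ must be excluded from that case; for $n=9$ and $r=17$ there are maximal subgroups ${\rm L}_2(17)$ subject to congruence conditions on $p$ and $f$; and for $n\geqs 13$, $i=n-1$, $r=2n-1$ and $|R|=r$ there are maximal subgroups with socle ${\rm L}_2(r)$ under quadratic-residue conditions on $p$ modulo $r$. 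These survivors are precisely what generate the conditions in the $\O_n(q)$ rows of Table \ref{tab:main1}; under your blanket claims you would, for instance, wrongly record $\bigl(\O_7(q),r_6,{\rm O}_6^{-}(q)\times{\rm O}_1(q)\bigr)$ as a uniqueness case. You also omit the small but necessary observation, for $i=n-1$, that $|R_0|=r=n$ always forces $|\mathcal{M}(R)|\geqs 2$ (via a subgroup of type ${\rm O}_1(q)\wr S_n$ when $f=1$, or a subfield subgroup when $f>1$).

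A point in your favour: the irreducibility dichotomy is the right organising principle, and it is in one respect sharper than the paper's division by the value of $i$, because it places $i=2$ with $n-1=2r^a$, $a\geqs 2$, in the hard case, where it belongs. Indeed the Sylow subgroup $C_{(q+1)_r}\wr C_r\wr\cdots\wr C_r$ is irreducible on the hyperplane there, and one can check that a subgroup of type ${\rm O}_{2r}^{-}(q)\times{\rm O}_{n-2r}(q)$, the one invoked at the corresponding point of the paper's Case 1, has index divisible by $r$ exactly when $r$ divides $(n-1)/2r$, so it is of no use when $(n-1)/2$ is an $r$-power with $a \geqs 2$. But this structural advantage remains only potential in your write-up, because the step where it would pay off, namely the $\mathcal{S}$-elimination for $i=2$ via \cite[Theorem 7.1]{GS} and \cite[Table 5]{BGS}, is exactly the step you replaced by the false assertion above.
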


\begin{proof}
If $r=p$ then the maximal parabolic subgroups $P_1$ and $P_2$ have $r'$-index. Now assume $r \ne p$ and set $i = d_q(r)$. Note that $i \leqs n-1$ since we may assume $r$ divides $|T|$ (see Lemma \ref{l:trivial}). There are several cases to consider. 

\vs

\noindent \emph{Case 1. $i \in \{1,2\}$}

\vs

First assume $i=1$, so $q \geqs 7$. Here it is easy to check that the $\mathcal{C}_1$-subgroups of type $P_{(n-1)/2}$ and ${\rm O}_{n-1}^{+}(q)$ have $r'$-index and thus $|\mathcal{M}(R)| \geqs 2$.

Now suppose $i=2$, so $q \geqs 5$. If $n \equiv 1 \imod{4}$ then $(q^{(n-1)/2}-1)_r = (q^{n-1}-1)_r$ and thus a $\C_1$-subgroup of type ${\rm O}_{n-1}^{+}(q)$ has $r'$-index. Similarly, if $n \equiv 3 \imod{4}$ then $\C_1$-subgroups of type ${\rm O}_{n-1}^{-}(q)$ have $r'$-index. We need to determine whether or not $G$ has any additional maximal subgroups of $r'$-index.

If $n-1$ is indivisible by $r$, then $(q^{n-1}-1)_r = (q+1)_r$ and we see that $\mathcal{M}(R)$ contains $\C_1$-subgroups of type ${\rm O}_2^{-}(q) \times {\rm O}_{n-2}(q)$. Therefore, we may assume $r$ divides $n-1$. If $n \equiv 1 \imod{4}$ then subgroups of type ${\rm O}_{(n-1)/2}^{-}(q) \times {\rm O}_{(n+1)/2}(q)$ have $r'$-index, so we can assume $n \equiv 3 \imod{4}$. If $n \ne 2r+1$ then subgroups of type ${\rm O}_{2r}^{-}(q) \times {\rm O}_{n-2r}(q)$ have $r'$-index, so we have reduced the problem to $n = 2r+1$, in which case $|R_0| = r((q+1)_r)^r$. In the usual manner, we can show that there are no subfield subgroups in $\mathcal{M}(R)$ if and only if $\a(1,-1)$ holds. It remains to determine whether or not there are any subgroups in $\mathcal{S}$ with $r'$-index.

Since $i=2$, it follows that $T$ contains a semisimple element $x$ with $\nu(x)=2$, so we may use \cite[Theorem 7.1]{GS} to study the maximal overgroups of $R$ in $\mathcal{S}$. Suppose $H \in \mathcal{S}$ has $r'$-index and let $S$ be the socle of $H$. First assume $S = A_d$ is irreducibly embedded in $T$ via the fully deleted permutation module, so $f=1$ and either $d = n+1$ and $(d,p) = 1$, or $d = n+2$ and $p$ divides $d$. Here it is easy to see that $H$ does not have $r'$-index. The remaining possibilities for $H$ are recorded in \cite[Table 5]{BGS} and by inspection we see that $H$ has $r'$-index if and only if $G = \O_7(p)$, $H = {\rm Sp}_6(2)$, $r=3$ and $p \not\equiv -1 \imod{9}$. 

\vs

\noindent \emph{Case 2. $2<i<n-1$}

\vs

If $n-1$ is indivisible by $i$, then maximal subgroups of type $P_1$ and ${\rm O}_{n-1}^{+}(q)$ have $r'$-index. Now assume $n-1 = \ell i$ for some $\ell \geqs 2$. If $i$ is odd then we can take subgroups of type $P_{(n-1)/2}$ and ${\rm O}_{n-1}^{+}(q)$, so we may assume $i$ is even. If $\ell$ is even, then subgroups of type ${\rm O}_{n-1}^{+}(q)$ have $r'$-index, and the same conclusion holds for subgroups of type ${\rm O}_{n-1}^{-}(q)$ when $\ell$ is odd. In addition, if $r$ does not divide $\ell$ then subgroups of type ${\rm O}_i^{-}(q) \times {\rm O}_{n-i}(q)$ have $r'$-index. 

So we have now reduced to the case where $4 \leqs i \leqs n-3$ is even and $n-1 = mri$ for some $m \geqs 1$. Here $r \geqs 5$ and 
\[
|R_0| = (q^i-1)_r(q^{2i}-1)_r \cdots (q^{mri}-1)_r > r.
\]
Suppose $m$ is not an $r$-power, so we may write $n-1 = r^abi$, where $a \geqs 1$ and $b \geqs 2$ is indivisible by $r$. Here one can check that a $\C_1$-subgroup of type ${\rm O}_{r^ai}^{-}(q) \times {\rm O}_{n-r^ai}(q)$ has $r'$-index  and thus $|\mathcal{M}(R)| \geqs 2$. Therefore, we may assume $m$ is an $r$-power. In particular, $\ell$ is odd and thus $\mathcal{M}(R)$ contains a $\C_1$-subgroup $H$ of type ${\rm O}_{n-1}^{-}(q)$. 

As usual, there are no subfield subgroups in $\mathcal{M}(R)$ if and only if $\a(i,1)$ holds. Moreover, if the latter condition holds, then by inspecting \cite{BHR,KL} it is straightforward to check that $H$ is the unique geometric subgroup in $\mathcal{M}(R)$. Here $H$ is the stabiliser in $G$ of a nondegenerate $(n-1)$-dimensional subspace $W$ of $V$ and we note that $R_0$ acts irreducibly on $W$. Finally, by applying Proposition \ref{p:new} we conclude that there are no $\mathcal{S}$ collection subgroups in $\mathcal{M}(R)$ and hence $|\mathcal{M}(R)| = 1$. 

\vs

\noindent \emph{Case 3. $i=n-1$}

\vs

Here $|R_0| = (q^{n-1}-1)_r = (q^{(n-1)/2}+1)_r$ and we see that $\C_1$-subgroups of type ${\rm O}_{n-1}^{-}(q)$ have $r'$-index. Let us also observe that if $|R_0| = r = n$ then either $f=1$ and a $\C_2$-subgroup of type ${\rm O}_1(q) \wr S_n$ has $r'$-index, or $f>1$ and $\mathcal{M}(R)$ contains a subfield subgroup (since $r$ divides $p^{n-1}-1$). In particular, if $|R_0| = r = n$ then $|\mathcal{M}(R)| \geqs 2$. 

By inspecting \cite{BHR,KL}, we see that any additional subgroup in $\mathcal{M}(R)$ must be in one of the collections $\C_5$ or $\mathcal{S}$. In the usual manner, there are no subfield subgroups in $\mathcal{M}(R)$ if and only if $\a(n-1,1)$ holds. 
So we have reduced the problem to determining whether or not there are any maximal subgroups in $\mathcal{S}$ with $r'$-index.

First assume $n \in \{7,9,11\}$. We can use \cite{BHR} to handle these cases. If $n=7$ then there is a maximal subgroup $H \in \mathcal{S}$ with socle $S = G_2(q)$ and $r'$-index, so $|\mathcal{M}(R)| \geqs 2$. Next assume $n=11$. By inspecting \cite[Table 8.75]{BHR} we see that there are no examples if $r>11$ or $|R_0|>r$, while we noted above that $|\mathcal{M}(R)| \geqs 2$ if $|R_0| = r = 11$. Now assume $n=9$, so $r \geqs 17$. From \cite[Table 8.59]{BHR}, we deduce that there are no examples if $r>17$ or $|R| > r$, so let us assume $|R| = r=17$. If $f=1$ and $r = \square \imod{p}$, or if $f=2$ and $r = \boxtimes \imod{p}$, then there is a maximal subgroup $H \in \mathcal{S}$ with socle ${\rm L}_2(17)$, which clearly has $r'$-index. So for $n=9$ we deduce that there are no subgroups in $\mathcal{S}$ with $r'$-index if and only if $r>17$, or $|R|>r$, or $f>2$, or $f=2$ and $r = \square \imod{p}$, or $f=1$ and $r = \boxtimes \imod{p}$.
 
To complete the proof, we may assume $i=n-1$ and $n \geqs 13$. By inspecting \cite{GPPS}, we find that there are no subgroups in $\mathcal{S}$ with $r'$-index when $r > 2n-1$, so we may assume $r \in \{n,2n-1\}$. For $r=2n-1$, the only possibility is a subgroup with socle ${\rm L}_2(s)$, where $r = s$ is a prime. Here $r \equiv 1 \imod{4}$ (since $n$ is odd) and by inspecting \cite[Table 2(b)]{HM} we see that either $f=1$ and $r = \square \imod{p}$, or $f=2$ and $r = \boxtimes \imod{p}$. In addition, we have $G=T$ and such a subgroup has $r'$-index if and only if $|R| = r$. It is also maximal because it is not contained in a reducible $\C_1$-subgroup of type ${\rm O}_{n-1}^{-}(q)$.
Finally, if $r=n \geqs 13$ then by inspecting \cite{GPPS} it is straightforward to check that there are no subgroups in $\mathcal{S}$ with $r'$-index when $|R_0|>r$. For example, if $H$ has socle $S = {\rm L}_d(s)$ and $n = r = (s^d-1)/(s-1)$ (see \cite[Table 8]{GPPS}), then $|H_0|$ is clearly indivisible by $r^2$. And finally, if $|R_0| = r = n$ then $|\mathcal{M}(R)| \geqs 2$, as noted above. 
\end{proof}

\subsection{Even dimensional orthogonal groups}

To complete the proof of Theorem \ref{t:main1} for classical groups, we may assume $T$ is an even dimensional orthogonal group. We will handle the plus-type and minus-type groups separately.

\begin{prop}\label{p:pso_plus}
The conclusion to Theorem \ref{t:main1} holds when $T = {\rm P\O}_n^{+}(q)$ and $n \geqs 8$. 
\end{prop}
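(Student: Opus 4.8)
Following the strategy of the preceding propositions, the plan is to appeal to Corollary \ref{c:core}: since we may assume $R_0 = R \cap T \ne 1$ by Lemma \ref{l:trivial} and $G/T$ is an $r$-group by Corollary \ref{c:easy}, it suffices to decide when $G$ has a unique conjugacy class of maximal subgroups of $r'$-index. The cases $n \leqs 12$ can be settled by inspecting \cite{BHR} (with a direct computation for the triality extensions when $n=8$ and $r=3$, which survive because $G/T$ has odd order), so I would assume $n \geqs 14$. If $r = p$, then the two families of maximal totally singular subspaces give two non-conjugate parabolic subgroups of $r'$-index --- these are not fused in $G$ since the relevant graph automorphism has order $2$ --- so $|\mathcal{M}(R)| \geqs 2$. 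Hence we may take $r \ne p$ and set $i = d_q(r)$; as $r$ divides $|T|$ we have $i \leqs n-2$, and the argument splits into the usual ranges $i \in \{1,2\}$, $2 < i < n-2$, and $i = n-2$.

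For $i \in \{1,2\}$ and throughout most of the intermediate range $2 < i < n-2$, I would exhibit two non-conjugate geometric maximal subgroups of $r'$-index, all of them reducible: suitable $\C_1$-subgroups of type ${\rm O}_k^{\pm}(q) \perp {\rm O}_{n-k}^{\mp}(q)$, imprimitive $\C_2$-subgroups of type ${\rm O}_i^{-}(q) \wr S_{n/i}$, and (when $i = n/2$) a $\C_2$-subgroup of type ${\rm GL}_{n/2}(q)$ together with a totally singular subspace stabiliser. The required divisibility checks are routine applications of Lemma \ref{l:gk}, exactly as in Proposition \ref{p:pso_odd}; the conclusion should be that $|\mathcal{M}(R)| \geqs 2$ in all these configurations, with the sole borderline exceptions occurring when $i$ is even and $n-2 = ir^a$ for some $a \geqs 1$, forcing $R_0$ to act irreducibly on a nondegenerate minus-type subspace $W$ of dimension $n-2$, so that the unique geometric overgroup of $R$ is the $\C_1$-subgroup $H$ of type ${\rm O}_{n-2}^{-}(q) \perp {\rm O}_2^{-}(q)$.

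These borderline cases, together with the genuinely large case $i = n-2$ (where $|R_0| = (q^{(n-2)/2}+1)_r$), form the core of the argument. In both, $R_0$ acts irreducibly on a nondegenerate minus-type subspace $W$ of dimension $n-2$ whose orthogonal complement is a minus-type $2$-space, and $H$ above is the unique geometric subgroup in $\mathcal{M}(R)$. The subfield subgroups should be eliminated precisely by a condition of the form $\a(n-2,1)$ in \eqref{e:ab}, via Lemma \ref{l:gk} as before. It then remains to control the members of the collection $\mathcal{S}$, and here my plan is to prove a plus-type analogue of Proposition \ref{p:new}: when $|R_0| > r$ one finds $x \in R_0$ of order $r^b$ cycling the minus-type summands of $W$ and acting irreducibly on $W$, so Lemma \ref{l:new1} gives $|x| \geqs \dim W + 5 = n+3$; this contradicts the minimal-dimension bounds of Lemma \ref{l:new2} for every $H \in \mathcal{S}$ with socle alternating, sporadic, or of Lie type in cross-characteristic, while the defining-characteristic and small-$\nu$ subgroups are excluded via \cite[Theorem 7.1]{GS} and \cite{GPPS} exactly as in Propositions \ref{p:new} and \ref{p:pso_odd}. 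The only surviving $\mathcal{S}$-subgroups should be the expected examples with socle ${\rm L}_2(s)$ and $|R| = r$, recorded through \cite{GPPS} and \cite[Table 2(b)]{HM}, whose maximality follows from the absolute-irreducibility argument used at the end of Propositions \ref{p:psln} and \ref{p:pso_odd}.

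The step I expect to be the main obstacle is the bookkeeping in the small-$i$ cases. This is where the even-dimensional plus-type setting differs substantially from the odd-dimensional situation of Proposition \ref{p:pso_odd}: the two families of maximal totally singular subspaces, the ${\rm GL}_{n/2}(q)$-type subgroups, and the need to track the plus/minus type of every nondegenerate summand (using Lemma \ref{l:gk} to decide which have $r'$-index) create several competing classes that must be compared carefully to guarantee that two non-conjugate $r'$-index subgroups really do exist outside the borderline configuration. A secondary difficulty is confirming the maximality of the surviving ${\rm L}_2(s)$-subgroups in the large case --- that they lie in no reducible or subfield subgroup --- which I would settle exactly as in the linear and odd-orthogonal propositions.
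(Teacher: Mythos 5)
There is a genuine gap, and it sits exactly where you located the ``core'' of your argument. Your borderline case --- $i$ even and $n-2 = ir^a$ --- does not behave as you claim: the $\C_1$-subgroup of type ${\rm O}_{n-2}^{-}(q) \perp {\rm O}_{2}^{-}(q)$ is \emph{not} the unique geometric member of $\mathcal{M}(R)$, because the stabiliser $N_1$ of a nondegenerate $1$-space (a $\C_1$-subgroup of type ${\rm Sp}_{n-2}(q)$ when $q$ is even) also has $r'$-index. Indeed, writing $n-2 = ir^a$ with $r^a$ odd gives $n/2 \equiv i/2+1 \imod{i}$, so $i \nmid n/2$ and hence $(q^{n/2}-1)_r = 1$; since the $r$-part of $|G:N_1|$ is precisely $(q^{n/2}-1)_r$, the subgroup $N_1$ has $r'$-index. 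Moreover $R_0$ acts trivially on the $2$-dimensional space $W^{\perp}$ (as $(q^2-1)_r=1$ when $i>2$), so $R_0$ fixes a nondegenerate $1$-space and a conjugate of $N_1$ lies in $\mathcal{M}(R)$, giving $|\mathcal{M}(R)| \geqs 2$. The same applies verbatim to your ``genuinely large'' case $i=n-2$, where again $i \nmid n/2$. This is exactly the structural difference from Proposition \ref{p:pso_odd}: there $W^{\perp}$ is $1$-dimensional, so the $N_1$-type subgroup \emph{is} the ${\rm O}_{n-1}^{-}(q)\perp {\rm O}_1(q)$ stabiliser and genuinely can be the only reducible overgroup, whereas here the $2$-dimensional complement always supplies a second, non-conjugate reducible maximal overgroup. (A smaller slip: in a plus-type space a nondegenerate decomposition has factors of \emph{equal} type, so subgroups of type ${\rm O}_k^{\pm}(q)\perp{\rm O}_{n-k}^{\mp}(q)$ do not occur in ${\rm P\O}_n^{+}(q)$.)

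Consequently the entire apparatus you plan to import --- a plus-type analogue of Proposition \ref{p:new}, the subfield condition $\a(n-2,1)$, the analysis via \cite{GPPS} and \cite[Table 2(b)]{HM}, and maximality arguments for ${\rm L}_2(s)$-subgroups --- is unnecessary, and your anticipated conclusion is wrong: Table \ref{tab:main1} has \emph{no} entries with $T = {\rm P\O}_n^{+}(q)$, so the conclusion of Theorem \ref{t:main1} in this case is that $|\mathcal{M}(R)| \geqs 2$ always; your predicted surviving examples with socle ${\rm L}_2(s)$ would contradict the very statement being proved (and in the minus-type case the paper rules such embeddings out via Frobenius--Schur indicators, as they land in symplectic rather than plus-type orthogonal groups). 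The paper's proof is accordingly much shorter than your plan: for $r=p$ and $i\in\{1,2\}$ it argues roughly as you do (including the $n=8$ triality case, which it treats directly rather than deferring to \cite{BHR}), and for all $2<i\leqs n-2$ it simply exhibits two non-conjugate geometric maximal subgroups of $r'$-index --- $P_{n/2}$ and $N_1$, or $N_1$ together with ${\rm O}_2^{\e}(q)\perp{\rm O}_{n-2}^{\e}(q)$ for a suitable $\e$ determined by Lemma \ref{l:gk}, or, when $i \mid n/2$, suitable $\C_2$- and $\C_3$-subgroups --- so the collection $\mathcal{S}$ never needs to be considered at all.
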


\begin{proof}
We will show that $|\mathcal{M}(R)| \geqs 2$ in all cases. If $r=p$ then every maximal parabolic subgroup has $r'$-index and there are at least two conjugacy classes of such subgroups in $G$ (including the special case where $n=8$, $r=3$  and $G$ contains triality automorphisms). Now assume $r \ne p$ and let $i = d_q(r)$. We may assume $r$ divides $|T|$, in which case $i \leqs n-2$. 

\vs

\noindent \emph{Case 1. $i \in \{1,2\}$}

\vs

First assume $G$ does not contain any triality graph or graph-field automorphisms when $n=8$. If $i=1$ then $G$ contains two conjugacy classes of maximal parabolic subgroups of type $P_{n/2}$ (corresponding to the two $T$-orbits on maximal totally singular subspaces of the natural module). These subgroups have $r'$-index and thus $|\mathcal{M}(R)| \geqs 2$. 

Now assume $i=2$, so $q \ne 3$. If $n \equiv 2 \imod{4}$ then $\C_1$-subgroups of type $P_1$ and $N_1$ have $r'$-index and thus $|\mathcal{M}(R)| \geqs 2$. Now assume $n \equiv 0 \imod{4}$. Here a $\C_2$-subgroup of type ${\rm O}_2^{-}(q) \wr S_{n/2}$ is maximal (see \cite{BHR,KL}) and has $r'$-index. In addition, we can take a $\C_3$-subgroup of type ${\rm GU}_{n/2}(q)$ (in fact, there are two conjugacy classes of such subgroups).

To complete the analysis of this case, let us assume $n=8$, $r=3$ and $G$ contains triality graph or graph-field automorphisms. Set $\e = +$ if $i=1$, otherwise $\e=-$, so $|R_0| = 3((q-\e)_3)^4$ and by inspecting \cite[Table 8.50]{BHR} we see that $G$ has a maximal subgroup of type ${\rm GL}_3^{\e}(q) \times {\rm GL}_1^{\e}(q)$ with $3'$-index. 
If $i=2$ or $q \geqs 7$, then $\mathcal{M}(R)$ also contains a $\C_2$-subgroup of type ${\rm O}_2^{\e}(q) \wr S_4$. Finally, if $(i,q)=(1,4)$ then a subfield subgroup of type ${\rm O}_8^{+}(2)$ has $3'$-index, and we conclude that $|\mathcal{M}(R)| \geqs 2$ in all cases.

\vs

\noindent \emph{Case 2. $2<i \leqs n-2$}

\vs

Here $r \geqs 5$ and we will first assume $i$ is odd. If $i$ does not divide $n/2$, then $(q^{n/2}-1)_r = 1$ and one can check that the $\C_1$-subgroups $P_{n/2}$ and $N_1$ have $r'$-index. On the other hand, if $i$ divides $n/2$ then a $\C_2$-subgroup of type ${\rm O}_{2i}^{+}(q) \wr S_{n/2i}$ has $r'$-index, and so does a $\C_3$-subgroup of type ${\rm O}_{n/k}^{+}(q^k)$, where $k$ is a prime divisor of $i$. 

Now assume $i$ is even. If $i$ does not divide $n/2$ then $N_1$ has $r'$-index, and so does a $\C_1$-subgroup of type ${\rm O}_{2}^{\e}(q) \times {\rm O}_{n-2}^{\e}(q)$, where $\e=+$ if $i$ divides $(n-2)/2$, otherwise $\e=-$. Finally, suppose $i$ divides $n/2$. Here a $\C_2$-subgroup of type ${\rm O}_{i}^{-}(q) \wr S_{n/i}$ has $r'$-index. In addition, a $\C_3$-subgroup of type ${\rm O}_{n/k}^{+}(q^k)$ also has $r'$-index, where $k=2$ if $i \equiv 0 \imod{4}$ and $k$ is an odd prime divisor of $i$ if $i \equiv 2 \imod{4}$. 
\end{proof}

\begin{prop}\label{p:pso_minus}
The conclusion to Theorem \ref{t:main1} holds when $T = {\rm P\O}_n^{-}(q)$ and $n \geqs 8$. 
\end{prop}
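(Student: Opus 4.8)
The plan is to apply Corollary~\ref{c:core} throughout, so that the task reduces to deciding when $G$ has a unique conjugacy class of maximal subgroups of $r'$-index; I would organise the argument exactly as in Propositions~\ref{p:pso_odd} and~\ref{p:pso_plus}. Write $n=2m$, and when $r\ne p$ put $i=d_q(r)$, recalling that since $r$ divides $|T|\sim (q^m+1)\prod_{j=1}^{m-1}(q^{2j}-1)$ we have $i\leqs 2m-2$ or $i=2m=n$ (with $r\mid q^m+1$ in the latter case). The case $r=p$ is disposed of immediately: the parabolic subgroups $P_1$ and $P_2$ both have $r'$-index, so $|\mathcal{M}(R)|\geqs 2$. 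The low-dimensional groups ($n\in\{8,10,12\}$) can be read off from \cite{BHR} or checked with {\sc Magma} \cite{magma}, so for the bulk of the proof I would assume $n\geqs 14$ and $r\ne p$, and split according to the value of $i$.

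For the small values $i\in\{1,2\}$, and more generally whenever $i$ is not close to $n$, I expect to exhibit two non-conjugate geometric subgroups of $r'$-index and so conclude $|\mathcal{M}(R)|\geqs 2$. The natural candidates are reducible $\mathcal{C}_1$-subgroups $N_k$ of type ${\rm O}_k^{\pm}(q)\perp{\rm O}_{n-k}^{\mp}(q)$, together with a parabolic $P_k$ or an imprimitive $\mathcal{C}_2$-subgroup of type ${\rm O}_i^{-}(q)\wr S_{n/i}$; Lemma~\ref{l:gk} is used to locate the relevant $r$-parts and to match them against $|T|_r$. This settles all but a handful of boundary configurations in which the divisibility constraints collapse onto a single nondegenerate-subspace stabiliser. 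In those residual cases $T$ contains a semisimple element $x$ with $\nu(x)\leqs 2$, and I would invoke the Guralnick--Saxl theorem \cite[Theorem~7.1]{GS} to determine (or exclude) the subgroups of $\mathcal{M}(R)$ lying in the collection $\mathcal{S}$, precisely as in Case~1 of Proposition~\ref{p:pso_odd}.

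The substantive cases, where a genuinely unique overgroup can occur, are the two large values $i=n-2$ and $i=n$. If $i=n-2=2m-2$ then a primitive prime divisor $r$ of $q^{2m-2}-1$ divides $q^{m-1}+1$, hence divides $|\O_{n-2}^{-}(q)|$ but not $|\O_{n-2}^{+}(q)|$; consequently the $\mathcal{C}_1$-subgroup $H=N_2$ of type ${\rm O}_{n-2}^{-}(q)\perp{\rm O}_2^{+}(q)$ (for which $|H|_r=(q^{m-1}+1)_r=|T|_r$) is the only geometric candidate of $r'$-index once subfield subgroups are excluded, and $R_0$ acts irreducibly on the nondegenerate $(n-2)$-space. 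If $i=n=2m$ then $r\mid q^m+1$ and $R_0$ acts irreducibly on $V$, so the reducible $\mathcal{C}_1$-subgroups are no longer available and the candidate overgroup lies in $\mathcal{C}_3$ (of type ${\rm GU}_m(q)$ when $m$ is odd, carrying the factor $q^m+1$, and of type ${\rm O}_m^{-}(q^2)$ when $m$ is even). In both cases the presence of subfield subgroups of $r'$-index is governed by the conditions $\a$ and $\b$ of \eqref{e:ab}, evaluated via Lemma~\ref{l:gk}, and these fix the precise entries of Table~\ref{tab:main1}. To control the subgroups $H\in\mathcal{S}$ I would appeal to the main theorem of \cite{GPPS}, since $r$ is now a primitive prime divisor of $q^{n}-1$ or $q^{n-2}-1$; as in Proposition~\ref{p:new}, the irreducible action of $R_0$ on a space of dimension $\geqs n-2$ severely restricts the possible socles, leaving essentially only a subgroup with socle ${\rm L}_2(r)$ under explicit congruence and irrationality conditions.

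I expect the $i=n$ analysis to be the main obstacle. First, one must identify correctly which $\mathcal{C}_3$-subgroup carries the relevant torus of order $(q^m+1)_r$ and confirm its maximality, then track the $r$-part through every subfield subgroup to obtain the exact $\a/\b$ boundary. The hardest part is ruling out, or precisely isolating, the $\mathcal{S}$-subgroups containing an element of order $r$: this requires a careful reading of the tables of \cite{GPPS}, and for any surviving ${\rm L}_2(r)$-type subgroup one must verify maximality by checking that it cannot be contained in the extension-field subgroup (a nontrivial cross-characteristic representation of ${\rm L}_2(r)$ has dimension at least $(r-1)/2$, which is incompatible with such a containment) --- exactly the mechanism used in the closing paragraphs of Propositions~\ref{p:psln} and~\ref{p:pso_odd}.
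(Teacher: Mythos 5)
Your overall framework (reduction via Corollary \ref{c:core}, division by $i = d_q(r)$, counting geometric subgroups via Lemma \ref{l:gk}, subfield subgroups via the conditions in \eqref{e:ab}, and \cite{GPPS} for the collection $\mathcal{S}$) matches the paper's, but your identification of where uniqueness can occur contains a genuine error. You claim that $i = n-2$ is a substantive case and that the $\C_1$-subgroup of type ${\rm O}_{n-2}^{-}(q) \perp {\rm O}_{2}^{+}(q)$ is ``the only geometric candidate of $r'$-index''. Write $n = 2m$. Since $r$ is a primitive prime divisor of $q^{2m-2}-1$ we have $q^{m-1} \equiv -1 \imod{r}$, hence $q^{m} \equiv -q \not\equiv -1 \imod{r}$ (otherwise $r$ would divide $q-1$), so $(q^m+1)_r = 1$ and $|T|_r = (q^{m-1}+1)_r$. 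But then the stabiliser $N_1$ of a nonsingular $1$-space --- a maximal $\C_1$-subgroup with $H_0$ of shape $\Omega_{n-1}(q)$ for $q$ odd and ${\rm Sp}_{n-2}(q)$ for $q$ even --- has order divisible by $\prod_{j=1}^{m-1}(q^{2j}-1)$ and therefore also has $r'$-index. So for $i = n-2$ one always has $|\mathcal{M}(R)| \geqs 2$, no unique-overgroup examples arise, and indeed Table \ref{tab:main1} contains no $r_{n-2}$ entry for ${\rm P\O}_n^{-}(q)$ (in contrast with the unitary case $r_{2n-2}$ and the odd-dimensional case $r_{n-1}$, where the analogous reducible subgroup really is the unique one). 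The paper disposes of $i=n-2$ inside its generic case $2<i<n$ precisely by exhibiting the pair $N_1$ and ${\rm O}_2^{+}(q) \times {\rm O}_{n-2}^{-}(q)$.

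In the genuine uniqueness case $i = n$, your outline also misses two points that determine the actual entries of Table \ref{tab:main1}. First, the restriction to $n = 2a$ with $a \geqs 5$ prime, or $n = 2^a$, does not come from subfield considerations: whenever $n$ has an odd prime divisor $k$ with $n > 2k$, a further $\C_3$-subgroup of type ${\rm O}_{n/k}^{-}(q^k)$ has $r'$-index and is not conjugate to the subgroup of type ${\rm GU}_{n/2}(q)$ or ${\rm O}_{n/2}^{-}(q^2)$, forcing $|\mathcal{M}(R)| \geqs 2$; this arithmetic reduction must precede any $\a/\b$ analysis. Second, your predicted exceptional $\mathcal{S}$-subgroups are the wrong ones: for minus-type groups the ${\rm L}_2(r)$ candidates with $r = 2n+1$ arising from \cite{GPPS} are eliminated because the Frobenius-Schur indicator of the relevant representation (see \cite[Table 2(b)]{HM}) forces an embedding into a symplectic rather than an orthogonal group. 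The subgroups that actually obstruct uniqueness are alternating: when $r = n+1$ and $|R| = r$, the fully deleted permutation module embeds $A_{n+1}$ (or $A_{n+2}$ if $p=2$) into $T$ with $r'$-index, and since $A_{n+1}$ has no nontrivial representation of dimension less than $n$ it cannot lie in the $\C_3$-subgroup, so $R$ lies in a second maximal subgroup for every $q$, not just $q=p$. This is exactly what produces the condition ``either $|R|>r$ or $r>n+1$'' in Table \ref{tab:main1}, which your route would not recover.
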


\begin{proof}
If $r=p$ then every maximal parabolic subgroup has $r'$-index and there are at least two conjugacy classes of such subgroups in $G$. Now assume $r \ne p$ and set $i = d_q(r)$. 

\vs

\noindent \emph{Case 1. $i \in \{1,2\}$}

\vs

For $i=1$ we observe that $N_1$ and $P_{n/2-1}$ both have $r'$-index (note that $(q^{n/2}+1)_r = 1$). Now assume $i=2$. If $n \equiv 0 \imod{4}$ then $(q^{n/2}+1)_r = 1$ and $(q^{n-2}-1)_r = (q^{(n-2)/2}+1)_r$, which means that the $\C_1$-subgroups of type $N_1$ and 
${\rm O}_{2}^{+}(q) \times {\rm O}_{n-2}^{-}(q)$ have $r'$-index. And for $n \equiv 2 \imod{4}$ we can take a $\C_2$-subgroup of type ${\rm O}_{2}^{-}(q) \wr S_{n/2}$ and a $\C_3$-subgroup of type ${\rm GU}_{n/2}(q)$.

\vs

\noindent \emph{Case 2. $2 < i < n$}

\vs

If $i$ is odd, then  $(q^{n/2}+1)_r=1$ and we deduce that the $\C_1$-subgroups $N_1$ and $P_{n/2-1}$ have $r'$-index.

Now assume $i$ is even. If $i$ divides $n/2$ then $n \equiv 0 \imod{4}$ and we have $(q^{n/2}+1)_r=1$ and $(q^{n-2}-1)_r=(q^{(n-2)/2}+1)_r$. Therefore, the $\C_1$-subgroups $P_1$ and $N_1$ have $r'$-index. If $i$ does not divide $n$, then $N_1$ and a $\C_1$-subgroup of type ${\rm O}_{2}^{\e}(q) \times {\rm O}_{n-2}^{-\e}(q)$ have $r'$-index,
where $\e=-$ if $i$ divides $(n-2)/2$, otherwise $\e=+$. Finally, suppose $i$ divides $n$, but does not divide $n/2$, in which case $n/i$ is odd and $(q^{n/2}+1)_r = (q^{i/2}+1)_r(n/i)_r$.  Here one can check that a $\C_2$-subgroup of type ${\rm O}_{i}^{-}(q) \wr S_{n/i}$ has $r'$-index. We can also take a $\C_3$-subgroup of type ${\rm O}_{n/k}^{-}(q^k)$, where $k=2$ if $i \equiv 0 \imod{4}$, and $k$ is an odd prime divisor of $i$ if $i \equiv 2 \imod{4}$.

\vs

\noindent \emph{Case 3. $i = n$}

\vs

First observe that $|R_0| = (q^{n/2}+1)_r$ and $r \geqs n+1$. If $n \equiv 0 \imod{4}$ then a $\C_3$-subgroup of type ${\rm O}_{n/2}^{-}(q^2)$ has $r'$-index. Similarly, a $\C_3$-subgroup of type ${\rm GU}_{n/2}(q)$ has $r'$-index if $n \equiv 2 \imod{4}$. If $k \geqs 3$ is a prime divisor of $n$ with $n > 2k$, then a $\C_3$-subgroup of type ${\rm O}_{n/k}^{-}(q^k)$ has $r'$-index. Therefore, $|\mathcal{M}(R)| = 1$ only if one of the following holds:
\begin{itemize}\addtolength{\itemsep}{0.2\baselineskip}
\item[{\rm (a)}] $n=2a$, where $a \geqs 5$ is a prime; or
\item[{\rm (b)}] $n=2^a$, where $a \geqs 3$.
\end{itemize}

As noted above, $\mathcal{M}(R)$ contains a $\C_3$-subgroup of type ${\rm GU}_{n/2}(q)$ or ${\rm O}_{n/2}^{-}(q^2)$ in the two respective cases. In the usual manner, we deduce that there are no subfield subgroups with $r'$-index if and only if $\b(n/2,-1)$ holds. And then by inspecting \cite{BHR,KL} we see that any additional subgroup in $\mathcal{M}(R)$ must be an almost simple group in the collection $\mathcal{S}$. Since neither a $\C_3$-subgroup of type ${\rm GU}_{n/2}(q)$ or ${\rm O}_{n/2}^{-}(q^2)$, nor a subfield subgroup of $G$ contains a subgroup in $\mathcal{S}$, it follows that $|\mathcal{M}(R)| \geqs 2$ if there exists a subgroup in $\mathcal{S}$ with $r'$-index. We now divide our analysis of the subgroups in $\mathcal{S}$ into two cases.

\vs

\noindent \emph{Case 3(a). $i = n$ and $n=2a$, where $a \geqs 5$ is a prime}

\vs

First assume case (a) holds. If $n=10$ then by inspecting \cite[Table 8.69]{BHR} we deduce that there is a maximal subgroup in $\mathcal{S}$ with $r'$-index if and only if $f=1$, $|R|=r=11$ and $p = \boxtimes \imod{11}$. In fact, if $r=11$ then our hypotheses imply that $p = \boxtimes \imod{11}$ and thus $|\mathcal{M}(R)| \geqs 2$. Indeed, if $p \equiv x^2 \imod{11}$, then $11$ divides $p^5-1$ and thus $11$ divides $q^5-1$, which is incompatible with the assumption $i=10$.

For the remainder, we may assume $n \geqs 14$. By carefully applying the main theorem of \cite{GPPS}, we deduce that there are no possibilities with $r>2n+1$. If $r=2n+1$, then the only option is a subgroup with socle ${\rm L}_2(r)$. Here $r \equiv 1 \imod{4}$ and by inspecting \cite[Table 2(b)]{HM} we see that the Frobenius-Schur indicator of the corresponding representation is incompatible with an embedding in $G$ (indeed, we get an embedding in a symplectic group). So this case does not arise. 

To complete the analysis of case (a), we may assume $r=n+1$ and $n \geqs 14$. By inspecting \cite{GPPS}, we see that there are no $r'$-index subgroups in $\mathcal{S}$ if $|R_0|>r$, so we may assume $|R_0| = r$. Now $r$ divides $p^n-1$, so every maximal subfield subgroup of $G$ has $r'$-index and therefore we may assume $f$ is a $2$-power. In particular, since $G/T$ is an $r$-group, it follows that $G = T$. Let us assume $H \in \mathcal{S}$ has socle $S$ and $r'$-index. 

First assume $S = A_d$ is an alternating group, which is irreducibly embedded in $G$ via the fully deleted permutation module over $\mathbb{F}_p$. Here $f=1$ and we note that $p$ does not divide $n+1$ (since $r = n+1$). Therefore, for $p$ odd there exists a subgroup in $\mathcal{S}$ with socle $A_{n+1}$ and $r'$-index, so $|\mathcal{M}(R)| \geqs 2$. Now assume $q=2$. We claim that $a \equiv 1 \imod{4}$. To see this, suppose $a \equiv 3 \imod{4}$, which means that $r \equiv -1 \imod{8}$ and thus $q$ is a square modulo $r$, say $q \equiv x^2 \imod{r}$. But then $q^a \equiv x^{r-1} \equiv 1 \imod{r}$, which contradicts the fact that $d_q(r) = n = 2a$. This justifies the claim and we deduce that $A_{n+2} < G$ has $r'$-index. In particular, $|\mathcal{M}(R)| \geqs 2$ if $f=1$. 

In fact, we claim that $|\mathcal{M}(R)| \geqs 2$ for all $f$. To justify this, suppose $p$ is odd and consider the fully deleted permutation module for $A_{n+1}$ over $\mathbb{F}_q$. This is an orthogonal module, so the corresponding representation embeds $A_{n+1}$ in $\Omega_{n}^{\e}(q)$ and we see that $\e=-$ since $|\Omega_{n}^{+}(q)|$ is indivisible by $r$. Therefore, we have $R < A_{n+1} < G$. Now $A_{n+1}$ is not contained in a $\C_3$-subgroup of type ${\rm GU}_{n/2}(q)$ because $A_{n+1}$ does not have a nontrivial representation of dimension less than $n$ over any field. Therefore, $R$ is contained in some other maximal subgroup of $G$ (necessarily in $\mathcal{S}$) and this justifies the claim. An entirely similar argument applies when $p=2$, working with $R < A_{n+2} < G$. 

\vs

\noindent \emph{Case 3(b). $i = n$ and $n=2^a$, where $a \geqs 3$}

\vs

If $n=8$ then by inspecting \cite[Table 8.53]{BHR} we see that there are no subgroups in $\mathcal{S}$ with $r'$-index, so we may assume $a \geqs 4$ for the remainder. Suppose $H \in \mathcal{S}$ has $r'$-index and socle $S$. If $r \geqs 2n+1$, then the main theorem of \cite{GPPS} implies that $S = {\rm L}_2(r)$ with $r = 2n+1$ is the only possibility. But by inspecting \cite[Table 2(b)]{HM} we find that the Frobenius-Schur indicator is incompatible with an embedding in $T$, so this case does not arise.

Finally, let us assume $r = n+1$, so $r$ is a Fermat prime. First note that $q$ is odd. Indeed, $r = 2^a+1$ divides $2^{2a}-1$ and thus $r$ divides $q^{2a}-1$ if $q$ is even, which is incompatible with the fact that $d_q(r) = n = 2^a$. By inspecting \cite{GPPS} we see that there are no subgroups in $\mathcal{S}$ with $r'$-index if $|R_0|>r$, so we may assume $|R_0| = r$.

Suppose $S = A_d$ is irreducibly embedded in $G$ via the fully deleted permutation module over $\mathbb{F}_p$. Then for $q=p$ it follows that there is a subgroup in $\mathcal{S}$ with socle $A_{n+1}$ and $r'$-index. By repeating the argument in Case (a), this implies that $|\mathcal{M}(R)| \geqs 2$ for all $q$ and the proof of the proposition is complete.
\end{proof}

\section{Proof of Theorem \ref{t:main1}: Exceptional groups}\label{s:t2}

In this section, we complete the proof of Theorem \ref{t:main1} by handling the exceptional groups of Lie type. Here it will be convenient to adopt the standard Lie notation for groups of Lie type. So for example, we will often write $A_{n-1}(q)$ for ${\rm SL}_n(q)$ (and also ${\rm PGL}_n(q)$ and ${\rm L}_n(q)$) and $A_{n-1}^{-}(q)$ for ${\rm SU}_n(q)$ (and also ${\rm PGU}_n(q)$ and ${\rm U}_n(q)$). As before, we write $q = p^f$ with $p$ a prime. In addition, if $H$ is a subgroup of $G$, then we set $H_0 = H \cap T$.

\begin{prop}\label{p:2b2g2}
The conclusion to Theorem \ref{t:main1} holds when $T = {}^2B_2(q)$ or ${}^2G_2(q)'$.
\end{prop}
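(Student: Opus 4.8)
The plan is to exploit the fact that the maximal subgroups of the Suzuki groups ${}^2B_2(q)$ (Suzuki) and the small Ree groups ${}^2G_2(q)'$ (Kleidman) are completely known, and that each family has only a handful of conjugacy classes of maximal subgroups. Write $q = p^f$, so $p=2$ and $f \geqs 3$ is odd for $T = {}^2B_2(q)$, while $p=3$ and $f \geqs 3$ is odd for $T = {}^2G_2(q)'$ (the excluded case $q=3$ gives ${}^2G_2(3)' \cong {\rm L}_2(8)$). Since ${\rm Out}(T)$ is cyclic of order $f$, Lemma \ref{l:trivial} and Corollary \ref{c:easy} let us assume that $r$ is an odd prime dividing $|T|$ and that $G/T$ is an $r$-group. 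By Corollary \ref{c:core} it then suffices to decide, for each such $r$, whether $G$ has a unique conjugacy class of maximal subgroups of $r'$-index. Note that, unlike the classical case, there is no $\mathcal{S}$ collection to contend with here: every maximal subgroup is a parabolic, a torus normaliser, an involution centraliser, or a subfield subgroup.

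First I would dispose of the \emph{small torus} primes, where $d = d_q(r) \in \{1,2\}$, by exhibiting two non-conjugate $r'$-index maximal subgroups. For ${}^2B_2(q)$ with $r \mid q-1$, both a Borel subgroup $q^{1+1}{:}(q-1)$ and the dihedral subgroup $D_{2(q-1)}$ have $r'$-index and are non-conjugate, so $|\mathcal{M}(R)| \geqs 2$. For ${}^2G_2(q)'$ one argues similarly, using the parabolic subgroup together with the involution centraliser $2 \times {\rm L}_2(q)$ when $r \mid q-1$, and $2 \times {\rm L}_2(q)$ together with $(2^2 \times D_{(q+1)/2}){:}3$ when $r \mid q+1$; a short $r$-part computation via Lemma \ref{l:gk} shows both subgroups have $r'$-index in each case. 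The one genuinely positive small-torus case is $r = p = 3$ for ${}^2G_2(q)'$: here $|T|_r = q^3$, the rank-$1$ BN-pair structure forces the Borel subgroup to be the unique maximal subgroup of order divisible by $q^3$, so $\mathcal{M}(R) = \{H\}$, giving an entry of Table \ref{tab:main2}.

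The substance of the proof is the \emph{large torus} case: $r \mid q^2+1$ for ${}^2B_2(q)$ and $r \mid q^2-q+1$ for ${}^2G_2(q)'$. Since the two relevant factors $q \pm \sqrt{2q}+1$ (respectively $q \pm \sqrt{3q}+1$) are coprime, $r$ divides exactly one of them, and the normaliser $N_G(R_0)$ — a Frobenius-type subgroup $(q\pm\sqrt{2q}+1){:}4$ or $(q\pm\sqrt{3q}+1){:}6$ — is the only $r'$-index maximal subgroup that is not a subfield subgroup. The remaining question is whether a subfield subgroup ${}^2B_2(q_0)$ (resp. ${}^2G_2(q_0)$) with $q = q_0^k$ and $k$ prime also has $r'$-index. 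I would settle this by directly comparing $r$-parts: writing $q = q_0^k$ with $k$ odd, if $r$ divides the corresponding factor of $q_0$ then the cyclotomic factor contributes an extra $(k)_r$, so the subfield subgroup has $r'$-index precisely when $r \ne k$ and $r$ already divides that factor. This produces the divisibility/congruence conditions recorded in Table \ref{tab:main2}, and in their absence $N_G(R_0)$ is the unique $r'$-index class, whence $\mathcal{M}(R) = \{N_G(R_0)\}$. Finally I would treat the field-automorphism extensions $G = T.r^a$, where $R$ properly contains $R_0$, checking directly that the overgroup structure is unchanged; the key small example is $G = {}^2B_2(32).5$ with $r = 5$, where $|R_0| = 25$ and $N_G(R_0) = N_G(R)$ is the unique overgroup, consistent with Corollary \ref{c:NGR2}(vi).

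The main obstacle I anticipate is exactly this last bookkeeping: pinning down, for the large-torus primes, the precise combination of $(q,r)$ and of the intermediate group $T \leqs G \leqs {\rm Aut}(T)$ for which the subfield subgroups fail to have $r'$-index, so that the surviving triples match Table \ref{tab:main2}. The small and boundary cases (small $q$, or $r$ equal to a prime divisor of $f$) would be verified by direct computation, as elsewhere in the paper.
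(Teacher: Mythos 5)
Your proposal is correct and takes essentially the same approach as the paper: the same split according to $d_q(r)$, the same pairs of non-conjugate maximal subgroups of $r'$-index in the small-torus cases (Borel/dihedral for ${}^2B_2(q)$, Borel or torus normalisers against the involution centraliser $2 \times {\rm L}_2(q)$ for ${}^2G_2(q)'$), and the same comparison of $r$-parts between the torus normaliser $N_G(R_0)$ and the subfield subgroups via Lemma \ref{l:gk} in the large-torus case. In your final bookkeeping just note that ${}^2B_2(2)$ is not a maximal subgroup of ${}^2B_2(q)$ (so the prime $k = f$ is excluded from the Suzuki condition) whereas ${}^2G_2(3)$ is maximal in ${}^2G_2(q)$, which is why the conditions recorded in Table \ref{tab:main2} for the two families differ slightly.
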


\begin{proof}
First assume $T = {}^2B_2(q)$, so $|T| = q^2(q-1)(q^2+1)$ and $q = 2^f$ with $f \geqs 3$ odd. The maximal subgroups of $G$ are listed in \cite[Table 8.16]{BHR}. As usual, let $i = d_q(r)$ and note that $i \in \{1,4\}$. If $i=1$ then $|R_0| = (q-1)_r$ and we see that $\mathcal{M}(R)$ contains subgroups $H$ with $H_0 = [q^{2}]{:}(q-1)$ and $D_{2(q-1)}$. Now assume $i=4$, so $|R_0| = (q^2+1)_r$ and $r$ divides $m=q+\e\sqrt{2q}+1$ for some $\e  = \pm$. In particular, $\mathcal{M}(R)$ contains the normaliser of a maximal cyclic torus of order $m$. By inspection, any additional $H \in \mathcal{M}(R)$ must be a subfield subgroup with $H_0 = {}^2B_2(q_0)$ and $q=q_0^k$, where $k$ is a proper prime divisor of $f$. Since $|H_0|_r = (q_0^2+1)_r$ we conclude that $|\mathcal{M}(R)| = 1$ if and only if $(q^{1/k})^2 \not\equiv -1 \imod{r}$ for every prime divisor $k \ne r,f$ of $f$. 

Now assume $T = {}^2G_2(q)'$, so $q = 3^f$ and $f$ is odd. If $f=1$ then $T \cong {\rm L}_2(8)$ and we find that $|\mathcal{M}(R)| = 1$ if and only if $r=3$. For the remainder, let us assume $f \geqs 3$ and note that $|T| = q^3(q-1)(q^3+1)$. We refer the reader to \cite[Table 8.43]{BHR} for the maximal subgroups of $G$. If $r=3$ then $\mathcal{M}(R) = \{H\}$, where $H_0 = [q^3]{:}(q-1)$ is a Borel subgroup of $T$. Now assume $r \geqs 5$ and set $i = d_q(r)$, so $i \in \{1,2,6\}$. If $i=1$ then $|R_0| = (q-1)_r$ and $\mathcal{M}(R)$ contains a Borel subgroup and the centraliser of an involution (so $H_0 = 2 \times {\rm L}_2(q)$ in the latter case).  Similarly, if $i=2$ then $\mathcal{M}(R)$ contains subgroups with $H_0 = 2 \times {\rm L}_2(q)$ and $(2^2 \times D_{(q+1)/2}){:}3$. Finally, suppose $i=6$. Here $|R_0| = (q^3+1)_r$ and $r$ divides $m=q+\e\sqrt{3q}+1$ for some $\e = 
\pm$. In this case, $\mathcal{M}(R)$ contains the normaliser of a maximal torus of order $m$ and any other subgroup in $\mathcal{M}(R)$ has to be a subfield subgroup of type ${}^2G_2(q_0)$, where $q=q_0^k$ and $k$ is a prime. We deduce that $|\mathcal{M}(R)| = 1$ if and only if $\a(3,-1)$ holds (see \eqref{e:ab}). 
\end{proof}

\begin{prop}\label{p:2f4}
The conclusion to Theorem \ref{t:main1} holds when $T = {}^2F_4(q)'$.
\end{prop}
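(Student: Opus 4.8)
The plan is to follow the template of Proposition \ref{p:2b2g2}, treating each value of $i = d_q(r)$ in turn and applying Corollary \ref{c:core}, which reduces the problem to counting the $G$-classes of maximal subgroups of $r'$-index. As in that proof, Corollary \ref{c:easy} and Lemma \ref{l:trivial} let us assume $r\ne p=2$ and that $r$ is an odd prime dividing $|T|$ with $G/T$ an $r$-group. Writing $|T| = q^{12}\Phi_1^2\Phi_2^2\Phi_4^2\Phi_6\Phi_{12}$ as a product of cyclotomic values $\Phi_d = \Phi_d(q)$, and noting that $\Phi_3$ and $\Phi_8$ do not divide $|T|$, we see that $i \in \{1,2,4,6,12\}$; since $q \equiv 2 \imod 3$, the prime $r=3$ forces $i=2$. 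The groups with $q=2$ (so $T$ is the Tits group) will be handled directly using \cite{BHR} together with the character-table methods of Proposition \ref{p:spor}, and one checks that $|\mathcal{M}(R)| \geqs 2$ in every case; thus we assume $q = 2^f \geqs 8$ with $f$ odd for the remainder and work with the known classification of the maximal subgroups of $G$ (see \cite{LS03} and the references therein). The relevant subgroups are the two parabolics $P_1,P_2$, the maximal rank subgroups ${\rm SU}_3(q){:}2$, ${\rm PGU}_3(q){:}2$ and ${}^2B_2(q)\wr 2$, the subsystem subgroup ${\rm Sp}_4(q){:}2$, the subfield subgroups ${}^2F_4(q_0)$ with $q=q_0^k$, and the normalisers of the two Ree tori; crucially, no subgroups in the collection $\mathcal{S}$ arise for $q \geqs 8$.

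Using Lemma \ref{l:gk} to compute the $r$-parts, the cases $i \in \{1,2,6\}$ will each yield $|\mathcal{M}(R)| \geqs 2$. For $i=1$ we have $|T|_r = ((q-1)_r)^2$ and $R$ lies in a split maximal torus contained in every parabolic, so $P_1$ and $P_2$ both have $r'$-index. For $i=2$ we have $|T|_r = ((q+1)_r)^2$ (with an extra factor of $(q^2-q+1)_3$ when $r=3$), and both ${\rm SU}_3(q){:}2$ and ${\rm PGU}_3(q){:}2$ have $r'$-index; these are non-conjugate because $3 \mid q+1$, so again $|\mathcal{M}(R)| \geqs 2$. The same pair of subgroups settles $i=6$, where $|T|_r = (q^2-q+1)_r$ and both once more have $r'$-index.

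This leaves $i \in \{4,12\}$, where $\mathcal{M}(R)$ can be a singleton. When $i=4$ we have $|T|_r = ((q^2+1)_r)^2$, and a direct inspection shows that, among the subgroups above, only ${}^2B_2(q)\wr 2$ has $r'$-index (the subsystem ${\rm Sp}_4(q)$, the two $A_2^-$-type subgroups, and the Levi factors of $P_1,P_2$ each meet the relevant torus in only a single $\Phi_4$-factor). When $i=12$ we have $|T|_r = (\Phi_{12})_r$, the Sylow subgroup $R$ is cyclic and lies in one of the two Ree tori of order $q^2 \pm \sqrt{2q}(q+1) + q + 1$, whose normaliser $N_G(R)$ is maximal. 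In each case the only remaining source of a second class of $r'$-index maximal subgroups is a subfield subgroup ${}^2F_4(q_0)$, and in the usual manner we determine the precise condition on $q$ under which none of these has $r'$-index: for $i=4$ this is the condition $\a(2,-1)$ of \eqref{e:ab}, while for $i=12$ it is a twisted analogue recorded in Table \ref{tab:main2}. When the relevant condition holds we conclude $|\mathcal{M}(R)| = 1$, and otherwise $|\mathcal{M}(R)| \geqs 2$.

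The main obstacle is the subfield analysis for $i=12$: since the orders of the two Ree tori involve the irrational quantity $\sqrt{2q}$ rather than a polynomial in $q$, comparing the $r$-parts for ${}^2F_4(q)$ and ${}^2F_4(q_0)$ cannot be read off directly from $\a(m,\e)$, and one must verify via Shintani descent that $r$ divides the corresponding one of the two torus factors for both $q$ and $q_0$. A secondary point, already invoked above, is that the completeness of the maximal subgroup list for ${}^2F_4(q)$ guarantees no almost simple subgroup in $\mathcal{S}$ contains $R$ when $q \geqs 8$; this is exactly what makes the $q=2$ behaviour genuinely different, since there an element of order $13$ lies in both ${\rm L}_2(25)$ and ${\rm L}_3(3){:}2$.
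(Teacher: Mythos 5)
Your overall framework -- reducing to $r \ne p$ with $i = d_q(r) \in \{1,2,4,6,12\}$, handling the Tits group separately, disposing of $i \in \{1,2,6\}$ via the pairs $\{P_1,P_2\}$ and $\{{\rm SU}_3(q){:}2, {\rm PGU}_3(q){:}2\}$ -- agrees with the paper. But your analysis of $i=4$ contains a decisive error, caused by an incomplete list of maximal subgroups. Malle's classification of the maximal subgroups of ${}^2F_4(q)$, $q \geqs 8$, includes not only the normalisers of the two ``Ree tori'' of order $q^2 \pm \sqrt{2q^3}+q\pm\sqrt{2q}+1$ (relevant to $i=12$), but also the normalisers of the rank-two maximal tori of order $(q+\sqrt{2q}+1)^2$ and $(q-\sqrt{2q}+1)^2$, with complement $4 \circ {\rm GL}_2(3)$ of order $96$; these are missing from your list. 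Since $q+\sqrt{2q}+1$ and $q-\sqrt{2q}+1$ are coprime (their difference is $2\sqrt{2q}$ and both are odd), a prime $r$ with $i=4$ divides exactly one of them, say $m = q+\e\sqrt{2q}+1$, and then $(m^2)_r = ((q^2+1)_r)^2 = |R_0|$, so the normaliser of the torus of order $m^2$ also has $r'$-index. This normaliser is maximal for all $q \geqs 8$ when $\e=+$, and for $q \geqs 32$ when $\e=-$; in the one remaining case $\e=-$, $q=8$, $r=5$, the subfield subgroup ${}^2F_4(2)$ has $r'$-index since $|{}^2F_4(2)|_5 = 5^2 = |R_0|$. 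Hence $|\mathcal{M}(R)| \geqs 2$ in every case with $i=4$, which is why Table \ref{tab:main2} contains no $r_4$ entry for ${}^2F_4(q)'$. Your claimed singleton case at $i=4$ (with subfield condition $\a(2,-1)$) therefore contradicts the very statement of Theorem \ref{t:main1} that the proposition asserts, so the proof fails.

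A secondary point: the ``main obstacle'' you identify at $i=12$ is not an obstacle at all, and no Shintani descent is needed. One never has to track which of the two irrational factors of $q^4-q^2+1$ the prime $r$ divides: a subfield subgroup ${}^2F_4(q_0)$, $q = q_0^k$ with $k$ prime, has order whose $r$-part equals $(q_0^6+1)_r$ (the other factors of $|{}^2F_4(q_0)|$ are prime to $r$ when $d_q(r)=12$), and comparing this with $|R_0| = (q^6+1)_r$ via Lemma \ref{l:gk} shows that ${}^2F_4(q_0)$ has $r'$-index if and only if $k \ne r$ and $q_0^6 \equiv -1 \imod{r}$. So the subfield condition is exactly $\a(6,-1)$ as recorded in Table \ref{tab:main2}, a purely polynomial condition in $q$, not a ``twisted analogue'' of it.
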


\begin{proof}
Here $q = 2^f$ with $f$ odd. If $f=1$ then a {\sc Magma} calculation shows that $|\mathcal{M}(R)| \geqs 2$, so we may assume $f \geqs 3$. Note that
\[
|T| = q^{12}(q-1)(q^3+1)(q^4-1)(q^6+1)
\]
and the maximal subgroups of $G$ are determined in \cite{Malle} (noting the correction in \cite{Craven}, where Craven shows that ${}^2F_4(8)$ has three classes of maximal subgroups isomorphic to ${\rm PGL}_2(13)$ that were omitted in \cite{Malle}). Set $i = d_q(r)$, so $i \in \{1,2,4,6,12\}$.

First assume $i=1$, so $|R_0| = ((q-1)_r)^2$ and every maximal parabolic subgroup of $G$ has $r'$-index, whence $|\mathcal{M}(R)| \geqs 2$. Similarly, if $i=2$ then $|R_0| = ((q+1)_r)^2(3)_r$ and $\mathcal{M}(R)$ contains subgroups with $H_0 = {\rm SU}_3(q){:}2$ and ${\rm PGU}_3(q){:}2$. The case $i=6$ is entirely similar. 

Next assume $i=4$.  Here $r \geqs 5$, $|R_0| = ((q^2+1)_r)^2$ and $\mathcal{M}(R)$ contains a subgroup with $H_0 = {}^2B_2(q) \wr S_2$. Now
\[
q^2+1 = (q+\sqrt{2q}+1)(q-\sqrt{2q}+1)
\]
and thus $r$ divides $m = q+\e\sqrt{2q}+1$ for some $\e = \pm$. If $\e=+$ then $\mathcal{M}(R)$ contains the normaliser of a maximal torus of order $m^2$, and the same conclusion holds if $\e = -$ and $f \geqs 5$. So let us assume $\e = -$ and $f=3$, in which case $q=8$ and $r=5$. Here $|R_0| = 5^2$ and $\mathcal{M}(R)$ contains a subfield subgroup ${}^2F_4(2)$, so once again we conclude that $|\mathcal{M}(R)| \geqs 2$. 

Finally, let us assume $i=12$. Here $r \in \{13,37,61, \ldots\}$, $|R_0| = (q^6+1)_r = (q^4-q^2+1)_r$ and we note that 
\[
q^4-q^2+1 = (q^2+\sqrt{2q^3}+q+\sqrt{2q}+1)(q^2-\sqrt{2q^3}+q-\sqrt{2q}+1),
\]
so $r$ divides $m = q^2+\e\sqrt{2q^3}+q+\e\sqrt{2q}+1$ for some $\e = \pm$. Then $\mathcal{M}(R)$ contains the normaliser of a maximal torus of order $m$ and by inspecting \cite{Malle} we see that any additional subgroup in $\mathcal{M}(R)$ is a subfield subgroup of type ${}^2F_4(q_0)$, where $q=q_0^k$, $k$ is a prime and $(q_0^6+1)_r = (q^6+1)_r$. (Note that if $q=8$ then $r \in \{37,109\}$, so the maximal subgroups ${\rm PGL}_2(13) < {}^2F_4(8)$ do not have $r'$-index.) Therefore, $|\mathcal{M}(R)| = 1$ if and only if $\a(6,-1)$ holds.
\end{proof}

\begin{prop}\label{p:3d4}
The conclusion to Theorem \ref{t:main1} holds when $T = {}^3D_4(q)$.
\end{prop}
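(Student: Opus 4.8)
My plan is to follow the template used above for the other exceptional groups, working throughout from the known classification of the maximal subgroups of $G$ (via \cite[Theorem 8]{LS03}, with \cite{LSS} for the maximal rank subgroups and \cite{CLSS} for the exotic local subgroups). Writing $q=p^f$, I recall that
\[
|T| = q^{12}(q^8+q^4+1)(q^6-1)(q^2-1),
\]
so $\Out(T)$ is cyclic of order $3f$ and the hypothesis that $G/T$ is an $r$-group forces $G \leqs T.\la \phi \ra$ for a field automorphism $\phi$. I set $i = d_q(r)$, noting that $i \in \{1,2,3,6,12\}$ when $r \ne p$, and I dispose of any small fixed field $q$ (in particular $q=2$) by a direct {\sc Magma} computation. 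If $r=p$, then the two classes of maximal parabolic subgroups $P_1$ and $P_2$ both have $r'$-index, so $|\mathcal{M}(R)| \geqs 2$.

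For $r \ne p$ with $r \geqs 5$, the strategy for each $i \in \{1,2,3,6\}$ is to exhibit two non-conjugate maximal subgroups of $r'$-index and conclude $|\mathcal{M}(R)| \geqs 2$ via Corollary \ref{c:core}. For $i=1$ both parabolics $P_1,P_2$ have $r'$-index. For $i=2$ I would use $G_2(q)$ together with a maximal rank subgroup of type $A_1(q^3)A_1(q)$. For $i=3$ (where $r \geqs 7$ and $|R_0|=((q^2+q+1)_r)^2$) I would pair the maximal rank subgroup of type $A_2$ (with socle ${\rm L}_3(q)$) with the normaliser of a maximal torus of order $(q^2+q+1)^2$; the case $i=6$ is entirely analogous, using a subgroup of type ${}^2A_2$ (socle ${\rm U}_3(q)$) and the normaliser of a torus of order $(q^2-q+1)^2$. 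In each case a routine application of Lemma \ref{l:gk} confirms the $r'$-index, and the two chosen subgroups are manifestly non-conjugate.

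The main positive case is $i=12$, and I expect it to run exactly as for $T={}^2F_4(q)'$. Here $r \geqs 13$ divides $q^4-q^2+1$, which divides $|T|$ only to the first power, so $R_0$ is cyclic of order $(q^4-q^2+1)_r$ and $\mathcal{M}(R)$ contains the normaliser $H = (q^4-q^2+1){:}4$ of the corresponding maximal torus. Scanning the maximal subgroups of $G$, the only others whose order is divisible by $r$ are the subfield subgroups of type ${}^3D_4(q_0)$ with $q=q_0^k$ and $k$ a prime; such a subgroup has $r'$-index precisely when $(q_0^4-q_0^2+1)_r = (q^4-q^2+1)_r$, i.e. when $(q^{1/k})^6 \equiv -1 \imod{r}$. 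Hence $\mathcal{M}(R) = \{H\}$ if and only if $\a(6,-1)$ holds (see \eqref{e:ab}).

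The one genuinely awkward case --- and the step I expect to be the main obstacle --- is the bad prime $r=3$, where $i \in \{1,2\}$ but the usual reasoning collapses because the Sylow $3$-subgroup $R_0$ is nonabelian and is \emph{not} contained in any maximal torus of $T$. (For $q \equiv 2 \imod{3}$, for instance, one computes $|R_0| = ((q+1)_3)^2\cdot 3^2$, which strictly exceeds the $3$-part of every maximal torus, the extra factor of $3$ coming from the relative Weyl group of type $G_2$.) Consequently, when $\mathcal{M}(R)$ is a singleton its member is a maximal $3$-local subgroup of shape $3^{1+2}.(\ldots)$ rather than a parabolic, torus normaliser, or subgroup of maximal rank. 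To settle exactly when $|\mathcal{M}(R)| = 1$ occurs here, and to identify $H$ precisely, I would combine the list of maximal $3$-local subgroups of $G$ with direct computation for the relevant small fields; this is the source of the isolated table entries such as $G={}^3D_4(2)$ with $r=3$ and $H = 3^{1+2}{:}2S_4$.
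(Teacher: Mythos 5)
Your treatment of $r=p$, of $i\in\{1,2\}$ with $r\geqs 5$, of $i\in\{3,6\}$, and of $i=12$ is correct and essentially follows the paper's proof. Two small deviations are harmless: for $i=1$, $r\geqs 5$ you use the parabolics $P_1,P_2$ (indices $(q+1)(q^8+q^4+1)$ and $(q^3+1)(q^8+q^4+1)$, both coprime to $r$) where the paper uses $G_2(q)$ and $A_1(q^3)A_1(q)$; and for $i=12$ your criterion $\a(6,-1)$ looks stronger than the paper's condition, which only quantifies over primes $k \ne 3,r$, but the two are equivalent, since $(q^{1/k})^6 \equiv -1 \imod{r}$ forces $d_{q^{1/k}}(r)=12$ and hence $d_q(r) = 12/\gcd(12,k) \ne 12$ for $k \in \{2,3\}$, so the congruence can never hold for those $k$ anyway.

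The genuine gap is the case $r=3$, precisely the step you flag as the main obstacle: both your structural prediction and your proposed method are wrong there. It is true that $R_0$, of order $9((q-\e)_3)^2$ where $q \equiv \e \imod{3}$, lies in no maximal torus; but it does not follow that the unique maximal overgroup must be a $3$-local subgroup of shape $3^{1+2}.(\ldots)$ to be pinned down by small-field computation. In fact the maximal rank subgroup of type $A_2^{\e}(q) \times (q^2+\e q+1)$ has $3'$-index for \emph{every} $q$: its order has $3$-part $(q^2+\e q+1)_3\,(q^2-1)_3\,(q^3-\e)_3 = 3\cdot (q-\e)_3\cdot 3(q-\e)_3 = 9((q-\e)_3)^2 = |R_0|$, using Lemma \ref{l:gk}. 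Your example $3^{1+2}{:}2S_4 < {}^3D_4(2)$ is exactly this subgroup with $q=2$, $\e=-$, where ${\rm SU}_3(2)$ happens to be soluble with normal subgroup $3^{1+2}$; for $q>2$ the subgroup contains a quasisimple ${\rm SL}_3^{\e}(q)$ and has no such shape. Moreover, every subfield subgroup ${}^3D_4(q_0)$ with $q=q_0^k$ and $k \ne 3$ prime also has $3'$-index (for $k=2$ because $(q_0^2-1)_3 = (q_0-\e_0)_3$; for $k \geqs 5$ by Lemma \ref{l:gk}), so the correct conclusion is that $|\mathcal{M}(R)|=1$ if and only if $f = 3^a$, with $H$ the maximal rank subgroup above. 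This is an infinite family, recorded as a single entry of Table \ref{tab:main2}, not a collection of isolated small-field exceptions. Your plan of combining lists of maximal $3$-local subgroups with {\sc Magma} computations for small $q$ would both miss the maximal rank subgroup that actually contains $R$ and fail to produce the subfield criterion $f=3^a$ that governs uniqueness.
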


\begin{proof}
Let us first observe that
\begin{align*}
|T| & = q^{12}(q^2-1)(q^6-1)(q^8+q^4+1) \\
& = q^{12}(q^2-1)(q^6-1)(q^2-q+1)(q^2+q+1)(q^4-q^2+1)
\end{align*}
and the maximal subgroups of $G$ are listed in \cite[Table 8.51]{BHR}. If $r=p$ then every maximal parabolic subgroup has $r'$-index, so we may assume $r \ne p$. Let $i = d_q(r)$ be the order of $q$ modulo $r$ and note that $i \in \{1,2,3,6,12\}$. 

First assume $i=1$ and note that $|R_0| = ((q-1)_r(3)_r)^2$. If $r \geqs 5$ then $\mathcal{M}(R)$ contains subgroups of type $G_2(q)$ and $A_1(q^3)A_1(q)$, so we may assume $r=3$. Here we observe that maximal rank subgroups of the form $A_2(q) \times (q^2+q+1)$ have $3'$-index and by inspecting \cite[Table 8.51]{BHR} we deduce that any additional subgroups in $\mathcal{M}(R)$ must be of the form ${}^3D_4(q_0)$, where $q = q_0^k$ and $k \ne 3$ is a prime. If $k \geqs 5$ then  $q_0 \equiv 1 \imod{3}$ (since $q \equiv 1 \imod{3}$) and $(q_0-1)_3 = (q-1)_3$, so $H$ has $3'$-index. The same conclusion holds if $k=2$ and $q_0 \equiv 1 \imod{3}$. And if $k=2$ and $q_0 \equiv -1 \imod{3}$, then $(q_0+1)_3 = (q-1)_3$ and once again we deduce that $H$ has $3'$-index. Therefore, if $r=3$ then $|\mathcal{M}(R)| = 1$ if and only if $f=3^a$ for some $a \geqs 0$. 

Next assume $i=2$. As in the previous case, it is easy to see that $|\mathcal{M}(R)| \geqs 2$ if $r \geqs 5$, so we may assume $r=3$. Note that $f$ is odd since  $3$ divides $q-1$ if $f$ is even. Then $|R_0| = 9((q+1)_3)^2$ and maximal rank subgroups of type $A_2^{-}(q) \times (q^2-q+1)$ have $3'$-index. By considering subfield subgroups as above, we deduce that $|\mathcal{M}(R)| = 1$ if and only if $f = 3^a$ for some $a \geqs 0$. 

Suppose $i=3$, so $r \geqs 7$ and $|R_0| = ((q^2+q+1)_r)^2$. Then $\mathcal{M}(R)$ contains maximal rank subgroups of the form $A_2(q) \times (q^2+q+1)$ and $(q^2+q+1)^2$. Similarly, if $i=6$ then subgroups of type 
$A_2^{-}(q) \times (q^2-q+1)$ and $(q^2-q+1)^2$ have $r'$-index.

Finally, let us assume $i=12$, so $r \geqs 13$ and $|R_0| = (q^6+1)_r = (q^4-q^2+1)_r$. Here $\mathcal{M}(R)$ contains the normaliser of a maximal torus of order $q^4-q^2+1$. Any additional subgroup in $\mathcal{M}(R)$ has to be a subfield subgroup of type ${}^3D_4(q_0)$, where $q = q_0^k$ and $k \ne 3$ is a prime. As a consequence, we deduce that $|\mathcal{M}(R)| = 1$ if and only if $(q^{1/k})^6 \not\equiv -1 \imod{r}$ for every prime divisor $k \ne 3,r$ of $f$.
\end{proof}

\begin{prop}\label{p:g2}
The conclusion to Theorem \ref{t:main1} holds when $T = G_2(q)'$.
\end{prop}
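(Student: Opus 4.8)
The plan is to apply Corollary~\ref{c:core}. By Lemma~\ref{l:trivial} and Corollary~\ref{c:easy} we may assume $r$ divides $|T|$ and $G/T$ is an $r$-group, so that $\mathcal{M}(R)=\{H\}$ if and only if $G$ has a unique conjugacy class of maximal subgroups of $r'$-index. First I would record
\[
|T| = q^6(q^2-1)(q^6-1) = q^6\,\Phi_1^2\Phi_2^2\Phi_3\Phi_6,
\]
where $\Phi_1=q-1$, $\Phi_2=q+1$, $\Phi_3=q^2+q+1$ and $\Phi_6=q^2-q+1$, and take the maximal subgroups of $G$ from the relevant tables in \cite{BHR} (treating $G_2(2)'\cong {\rm U}_3(3)$, along with a bounded range of small $q$, directly in {\sc Magma} \cite{magma}). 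The case $r=p$ is immediate: both maximal parabolic subgroups $P_1,P_2$ have index $(q+1)(q^4+q^2+1)$, which is prime to $p$, and since $G/T$ is an $r$-group with $r$ odd, $G$ induces only field automorphisms on $T$ (a graph automorphism exists only when $p=3$, and then has even order). Hence $P_1$ and $P_2$ remain non-conjugate and $|\mathcal{M}(R)|\geqs 2$. For $r\neq p$ I set $i=d_q(r)\in\{1,2,3,6\}$ and treat the four values in turn.

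For $i\in\{1,2\}$, Lemma~\ref{l:gk} gives $|R_0|=(q-\e)_r^2$ when $r\geqs 5$, where $\e=+$ if $i=1$ and $\e=-$ if $i=2$. Here the maximal-rank subgroups $A_2^{\e}(q).2$ (that is, ${\rm SL}_3(q).2$ or ${\rm SU}_3(q).2$) and $A_1\tilde A_1=({\rm SL}_2(q)\circ {\rm SL}_2(q)).2$ both have $r'$-index, so $|\mathcal{M}(R)|\geqs 2$. When $r=3$ one finds $|R_0|=3(q-\e)_3^2$ and only $A_2^{\e}(q).2$ survives with $3'$-index (the wreath-type subgroup loses a factor of $3$). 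The analysis then reduces, via Lemma~\ref{l:gk}, to subfield subgroups $G_2(q_0)$ with $q=q_0^k$: such a subgroup has $3'$-index precisely when $k\neq 3$, so $|\mathcal{M}(R)|=1$ forces $f=3^a$. The only remaining threat is an $\mathcal{S}$-subgroup of $3'$-index; since the almost simple maximal subgroups of $G_2(q)$ (such as ${\rm U}_3(3)$, ${\rm L}_2(8)$, ${\rm L}_2(13)$, $J_2$) have $3$-part at most $3^3$, this can occur only for the finitely many $q$ with $|R_0|\leqs 27$, which I would settle in {\sc Magma}.

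For $i\in\{3,6\}$ we have $r\geqs 7$ and, since $\Phi_3$ and $\Phi_6$ each divide $|T|$ exactly once, $R_0$ is cyclic of order $(q^2+\e q+1)_r$ with $\e=+$ for $i=3$ and $\e=-$ for $i=6$. Exactly one maximal-rank subgroup then has $r'$-index: ${\rm SL}_3(q).2=A_2(q).2$ contains the $\Phi_3$-Singer torus when $i=3$, and ${\rm SU}_3(q).2=A_2^-(q).2$ contains the $\Phi_6$-torus when $i=6$; in both cases the torus normaliser $(q^2\pm q+1){:}6$ lies inside this subgroup and so is not separately maximal. As in the previous paragraph, the problem reduces to subfield and $\mathcal{S}$-subgroups: the subfield subgroups yield the condition $\a(3,1)$ when $i=3$ and $\a(3,-1)$ when $i=6$ (see \eqref{e:ab}), while the bounded-order $\mathcal{S}$-subgroups intervene only for $r\in\{7,13\}$ with $|R_0|=r$, which I would check against the explicit maximal subgroup list. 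This produces the cases recorded in Table~\ref{tab:main2}.

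The main obstacle will be the treatment of the $\mathcal{S}$-subgroups (and, to a lesser extent, the subfield subgroups) in the cyclic cases $i\in\{3,6\}$ and in the $r=3$ subcases of $i\in\{1,2\}$: one must use the complete classification of the almost simple maximal subgroups of $G_2(q)$ and decide, for the small primes $r\in\{3,7,13\}$, whether any such subgroup has $r'$-index, equivalently contains a full Sylow $r$-subgroup. This is precisely where the field-automorphism conditions $\a(3,\pm1)$ and a short list of {\sc Magma} verifications for small $q$ are needed; by contrast, the parabolic and maximal-rank bookkeeping for all remaining pairs $(i,r)$ is a routine application of Lemma~\ref{l:gk}.
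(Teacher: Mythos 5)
Your overall skeleton (reduction via Corollary~\ref{c:core}, division into cases by $i=d_q(r)$, maximal rank subgroups to force $|\mathcal{M}(R)|\geqs 2$, subfield subgroups giving the conditions $\a(3,\pm1)$, and $\mathcal{S}$-subgroups only when $|R_0|$ is small) is the same as the paper's, but two steps in your plan fail.

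First, you never separate characteristic $3$, and there your key claim is false. For $q=3^f$ the short roots of $G_2$ form a closed subsystem (the special isogeny underlying ${}^2G_2$), so $T$ has \emph{two} conjugacy classes of maximal subgroups of type $A_2^{\epsilon}(q)$ --- the long-root and the short-root versions --- and these are fused only by the graph automorphism. Since $G/T$ is an $r$-group with $r$ odd, $G$ contains no graph automorphism, so both classes survive in $G$. Hence for $p=3$ and $i\in\{3,6\}$ your assertion that ``exactly one maximal-rank subgroup has $r'$-index'' is wrong: both classes of $A_2^{\epsilon}(q)$-subgroups have $r'$-index and $|\mathcal{M}(R)|\geqs 2$ unconditionally (this is the paper's Case 2, which disposes of $p=3$ entirely). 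Your argument would therefore wrongly admit characteristic-$3$ examples satisfying $\a(3,\pm1)$; note that every condition for $G_2(q)$ in Table~\ref{tab:main2} explicitly requires $p=2$ or $p\geqs 5$.

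Second, the $\mathcal{S}$-subgroup step is not a finite computation, so the proposed {\sc Magma} verification cannot close the argument. For $r=3$ the inequality $|R_0|=3\bigl((q-\epsilon)_3\bigr)^2\leqs 27$ is the congruence condition $q\not\equiv \epsilon \imod{9}$, which holds for infinitely many $q$, not finitely many. The threat is genuine: for every prime $p\geqs 5$, ${\rm U}_3(3){:}2\cong G_2(2)$ is a maximal subgroup of $G_2(p)$, and it has $3'$-index whenever $|R_0|=27$. This is precisely the source of the condition ``$q\equiv \epsilon \imod{9}$ if $p\geqs 5$ and $f=1$'' in Table~\ref{tab:main2}, which your stated conclusion ($|\mathcal{M}(R)|=1$ forces only $f=3^a$, plus a finite check) omits. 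The same issue affects $i\in\{3,6\}$ when $|R_0|=r\in\{7,13\}$: whether ${\rm L}_2(13)$, ${\rm L}_2(8)$ or ${\rm U}_3(3){:}2$ lies in $\mathcal{M}(R)$ is governed by field-of-definition congruences on $p$ (for instance $p=\square \imod{13}$ for ${\rm L}_2(13)$), each satisfied by infinitely many primes, so these cases must be settled arithmetically from the maximality conditions in \cite{BHR}, as the paper does, rather than by sampling small $q$.
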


\begin{proof}
If $q=2$ then $T \cong {\rm U}_3(3)$ and $|\mathcal{M}(R)| = 1$ if and only if $r \in \{3,7\}$. For the remainder we will assume $q \geqs 3$, so $|T| = q^6(q^2-1)(q^6-1)$ and the maximal subgroups of $G$ are recorded in \cite{BHR} (Table 8.30 for $p=2$, Table 8.41 for $p \geqs 5$ and Table 8.42 for $p=3$).

\vs

\noindent \emph{Case 1. $p=2$}

\vs

First assume $p=2$ and $q \geqs 4$. Let $i = d_q(r)$, so we have $i \in \{1,2,3,6\}$. Suppose $i=1$ and note that $|R_0| = ((q-1)_r)^2(3)_r$. Here a maximal rank subgroup of type $A_2(q)$ has $r'$-index, and so does $A_1(q)^2$  if $r \geqs 5$. For $r=3$ we note that a subfield subgroup of type $G_2(q_0)$ has $3'$-index if and only if $q=q_0^k$ and $k \ne 3$ is a prime. Therefore, if $i=1$ then $|\mathcal{M}(R)| = 1$ if and only if $r=3$ and $f = 3^a$ for some $a \geqs 0$. The same conclusion holds for  $i=2$.

Next assume $i=3$, so $r \geqs 7$. Here $\mathcal{M}(R)$ contains a maximal rank subgroup of type $A_2(q)$. It also contains a subfield subgroup of type $G_2(q_0)$ if $q=q_0^k$, $k \ne r$ is a prime and $q_0^3 \equiv 1 \imod{r}$. By inspecting \cite[Table 8.30]{BHR}, we conclude that $|\mathcal{M}(R)| = 1$ if and only if $\a(3,1)$ holds. Similarly, if $i=6$ then $q \geqs 4$ (recall that $2^6-1$ does not have a primitive prime divisor) and $|\mathcal{M}(R)| = 1$ if and only if $q \geqs 8$ and $\a(3,-1)$ holds. (Note that if $i=6$ and $q=4$, then $r=13$ and $\mathcal{M}(R)$ contains a subgroup with socle ${\rm L}_2(13)$.)

\vs

\noindent \emph{Case 2. $p=3$}

\vs

It is straightforward to show that $|\mathcal{M}(R)| \geqs 2$ when $p=3$. Indeed, if $r=p$ then every maximal parabolic subgroup has $r'$-index, so we can assume $r \ne p$ and we define $i = d_q(r)$ as above. If $i \in \{1,3\}$ then $\mathcal{M}(R)$ contains at least two subgroups of type $A_2(q)$ (note that $G$ has two conjugacy classes of such subgroups). Similarly, if $i \in \{2,6\}$ then there are at least two subgroups of type $A_2^{-}(q)$ in $\mathcal{M}(R)$.

\vs

\noindent \emph{Case 3. $p \geqs 5$}

\vs

Finally, let us assume $p \geqs 5$. For $r=p$ we note that every maximal parabolic subgroup has $r'$-index, so we may assume $r \ne p$. Define $i = d_q(r) \in \{1,2,3,6\}$ as before. The conditions here are more complicated than in Cases 1 and 2.

First assume $i=1$, so $|R_0| = ((q-1)_r)^2(3)_r$. Here $\mathcal{M}(R)$ contains a maximal rank subgroup of type $A_2(q)$, and it also contains one of type $A_1(q)^2$ if $r \geqs 5$. Now assume $r=3$.  If $q=q_0^k$ with $k$ a prime, then a subfield subgroup of type $G_2(q_0)$ has $3'$-index if and only if $(q_0-1)_3 = (q-1)_3$, which is equivalent to the condition $k \ne 3$. It follows that there are no subfield subgroups in $\mathcal{M}(R)$ if and only if $f = 3^a$ for some $a \geqs 0$. By inspecting \cite[Table 8.41]{BHR}, we note that if $f=1$ and $|R| = 3^3$, then $\mathcal{M}(R)$ contains a subgroup ${\rm U}_3(3){:}2$. Therefore, we conclude that if $i=1$ then $|\mathcal{M}(R)| = 1$ if and only if $r=3$ and either $f = 3^a$ with $a \geqs 1$, or $f=1$ and $q \equiv 1 \imod{9}$. The same conclusion holds for $i=2$, replacing the condition $q \equiv 1 \imod{9}$ by $q \equiv -1 \imod{9}$.

Next assume $i=3$, so $r \in \{7,13,19, 31, \ldots\}$ and $|R_0| = (q^3-1)_r$. A maximal rank subgroup of type $A_2(q)$ has $r'$-index and we note that there are no subfield subgroups in $\mathcal{M}(R)$ if and only if $\a(3,1)$ holds. So if we assume that the latter condition holds, then by inspecting \cite[Table 8.41]{BHR} we deduce that $|\mathcal{M}(R)| = 1$ if $r>13$, or $|R_0| >r$, or $f>3$, or if one of the following holds:
\begin{itemize}\addtolength{\itemsep}{0.2\baselineskip}
\item[{\rm (a)}] $f=3$ and either $r=13$, or $r=7$ and $p \equiv \pm 1, \pm 3 \imod{9}$.
\item[{\rm (b)}] $f=2$, $r \in \{7,13\}$ and $p = \square \imod{13}$.
\item[{\rm (c)}] $f=1$, $r=13$ and $p = \boxtimes \imod{13}$.
\end{itemize}
(Note that if $f=1$ and $|R_0| = r = 7$, then ${\rm U}_3(3){:}2 \in \mathcal{M}(R)$.) The same conclusion holds for $i=6$, replacing the condition $\a(3,1)$ by $\a(3,-1)$.
\end{proof}

\begin{prop}\label{p:f4}
The conclusion to Theorem \ref{t:main1} holds when $T = F_4(q)$.
\end{prop}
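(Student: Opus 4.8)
The plan is to follow the strategy of the previous propositions, using Corollary \ref{c:core} to reduce the problem to counting the $G$-conjugacy classes of maximal subgroups with $r'$-index. Here
\[
|T| = q^{24}(q^2-1)(q^6-1)(q^8-1)(q^{12}-1),
\]
the only outer automorphisms of $r$-power order are field automorphisms (so $G = T.r^b$ for some $b \geqs 0$, and no graph automorphisms are involved since $r$ is odd), and the maximal subgroups of $G$ are completely known: the parabolic subgroups, the maximal rank subgroups of \cite{LSS} --- namely $B_4(q)$, $D_4(q).S_3$, $A_1(q)C_3(q)$, $A_2^{\pm}(q)\tilde A_2^{\pm}(q)$, and the normalisers of the $\Phi_8$- and $\Phi_{12}$-tori --- the subfield subgroups $F_4(q_0)$, and the almost simple subgroups in $\mathcal{S}$, which by \cite{Craven,Craven3} form an explicit finite list. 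As usual I set $i = d_q(r)$ for $r \neq p$, and since $r$ divides $|T|$ we have $i \in \{1,2,3,4,6,8,12\}$.

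If $r = p$ then every maximal parabolic subgroup has $r'$-index and there are four classes of these, so $|\mathcal{M}(R)| \geqs 2$. For $r \neq p$ I would argue case-by-case on $i$, in each case computing $|R_0|$ via Lemma \ref{l:gk} and then reading off from the factorisation of $|T|$ which maximal rank subgroups realise this $r$-part. The expected picture is: when $i = 4$, or when $i \in \{1,2\}$ and $r \geqs 5$, both $B_4(q)$ and $D_4(q).S_3$ have $r'$-index, so $|\mathcal{M}(R)| \geqs 2$; whereas each of the remaining values isolates a single class carrying the full $r$-part of $|T|$. Concretely, for $i \in \{1,2\}$ with $r=3$ only the triality subgroup $D_4(q).S_3$ absorbs the extra factor of $9$ (the $S_3$ contributing a $3$ beyond the $3$-part of $\O_8^{+}(q)$); for $i=3$ (resp. $i=6$) only $A_2(q)\tilde A_2(q)$ (resp. $A_2^-(q)\tilde A_2^-(q)$) carries $(\Phi_3)_r^2$ (resp. $(\Phi_6)_r^2$); for $i=8$ only $B_4(q)$ carries $\Phi_8$, its Coxeter torus, with the $\Phi_8$-torus normaliser lying inside it; and for $i=12$ only the Coxeter torus normaliser $N_G(\Phi_{12})$ survives, since no reductive maximal rank subgroup has order divisible by $\Phi_{12}$.

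In each of these candidate-unique cases I then have to eliminate the subfield subgroups and the subgroups in $\mathcal{S}$. The subfield subgroups are routine: an order comparison via Lemma \ref{l:gk} shows that $F_4(q_0)$ has $r'$-index exactly when the relevant primitive prime divisor persists over $\FF_{q_0}$, so $\mathcal{M}(R)$ contains no subfield subgroup precisely when the appropriate condition from \eqref{e:ab} holds --- I expect this to be ``$f$ a power of $3$'' for $i \in \{1,2\}$, together with $\a(3,1)$ for $i=3$, $\a(3,-1)$ for $i=6$, $\a(4,-1)$ for $i=8$, and $\a(6,-1)$ for $i=12$. For the subgroups in $\mathcal{S}$ I would invoke the explicit list of almost simple maximal subgroups of $F_4(q)$ from \cite{Craven,Craven3}, discarding any whose order is not divisible by the requisite power of $r$, and checking the few survivors directly; the smallest values of $q$, where the maximal rank subgroups may fail to be maximal or may coincide, would be settled by a direct computation in {\sc Magma}.

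The main obstacle is the $\mathcal{S}$-collection step. The maximal rank analysis is essentially order bookkeeping, but ruling out the almost simple overgroups (and thereby pinning down the exact conditions separating $|\mathcal{M}(R)| = 1$ from $|\mathcal{M}(R)| \geqs 2$, especially for $i \in \{8,12\}$ and in the exceptional small-$q$ examples) rests on the recently completed classification of maximal subgroups of $F_4(q)$ and on careful control of the orders of elements of order $r$ inside the candidate subgroups.
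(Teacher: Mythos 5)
Your overall strategy --- Corollary \ref{c:core}, a case division according to $i = d_q(r)$, and an analysis of maximal rank, subfield and $\mathcal{S}$-subgroups via \cite{LSS} and Craven's classification \cite{Craven} --- is exactly the paper's. However, your enumeration of the maximal rank subgroups of $F_4(q)$ contains errors that would corrupt the classification. You omit ${}^3\!D_4(q).3$ entirely, and you wrongly include the $\Phi_8$- and $\Phi_{12}$-torus normalisers as maximal subgroups: in fact they are contained in $B_4(q)$ and in ${}^3\!D_4(q).3$ respectively, so neither is maximal. This has three concrete consequences. First, for $i \in \{3,6\}$ the subgroup ${}^3\!D_4(q).3$ has $r'$-index, since $q^8+q^4+1 = (q^2+q+1)(q^2-q+1)(q^4-q^2+1)$; together with $A_2^{\e}(q)\tilde{A}_2^{\e}(q)$ this gives two conjugacy classes of maximal subgroups with $r'$-index, so $|\mathcal{M}(R)| \geqs 2$ always, and the cases with condition $\a(3,\pm 1)$ that you intend to record are spurious (Table \ref{tab:main2} has $F_4$-entries only for $r_8$ and $r_{12}$). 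Second, for $i=12$ the unique maximal overgroup in the generic case is ${}^3\!D_4(q).3$, not the torus normaliser, and your claim that no reductive maximal rank subgroup has order divisible by $\Phi_{12}$ is simply false. Third, for $i \in \{1,2\}$ and $r=3$ your claim that only $D_4(q).S_3$ absorbs the extra factor of $9$ is also false: by Lemma \ref{l:gk} we have $(q^3-\e)_3 = 3(q-\e)_3$, so $|A_2^{\e}(q)\tilde{A}_2^{\e}(q)|_3 = 9((q-\e)_3)^4 = |T|_3$; hence there are again two classes, no examples arise, and no ``$f$ a power of $3$'' condition belongs in the table.

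You also ignore characteristic $2$: when $p=2$ the graph automorphism of $F_4$ yields two $T$-classes of subgroups $C_4(q)$ (relevant for $i \in \{4,8\}$) and two $T$-classes of ${}^3\!D_4(q)$ (relevant for $i=12$), and these classes are not fused in $G$ because $G/T$ has odd order; consequently $|\mathcal{M}(R)| \geqs 2$ whenever $q$ is even, which is why the surviving cases in Table \ref{tab:main2} carry the hypothesis $p \geqs 3$. After all these corrections, only $i \in \{8,12\}$ with $q$ odd remain; there your subfield conditions $\a(4,-1)$ and $\a(6,-1)$ agree with the paper's, and the remaining work is, as you anticipate, the inspection of Craven's tables for subgroups in $\mathcal{S}$, which produces the congruence conditions modulo $17$ in the $r_8$ case and the exceptions ${}^3\!D_4(2).3$ and ${\rm PGL}_2(13)$ in the $r_{12}$ case. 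One last point: the small-$q$ configurations cannot realistically be ``settled by a direct computation in {\sc Magma}'' --- these groups are far too large --- but no such computation is needed, since Craven's tables are complete for all $q$.
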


\begin{proof}
Here $|T| = q^{24}(q^2-1)(q^6-1)(q^8-1)(q^{12}-1)$ and we may assume $r \ne p$. Let $i = d_q(r)$ and note that $i \in \{1,2,3,4,6,8,12\}$. The maximal subgroups of $G$ have been determined (up to conjugacy) by Craven (see Tables 1.1, 7.1 and 7.2 in \cite{Craven}). By inspection, it is straightforward to identify the maximal subgroups with $r'$-index recorded in Table \ref{tab:ex1}, so to complete the proof we may assume $i \in \{8,12\}$ and $q$ is odd.

Suppose $i=8$ and $q$ is odd, in which case $|R_0| = (q^4+1)_r$ and $\mathcal{M}(R)$ contains a subgroup of type $B_4(q)$. Note that $r \in \{17, 41, 73, \ldots\}$. There are no subfield subgroups in $\mathcal{M}(R)$ if and only if $\a(4,-1)$ holds. Given this condition, by inspecting \cite[Tables 1.1, 7.1]{Craven} we deduce that $|\mathcal{M}(R)| = 1$ if and only if $r>17$, or $|R| >r$, or $f>2$, or one of the following holds:
\begin{itemize}\addtolength{\itemsep}{0.2\baselineskip}
\item[{\rm (a)}] $f = 2$ and either $p=3$ or $p = \square \imod{17}$.
\item[{\rm (b)}] $f=1$ and $p = \boxtimes \imod{17}$.
\end{itemize}

Finally suppose $i=12$ and $q$ is odd, so $r \in \{13,37,61, \ldots\}$, $|R_0| = (q^4-q^2+1)_r$ and $\mathcal{M}(R)$ contains a subgroup of type ${}^3D_4(q)$. There are no subfield subgroups in $\mathcal{M}(R)$ if and only if $\a(6,-1)$ holds. If this condition holds, then by inspecting \cite[Tables 1.1, 7.1]{Craven} we see that $|\mathcal{M}(R)| = 1$ if and only if $r>13$, or $|R| > r$, or $f \not\in \{1,3\}$, or $f=3$ and $p \equiv \pm 1 \imod{7}$. (Note that if $|R_0| = r = 13$ then ${}^3D_4(2).3 \in \mathcal{M}(R)$ if 
$f=1$, and similarly ${\rm PGL}_2(13) \in \mathcal{M}(R)$ if $f=3$ and $p \equiv \pm 2, \pm 3 \imod{7}$.)
\end{proof}

\begin{prop}\label{p:e6}
The conclusion to Theorem \ref{t:main1} holds when $T = E_6(q)$.
\end{prop}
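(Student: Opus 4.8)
The plan is to follow exactly the strategy of Propositions \ref{p:2b2g2}--\ref{p:f4}, reducing everything to Corollary \ref{c:core}: since $R_0 \ne 1$ and $G/T$ is an $r$-group, it suffices to decide when $G$ has a unique conjugacy class of maximal subgroups of $r'$-index. First I would record
\[
|T| = \frac{1}{d}\,q^{36}(q^2-1)(q^5-1)(q^6-1)(q^8-1)(q^9-1)(q^{12}-1),
\]
where $d=(3,q-1)$, and take the maximal subgroups of $G$ from Craven's classification \cite{Craven,Craven3} together with the maximal-rank subgroups of \cite{LSS}. Since $r$ is odd and $G/T$ is an $r$-group, $G$ contains no graph or graph-field automorphisms of $T$ (only field and, when $r=3$, diagonal automorphisms). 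If $r=p$ then every maximal parabolic subgroup has $r'$-index and there are at least two classes of these, so $|\mathcal{M}(R)| \geqs 2$. For the remainder I set $i=d_q(r)$ and note that $i$ must divide one of the fundamental degrees $2,5,6,8,9,12$, so $i \in \{1,2,3,4,5,6,8,9,12\}$; in each case $|R_0|=(q^i-1)_r$ is read off from Lemma \ref{l:gk}.

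Next I would dispose of the values $i \leqs 8$ by exhibiting two non-conjugate subgroups of $r'$-index. Writing $\Phi_i(q)$ for the $i$-th cyclotomic polynomial, one checks via Lemma \ref{l:gk} that the relevant factor $\Phi_i(q)$ is shared by two distinct maximal-rank families: for $i \in \{1,2\}$ one may take $A_1(q)A_5(q)$ and $A_2^{\e}(q)^3$ (and their twisted analogues); for $i \in \{3,6\}$ two non-conjugate subsystem subgroups of type $A_2^{\pm}(q)$; for $i=4$ the subgroups $C_4(q)$ and $(q-\e)\times D_5^{\pm}(q)$; for $i=5$ the pair $A_1(q)A_5(q)$ and $(q-1)\times D_5(q)$; and for $i=8$ the pair $F_4(q)$ and $(q-1)\times D_5(q)$, both of which contain $\Phi_8(q)$. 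In each instance the $r$-part computation confirms $r'$-index and hence $|\mathcal{M}(R)| \geqs 2$, so the problem reduces to the two large cases $i \in \{9,12\}$, in which $\Phi_i(q)$ divides $|T|$ exactly once.

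For the two hard cases the argument splits into three standard ingredients. For $i=12$ the only maximal-rank family of $r'$-index is $F_4(q)$ (whose order is divisible by $q^{12}-1$), and for $i=9$ it is the maximal-rank subgroup of type $A_2(q^3)$ containing the $\Phi_9$-torus normaliser; in both cases $|R_0|=\Phi_i(q)_r$ and $r$ is large ($r \geqs 13$, respectively $r \geqs 19$). I would then determine the subfield subgroups $E_6(q_0)$ of $r'$-index through the usual conditions of \eqref{e:ab}, namely $\a(6,-1)$ for $i=12$ and $\a(9,1)$ for $i=9$. The final and most delicate task is to decide whether $\mathcal{M}(R)$ meets the collection $\mathcal{S}$: since $r$ is large relative to $q$, any $H \in \mathcal{S}$ of $r'$-index would have to contain an element of order $r$ with $r \mid \Phi_i(q)$, which forces strong arithmetic constraints. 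The surviving triples, with the precise conditions on $f$ and on $p$ modulo $r$, are then recorded as the $E_6$ entries of Table \ref{tab:main2}.

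The main obstacle is this last step: controlling the $\mathcal{S}$-subgroups of $E_6(q)$ for $i \in \{9,12\}$. Unlike the maximal-rank and subfield subgroups, the collection $\mathcal{S}$ is not completely pinned down for $E_6$, so the verification must combine Craven's partial classification and restrictions \cite{Craven,Craven2} with the arithmetic fact that an element of order $r$ dividing $\Phi_i(q)$ forces the socle of any such $H$ to be of generic Lie type of large rank --- incompatible with embedding in a proper subgroup of $E_6(q)$. Here I would run an element-order versus minimal-dimension argument of the same flavour as in Propositions \ref{p:psln}, \ref{p:f4} and \ref{p:new}, using the bounds underlying \cite{GPPS} and \cite[Theorem 7.1]{GS} to eliminate the finitely many borderline socles. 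A secondary subtlety, as throughout the exceptional-group analysis, is the bookkeeping with $d=(3,q-1)$ and with diagonal automorphisms when $r=3$, which only affects the small-$i$ cases but must be tracked carefully to state the conditions in Table \ref{tab:main2} correctly.
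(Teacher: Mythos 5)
Your high-level strategy — reducing via Corollary \ref{c:core}, killing $r=p$ with two classes of parabolics, splitting by $i=d_q(r)$, exhibiting two classes of $r'$-index maximal subgroups for small $i$, and then running the maximal rank/subfield/$\mathcal{S}$ analysis for the hard cases — is exactly the paper's, but your case division contains a genuine error: $i=12$ is \emph{not} a hard case. The maximal rank subgroup ${}^3D_4(q) \times (q^2+q+1)$ of $E_6(q)$ has order divisible by $\Phi_{12}(q)$, since $|{}^3D_4(q)| = q^{12}(q^2-1)(q^6-1)(q^8+q^4+1)$ and $q^8+q^4+1 = \Phi_3(q)\Phi_6(q)\Phi_{12}(q)$; so for $i=12$ both $F_4(q)$ and ${}^3D_4(q) \times (q^2+q+1)$ have $r'$-index and $|\mathcal{M}(R)| \geqs 2$ always. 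Your assertion that "the only maximal-rank family of $r'$-index is $F_4(q)$" is false, and carrying out your plan for $i=12$ (the subfield condition $\a(6,-1)$ plus elimination of $\mathcal{S}$) would manufacture $r_{12}$-entries for $E_6(q)$ in Table \ref{tab:main2} that must not exist: the table correctly contains only an $r_9$ entry for $E_6(q)$, and the paper's entire residual analysis is confined to $i=9$.

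The second gap concerns the $\mathcal{S}$-analysis at $i=9$. Your premise that "the collection $\mathcal{S}$ is not completely pinned down for $E_6$" is wrong: Craven's classification \cite{Craven} is complete for $F_4(q)$, $E_6(q)$ and ${}^2E_6(q)$ (the indeterminacy affects only $E_7(q)$ and $E_8(q)$), so the paper simply reads the candidate subgroups from \cite[Table 1.2]{Craven}. More seriously, your proposed elimination ("the socle is forced to be of generic Lie type of large rank, incompatible with embedding") reaches the wrong conclusion: there \emph{are} subgroups in $\mathcal{S}$ with $r'$-index, namely those with socle ${\rm J}_3$ (when $q=4$ and $|R|=r=19$) and ${\rm L}_2(19)$ (cross-characteristic, subject to congruence conditions on $p$ modulo $5$ and modulo $19$), and it is precisely these survivors that generate the conditions recorded for $E_6$ in Table \ref{tab:main2}; a GPPS/GS-style order-versus-dimension bound cannot remove them. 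Finally, several of your claimed witnesses for small $i$ do not work as stated: $(q-1) \times D_5(q)$ is not maximal in the untwisted group $E_6(q)$ (its normaliser lies inside parabolics of types $P_1$ and $P_6$, which is why the paper uses $P_1, P_6$ there, while $D_5^{-}(q)\times(q+1)$ is maximal only in ${}^2E_6(q)$); $A_2^{\e}(q)^3$ fails to have $r'$-index when $r=5$ and $A_1(q)A_5(q)$ fails when $r=3$ (the Weyl-group contributions $|W|_5=5$, $|W|_3=3^4$ are not covered); and $C_4(q)$ is only available for $p$ odd. These last defects are repairable since correct pairs exist, but together with the $i=12$ and $i=9$ errors they mean the proposal as written would produce an incorrect classification.
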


\begin{proof}
Set $e = (3,q-1)$ and note that 
\[
|T| = \frac{1}{e}q^{36}(q^2-1)(q^5-1)(q^6-1)(q^8-1)(q^9-1)(q^{12}-1).
\]
We may assume $r \ne p$. Let $i = d_q(r)$, so $i \in \{1,2,3,4,5,6,8,9,12\}$. If $i \ne 9$ then by carefully inspecting \cite{Craven} we can identify at least two maximal subgroups with $r'$-index (see Table \ref{tab:ex1}). 

For example, suppose $i=1$. If $r \geqs 5$, then $|R_0| = ((q-1)_r)^6(5)_r$ and it is easy to see that the maximal parabolic subgroups $P_1$ and $P_6$ have $r'$-index. Now assume $r=3$, so $|R_0| = ((q-1)_3)^63^3$ and a maximal rank subgroup of type $A_2(q)^3$ has $r'$-index. If  $q \geqs 7$ then we can take the normaliser of a split maximal torus (a subgroup of type $(q-1)^6.W$, noting that  $W = {\rm O}_6^{-}(2)$ is the Weyl group of $T$ and we have $|W|_3 = 3^4$). And for $q=4$ we can take a subfield subgroup of type ${}^2E_6(2)$. 

To complete the proof, let us assume $i=9$. Here $r \geqs 19$, $|R_0| = (q^6+q^3+1)_r$ and a  maximal rank subgroup of type $A_2(q^3)$ has $r'$-index. By inspecting \cite[Table 7.3]{Craven}, it remains to determine whether or not there are any subfield subgroups or $\mathcal{S}$ collection subgroups in $\mathcal{M}(R)$, where the latter comprises the almost simple subgroups recorded in \cite[Table 1.2]{Craven}. As usual, there are no subfield subgroups with $r'$-index if and only if $\a(9,1)$ holds. Finally, suppose $H \in \mathcal{S}$ has socle $S$ and observe that $H$ has $r'$-index if and only if $|R|=r=19$ and one of the following holds:
\begin{itemize}\addtolength{\itemsep}{0.2\baselineskip}
\item[{\rm (a)}] $S = {\rm J}_3$, $q=4$ and $G = T$; or
\item[{\rm (b)}] $S = {\rm L}_2(19)$, $G=T$ and either $f=2$ and $p = \boxtimes \imod{5}$, or $f=1$, $p = \square \imod{5}$ and $p = \square \imod{19}$.
\end{itemize}
Therefore, for $i=9$ we conclude that there are no $\mathcal{S}$ collection subgroups in $\mathcal{M}(R)$ if and only if $r>19$, or $|R|>r$, or $f>2$, or $f=2$ and $p = \square \imod{5}$, or $f=1$ and either $p = \boxtimes \imod{5}$ or $p = \boxtimes \imod{19}$.
\end{proof}

\begin{prop}\label{p:2e6}
The conclusion to Theorem \ref{t:main1} holds when $T = {}^2E_6(q)$.
\end{prop}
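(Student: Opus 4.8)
The plan is to mirror the treatment of $E_6(q)$ in Proposition \ref{p:e6}, adjusting the cyclotomic bookkeeping for the twisted group. First I would record
\[
|T| = \frac{1}{e}\,q^{36}(q^2-1)(q^5+1)(q^6-1)(q^8-1)(q^9+1)(q^{12}-1), \qquad e=(3,q+1),
\]
whose cyclotomic factorisation is $\tfrac{1}{e}q^{36}\Phi_1^4\Phi_2^6\Phi_3^2\Phi_4^2\Phi_6^3\Phi_8\Phi_{10}\Phi_{12}\Phi_{18}$. Recall that $r$ is odd and, by Corollary \ref{c:easy}, that $G/T$ is an $r$-group. If $r=p$ then at least two non-conjugate maximal parabolic subgroups (of distinct Levi type) have $r'$-index and $|\mathcal{M}(R)|\geqs 2$, so I would assume $r\ne p$ and set $i=d_q(r)\in\{1,2,3,4,6,8,10,12,18\}$. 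Throughout, the decisive tool is Corollary \ref{c:core}: it suffices to decide whether $G$ has a unique conjugacy class of maximal subgroups of $r'$-index.

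For the cases $i\in\{1,2,3,4,6\}$, in which the relevant $\Phi_i$ divides $|T|$ with multiplicity at least two, I expect to read off two non-conjugate $r'$-index maximal subgroups — parabolic subgroups and maximal-rank subsystem subgroups — directly from Craven's classification \cite{Craven,Craven3}, recording them in Table \ref{tab:ex1}; this gives $|\mathcal{M}(R)|\geqs 2$. The delicate cases are the single-multiplicity ones $i\in\{8,10,12,18\}$, where $|R_0|=\Phi_i(q)_r$ and for each I must determine whether $G$ has one or more than one class of $r'$-index maximal subgroups. The relevant reductive overgroups include $F_4(q)$ (which carries both $\Phi_8$ and $\Phi_{12}$), the subgroup $A_2^{-}(q^3)={\rm SU}_3(q^3)$ (which carries $\Phi_{18}$, using $\Phi_{18}(q)=\Phi_6(q^3)$), and an $A_5^{-}$-type subsystem subgroup (which carries $\Phi_{10}$). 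For each such $i$ I would then determine precisely when these are the \emph{only} $r'$-index maximal subgroups by: (a) ruling out subfield subgroups ${}^2E_6(q_0)$ with $q=q_0^k$, which are absent exactly when the appropriate condition $\a(4,-1)$, $\a(5,-1)$, $\a(6,-1)$ or $\a(9,-1)$ of \eqref{e:ab} holds; and (b) treating members of the $\mathcal{S}$-collection via the main theorem of \cite{GPPS} together with \cite[Theorem 7.1]{GS} and the Frobenius--Schur/irrationality data of \cite[Table 2]{HM}, checked against Craven's list of almost simple maximal subgroups of ${}^2E_6(q)$.

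I expect step (b) to be the main obstacle, exactly as the subgroups ${\rm J}_3$ and ${\rm L}_2(19)$ were in the $i=9$ case for $E_6(q)$: one must enumerate the $\mathcal{S}$-overgroups of $R$, decide which have $r'$-index, verify their maximality (typically because they cannot lie in the maximal subgroup already identified, on dimension or absolute-irreducibility grounds) and determine whether they are conjugate to it. I anticipate that genuine triples with $|\mathcal{M}(R)|=1$ survive only under fine square/nonsquare conditions on $p$ modulo $r$ that depend on $f$ and on the relevant Frobenius--Schur indicator, all to be tabulated in Table \ref{tab:main2}. A second, separate obstacle is the small field $q=2$: here exceptional almost simple maximal subgroups occur — most notably ${\rm Fi}_{22}<{}^2E_6(2)$ — so I would dispose of $q=2$ (and any other genuinely small configurations) by direct computation in {\sc Magma}, rather than through the generic argument above.
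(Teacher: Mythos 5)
Your skeleton (reduce to $r \ne p$, divide by $i = d_q(r) \in \{1,2,3,4,6,8,10,12,18\}$, invoke Craven's classification, then a subfield-plus-$\mathcal{S}$ analysis for the hard case) matches the paper's, but your criterion for which cases are delicate contains a genuine error. You take multiplicity one of $\Phi_i$ in $|T|$ to mean that $i \in \{8,10,12,18\}$ are the cases where uniqueness of the $r'$-index class is in doubt, and for these your list of candidate additional overgroups consists only of subfield subgroups and members of $\mathcal{S}$. That candidate list is incomplete: it omits parabolic subgroups and further maximal rank subgroups, and this is exactly what fails for $i \in \{8,10,12\}$. In ${}^2E_6(q)$, for $i=8$ both $P_2$ and a maximal rank subgroup of type $D_5^{-}(q) \times (q+1)$ have $r'$-index (the order of $\Omega_{10}^{-}(q)$ contains $\Phi_8$ and $\Phi_{10}$ each exactly once); for $i=10$ both $P_1$ and $D_5^{-}(q) \times (q+1)$ do; and for $i=12$ both $F_4(q)$ and ${}^3D_4(q) \times (q^2-q+1)$ do, since $q^8+q^4+1 = \Phi_3\Phi_6\Phi_{12}$. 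Hence $|\mathcal{M}(R)| \geqs 2$ holds unconditionally in these three cases, and only $i=18$ requires the finer analysis. Followed literally, your plan would produce false positives: for $r=r_8$ with $f=1$ the condition $\a(4,-1)$ is vacuous, and in the absence of an $\mathcal{S}$-overgroup you would declare $\mathcal{M}(R)=\{F_4(q)\}$, which is wrong. Cyclicity of $R_0$ (which is all that multiplicity one gives you) does not bound the number of classes of maximal overgroups.

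Two smaller points. For the genuinely delicate case $i=18$, the subfield condition should be $\b(9,-1)$ rather than $\a(9,-1)$: subfield subgroups of the twisted group ${}^2E_6(q)$ arise only from odd-degree subfields, so the prime $k=2$ must be excluded, which is why the paper's conditions for twisted types use $\b$. Also, disposing of $q=2$ by {\sc Magma} is neither necessary nor realistic for a group of order roughly $7.6 \times 10^{22}$: the ${\rm Fi}_{22}$ phenomenon you mention occurs only in the case $i=2$, $r=3$, $G=T$, where it merely replaces the torus normaliser $(q+1)^6.W$ in the list of $3'$-index maximal subgroups; the pair $A_2^{-}(q)^3$, ${\rm Fi}_{22}$ still gives two classes, so the conclusion $|\mathcal{M}(R)| \geqs 2$ persists and is read off from the known classification rather than computed.
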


\begin{proof}
Set $e' = (3,q+1)$ and note that 
\[
|T| = \frac{1}{e'}q^{36}(q^2-1)(q^5+1)(q^6-1)(q^8-1)(q^9+1)(q^{12}-1).
\]
We may assume $r \ne p$. Let $i = d_q(r)$ and note that $i \in \{1,2,3,4,6,8,10,12,18\}$. By inspecting \cite{Craven}, we deduce that $|\mathcal{M}(R)| = 1$ only if $i=18$ (see Table \ref{tab:ex1}).

Suppose $i=18$, so $r \geqs 19$, $|R_0| = (q^6-q^3+1)_r$ and a maximal rank subgroup of type $A_2^{-}(q^3)$ has $r'$-index. By inspecting \cite[Table 7.4]{Craven}, we see that any additional subgroup in $\mathcal{M}(R)$ is either a subfield subgroup or is contained in the collection $\mathcal{S}$. As usual, there are no subfield subgroups in $\mathcal{M}(R)$ if and only if $\b(9,-1)$ holds. Finally, let us assume $H \in \mathcal{S}$ has socle $S$. By considering \cite[Table 1.3]{Craven} we deduce that such a subgroup has $r'$-index if and only if $S = {\rm L}_2(19)$, $|R| = r = 19$, $f=1$, $p = \square \imod{5}$ and $p = \boxtimes  \imod{19}$.
\end{proof}

\begin{prop}\label{p:e7}
The conclusion to Theorem \ref{t:main1} holds when $T = E_7(q)$.
\end{prop}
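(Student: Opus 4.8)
The plan is to reduce, via Corollary \ref{c:core}, to deciding when $G$ has a unique conjugacy class of maximal subgroups of $r'$-index. I would first record
\[
|T| = \frac{1}{(2,q-1)}\,q^{63}(q^2-1)(q^6-1)(q^8-1)(q^{10}-1)(q^{12}-1)(q^{14}-1)(q^{18}-1),
\]
and dispose of the case $r=p$ immediately, since then the two end-node parabolic subgroups $P_1$ and $P_7$ both have $r'$-index and $|\mathcal{M}(R)|\geqs 2$. So I may assume $r\neq p$ and set $i=d_q(r)$; as $r$ divides $|T|$ we have $i\in\{1,2,3,4,5,6,7,8,9,10,12,14,18\}$. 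The maximal subgroups of $G$ are taken from Craven's classification \cite{Craven} together with the Liebeck--Seitz--Saxl description \cite{LSS} of the maximal-rank subgroups, bearing in mind that the almost simple collection $\mathcal{S}$ is only known up to the restrictions of \cite{Craven2}.

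The key structural observation is that the cyclotomic polynomial $\Phi_i(q)$ divides the order of a proper parabolic Levi factor of $G$ precisely when $i\notin\{14,18\}$: indeed $\Phi_i$ divides $|A_6(q)|$, $|D_6(q)|$ or $|E_6(q)|$ for every $i\leqs 12$ in the list, whereas $\Phi_{14}$ and $\Phi_{18}$ divide $q^7+1$ and $q^9+1$ and occur in no Levi. Thus for $i\notin\{14,18\}$ a suitable maximal parabolic subgroup has $r'$-index, and I would pair it with a second non-conjugate subgroup of $r'$-index (another parabolic, or a maximal-rank subsystem subgroup such as one of type $A_7^{\pm}(q)$, $A_1(q)D_6(q)$, $A_2^{\pm}(q)A_5^{\pm}(q)$ or $(q\mp 1)\times E_6^{\pm}(q)$), thereby forcing $|\mathcal{M}(R)|\geqs 2$. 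Verifying $r'$-index in each instance is a routine application of Lemma \ref{l:gk}, and the resulting subgroups would be recorded in Table \ref{tab:ex1}.

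This leaves the two genuinely delicate cases $i\in\{14,18\}$, where $|R_0|=(\Phi_i(q))_r$ and $R$ lies in the normaliser of a maximal torus of corank $1$. Here there is exactly one maximal-rank candidate of $r'$-index, namely a subgroup of type $A_7^{-}(q)$ when $i=14$ and a subgroup $(q+1)\times{}^2E_6(q)$ when $i=18$, since these are the only reductive maximal subgroups whose order is divisible by $q^7+1$, respectively $q^9+1$ (note in particular that the untwisted $E_6(q)$-Levi does not contain $\Phi_{18}$). A subfield subgroup $E_7(q_0)$ with $q=q_0^k$ has $r'$-index if and only if $k\neq r$ and $q_0^7\equiv -1\imod{r}$ (resp. $q_0^9\equiv -1\imod{r}$), so exactly as in Propositions \ref{p:e6} and \ref{p:2e6} there are no subfield subgroups in $\mathcal{M}(R)$ if and only if $\a(7,-1)$ (resp. $\a(9,-1)$) holds.

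The main obstacle is the collection $\mathcal{S}$, which for $T=E_7(q)$ is not completely determined; my plan is to invoke the strong restrictions of Craven \cite{Craven2}. Any $H\in\mathcal{S}$ lying in $\mathcal{M}(R)$ must contain an element of order $r$, where $r$ is a primitive prime divisor of $q^i-1$ with $i\in\{14,18\}$, so $r\equiv 1\imod{i}$ is large; acting on the $56$-dimensional module this severely limits the socle $S$ of $H$, and I expect the surviving possibilities to be of the form ${\rm L}_2(r)$ for a handful of specific primes $r$ (together with any small exceptions flagged in \cite{Craven2}). For each such $H$ I would check that $H$ has $r'$-index only when $|R|=r$, determine the precise congruence conditions on $q$ modulo $p$ and $r$ from the Frobenius--Schur indicator and Brauer character irrationalities (as in Propositions \ref{p:e6} and \ref{p:2e6}, using \cite{HM}), and confirm maximality by observing that such an $H$ cannot be contained in the reductive subgroup of type $A_7^{-}(q)$ or ${}^2E_6(q)$, since the minimal dimension of a nontrivial faithful representation of ${\rm L}_2(r)$ is incompatible with that embedding. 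Collecting the subfield and $\mathcal{S}$ conditions for $i=14$ and $i=18$ then yields the exact list of triples $(G,r,H)$, completing the proof.
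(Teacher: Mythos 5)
Your overall strategy (reduce via Corollary \ref{c:core}, kill most values of $i=d_q(r)$ with pairs of non-conjugate maximal subgroups of $r'$-index, then treat the residual cyclotomic cases with subfield subgroups and Craven's restrictions on $\mathcal{S}$) is the paper's strategy, and your treatment of $i=18$ matches the paper's: the subgroup ${}^2E_6(q)\times(q+1)$, the subfield condition (your $\a(9,-1)$ is equivalent to the paper's $\a(18,1)$ given $d_q(r)=18$), and the appeal to \cite{Craven2} and \cite{HM} for the collection $\mathcal{S}$ (where the paper finds socles ${\rm U}_3(8)$ and ${\rm L}_2(37)$, which your ``small exceptions flagged in \cite{Craven2}'' would have to capture).

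However, there is a genuine error in your handling of $i=14$. You assert that $A_7^{-}(q)$ is the \emph{only} maximal-rank subgroup whose order is divisible by $q^7+1$, and you therefore treat $i=14$ as a delicate case on a par with $i=18$. This is false: the tables of \cite{LSS} contain a maximal-rank maximal subgroup of $E_7(q)$ of type $A_1(q^7)$ (arising from the twisted form of the $A_1^7$ subsystem, with structure $d.{\rm L}_2(q^7).7$ where $d=(2,q-1)$), and its order is divisible by $q^7(q^{14}-1)$, hence by the full $r$-part $(q^7+1)_r=|R_0|$ of $|G|$ when $d_q(r)=14$. Since this subgroup is not conjugate to one of type $A_7^{-}(q)$, we get $|\mathcal{M}(R)|\geqs 2$ for \emph{every} primitive prime divisor $r$ of $q^{14}-1$ — this is exactly the pair $A_7^{-}(q),\,A_1(q^7)$ recorded in Table \ref{tab:ex2}. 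Consequently your plan, carried out as written, would terminate with spurious entries in the classification: triples $(G,r_{14},H)$ with $H$ of type $A_7^{-}(q)$ satisfying your subfield and $\mathcal{S}$ conditions, none of which actually have $|\mathcal{M}(R)|=1$. The correct conclusion, as in Table \ref{tab:main2}, is that $E_7(q)$ contributes cases only for $r=r_{18}$.
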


\begin{proof}
Set $d = (2,q-1)$ and note that 
\[
|T| = \frac{1}{d}q^{63}(q^2-1)(q^6-1)(q^8-1)(q^{10}-1)(q^{12}-1)(q^{14}-1)(q^{18}-1).
\]
We may assume $r \ne p$. Let $i = d_q(r)$, so $i \in \{1,2, \ldots, 10,12,14,18\}$. By considering maximal rank subgroups of $G$ (see  \cite{LSS}), it is straightforward to show that $|\mathcal{M}(R)| \geqs 2$ if $i \ne 18$ (see Table \ref{tab:ex2}).

To complete the proof, we may assume $i = 18$. First observe that $r \in \{19,37,73, \ldots\}$ and a maximal rank subgroup of type ${}^2E_6(q) \times (q+1)$ has $r'$-index. In the usual manner, we see that there are no subfield subgroups in $\mathcal{M}(R)$ if and only if $\a(18,1)$ holds. We now need to determine whether or not there are any additional subgroups in $\mathcal{M}(R)$ and we can use 
\cite[Theorem 1.1]{Craven2} to do this, which gives an almost complete classification of the maximal subgroups of $G$ up to conjugacy. In this way, we deduce that any other subgroup $H$ with $r'$-index must be almost simple with socle $S$ and one of the following holds:
\begin{itemize}\addtolength{\itemsep}{0.2\baselineskip}
\item[{\rm (a)}] $|R| = r = 19$ and either $S = {\rm U}_3(8)$ with $q=p \geqs 3$, or $S = {\rm L}_2(37)$ and either $q=p \ne 37$ is a square modulo $37$, or $q=p^2$ and $p$ is a nonsquare modulo $37$.
\item[{\rm (b)}] $|R| = r = 37$ and $S = {\rm L}_2(37)$, with the same conditions on $q$ as in (a).
\end{itemize}
Therefore, we conclude that there are no $r'$-index subgroups in the collection $\mathcal{S}$ if and only if $r>37$, or $|R|>r$, or $f>2$, or one of the following holds:
\begin{itemize}\addtolength{\itemsep}{0.2\baselineskip}
\item[{\rm (i)}] $f=2$ and $p = \square \imod{37}$; or
\item[{\rm (ii)}] $f=1$ and either $r=19$ and $q=2$ (note that $2$ is not a square modulo $37$), or $r=37$ and $p = \boxtimes \imod{37}$.
\end{itemize}
These conditions are recorded in Table \ref{tab:main2}.
\end{proof}

{\scriptsize
\begin{table}
\[
\begin{array}{clll} \hline
i & F_4(q) & E_6(q) & {}^2\!E_6(q) \\ \hline

1 & D_4(q), A_2(q)^2 & \mbox{$P_1,P_6$ ($r \geqs 5$)} &F_4(q), A_2(q^2)A_2(q) \\
& & \mbox{$A_2(q)^3, (q-1)^6.W$ ($r=3$, $q \geqs 7$)} &   \\ 
& & \mbox{$A_2(q)^3, E_6(2)$ ($r=3$, $q = 4$)} &   \\

2 & D_4(q), A_2^{-}(q)^2 & F_4(q), A_2(q^2)A_2^{-}(q) & \mbox{$A_5^{-}(q)A_1(q), (q+1)^6.W$ ($r \geqs 5$)} \\
& & & \mbox{$A_2^{-}(q)^3, (q+1)^6.W$ ($r=3$, $q \geqs 4$)}\\
& & & \mbox{$A_2^{-}(q)^3, (q+1)^6.W$ ($r=3$, $q=2$, $G = T.3$)} \\
& & & \mbox{$A_2^{-}(q)^3, {\rm Fi}_{22}$ ($r=3$, $q=2$, $G = T$)} \\
3 & {}^3\!D_4(q), A_2(q)^2 & A_2(q)^3, (q^2+q+1)^3 & A_2(q^2)A_2(q), {}^3\!D_4(q) \times (q^2-q+1) \\

4 & \mbox{$D_4(q)$, $B_4(q)$ $(p \ne 2)$} & P_1, P_6 & D_4(q) \times (q+1)^2, D_5^{-}(q) \times (q+1)  \\
& \mbox{$C_4(q)$, 2 classes $(p=2)$} & &  \\  

5 & & P_1, P_6 & \\

6 & {}^3\!D_4(q), A_2^{-}(q)^2 & F_4(q), {}^3\!D_4(q) \times (q^2+q+1) & A_2^{-}(q)^3, {}^3\!D_4(q) \times (q^2-q+1) \\

8 & \mbox{$B_4(q)$ $(p \ne 2)$} & P_1, P_6 & P_2, D_5^{-}(q) \times (q+1) \\
& \mbox{$C_4(q)$, 2 classes $(p=2)$} & & \\ 

9 & & A_2(q^3) &  \\ 

10 & & & P_1, D_5^{-}(q) \times (q+1) \\

12 & \mbox{${}^3\!D_4(q)$ $(p \ne 2)$} & F_4(q), {}^3\!D_4(q) \times (q^2+q+1) & F_4(q), {}^3\!D_4(q) \times (q^2-q+1) \\
& \mbox{${}^3\!D_4(q)$, 2 classes $(p=2)$} & & \\ 

18 & & & A_2^{-}(q^3)  \\ \hline
\end{array}
\]
\caption{The cases $r \ne p$ for $T = F_4(q), E_6(q)$ or ${}^2\!E_6(q)$}
\label{tab:ex1}
\end{table}}

{\scriptsize
\begin{table}
\[
\begin{array}{cll} \hline
i & E_7(q) & E_8(q) \\ \hline
1 & \mbox{$(q-1)^7.W$, $E_6(q) \times (q-1)$ ($r \ne 7$, $q \geqs 7$)} & \mbox{$(q-1)^8.W$, $D_8(q)$ ($r \geqs 7$)} \\
& \mbox{$(q-1)^7.W$, $A_1(q)^7$ ($r = 7$, $q \geqs 11$)} & \mbox{$(q-1)^8.W$, $A_4(q)^2$ ($r=5$)} \\
& \mbox{$E_6(q) \times (q-1)$, $E_7(2)$ ($r = 3$, $q = 4$)} & \mbox{$(q-1)^8.W$, $A_2(q)^4$ ($r=3$, $q \geqs 7$)} \\
& & \mbox{$E_8(2)$, $A_2(q)^4$ ($r=3$, $q=4$)} \\
2 & \mbox{$(q-1)^8.W$, $D_6(q)A_1(q)$ ($r \ne 3,7$)} & \mbox{$(q+1)^8.W$, $D_8(q)$ ($r \geqs 7$)} \\
& \mbox{$(q+1)^7.W$, ${}^2\!E_6(q) \times (q+1)$ ($r = 3$)} & \mbox{$(q+1)^8.W$, $A_4^{-}(q)^2$ ($r = 5$)}\\
& \mbox{$(q+1)^7.W$, $A_1(q)^7$ ($r = 7$)} & \mbox{$(q+1)^8.W$, $A_2^{-}(q)^4$ ($r = 3$)} \\
3 & A_1(q^3)\,{}^3\!D_4(q), A_5(q)A_2(q) & (q^2+q+1)^4, A_2(q)E_6(q) \\
4 & A_7(q), D_6(q)A_1(q) & \mbox{$(q^2+1)^4$, $D_8(q)$ ($r \geqs 7$)} \\ 
& & \mbox{$(q^2+1)^4$, $A_4^{-}(q^2)$ ($r = 5$)} \\
5 & A_7(q), D_6(q)A_1(q) & (q^4+q^3+q^2+q+1)^2, A_4(q)^2  \\
6 & A_1(q^3)\, {}^3\!D_4(q), A_5^{-}(q)A_2^{-}(q) & (q^2-q+1)^4, A_2^{-}(q)^4 \\
7 & A_7(q), A_1(q^7) & A_8(q), A_1(q)E_7(q) \\
8 & A_7(q), D_6(q)A_1(q) & D_8(q), D_4(q^2) \\
9 & P_7, E_6(q) \times (q-1) & A_8(q), A_1(q)E_7(q) \\ 
10 & A_7^{-}(q), D_6(q)A_1(q) & (q^4-q^3+q^2-q+1)^2, A_4^{-}(q)^2 \\
12 & \mbox{$E_6^{\e}(q) \times (q-\e)$, $\e = \pm$} & (q^4-q^2+1)^2, {}^3\!D_4(q)^2 \\ 
14 & A_7^{-}(q), A_1(q^7) & A_8^{-}(q), A_1(q)E_7(q) \\
15 & & q^8-q^7+q^5-q^4+q^3-q+1 \\
18 & {}^2\!E_6(q) \times (q+1) & A_8^{-}(q), {}^2\!E_6(q)A_2^{-}(q) \\ 
20 & & \mbox{$A_{4}^{-}(q^2)$ ($2$ classes)} \\
24 & & A_2^{-}(q^4), {}^3\!D_4(q^2) \\ 
30 & & q^8+q^7-q^5-q^4-q^3+q+1 \\ \hline
\end{array}
\]
\caption{The cases $r \ne p$ for $T = E_7(q)$ or $E_8(q)$}
\label{tab:ex2}
\end{table}
}

Finally, we turn to the groups with socle $T = E_8(q)$. Here we will need the following result of Craven \cite{Craven4}.

\begin{prop}\label{p:e8psl}
Let $G$ be an almost simple group with socle $T = E_8(q)$, where $q=p^f$ and $p \ne 61$ is a prime. Then $G$ has a maximal subgroup with socle ${\rm L}_2(61)$ if and only if $f$ is minimal such that $d_q(61)$ is indivisible by $4$.
\end{prop}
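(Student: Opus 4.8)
The plan is to follow the now-standard strategy for determining the almost simple maximal subgroups of exceptional groups (as in \cite{LS03, Craven, Craven2}): work first in the ambient simple algebraic group $\bar{G} = E_8(\bar{\mathbb{F}}_p)$ and then descend to the finite groups via the Lang--Steinberg theorem. Write $S = {\rm L}_2(61)$. Since $\bar{G}$ has trivial centre and no graph automorphisms, and the embedding we are after is Lie primitive, it lies in no parabolic and no reductive maximal-rank subgroup; so the first task is to describe the $\bar{G}$-conjugacy classes of embeddings $\iota \colon S \hookrightarrow \bar{G}$, and then to sort out their fields of definition and maximality.

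First I would establish existence and uniqueness over $\bar{\mathbb{F}}_p$. A complex embedding $S \hookrightarrow E_8(\mathbb{C})$ is known to exist (Cohen--Griess, Griess--Ryba, Serre), and reducing modulo $p$ gives an embedding into $\bar{G}$ for every $p \ne 61$. To count the classes one analyses the restriction to $S$ of the $248$-dimensional adjoint module $L(\bar{G})$: the admissible decompositions into $\mathbb{F}_p S$-composition factors are severely constrained by the available irreducible dimensions of $S$ together with the requirement that the Lie bracket (equivalently the Killing form) be $S$-invariant, and this is expected to force a single feasible character and a single $\bar{G}$-class of Lie primitive subgroups $\iota(S)$. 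The arithmetic output of this step is the field of definition of the module: it is $\mathbb{F}_p(\sqrt{61})$ and no smaller field (the irrationality $\sqrt{61}$ arising from the half-discrete-series representations of $S$ of degree $31$, whose character values lie in $\mathbb{Q}(\sqrt{61})$), and since Schur indices over finite fields are trivial the module is realised over exactly this field.

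Next I would descend to $E_8(q)$. The unique $\bar{G}$-class of $\iota(S)$ is fixed by the Frobenius endomorphism $F_q$ with $\bar{G}^{F_q} = E_8(q)$ precisely when the module descends to $\mathbb{F}_q$, that is, when $\mathbb{F}_p(\sqrt{61}) \subseteq \mathbb{F}_q$; in that case Lang--Steinberg produces an $F_q$-stable member and hence a subgroup $S \leqs E_8(q)$. Here I would use the elementary identity that $\sqrt{61} \in \mathbb{F}_q$ if and only if $q$ is a square modulo $61$ (by quadratic reciprocity, as $61 \equiv 1 \imod{4}$), if and only if $q^{30} \equiv 1 \imod{61}$, if and only if $4 \nmid d_q(61)$ (using $d_q(61) \mid 60$, so that $4 \nmid d_q(61)$ is equivalent to $d_q(61) \mid 30$). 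This already yields the existence half of the statement.

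Finally I would settle maximality, which is where the \emph{minimality} of $f$ enters and which I expect to be the main obstacle. If $f$ is not minimal, then $\mathbb{F}_p(\sqrt{61}) = \mathbb{F}_{q_0}$ is a proper subfield of $\mathbb{F}_q$ and $\iota(S)$ is conjugate into the subfield subgroup $E_8(q_0) < E_8(q)$, hence is not maximal. Conversely, when $f$ is minimal, so $\mathbb{F}_q = \mathbb{F}_p(\sqrt{61})$, the subgroup lies in no subfield subgroup, and its Lie primitivity excludes containment in any positive-dimensional maximal subgroup; it then remains to rule out containment in another almost simple maximal subgroup (a member of the collection $\mathcal{S}$), which is done by comparison with the near-complete determination of the maximal subgroups of $E_8(q)$ in \cite{Craven4}. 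This last exclusion, together with the uniqueness of the embedding in the second step, relies on the fine internal structure of $E_8$ and is the technical heart of the argument.
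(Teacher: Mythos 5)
You should first be aware that the paper contains no proof of this statement: Proposition \ref{p:e8psl} is introduced with the words ``we will need the following result of Craven \cite{Craven4}'' and is used as a black box in the proof of Proposition \ref{p:e8}. So there is no internal argument to compare yours against, and a blind proof must in effect reconstruct a piece of \cite{Craven4}. That said, your overall strategy (classify Lie primitive embeddings of ${\rm L}_2(61)$ into $E_8(\bar{\mathbb{F}}_p)$, pin down the field of definition via the $248$-dimensional module, descend by Lang--Steinberg, then settle maximality) is the standard route for results of this kind, and your target arithmetic is consistent with how the paper applies the proposition: $q$ is a square modulo $61$ if and only if $q^{30} \equiv 1 \imod{61}$, i.e. if and only if $d_q(61)$ divides $30$, which (as $d_q(61)$ divides $60$) is equivalent to $4 \nmid d_q(61)$; and the minimal such $f$ is $1$ or $2$.

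However, there are genuine gaps, and they sit exactly at the hard points. First, the core of the argument --- that the restriction of the adjoint module admits a unique feasible character (involving the degree-$31$ representations, whence the irrationality $\sqrt{61}$), and that there is a unique $\bar{G}$-class of Lie primitive subgroups ${\rm L}_2(61)$ --- is not proved; you write that the analysis ``is expected to force'' these conclusions. Both directions of the proposition rest on this step: without it you can neither exclude all embeddings when $4 \mid d_q(61)$ nor identify the field of definition when $4 \nmid d_q(61)$ (Lie primitivity itself is likewise only asserted). Second, your maximality argument is circular: you exclude containment in other members of $\mathcal{S}$ ``by comparison with \cite{Craven4}'', but the proposition being proved is itself a result of \cite{Craven4}, so a blind proof cannot invoke that classification. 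Third, there is a concrete error in the descent step: the claimed equivalence ``$\sqrt{61}\in\mathbb{F}_q$ if and only if $q$ is a square modulo $61$'' fails in characteristic $2$, where every element of $\mathbb{F}_q$ is a square while $2$ is a nonresidue modulo $61$ (indeed $d_2(61)=60$); taken literally, your argument would place a maximal ${\rm L}_2(61)$ inside $E_8(2)$, contradicting the statement, and quadratic reciprocity as you invoke it only applies to odd $p$ in any case. The correct criterion must be phrased via the action of the Frobenius $\zeta \mapsto \zeta^{q}$ on the Brauer character (equivalently, on the Gauss sum representing $\sqrt{61}$), which does yield the condition ``$q$ is a square modulo $61$'' in every characteristic $p \ne 61$; it is the field-theoretic shortcut through square roots in $\mathbb{F}_q$ that breaks.
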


\begin{prop}\label{p:e8}
The conclusion to Theorem \ref{t:main1} holds when $T = E_8(q)$.
\end{prop}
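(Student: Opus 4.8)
The plan is to mirror the strategy used for $E_7(q)$ in Proposition \ref{p:e7}, combining the maximal rank subgroups of $G$ with Craven's classification of the remaining maximal subgroups. First I would record
\[
|T| = q^{120}(q^2-1)(q^8-1)(q^{12}-1)(q^{14}-1)(q^{18}-1)(q^{20}-1)(q^{24}-1)(q^{30}-1),
\]
note that we may assume $r \ne p$ (since every maximal parabolic subgroup has $r'$-index when $r = p$, forcing $|\mathcal{M}(R)| \geqs 2$), and set $i = d_q(r) \in \{1,2,\ldots,10,12,14,15,18,20,24,30\}$. Using the list of maximal rank subgroups in \cite{LSS}, I would then exhibit two non-conjugate maximal subgroups with $r'$-index for every value of $i$ except $i \in \{15,30\}$, as recorded in the $E_8(q)$ column of Table \ref{tab:ex2}; this gives $|\mathcal{M}(R)| \geqs 2$ in all these cases. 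For instance, when $i=1$ one takes the split torus normaliser $(q-1)^8.W$ together with a subsystem subgroup ($D_8(q)$ if $r \geqs 7$, $A_4(q)^2$ if $r=5$, and $A_2(q)^4$ if $r=3$), and the other small and intermediate values of $i$ are handled in the same fashion.

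This reduces the problem to the two cases $i = 15$ and $i = 30$. Here $r$ is a primitive prime divisor of $q^{15}-1$ (respectively $q^{30}-1$), so $r \equiv 1 \imod{15}$ (respectively $r \equiv 1 \imod{30}$) and hence $r \geqs 31$; moreover $\mathcal{M}(R)$ contains the normaliser of a cyclic maximal torus of order $\Phi_{15}(q) = q^8-q^7+q^5-q^4+q^3-q+1$ (respectively $\Phi_{30}(q) = q^8+q^7-q^5-q^4-q^3+q+1$), which is the unique maximal rank subgroup of $r'$-index. I would next dispose of the subfield subgroups $E_8(q_0)$ with $q=q_0^k$ in the usual manner: such a subgroup has $r'$-index if and only if $d_{q_0}(r) = i$, and a short computation with Lemma \ref{l:gk} shows there are none in $\mathcal{M}(R)$ precisely when the relevant condition from \eqref{e:ab} holds, namely $\a(15,1)$ when $i=15$ and $\a(15,-1)$ when $i=30$ (the latter coming from $q^{15} \equiv -1 \imod{r}$). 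By inspecting Craven's tables in \cite{Craven, Craven4}, any further subgroup in $\mathcal{M}(R)$ must then lie in the collection $\mathcal{S}$ of almost simple subgroups.

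The main work, and the principal obstacle, is the analysis of the $\mathcal{S}$-collection subgroups, which depends on the forthcoming classification in \cite{Craven4}. Since $r \geqs 31$ is a large primitive prime divisor, an $\mathcal{S}$-subgroup $H$ of $r'$-index must contain an element of order $r$, and I expect the only socle that can arise to be ${\rm L}_2(61)$, occurring when $r = 61$ (note $61 \equiv 1 \imod{30}$, so $61$ divides both $\Phi_{15}(q)$ and $\Phi_{30}(q)$ for suitable $q$, and these are exactly the two remaining cases). Since $|{\rm L}_2(61)|$ is divisible by $r=61$ only to the first power, such an $H$ has $r'$-index if and only if $|R| = r = 61$, and Proposition \ref{p:e8psl} determines exactly when $G$ has a maximal subgroup with socle ${\rm L}_2(61)$, namely when $f$ is minimal such that $d_q(61)$ is indivisible by $4$.

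Putting this together, for $i \in \{15,30\}$ I would conclude that $|\mathcal{M}(R)| = 1$ if and only if the appropriate subfield condition of the previous paragraph holds and, in addition, either $r \ne 61$, or $|R| > 61$, or $G$ has no maximal subgroup with socle ${\rm L}_2(61)$; these conditions are then recorded in Table \ref{tab:main2}. The delicate point throughout is verifying, via \cite{Craven4}, that no other almost simple maximal subgroup of $E_8(q)$ can contain an element of order $r$ with $r'$-index, and that the ${\rm L}_2(61)$ subgroup is genuinely maximal (rather than lying inside the relevant torus normaliser or a subsystem subgroup) precisely in the cases identified by Proposition \ref{p:e8psl}.
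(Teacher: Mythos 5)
Your overall strategy coincides with the paper's (maximal rank subgroups via \cite{LSS} for $i \notin \{15,30\}$, then torus normaliser $+$ subfield $+$ $\mathcal{S}$ analysis for $i \in \{15,30\}$), but there are two concrete errors. First, your subfield criterion for $i=15$ is wrong. A subfield subgroup $E_8(q_0)$ with $q=q_0^k$ has $r'$-index if and only if $r$ divides $q_0^{30}-1$ and $k \ne r$ (since $|R_0| = (q^{30}-1)_r = (q_0^{30}-1)_r(k)_r$ by Lemma \ref{l:gk}); when $i=15$ this allows $d_{q_0}(r) = 30$ as well as $d_{q_0}(r)=15$, so the correct condition is $\a(30,1)$ (as recorded in Table \ref{tab:main2}), not your $\a(15,1)$. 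Concretely, take $q_0 = 3$, $q = 9$, $r = 31$: since $3$ is a primitive root modulo $31$ we have $d_3(31) = 30$ and $d_9(31) = 15$, so $E_8(3) < E_8(9)$ is a maximal subfield subgroup of $31'$-index and $|\mathcal{M}(R)| \geqs 2$, yet your condition $\a(15,1)$ holds because $3^{15} \equiv -1 \imod{31}$. (For $i=30$ your $\a(15,-1)$ is equivalent to $\a(30,1)$, so that case is fine.)

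Second, you have omitted the exotic local maximal subgroups entirely: by \cite[Theorem 8]{LS03} and \cite{CLSS}, these form a separate class beyond parabolic, maximal rank, subfield and $\mathcal{S}$, and $E_8(q)$ has a maximal exotic local subgroup $2^{5+10}.{\rm L}_5(2)$ whose order is divisible by $31$. The paper uses this to show $|\mathcal{M}(R)| \geqs 2$ whenever $|R| = r = 31$: if $f>1$ then every subfield subgroup has $r'$-index (as $31$ divides $p^{30}-1$ for all $p \ne 31$), while if $f=1$ the exotic local subgroup lies in $\mathcal{M}(R)$. Your final criterion (``$r \ne 61$, or $|R| > 61$, or $G$ has no maximal subgroup with socle ${\rm L}_2(61)$'') would therefore wrongly report $|\mathcal{M}(R)| = 1$ when $|R| = r = 31$; this is exactly why the paper's conditions read ``$r>61$, or $|R|>r$, or $|R|=r=61$ and \ldots''. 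Relatedly, your $\mathcal{S}$-collection analysis is only an expectation: the paper proves the reduction to ${\rm L}_2(61)$ by splitting into socles in ${\rm Lie}(p)$ --- using \cite{Craven3} together with the bound $q' \leqs 1312(2,p-1)$ from \cite{LS98} to exclude ${\rm L}_2(q')$, since $r'$-index forces $15 \mid e$ and hence $q' \geqs 2^{15}$ --- and socles outside ${\rm Lie}(p)$ via \cite{LS99}, which yields $r \in \{31,61\}$ with $|R|=r$, so that only $r=61$ survives after the exotic local argument has eliminated $r=31$.
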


\begin{proof}
First observe that
\[
|T| = q^{120}(q^2-1)(q^8-1)(q^{12}-1)(q^{14}-1)(q^{18}-1)(q^{20}-1)(q^{24}-1)(q^{30}-1).
\]
If $r=p$ then every maximal parabolic subgroup has $r'$-index, so assume $r \ne p$. As usual, set $i = d_q(r)$ and note that
\[
i \in \{1, 2, \ldots, 10, 12, 14, 15, 18, 20, 24, 30\}.
\]
For $i \ne 15,30$, we can use the information in 
\cite[Tables 5.1, 5.2]{LSS} on maximal rank subgroups to identify two maximal subgroups with $r'$-index (see Table \ref{tab:ex2}). 

To complete the proof, we may assume $i \in \{15,30\}$, so $r \in \{31, 61, 151, \ldots\}$ and we have $|R_0|=(q^{30}-1)_r$. In addition, we note that the normaliser in $G$ of a maximal torus of order $\Phi_{i}(q) = q^8-\e q^7+\e q^5-q^4+\e q^3-\e q+1$ has $r'$-index, where $\e=+$ if $i = 15$ and $\e = -$ if $i=30$. In the usual manner, we see that there are no subfield subgroups in $\mathcal{M}(R)$ if and only if $\a(30,1)$ holds. By considering \cite[Theorem 8]{LS03}, we deduce that any additional maximal subgroup $H$ with $r'$-index is either an exotic local subgroup, or an almost simple subgroup in the collection $\mathcal{S}$.  

First we claim that $|\mathcal{M}(R)| \geqs 2$ if $|R| = r = 31$. To see this, first observe that $r$ divides $p^{30}-1$ and thus every subfield subgroup of $G$ has $r'$-index. Therefore, we may assume $f=1$ and we note that $q \geqs 3$ since  $31=2^5-1$. By inspecting \cite[Table 1]{CLSS} we see that $G$ has a maximal exotic local subgroup $H = 2^{5+10}.{\rm L}_5(2)$ with $r'$-index and this justifies the claim. For the remainder, we may assume $r>31$ or $|R|>r$, which means that $G$ has no maximal exotic local subgroups with $r'$-index (see \cite{CLSS}).

Finally, let us assume $H$ is almost simple with socle $S$, and let ${\rm Lie}(p)$ be the set of simple groups of Lie type in characteristic $p$. 

First assume $S \in {\rm Lie}(p)$. Here we apply \cite[Theorem 1.1]{Craven3}, which describes the possibilities for $S$ under the assumption that $S$ is not isomorphic to a group of the form ${\rm L}_{2}(q')$. It is easy to see that none of the possibilities have order divisible by $r$, so we may assume $S = {\rm L}_{2}(q')$ with $q' = p^e$ for some $e \geqs 1$.  By applying the main theorem of \cite{LS98}, we may assume $q' \leqs 1312d$, where $d = (2,p-1)$. Now such a subgroup has $r'$-index only if $r$ divides $p^{2e}-1$, which implies that $r$ divides $q^{2e}-1$. Therefore, $e$ has to be divisible by $15$ and thus $q' \geqs 2^{15} > 1312d$. We conclude that there are no such subgroups in $\mathcal{M}(R)$.

Finally, let us assume $S \not\in {\rm Lie}(p)$. Here the possibilities for $S$ are described in \cite{LS99} and the fact that $|H|$ is divisible by $r$ is very restrictive. Indeed, we see that $H$ has $r'$-index only if $r \in \{31,61\}$ and $|R|=r$, so in view of our earlier observations, we may assume $|R| = r = 61$ and it follows that $S = {\rm L}_2(61)$ is the only possibility. By Proposition \ref{p:e8psl}, $G$ has a maximal subgroup of this form if and only if $q=p^f$ and $f$ is minimal such that $d_{p^f}(r)$ is indivisible by $4$. Since $d_p(r)$ divides $60$, it follows that either $d_p(r)$ or $d_{p^2}(r)$ is indivisible by $4$. Therefore $f \leqs 2$, with equality if and only if $d_p(r) = 60$ and $i=30$. 

In conclusion, if $|R| = r = 61$ then $\mathcal{M}(R)$ contains an almost simple subgroup with socle ${\rm L}_2(61)$ if $f=1$ (since $d_q(r) = d_p(r) \in \{15,30\}$ is indivisible by $4$) and it does not contain such a subgroup if $f>2$. And for $f=2$, there is no such subgroup in $\mathcal{M}(R)$ if and only if $d_p(r) \in \{15,30\}$ (in which case $i=15$).
\end{proof}

This completes the proof of Theorem \ref{t:main1} for exceptional groups of Lie type. By combining this with Propositions \ref{p:spor} and \ref{p:alt}, together with the results in Sections \ref{s:t1}, we conclude that the proof of Theorem \ref{t:main1} is complete. 

\section{Proofs of the main corollaries}\label{s:corols}

In this section we prove Corollaries \ref{c:NGR}, \ref{c:NGR2}, \ref{c:Or}, \ref{c:OrH}  and \ref{c:max}. 

\begin{proof}[Proof of Corollary \ref{c:NGR}]
As in the statement of the corollary, let $G$ be an almost simple group with socle $T$, let $R$ be a Sylow $r$-subgroup of $G$ and assume $G/T$ is an $r$-group. Set $R_0 = R \cap T$, which is a Sylow $r$-subgroup of $T$. Our goal is to show that $\mathcal{M}(R) = \{N_G(R_0)\}$ if and only if $(G,r)$ is one of the cases recorded in Table \ref{tab:NGR}, which is presented in Section \ref{s:tables}. 

We proceed by inspecting the triples $(G,r,H)$ with $\mathcal{M}(R) = \{H\}$ arising in Theorem \ref{t:prime2} (for $r=2$) and Theorem \ref{t:main1} (for $r$ odd). Here $R_0 \ne 1$ by Lemma \ref{l:trivial} and thus $H = N_G(R_0)$ if and only if $R_0 = O_r(H_0)$. We consider each possibility for $T$ in turn.

First assume $T = A_n$ is an alternating group. If $r = 2$ then Theorem \ref{t:prime2} implies that $H_0 = A_{n-1}$ and we deduce that $n = 5$ is the only possibility. Here $T \cong {\rm L}_2(4)$ and this case is recorded as $({\rm L}_2(4), 2, P_1)$ in Table \ref{tab:NGR}. Now assume $r$ is odd, so $G = T$ and the possibilities are presented in part (i) of Theorem \ref{t:main1}. By inspection, it is easy to check that $H = N_G(R_0)$ if and only if $n=6$, $r=3$ and $H = (S_3 \wr S_2) \cap G$, which is listed as $({\rm L}_2(9),3,P_1)$ in Table \ref{tab:NGR}, or if the conditions in case (i)(d) of Theorem \ref{t:main1} are satisfied. In the latter case, we have $n = r$ and $H = {\rm AGL}_1(r) \cap G = N_G(R)$, as recorded in the first row of Table \ref{tab:NGR} (note that the special case $r=5$ appears as $({\rm L}_2(5),5,P_1)$). 

The sporadic groups are entirely straightforward and the relevant cases can be read off from Table \ref{tab:spor}. So for the remainder, we may assume $T$ is a group of Lie type over $\mathbb{F}_q$, where $q = p^f$ and $p$ is a prime. As before, let $r_i$ be a primitive prime divisor of $q^i-1$. 

First suppose $T$ is an exceptional group. If $r=2$, then $G=T = {}^2B_2(q)$ and  $H = N_G(R)$ is the only possibility, and this case is recorded in Table \ref{tab:NGR}. Now assume $r$ is odd. Here we inspect the cases arising in Table \ref{tab:main2}, considering each possibility for $T$ in turn. 

Suppose $T = {}^2B_2(q)$. Here $r = r_4$ is a primitive prime divisor of $q^4-1$ and $H = N_G(S)$, where $S$ is a cyclic maximal torus of $T$ of order $q\pm \sqrt{2q}+1$ (where the sign is chosen so that $r$ divides $|S|$). Then $R_0 \normeq H_0$, so $H = N_G(R_0)$ and this case is recorded in Table \ref{tab:NGR}. The same conclusion holds for both cases with $T = {}^2G_2(q)$ in Table \ref{tab:main2}, and so they also appear in Table \ref{tab:NGR}.

Next assume $T = G_2(q)$ with $q \geqs 3$. Here $H_0 = {\rm SL}_3^{\e}(q).2$ and $R_0$ is not normal in $H_0$, so $H \ne N_G(R_0)$ in these cases. Similarly, by inspecting the remaining cases in Table \ref{tab:main2}, we deduce that $H = N_G(R_0)$ if and only if $H = N_G(S)$, where $S$ is a cyclic maximal torus of $T$ whose order is divisible by $r$. All of these cases are listed in Table \ref{tab:NGR}.

Finally, let us assume $T$ is a classical group. First suppose $r=2$, so the possibilities for $(G,H)$ are given in Table \ref{tab:prime2}. 

First assume $T = {\rm L}_2(q)$ with $q \geqs 4$. Clearly, if $q$ is even and $H$ is a Borel subgroup, then $H = N_G(R_0)$. Similarly, if $q=5$ and $H$ is of type $2^{1+2}.{\rm O}_2^{-}(2)$, then $H_0 = A_4$ and the same conclusion holds (this case is recorded as $({\rm L}_2(4),2,P_1)$ in Table \ref{tab:NGR}). Next assume $H$ is of type ${\rm GL}_1(q) \wr S_2$, so $H_0 = D_{q-1}$, $q \geqs 9$ and $q \equiv 1 \imod{4}$. Here $R_0$ is normal in $H_0$ if and only if $R_0 = H_0$, so $q - 1 = 2^k$ for some $k$ and \cite[Lemma 2.6]{BTV} implies that either $q = 9$ or $q$ is a Fermat prime. We conclude that $\mathcal{M}(R) = \{ N_G(R_0) \}$ if and only if $q=9$ and $G \not\leqs {\rm P\Sigma L}_2(q)$, or if $q=p = 2^k+1 \geqs 17$ (note that in the latter case we have $|R_0| \geqs 2^4$). Similarly, if $H$ is of type ${\rm GL}_1(q^2)$, then $H_0 = D_{q+1}$ with $q = p \equiv 3 \imod{4}$ and we need $H_0 = R_0$. Therefore  $q+1 = 2^k$ for some $k$ and by appealing to \cite[Lemma 2.6]{BTV} we deduce that $q \geqs 7$ is a Mersenne prime. If $q=7$ then $|R_0| = 2^3$ and so we need $G = {\rm PGL}_2(7)$ (see Table \ref{tab:prime2}). On the other hand, if $q \geqs 31$ then $|R_0| \geqs 2^5$ and there are no additional conditions on $G$. These cases are recorded in Table \ref{tab:NGR}.

Continuing to assume $r=2$, it is easy to see that $R_0 \normeq H_0$ when $p=2$ and either $T = {\rm L}_3(q)$ or ${\rm PSp}_4(q)$ with $H$ of type $P_{1,2}$, or if $T = {\rm U}_3(q)$ and $H$ is of type $P_1$. All of these cases appear in Table \ref{tab:NGR}. And for the remaining possibilities in Table \ref{tab:prime2}, one checks that $R_0$ is not normal in $H_0$ and so none of these cases arise. This completes the proof of Corollary \ref{c:NGR} when $T$ is a classical group and $r=2$.

Finally, let us assume $T$ is classical and $r$ is odd. Here we inspect the triples 
$(G,r,H)$ in Table \ref{tab:main1}.

First assume $T = {\rm L}_2(q)$. Clearly, if $r = p$ and $H$ is of type $P_1$, then $H = N_G(R_0)$ and this case is recorded in Table \ref{tab:NGR}. Similarly, if $r = r_2$ and $H$ is of type ${\rm GL}_1(q^2)$, then $H = N_G(S)$ with $S$ a cyclic maximal torus of $T$ of order $(q+1)/2$, hence $R_0 \normeq H_0$ and so these cases also appear in Table \ref{tab:NGR}. 

Next suppose $T = {\rm L}_n(q)$ with $n \geqs 3$. If $n = r = r_1$ and $H$ is of type ${\rm GL}_1(q) \wr S_n$, then it is straightforward to check that $R_0$ is not normal in $H_0$. For example, if $n=3$ then $R_0$ is normal in $H_0$ if and only if $q-1$ is a $3$-power, which is not possible by \cite[Lemma 2.6]{BTV} (note that $q \ne 4$ since $f=3^a \geqs 1$, as indicated in Table \ref{tab:main1}). On the other hand, if $G = {\rm PGL}_3(4)$, $r=3$ and $H$ is of type ${\rm GU}_3(2)$, then $H_0 = {\rm U}_3(2) = 3^2{:}Q_8$ and $R_0 \normeq H_0$, so this case is recorded in Table \ref{tab:NGR}. To complete the proof for linear groups, we may assume $r = r_n$ and $H$ is a $\C_3$-subgroup of type ${\rm GL}_{n/t}(q^t)$, where $n = t^a$ and $t$ is an odd prime. Here $R_0$ is normal in $H_0$ if and only if $n = t$, in which case $H = N_G(S)$ is the normaliser of a Singer cycle $S$ in $T$. So the special case with $n=t$ is recorded in Table \ref{tab:NGR}.

Now assume $T = {\rm U}_n(q)$ with $n \geqs 3$. To begin with, suppose $n=3$, so $q \geqs 3$. Clearly, we have $H = N_G(R_0)$ when $r=p$ and $H = P_1$ is a Borel subgroup, so this case appears in Table \ref{tab:NGR}. If $r = r_2 = 3$ and $H$ is of type ${\rm GU}_1(q) \wr S_3$ then $R_0$ is normal in $H_0$ if and only if $q+1$ is a $3$-power, which is equivalent to the condition $q=8$ by \cite[Lemma 2.6]{BTV} (recall that $q \ne 2$). So the case $q=8$ is recorded in Table \ref{tab:NGR}. Similarly, if $r=r_6$ and $H$ is of type ${\rm GU}_1(q^3)$, then $R_0 \normeq H_0$ and $H = N_G(R_0)$. To complete the proof for unitary groups, we may assume $n \geqs 4$. By inspecting the cases in Table \ref{tab:main1}, we quickly deduce that $R_0$ is normal in $H_0$ if and only if $r = r_{2n}$ and $H$ is of type ${\rm GU}_1(q^n)$, where $n \geqs 5$ is a prime, and this latter case is included in Table \ref{tab:NGR}.

To complete the proof for classical groups, we may assume $T$ is a symplectic or orthogonal group. By inspecting the cases in Table \ref{tab:main1}, it is easy to see that $R_0$ is not normal in $H_0$ and so none of these cases appear in Table \ref{tab:NGR}. 
\end{proof}

Next we turn to Corollary \ref{c:NGR2}. As before, $G$ is an almost simple group with socle $T$ and $R$ is a Sylow $r$-subgroup of $G$. Recall that $R$ is weakly subnormal in $G$ if and only if $\mathcal{M}(R) = \{N_G(R)\}$, so we need to determine the cases in Table \ref{tab:NGR} with $N_G(R_0) = N_G(R)$, where $R_0 = R \cap T$. With this aim in mind, the following elementary lemma will be useful.

\begin{lem}\label{l:equiv}
Let $G$, $T$ and $R$ be defined as above and assume $\mathcal{M}(R) = \{H\}$, where $H=N_G(R_0)$ and $R_0 = R \cap T$. Then the following properties are equivalent:
\begin{itemize}\addtolength{\itemsep}{0.2\baselineskip}
\item[{\rm (i)}] $N_G(R_0)=N_G(R)$;
\item[{\rm (ii)}] $\mathcal{M}(O_r(H))=\{H\}$;
\item[{\rm (iii)}]  $G=O_r(H)T$;
\item[{\rm (iv)}] $[R,H_0]\leqs R_0$, where $H_0 = H \cap T$.
\end{itemize}
\end{lem}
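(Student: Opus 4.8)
The plan is to first record the structural facts that make all four conditions comparable, and then to verify the equivalences in two short chains: (iii) $\Leftrightarrow$ (i) $\Leftrightarrow$ (iv) for the ``structural'' statements, and (iii) $\Leftrightarrow$ (ii) for the overgroup statement.

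First I would assemble the preliminaries. Since $H = N_G(R_0)$ is a proper (maximal) subgroup, $R_0 \neq 1$; then Lemma \ref{l:sylow} shows $H$ is core-free and Corollary \ref{c:easy} shows $G/T$ is an $r$-group. As $R$ normalises $R_0 = R \cap T$, we have $R \leq H$, so $R$ is a Sylow $r$-subgroup of $H$; Frattini's argument gives $G = TH$, whence $H/H_0 \cong G/T$ is an $r$-group and therefore $H = H_0 R$. Two identities are crucial: first, $O_r(H) \leq R$ (a normal $r$-subgroup lies in every Sylow $r$-subgroup) together with $R_0 \normeq H$ yields $O_r(H) \cap T = R_0$; second, $RT/T = G/T$ gives the order count $|R| = |R_0|\,|G/T|$. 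I would also note that $N_G(R) \leq N_G(R_0) = H$ always holds, since any $g \in N_G(R)$ fixes $R_0 = R \cap T$.

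Next I would treat the three structural conditions. Condition (i) says $H \leq N_G(R)$, i.e. $R \normeq H$, and since $R$ is Sylow in $H$ this is equivalent to $R = O_r(H)$. Because $O_r(H) \leq R$ with $O_r(H) \cap T = R_0$, the order count gives $R = O_r(H) \Leftrightarrow |O_r(H)| = |R| \Leftrightarrow |O_r(H)T/T| = |G/T| \Leftrightarrow O_r(H)T = G$, which is exactly (iii); thus (i) $\Leftrightarrow$ (iii). For (iv), I would use $H = H_0 R$ with $H_0 \normeq H$: if $R \normeq H$ then $[R, H_0] \leq R \cap H_0 = R \cap T = R_0$; conversely $[R, H_0] \leq R_0 \leq R$ forces $H_0$ to normalise $R$, and since $R$ normalises itself and $H = H_0 R$, we get $R \normeq H$. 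Hence (iv) $\Leftrightarrow$ (i).

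Finally I would prove (ii) $\Leftrightarrow$ (iii). If (iii) holds then $O_r(H) = R$, so $\mathcal{M}(O_r(H)) = \mathcal{M}(R) = \{H\}$. For the converse I would argue contrapositively: $O_r(H) \leq R$ gives $\mathcal{M}(R) \subseteq \mathcal{M}(O_r(H))$, so $H \in \mathcal{M}(O_r(H))$; if (iii) fails, then $O_r(H)T$ is a proper subgroup of $G$ containing $T$, so its image in the nontrivial $r$-group $G/T$ lies in a maximal subgroup, whose preimage $M$ is a maximal subgroup of $G$ with $T \leq M$ and $O_r(H) \leq M$. As $H$ is core-free we have $T \not\leq H$, so $M \neq H$ and $\{H, M\} \subseteq \mathcal{M}(O_r(H))$, contradicting (ii). The genuine content lies in the two identities $O_r(H) \cap T = R_0$ and $H = H_0 R$, from which everything else follows formally; the one step needing care is the reverse implication (ii) $\Rightarrow$ (iii), which must exploit both that $G/T$ is a nontrivial $r$-group (to locate a maximal subgroup above $O_r(H)T$) and that $H$ is core-free (to ensure $M \neq H$).
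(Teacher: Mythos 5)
Your proposal is correct and uses essentially the same ingredients as the paper's proof: the identification $R\normeq H \Leftrightarrow R=O_r(H)$ via Sylow's theorem, the product decomposition $H=RH_0$ with $R\cap H_0=R_0$ for the commutator condition (iv), and core-freeness of $H$ plus uniqueness of the maximal overgroup for (ii) $\Rightarrow$ (iii). The only cosmetic differences are organizational — you prove pairwise equivalences rather than the paper's cycle (i) $\Rightarrow$ (ii) $\Rightarrow$ (iii) $\Rightarrow$ (iv) $\Rightarrow$ (i), and you recover $O_r(H)=R$ from (iii) by an order count where the paper invokes Dedekind's modular law — but these are the same argument in substance.
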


\begin{proof}
Since $H=N_G(R_0)$, we have $R_0 \normeq H$ and hence $R_0\leqs O_r(H)$. In addition, $R_0 \leqs H_0$ and thus $R_0\leqs O_r(H)\cap H_0$. Since $T\normeq G=RT$, it follows that $H_0 \normeq H=RH_0$ and $R\cap H_0=R_0$.

If (i) holds, then $R\normeq H$ and thus $R=O_r(H)$ since $R$ is a Sylow $r$-subgroup of $H$, whence (ii) holds. 

Next assume (ii) holds and note that $O_r(H)\leqs O_r(H)T\leqs G$. Since  $T\not\leqs H$ and $H$ is the unique maximal subgroup of $G$ containing $O_r(H)$, we must have $G=O_r(H)T$ and thus (iii) is satisfied.

Now suppose (iii) holds. By Dedekind's modular law, we have $R=O_r(H)(R\cap T)=O_r(H)R_0$. As both $O_r(H)$ and $R_0$ are normal subgroups of $H$, we have $R=O_r(H)R_0\normeq H$ which implies that $R=O_r(H)$. And since $H_0\normeq  H=RH_0$, it follows that $[R,H_0]\leqs R\cap H_0=R_0$ as in (iv).

Finally, suppose (iv) holds. Since $R_0\normeq H=RH_0$, the hypothesis implies that $H_0/R_0$ centralises $R/R_0$ in $H/R_0$, whence $R/R_0\normeq H/R_0$ and thus $R\normeq H$. Hence (i) holds and the proof of the lemma is complete.
\end{proof}

\begin{proof}[Proof of Corollary \ref{c:NGR2}]
Let $G$ be an almost simple group with socle $T$ and let $R$ be a Sylow $r$-subgroup of $G$. Recall that it suffices to determine the cases in Table \ref{tab:NGR} with $H = N_G(R_0) = N_G(R)$, noting that the latter property clearly holds when $G = T$. So we may assume $G \ne T$ and we will work repeatedly with criterion (iv) in Lemma \ref{l:equiv}. In particular, the lemma implies that $N_G(R_0) \ne N_G(R)$ if $R$ acts nontrivially on $H_0/R_0$. In each case, $T$ is a simple group of Lie type over $\mathbb{F}_q$, where $q= p^f$ and $p$ is a prime.

We begin by considering the groups with socle $T = {\rm L}_2(q)$. Set $d = (2,q-1)$. First assume $r = p$ and $H$ is a Borel subgroup, so $H_0/R_0$ is a cyclic maximal torus of $T$ of order $(q-1)/d$. Now $R$ contains a field automorphism $x$ of order $r$ and the centraliser of $x$ on $H_0/R_0$ has order $(q_0-1)/d < (q-1)/d$, where $q=q_0^r$. So by appealing to Lemma \ref{l:equiv} we conclude that $N_G(R_0) \ne N_G(R)$. Clearly, if $r=2$ and $q \geqs 7$ is a Mersenne or Fermat prime, then $G = {\rm PGL}_2(q)$ and $N_G(R_0) = R$, so $N_G(R_0) = N_G(R)$ and this case is recorded in part (ii) of the corollary. The same conclusion holds if $q=9$ and $G \ne {\rm P\Sigma L}_2(9)$ as in part (iii) of Corollary \ref{c:NGR2}, noting that $|\mathcal{M}(R)| > 1$ when $G = {\rm P\Sigma L}_2(9)$.

Finally, suppose $T = {\rm L}_2(q)$ and $r = r_2$ is a primitive prime divisor of $q^2-1$. Note that $r \geqs 3$ and thus $G = T.r^a$ for some $a \geqs 1$, where $r^a$ divides $f$. It is straightforward to check that $N_G(R_0) = N_G(R)$ when $G = {\rm L}_2(8).3$, so we may assume $q>8$. In particular, this means that we can find a primitive prime divisor $s$ of $p^{2f}-1$ and we note that $s \geqs 2f+1 > r$. Now $H_0 = D_{2(q+1)/d}$ and $R$ contains a field automorphism $x$ of order $r$. Since $s$ does not divide $2(q_0+1)/d$, where $q=q_0^r$, it follows that $x$ does not centralise the subgroup $D_{2s}$ of $H_0/R_0$ and we deduce that $N_G(R_0) \ne N_G(R)$.

Next assume $T = {\rm L}_3(q)$. The special case $G = {\rm PGL}_3(4)$ with $r=3$ can be checked directly. Next assume $r=p=2$ and $G \not\leqs {\rm P\Gamma L}_3(q)$, so $R$ contains an involutory graph or graph-field automorphism. For $q=2$, it is straightforward to check that $N_G(R_0) = N_G(R)$, so $G = {\rm L}_3(2).2$ is recorded in part (iv) of the corollary. Similarly, if $q=4$ then one checks that $N_G(R_0) = N_G(R)$ if and only if $G = {\rm L}_3(4).2_3$ and so this example also appears in part (iv) (note that $G$ in this case contains involutory graph automorphisms). Now assume $q \geqs 8$ and note that $H_0/R_0$ is the image modulo scalars of a split maximal torus $C_{q-1} \times C_{q-1}$ of ${\rm SL}_3(q)$. Suppose $R$ contains an involutory graph automorphism $x$. Then $x$ inverts the torus $C_{q-1} \times C_{q-1}$ and so it does not centralise $H_0/R_0$. Finally, suppose $x \in R$ is an involutory graph-field automorphism, so $q=q_0^2$ and we may assume $q_0 \geqs 4$. Let $s$ 
be a prime divisor of $q_0-1$. The centraliser of $x$ in the torus $C_{q-1} \times C_{q-1}$ has order $(q_0+1)^2$, which is indivisible by $s$, so $x$ acts nontrivially on $H_0/R_0$ and thus $N_G(R_0) \ne N_G(R)$. 

Finally, suppose $T = {\rm L}_3(q)$ and $r = r_3$, so $H$ is of type ${\rm GL}_1(q^3)$. Here $G = T.r^a$ and $H_0 = C_{(q^2+q+1)/d}.3$, where $r^a$ divides $f$ and we set $d = (3,q-1)$. By working with a primitive prime divisor of $p^{3f}-1$ we can argue as above to show that $N_G(R_0) \ne N_G(R)$ (see the case $T = {\rm L}_2(q)$ with $r=r_2$). 

Now suppose $T = {\rm U}_3(q)$ and set $d = (3,q+1)$. First assume $r=p$ and $H$ is of type $P_1$, so $H_0/R_0 = C_{(q^2-1)/d}$. If $p=2$ then $R$ must contain an involutory graph automorphism $x$, which is induced by the order two field automorphism of $\mathbb{F}_{q^2}$. Therefore, the centraliser of $x$ on $H_0/R_0$ has order $(q-1)/d$ and we conclude that $N_G(R_0) \ne N_G(R)$. An entirely similar argument applies when $p$ is odd, in which case $R$ contains a field automorphism of order $r$. The special cases $G = {\rm U}_3(8).3$ and ${\rm U}_3(8).3^2$ with $r=3$ can be checked directly (there are three groups of the form ${\rm U}_3(8).3$, up to isomorphism, and one checks that $N_G(R_0) = N_G(R)$ in every case). Finally, the case where  $r=r_6$ is entirely similar to the analogous case with $T = {\rm L}_3(q)$ and $r=r_3$ handled above, this time working with a primitive prime divisor of $p^{6f}-1$. We omit the details.

Next consider the case where $T = {\rm PSp}_4(q)$ and $r=p=2$, with $q \geqs 4$ and $H_0/R_0 = C_{q-1} \times C_{q-1}$. Write $q=2^f$ and note that ${\rm Out}(T) = C_{2f}$ is cyclic. Suppose $f$ is odd, so $G = T.2$ and $R$ contains an involutory graph automorphism $x$. Since $x$ interchanges long and short root subgroups, we deduce that $x$ does not centralise the torus $H_0/R_0$. Similarly, if $f$ is even and $q=q_0^2$ then $R$ contains an involutory field automorphism with centraliser $C_{q_0-1} \times C_{q_0-1}$ on $H_0/R_0$. So in both cases, by appealing to Lemma \ref{l:equiv}, we conclude that $N_G(R_0) \ne N_G(R)$.

To complete the proof for classical groups, we may assume $n \geqs 5$ is a prime and either $T = {\rm L}_n(q)$ and $r = r_n$, or $T = {\rm U}_n(q)$ with $r=r_{2n}$. In both cases, we have $G = T.r^a$, where $r^a$ divides $f$, and we can argue as above, working with primitive prime divisors of $p^{nf}-1$ and $p^{2nf}-1$, respectively.

Finally, let us turn to the cases where $T$ is an exceptional group of Lie type. The groups with $T = {}^3D_4(q)$ or $E_8(q)$ are straightforward; in both cases, $R$ contains a field automorphism of order $r$, which acts nontrivially on $H_0/R_0$ due to the existence of a suitable primitive prime divisor (e.g. a primitive prime divisor of $p^{12f}-1$ for $T = {}^3D_4(q)$). The case $T = {}^2G_2(q)$ with $H_0/R_0 = C_{q-1}$ is also entirely straightforward. So it remains to deal with the following cases:
\begin{itemize}\addtolength{\itemsep}{0.2\baselineskip}
\item[{\rm (a)}] $T = {}^2B_2(q)$, $r=r_4$, $H_0 = (q \pm \sqrt{2q}+1){:}4$
\item[{\rm (b)}] $T = {}^2G_2(q)$, $r=r_6$, $H_0 = (q \pm \sqrt{3q}+1){:}6$
\item[{\rm (c)}] $T = {}^2F_4(q)$, $r=r_{12}$, $H_0 = (q^2 \pm \sqrt{2q^3}+q\pm \sqrt{2q}+1){:}12$
\end{itemize}
In each case, $G = T.r^a$ and $r^a$ divides $f$, so $R$ contains a field automorphism $x$ of order $r$. In particular, $q = q_0^r$. Given an integer $n$, we will write $n_r$ to denote the $r$-part of $n$.

Consider case (a). Fix $\e = \pm$ so that $r$ divides $q + \e\sqrt{2q} + 1$ and note that $(q^2+1)_r = (q+\e \sqrt{2q}+1)_r$ since $(q+\sqrt{2q}+1,q-\sqrt{2q}+1) = 1$. Write $(q_0^2+1)_r = r^b \geqs 1$ and observe that $(q^2+1)_r = r^{b+1}$. If we write $H_0 = S{:}4$, then $C_S(x)$ is a cyclic group of order $q_0 + \e'\sqrt{2q_0}+1$ for some $\e' = \pm$ and it suffices to show that 
\[
A = \frac{q+\e\sqrt{2q}+1}{q_0+\e'\sqrt{2q_0}+1}
\]
is an $r$-group if and only if $(q_0,r) = (2,5)$ and $\e = \e' = -$. Indeed, if $A$ is divisible by a prime $s \ne r$, then $x$ acts nontrivially on $H_0/R_0$ and thus $N_G(R_0) \ne N_G(R)$ by Lemma \ref{l:equiv}. There are two cases to consider:
\begin{itemize}\addtolength{\itemsep}{0.2\baselineskip}
\item[{\rm (I)}] $(q_0^2+1)_r = (q_0 + \e'\sqrt{2q_0} + 1)_r$
\item[{\rm (II)}] $(q_0^2+1)_r = (q_0 - \e'\sqrt{2q_0} + 1)_r$
\end{itemize}

First consider (I). Here $A_r = r$ and so it suffices to show that $A>r$. Write $q_0 = 2^k$ (with $k \geqs 1$ odd) and set
\[
g(k) = \frac{2^{kr} + \e 2^{(kr+1)/2} + 1}{2^k + \e' 2^{(k+1)/2} + 1}.
\]
It is easy to check that $g$ is increasing as a function of $k$, and it is also easy to see that $g(1)>r$. Hence $A>r$ and the result follows.

Now let us turn to (II), so $(q_0 + \e'\sqrt{2q_0} + 1)_r=1$ and $A_r = r^{b+1}$. If $q_0 - \e'\sqrt{2q_0} + 1$ is divisible by a prime $s \ne r$ then $s$ divides $A$ and the result follows. Now assume $q_0 - \e'\sqrt{2q_0} + 1 = r^b = (q_0^2+1)_r$. Then $A$ is an $r$-power if and only if 
\[
\frac{q+\e \sqrt{2q}+1}{q_0^2+1} = r.
\]
It is a straightforward exercise to show that this holds if and only if $q_0 = 2$, $r=5$ and $\e = -$, in which case $G = {}^2B_2(32).5$ and one can check that $N_G(R_0) = N_G(R)$. This is the special case recorded in part (vi) of Corollary \ref{c:NGR2}.

An entirely similar argument applies in case (b), noting that $r = r_6 \geqs 7$, 
\[
(q_0+\sqrt{3q_0}+1)(q_0-\sqrt{3q_0}+1)= q_0^2-q_0+1
\]
and the two factors on the left are coprime. And similarly in (c), where $r = r_{12} \geqs 13$ and 
\[
(q_0^2+\sqrt{2q_0^3}+q_0+\sqrt{2q_0}+1)(q_0^2-\sqrt{2q_0^3}+q_0-\sqrt{2q_0}+1) = q_0^4-q_0^2+1.
\]
We leave the reader to check that $N_G(R_0) \ne N_G(R)$ in these cases.
\end{proof}

Next we turn to the proof of Corollary \ref{c:Or}. 

\begin{proof}[Proof of Corollary \ref{c:Or}]
Let $G$ be an almost simple group with socle $T$ and $\mathcal{M}(R) = \{H\}$, where $R$ is a Sylow $r$-subgroup of $G$. Set $H_0=H\cap T$ and $R_0 = R \cap T$, noting that $R_0$ is a Sylow $r$-subgroup of $H_0$ and we have $R_0 \normeq  R$ and $H_0 \normeq H$. By Lemma \ref{l:trivial} we have $R_0 \ne 1$.

Our goal is to show that $O_r(H) \ne 1$ if and only if  $(G,r,H)$ is one of the cases in Table \ref{tab:NGR} or \ref{tab:Or}. Notice that if $O_r(H_0)\ne 1$, then since $H_0\normeq H$ and $O_r(H_0)$ is characteristic in $H_0$, we have $O_r(H_0)\normeq H$ and thus $O_r(H) \ne 1$. In particular, $O_r(H) \ne 1$ for every case in Table \ref{tab:NGR}. 

We now divide the proof into two cases, according to the parity of $r$.  

\vs

\noindent \emph{Case 1. $r=2$.}

\vs

First assume $r=2$, noting that the possibilities for $(G,H)$ are determined in Theorem \ref{t:prime2}. If $T = A_n$ then $n=2^k+1$ with $k \geqs 2$ and $H=A_{n-1}$ or $S_{n-1}$. Clearly, $O_2(H) \ne 1$ if and only if $n = 5$, noting that this case is  recorded in Table \ref{tab:NGR} as $({\rm L}_2(4), 2, P_1)$. Similarly, if $G = {}^2B_2(q)$ and $H = N_G(R)$ then $O_2(H) \neq 1$ and so this case also appears in Table \ref{tab:NGR}.

To complete the proof for $r=2$, let us assume $T$ is a classical group, so the possibilities for $(G,H)$ are listed in Table \ref{tab:prime2}. We claim that $O_2(H_0) \ne 1$ in every case, noting that this property has already been established for the cases appearing in Table \ref{tab:NGR}. If $T = {\rm L}_2(q)$ and $H$ is of type ${\rm GL}_1(q) \wr S_2$ or ${\rm GL}_1(q^2)$ then $H_0 = D_{q \pm 1}$ and the additional congruence condition on $q$ implies that $Z(H_0)$ has order $2$, whence $O_2(H_0) \ne 1$. Next suppose $T = {\rm L}_n^{\e}(q)$, $n \geqs 3$ and $H$ is of type ${\rm GL}_{n-1}^{\e}(q) \times {\rm GL}_1^{\e}(q)$. Here $n$ is odd and $H_0 = C_T(z)$ for some involution $z \in T$, so $O_2(H_0) \ne 1$ as required. Similarly, if $T$ is an orthogonal group, then once again $H_0$ contains a central involution (see \cite[Tables B.8. B.10 and B.11]{BG}, for example) and thus $O_2(H_0) \ne 1$. The reader can check that all of the cases in Table \ref{tab:prime2} that do not appear in Table \ref{tab:NGR} have been included in Table \ref{tab:Or}.
 
\vs

\noindent \emph{Case 2. $r>2$.}

\vs

Let $r$ be an odd prime and recall that $O_r(H) \ne 1$ if $O_r(H_0) \ne 1$. We claim that the converse holds. To see this, suppose that $O_r(H_0)=1$ and $O_r(H) \ne 1$. Then $O_r(H)\cap H_0=1$ and $[O_r(H),H_0] \leqs O_r(H)\cap H_0=1$, so $O_r(H)$ centralises $R_0 \neq 1$. In other words, any nontrivial element of $O_r(H)$ induces an automorphism of $T$ 
which centralises a Sylow $r$-subgroup of $T$ and has order a power of $r$.   However, this contradicts \cite[Theorem A]{Gross}. This justifies the claim and so it suffices to determine all the triples $(G,r,H)$ arising in Theorem \ref{t:main1} with $O_r(H_0)\neq 1$.
 
First assume $G = T = A_n$ is an alternating group, so the relevant cases are listed in parts (a)-(d) of Theorem \ref{t:main1}(i). Here it is easy to check that $O_r(H_0)\neq 1$ in case (b) with $r=3$, in case (d), and also in case (c) when $r=3$. The first two possibilities appear in Table \ref{tab:NGR} (note that case (b) is listed as $({\rm L}_2(9),3,P_1)$), while the latter triple $(A_9, 3, (S_3\wr S_3)\cap A_9)$ has been included in Table \ref{tab:Or}. The case where $T$ is a sporadic group is easy and each possibility for $(G,r,H)$ is recorded in Table \ref{tab:NGR}.

For the remainder, we may assume $T$ is a simple group of Lie type over $\mathbb{F}_q$, where $q = p^f$ with $p$ a prime. As usual, $r_i$ denotes a primitive prime divisor of $q^i-1$.

Suppose $T$ is a classical group, so the possibilities for $(G,r,H)$ are given in  Table \ref{tab:main1}. First assume $T = {\rm L}_n(q)$. If $n=2$ then both cases in Table \ref{tab:main1} are included in Table \ref{tab:NGR}, so we may assume $n \geqs 3$.
The special case $G={\rm PGL}_3(4)$ with $r=3$ is in Table \ref{tab:NGR}. If $r = r_1$ and $H$ is of type ${\rm GL}_1(q) \wr S_n$, then it is clear that $O_r(H_0)\neq 1$ and so this case appears in Table \ref{tab:Or}. Finally, if $r=r_n$ and $H$ is of type ${\rm GL}_{n/t}(q^t)$, then one checks that $O_r(H) \neq 1$ if and only if $n=t$, in which case the triple $(G,r,H)$ is included in Table \ref{tab:NGR}.

Now assume $T={\rm U}_n(q)$ with $n \geqs 3$. If $n=3$, then by inspecting Table \ref{tab:main1} we see that $O_r(H_0)\neq 1$ unless $G={\rm U}_3(3)$, $r=7$ and $H = {\rm L}_2(7)$. In particular, all of the cases with $O_r(H) \ne 1$ are in Tables \ref{tab:NGR} or \ref{tab:Or}. Now assume $n\geqs 4$. Here $O_r(H_0)\neq 1$ if and only if $r=r_2$ and $H$ is of type ${\rm GU}_1(q)\wr S_n$, or if $r=r_{2n}$ and $H$ is of type ${\rm GU}_1(q^n)$. The latter case appears in Table \ref{tab:NGR}, while the former has been included in Table \ref{tab:Or}. Finally, it is easy to see that $O_r(H_0)=1$ for the cases in Table \ref{tab:main1} with $T$ a symplectic or orthogonal group. 

Finally, let us assume $T$ is an exceptional group of Lie type, so the possibilities for $(G,r,H)$ are recorded in Table \ref{tab:main2}. First note that all the cases with $T = {}^2B_2(q)$, ${}^2G_2(q)$, ${}^2F_4(q)'$ or $E_8(q)$ appear in Table \ref{tab:NGR} and it is easy to see that $O_r(H_0)=1$ when $T=F_4(q)$, $E_6^{\e}(q)$ or $E_7(q)$. If $T = G_2(q)$ and $H_0 = {\rm SL}_3^{\e}(q).2$, then $O_r(H_0) \ne 1$ if and only if $r = r_{(3-\e)/2} = 3$, so these cases are recorded in Table \ref{tab:Or}.
Finally, suppose $T = {}^3D_4(q)$. The case where $H$ is of type $q^4-q^2+1$ is recorded in Table \ref{tab:NGR}. Now, if $r = r_{(3-\e)/2} = 3$ with $\e = \pm$ then $r$ divides $q^2+\e q +1$ and thus $O_r(H_0) \ne 1$ when $H$ is of type $A_2^{\e}(q) \times (q^2+\e q +1)$. Therefore, the latter case is included in Table \ref{tab:Or}.
\end{proof}

Next let us consider Corollary \ref{c:OrH}. As before, $G$ is an almost simple group with socle $T$ and $\mathcal{M}(R)=\{H\}$, where $R$ is a Sylow $r$-subgroup of $G$, and we need to determine the cases with $\mathcal{M}(O_r(H))=\{H\}$. Clearly, this holds only if $O_r(H) \ne 1$, so the possibilities are listed in Tables \ref{tab:NGR} and \ref{tab:Or}. If $(G,r,H)$ is one of the cases in Table \ref{tab:NGR}, then Lemma \ref{l:equiv} implies that $\mathcal{M}(R)=\{H\}$ if and only if $R$ is weakly subnormal in $G$, as in part (i) of Corollary \ref{c:OrH}, so we only need to consider the cases in Table \ref{tab:Or}. 

\begin{proof}[Proof of Corollary \ref{c:OrH}]
As explained above, we need to inspect the triples $(G,r,H)$ in Table \ref{tab:Or}. In each case we have $O_r(H) \ne 1$ and $H \ne N_G(R_0)$, so $R$ is not normal in $H$. Let us also note that $O_r(H)\leqs O_r(H)T \leqs G$ and $T\not\leqs H$, so $\mathcal{M}(O_r(H)) = \{H\}$ only if  $G=O_r(H)T$. We consider each case separately.

First assume $T=A_9$, $r=3$ and $H$ is of type $S_3\wr S_3$, so $G=T$ and $O_r(H)$ is elementary abelian of order $27$. Here it is easy to see that $\mathcal{M}(O_r(H)) \ne \{H\}$. For example, $O_r(H)$ is contained in an intransitive maximal subgroup $(S_6 \times S_3) \cap G$.

In each of the remaining cases, $T$ is a simple group of Lie type over $\mathbb{F}_q$, where $q=p^f$ and $p$ is a prime.   

Suppose $T={\rm L}_2(q)$, $r =2$, $q$ is odd and $H$ is of type ${\rm GL}_1(q)\wr S_2$, in which case $H \leqs N_G(S)$ where $S = C_{q-1}$ is a split maximal torus of ${\rm PGL}_2(q)$. Write $S=O(S) \times O_2(S)$ where $O(S)=O_{2'}(S)$.  Note that $O_2(S) \leqs O_2(H)$, $[O(S),O_2(H)] = 1$ and $O(S) \ne 1$.   Let $S_0 = S \cap T$. 

If $ O_2(H) \leqs S$, then $|\mathcal{M}(O_2(H))| \geqs 2$ since $O_2(H)$ is contained in a Borel subgroup of $G$. So let us assume otherwise. Then $q = q_0^2$, $G \not\leqs {\rm PGL}_2(q)$ and we may assume $G = O_2(H)T$ and $H$ contains a nontrivial field automorphism $x$ normalising $S$. Then $x$ either centralises or inverts $O(S)$, so $x$ has order $2$ and there are two cases to consider.

If $x$ centralises $O(S)$, then $|O(S)|$ divides $q_0 - 1$, whence $O_2(H) \leqs \langle S, x \rangle$ is contained in a Borel subgroup and $|\mathcal{M}(O_2(H))| \geqs 2$. 

Now assume $x$ inverts $O(S)$. Then $xy$ centralises $O(S)$, where $y \in T$ is an element inverting $S$, and we may choose $y \in C_T(x)$. It follows that $|O(S)|$ divides $q_0 + 1$, which in turn implies that $q_0-1$ is a power of $2$ and thus $q_0 = 9$ or $q_0=p \geqs 5$ is a Fermat prime (as stated in Table \ref{tab:Or}, we have $q \geqs 13$ and thus $q_0 \ne 3$). Since $G=O_2(H)T$,  we see that $G \leqs  {\rm PGL}_2(q).\langle x \rangle$ and $O_2(H) \leqs \langle O_2(S), x \rangle$. Also note that the condition $G \not\leqs {\rm P\Sigma L}_2(q)$ in Table \ref{tab:Or} rules out $G = \langle T, x \rangle$. The case $q_0 = 9$ can be handled using {\sc Magma} and we deduce that $\mathcal{M}(O_2(H)) = \{H\}$ if and only if $G = T.2_3$ (nonsplit extension) or $T.2^2 = {\rm PGL}_2(81).\la x \ra$. Finally, if $q_0 =p \geqs 5$ is a Fermat prime, then there are only two possibilities for $G$, namely the nonsplit extension $T.2_3$ and ${\rm Aut}(T) = T.2^2$. In both cases, it is straightforward to check that $H$ is the unique maximal subgroup containing  $O_2(H)$. Indeed, the element $xy \in G$ interchanges the two Borel subgroups containing $S$, while $xy$ does not normalise any subfield subgroup containing $O_2(S_0)$. These two groups are recorded in part (ii) of the corollary.   

Next consider the case $T={\rm L}_2(q)$ with $r =2$ and $H$ of type ${\rm GL}_1(q^2)$. Here $q = p \equiv 3 \imod{4}$ and $H = N_G(S)$, where $S = O(S) \times O_2(S)$ is a nonsplit maximal torus of $G \cap {\rm PGL}_2(q)$. Since $G = T$ or ${\rm PGL}_2(q)$, we see that $O(S) \ne 1$ and $O_2(H) = O_2(S)$, so by inspection of the maximal subgroups of $G$ (see \cite[Table 8.1]{BHR}, for example) we deduce that $\mathcal{M}(O_2(H)) = \{H\}$. These cases as recorded in part (iii) of the corollary. 

We now turn to the remaining cases in Table \ref{tab:Or}. Suppose $T = {\rm L}_n(q)$ with $n \geqs 3$. First assume $r=2$ and $H$ is of type ${\rm GL}_{n-1}(q) \times {\rm GL}_1(q)$, so $G = T.2$ contains a graph automorphism. If $(n,q) \ne (3,3)$ then $O_2(H)$ has order $2$ and is contained in $T$, whence $|\mathcal{M}(O_2(H))| \geqs 2$. On the other hand, if $n = q =3$ then it is straightforward to verify that $\mathcal{M}(O_2(H)) = \{H\}$ as in part (iv). Next suppose $n = r = r_1$ and $H$ is of type ${\rm GL}_1(q) \wr S_n$. If $n \geqs 5$ then it is clear that $O_r(H)$ is contained in a maximal parabolic subgroup of $G$ and thus $|\mathcal{M}(O_r(H))| \geqs 2$. And the same conclusion holds when $n=3$ since the conditions in Table \ref{tab:Or} imply that $q \ne 4$ and thus $q-1$ is a not power of $3$.
 
 A very similar argument shows that $|\mathcal{M}(O_2(H))| \geqs 2$ for each case in Table \ref{tab:Or} with $T = {\rm U}_n(q)$.  

Next suppose $T = \Omega_n(q)$ with $n$ odd and let $V$ be the natural module. Here $r=2$ and $H$ is of type ${\rm O}_{n-1}^{+}(q) \times {\rm O}_1(q)$, which is the stabiliser in $G$ of a nondegenerate hyperplane of plus-type. For all $1 \leqs k \leqs n$, we note that $O_2(H)$ stabilises a nondegenerate $k$-dimensional subspace of $V$ and thus $O_2(H)$ is contained in several maximal subgroups.
Similarly if $T = {\rm P\Omega}^{\e}_n(q)$ with $n$ even, then $r=2$ and $O_2(H)$ stabilises a nondegenerate $k$-space for all even $2 \leqs k \leqs n$, hence the same conclusion holds. 

To complete the proof, we may assume $r=3$ and $T=G_2(q)$ or ${}^3D_4(q)$. First assume $T = G_2(q)$. Here $|O_3(H)|=3$ (since the centraliser of the derived subgroup of $H$ has order $3$), so $O_3(H) \leqs T$ and we may assume $G=T$.    It also follows that  $O_3(H) \leqs C_G(x)$, where $x \in H$ is an involution.
  For $p=2$ this implies that $O_3(H)$ is contained in a parabolic subgroup, whereas for $p$ odd we deduce that $O_3(H)$ is contained in a maximal rank subgroup of type $A_1(q)^2$. In both cases, the given overgroup of $O_3(H)$ is not contained in $H$ and thus $|\mathcal{M}(O_3(H))| \geqs 2$. 

Finally, let us assume $T ={}^3D_4(q)$ and $r=3$, in which case $H$ is the normaliser in $G$ of a cyclic subgroup of order $3$.  As in the previous case, we deduce that $G=T$ and $O_3(H)$ centralises an involution. For $p=2$, we conclude that $O_3(H)$ is contained in a parabolic subgroup, while it is contained in a subgroup of type $A_1(q^3)A_1(q)$ when $p$ is odd. Neither overgroup is contained in $H$ and thus $O_3(H)$ is contained in at least two maximal subgroups of $G$.

This exhausts all the possibilities in Table \ref{tab:Or} and completes the proof. 
\end{proof}

Finally, we provide a proof of Corollary \ref{c:max}. 

\begin{proof}[Proof of Corollary \ref{c:max}]
As in the corollary, let $G$ be a finite group, let $r$ be a prime divisor of $|G|$ with $O_r(G)=1$ and let $R$ be a Sylow $r$-subgroup of $G$.

First we need to show that $R$ is a maximal subgroup of $G$ if (i) or (ii) holds in the statement of the corollary. This is clear for (i), so let us consider (ii). 
Here $r=2$ and $G=ER$, where $E=E(G)=T_1\times T_2\times\cdots \times T_k$ is a minimal normal subgroup of $G$ and each $T_i \cong T$ is a nonabelian simple group. Moreover, $J=N_G(T_1)/C_G(T_1)$ is an almost simple group with a maximal Sylow $2$-subgroup and the possibilities for $J$ are as follows:
\begin{equation}\label{e:J}
{\rm PGL}_2(7), \; {\rm PGL}_2(9), \; {\rm M}_{10}, \; {\rm L}_{2}(9).2^2, \; {\rm L}_2(q),\; {\rm PGL}_2(q),
\end{equation}
where $q>7$ is a Mersenne or Fermat prime.

Clearly, if $k=1$ then $G=J$ is almost simple and $R$ is a maximal subgroup. Now assume $k \geqs 2$, noting that $R$ acts transitively on the set $\{T_1,T_2,\ldots, T_k\}$.

First we claim that $N_R(T_1)$ is a Sylow $2$-subgroup of $N_G(T_1)$, which then implies that 
\[
N_R(T_1)C_G(T_1)/C_G(T_1) \cong N_R(T_1)/C_R(T_1)
\]
is a Sylow $2$-subgroup of $J$. Since $E \normeq G=ER$ and $T_1\normeq E$,  it follows that $E \cap R$ is a Sylow $2$-subgroup of $E$ and we have $E \leqs N_G(T_1)$. By Dedekind's modular law, $N_G(T_1)=E(N_G(T_1)\cap R)=EN_R(T_1)$. Moreover, $E \cap N_R(T_1)=E \cap R\cap N_G(T_1)=E \cap R$. Hence
\[
|N_G(T_1)|=|EN_R(T_1)|=|E||N_R(T_1)|/|E \cap R|=|N_R(T_1)||E|_{2'}
\]
and we deduce that $N_R(T_1)$ is a Sylow $2$-subgroup of $N_G(T_1)$.

In order to conclude that $R$ is a maximal subgroup of $G$, it suffices to show that $T_1\cap R$ is a maximal proper $N_R(T_1)$-invariant subgroup of $T_1$ containing $T_1\cap R$. Indeed, if this holds then Lemma \ref{l:simpleprod} implies that $N\cap R$ is a maximal $R$-invariant subgroup of $N$ containing $N\cap R$ and we deduce that $R$ is maximal via Lemma \ref{l:normal}.  
 
To do this, first observe that $N_G(T_1)=EN_R(T_1)=T_1C_G(T_1)N_R(T_1)$ since we have $E \leqs T_1C_G(T_1)\leqs N_G(T_1)$. Also note that $T_1\cap C_G(T_1)=1$, where both $T_1$ and $C_G(T_1)$ are normal subgroups of $N_G(T_1)$. Hence $T_1C_G(T_1)\cong T_1\times C_G(T_1)$ and 
\[
N_R(T_1)\cap (T_1C_G(T_1))=(N_R(T_1)\cap T_1)(N_R(T_1)\cap C_G(T_1))=(R\cap T_1)C_R(T_1).
\]
We claim that $T_1\cap N_R(T_1)C_G(T_1)=T_1\cap R$. Clearly 
\[
T_1\cap R\leq T_1\cap N_G(T_1)\cap R\leq T_1\cap N_R(T_1)\leq T_1\cap N_R(T_1)C_G(T_1).
\]
For the reverse inclusion, let $x\in T_1\cap N_R(T_1)C_G(T_1)$. Then $x=ab\in T_1$ with $a\in N_R(T_1)$ and $b\in C_G(T_1)$.
Hence $a=xb^{-1}\in T_1C_G(T_1)\cap N_R(T_1)=(R\cap T_1)C_R(T_1)$ which implies that $xb^{-1}=uv$ for some $u \in R\cap T_1$ and $v \in C_R(T_1)$. Thus $u^{-1}x=vb\in T_1\cap C_G(T_1)=1$ and so $x=u\in R\cap T_1$. Therefore $T_1\cap N_R(T_1)C_G(T_1)=T_1\cap R$.

Let $T_1\cap R\leqs U\leqs T_1$ be an $N_R(T_1)$-invariant subgroup of $T_1$. 
Since $N_R(T_1)C_G(T_1)/C_G(T_1)$ is a maximal subgroup of $N_G(T_1)/C_G(T_1)$, it follows that $N_R(T_1)C_G(T_1)$ is a maximal subgroup of $N_G(T_1)$. As $U$ is $N_R(T_1)$-invariant, $UN_R(T_1)$ is a subgroup of $N_G(T_1)$ and hence 
\[
C_G(T_1)N_R(T_1)\leqs C_G(T_1)N_R(T_1)U\leqs N_G(T_1).
\]
It follows that $C_G(T_1)N_R(T_1)U$ is either $C_G(T_1)N_R(T_1)$ or $N_G(T_1)$, so there are two cases to consider.

First assume $C_G(T_1)N_R(T_1)U = C_G(T_1)N_R(T_1)$. Since $U\leqs T_1$, we have 
\[
T_1\cap (C_G(T_1)N_R(T_1))=T_1\cap (C_G(T_1)N_R(T_1)U)=U(T_1\cap C_G(T_1)N_R(T_1))
\]
and this implies that $U\leqs T_1\cap C_G(T_1)N_R(T_1)=T_1\cap R$. Hence $U=T_1\cap R$. Now assume $C_G(T_1)N_R(T_1)U=N_G(T_1)$. 
Since $T_1\cap R\leqs U$ and $T_1\cap R\leqs N_R(T_1)$, we have 
\[
T_1\cap R\leqs U\cap C_G(T_1)N_R(T_1)\leqs T_1\cap C_G(T_1)N_R(T_1)=T_1\cap R,
\]
which implies that 
\[
U\cap C_G(T_1)N_R(T_1)=T_1\cap C_G(T_1)N_R(T_1)=T_1\cap R.
\]
Since $C_G(T_1)N_R(T_1)U=C_G(T_1)N_R(T_1)T_1$, we deduce that $|U|=|T_1|$ and thus $U=T_1$.

This proves that $T_1 \cap R$ is a maximal proper $N_R(T_1)$-invariant subgroup of $T_1$ containing $T_1\cap R$, which implies that $R$ is a maximal subgroup of $G$.

\vs

For the remainder of the proof, we may assume $R$ is maximal and we need to show that the conditions in part (i) or (ii) of the corollary are satisfied. As explained in the paragraph preceding the corollary, we have $O_r(G) = \Phi(G) = 1$ and thus Theorem \ref{t:main2} applies. If part (i) of Theorem \ref{t:main2} holds, then we are in case (i) of the corollary, so we just need to consider parts (ii) and (iii) of Theorem \ref{t:main2}.

First consider part (ii) of Theorem \ref{t:main2}, so $G$ is almost simple and $\mathcal{M}(R)=\{R\}$, which means that we can read off $(G,r,H)$ from Table \ref{tab:NGR}. In this way, we deduce that $T={\rm L}_2(q)$, $r=2$ and either
\begin{itemize}\addtolength{\itemsep}{0.2\baselineskip}
\item[{\rm (a)}] $H$ is of type ${\rm GL}_1(q)\wr S_2$ and either $q=9$ and $G \not\leqs {\rm P\Sigma L}_2(9)$, or $q=p \geqs 17$ is a Fermat prime; or
\item[{\rm (b)}] $H$ is of type ${\rm GL}_1(q^2)$ and either $G = {\rm PGL}_2(7)$ or $q=p \geqs 31$ is a Mersenne prime.
\end{itemize}
It follows that $G$ is isomorphic to one of the groups in \eqref{e:J}. In particular, part (ii) of the corollary holds with $k=1$.

Finally, let us assume part (iii) of Theorem \ref{t:main2} is satisfied, so $G=ER$ and $E = E(G)$ is a direct product of $k \geqs 2$ nonabelian simple groups. Let $N$ be a minimal normal subgroup of $G$ and note that $R<NR \leqs G$. By the maximality of $R$, we have $G=NR$ and thus $E = N = T_1\times T_2\times\cdots\times T_k$, where each $T_i\cong T$ for some nonabelian simple group $T$.
Then $Q=N_R(T_1)C_G(T_1)/C_G(T_1)\cong N_R(T_1)/C_R(T_1)$ is a Sylow $r$-subgroup of the almost simple group $J=N_G(T_1)/C_G(T_1)$, and $Q$ is contained in a unique maximal subgroup of $J$, say $M/C_G(T_1)$, where $C_G(T_1)N_R(T_1)\leqs M\leqs N_G(T_1)$.   Since $N_G(T_1)=EN_R(T_1)=C_G(T_1)N_R(T_1)T_1$, we have $M=C_G(T_1)N_R(T_1)(M\cap T_1)$. Clearly $R\cap T_1= N_R(T_1)\cap T_1\leqs M\cap T_1\leqs T_1$ and $M\cap T_1$ is $N_R(T_1)$-invariant. Also note that 
$M\cap T_1\neq T_1$ since $M\neq N_G(T_1)$.

Since $R$ is a maximal subgroup of $G=ER$, Lemma \ref{l:normal} implies that  $E\cap R$ is a maximal $R$-invariant subgroup of $E$ containing $E\cap R$.  As $R$ permutes the $T_i$ transitively and $R\cap T_1$ is nontrivial (since $r$ divides $|T_1|$), Lemma \ref{l:simpleprod} (and its proof) implies that $E \cap R\cap T_1=R\cap T_1$ is a maximal $N_R(T_1)$-invariant subgroup of $T_1$ containing $R\cap T_1$. It follows that $M \cap T_1=R\cap T_1$ and hence $M=C_G(T_1)N_R(T_1)(R\cap T_1)=C_G(T_1)N_R(T_1)$. This implies that $M/C_G(T_1)=N_R(T_1)C_G(T_1)/C_G(T_1) = Q$ is a maximal subgroup of $J$, hence $J$ has a maximal Sylow $r$-subgroup and by arguing as above we conclude that $r=2$ and $J$ is one of the almost simple groups in \eqref{e:J}. The proof of the corollary is complete.
\end{proof}

\section{The tables}\label{s:tables}

In this final section we present the tables referred to in the statements of  Theorems \ref{t:prime2} and \ref{t:main1}, as well as Corollaries \ref{c:NGR} and \ref{c:Or}.

Before presenting the tables, we first fix our notation. In all cases, $G$ is an almost simple group with socle $T$ such that $G/T$ is an $r$-group, where $r$ is a prime divisor of $|G|$. In addition, $R$ is a Sylow $r$-subgroup of $G$ and we set $R_0 = R \cap T$. In the cases where $T$ is a group of Lie type, we write $q = p^f$ with $p$ a prime and we use $r_i$ to denote a primitive prime divisor of $q^i-1$. 

\begin{rem}\label{r:prime2}
In the final row of Table \ref{tab:prime2}, we write $T.\la \varphi \ra = T.C_{2f}$ for the subgroup of ${\rm Aut}(T)$ generated by the inner, field and graph-field automorphisms of $T = {\rm P\O}_{n}^{-}(q)$. The same notation is also used in Table \ref{tab:Or}.
\end{rem}

\begin{rem}\label{r:main1}
Let us record some comments on the cases arising in Tables \ref{tab:main1} and \ref{tab:main2}.
\begin{itemize}\addtolength{\itemsep}{0.2\baselineskip}
\item[{\rm (a)}] We have excluded certain groups due to the existence of isomorphisms. Specifically, we exclude
\[
{\rm L}_2(4), \; {\rm L}_3(2), \; {\rm PSp}_4(2)', \; G_2(2)', \; {}^2G_2(3)'
\]
since we have isomorphisms
\[
{\rm L}_2(4) \cong {\rm L}_2(5), \; {\rm L}_3(2) \cong {\rm L}_2(7), \; {\rm PSp}_4(2)' \cong {\rm L}_2(9),
\]
\[
G_2(2)' \cong {\rm U}_3(3), \; {}^2G_2(3)' \cong {\rm L}_2(8)
\]
(see \cite[Proposition 2.9.1]{KL}). For example, if $G = {\rm L}_2(4)$ and $r$ is odd, then we have $|\mathcal{M}(R)| = 1$ if and only if $r = r_2 = 5$.

\item[{\rm (b)}] In both tables, $T$ is a simple group of Lie type over $\mathbb{F}_q$ (with $q=p^f$ and $p$ is a prime) and we use the notation
$\a(m,\e)$ to denote the following condition:
\[
\mbox{$(q^{1/k})^m \not\equiv \e \imod{r}$ for all $k \in \pi(f) \setminus \{r\}$,}
\]
where $m$ is a positive integer, $\e \in \{\pm 1\}$ and $\pi(f)$ is the set of prime divisors of $f$. Similarly, we write $\b(m,\e)$ for the same condition, but restricted to the \emph{odd} primes in $\pi(f) \setminus \{r\}$. We also write $s = \square \imod{p}$ and $s = \boxtimes \imod{p}$ to denote that $s$ is a square or nonsquare modulo $p$, respectively. The same notation is also used in Tables \ref{tab:NGR} and \ref{tab:Or}.
\item[{\rm (c)}] In the third column of Table \ref{tab:main1}, $P_1$ denotes the stabiliser in $G$ of a totally singular $1$-space. And in the final row of Table \ref{tab:main2}, we write $d_p(r)$ for the order of $p$ modulo $r$. 
\end{itemize}
\end{rem}

\begin{rem}\label{r:NGR}
We record some comments on Table \ref{tab:NGR}.
\begin{itemize}\addtolength{\itemsep}{0.2\baselineskip}
\item[{\rm (a)}] In view of the isomorphisms $A_5 \cong {\rm L}_2(4) \cong {\rm L}_2(5)$ and $A_6 \cong {\rm L}_2(9)$, we do not record the cases $(T,r) = (A_5,2)$, $(A_5,5)$ and $(A_6,3)$ in Table \ref{tab:NGR}. We also exclude the case $(T,r) = ({\rm L}_2(5),2)$. In addition, the groups with socle ${\rm PSp}_4(2)'$, $G_2(2)'$ and ${}^2G_2(3)'$ are omitted due to the isomorphisms ${\rm PSp}_4(2)' \cong {\rm L}_2(9)$, $G_2(2)' \cong {\rm U}_3(3)$ and ${}^2G_2(3)' \cong {\rm L}_2(8)$.

\item[{\rm (b)}] In the first row of Table \ref{tab:NGR}, we write $\mathcal{P}$ for the set of primes $r$ that are of the form $(q^d-1)/(q-1)$ for some prime power $q$ and integer $d \geqs 2$. Note that the first $240$ prime numbers of this form are recorded in \cite[Table II]{BS}.
\end{itemize}
\end{rem}

{\scriptsize
\begin{table}
\renewcommand\thetable{A}
\[
\begin{array}{lll} \hline
T &  \mbox{Type of $H$} & \mbox{Conditions} \\ \hline
{\rm L}_2(q) & P_1 & p = 2 \\
& 2^{1+2}_{-}.{\rm O}_{2}^{-}(2) & q = 5 \\
& {\rm GL}_1(q) \wr S_2 & \mbox{$q \equiv 1 \imod{4}$, $q \geqs 9$ and either $f = 2^a >1$ and $G \not\leqs {\rm P\Sigma L}_2(q)$, or} \\
& & \mbox{$f=1$ and either $|R_0| \geqs 2^4$, or $|R_0| = 2^3$ and $G = {\rm PGL}_2(q)$} \\
& {\rm GL}_1(q^2) & \mbox{$q = p \equiv 3 \imod{4}$ and either $|R_0| \geqs 2^4$, or $|R_0| = 2^3$ and $G = {\rm PGL}_2(q)$} \\ 
{\rm L}_3(q) & P_{1,2} & \mbox{$p=2$, $G \not\leqs {\rm P\Gamma L}_3(q)$} \\
{\rm U}_3(q) & P_1 & p = 2 \\
{\rm PSp}_4(q) & P_{1,2} & \mbox{$p=2$, $q \geqs 4$, $G \not\leqs {\rm P\Gamma Sp}_4(q)$} \\
{\rm L}_n(q) & {\rm GL}_{n-1}(q) \times {\rm GL}_1(q) & \mbox{$n = 2^k+1 \geqs 3$, $q =p \equiv 3 \imod{4}$, $G \not\leqs {\rm P\Gamma L}_n(q)$} \\
{\rm U}_n(q) & {\rm GU}_{n-1}(q) \times {\rm GU}_1(q) & \mbox{$n = 2^k+1 \geqs 3$, $q \equiv 1 \imod{4}$ and $f = 2^a \geqs 1$, with $q \geqs 9$ if $n=3$} \\
\O_n(q) & {\rm O}_{n-1}^{+}(q) \times {\rm O}_1(q) & \mbox{$n = 2^k+1 \geqs 9$ and $f = 2^a \geqs 1$, with $q \equiv \pm 1 \imod{8}$ if $f=1$} \\
{\rm P\O}_{n}^{+}(q) & {\rm O}_{n-2}^{+}(q) \times {\rm O}_2^{+}(q) & \mbox{$n = 2^k+2 \geqs 10$, $q=p \equiv 3 \imod{4}$, $G \not\leqs {\rm PO}_{n}^{+}(q)$} \\
{\rm P\O}_{n}^{-}(q) & {\rm O}_{n-2}^{+}(q) \times {\rm O}_2^{-}(q) & \mbox{$n = 2^k+2 \geqs 10$, $q \equiv 1 \imod{4}$, $f = 2^a \geqs 1$, $G \not\leqs T.\la \varphi \ra$} \\ \hline
\end{array}
\]
\caption{The cases $(G,H)$ in Theorem \ref{t:prime2} with $T$ classical}
\label{tab:prime2}
\end{table}}

{\scriptsize
\begin{table}
\renewcommand\thetable{B}
\[
\begin{array}{llll} \hline
T & r & \mbox{Type of $H$} & \mbox{Conditions} \\ \hline
{\rm L}_2(q) & p & P_1 & \\ 
& r_2 & {\rm GL}_1(q^2) & \mbox{$f \leqs 2$ and either $r >5$ or $|R| > r$} \\ 
& & & \mbox{$f>2$ and either $\a(1,-1)$, or} \\
& & & \mbox{$(r,p) = (3,2)$ and $q^{1/k} \not\equiv -1 \imod{3}$ for all $k \in \pi(f) \setminus \{3,f\}$} \\
& & & \\
{\rm L}_n(q) & r_1 & {\rm GL}_1(q) \wr S_n & \mbox{$n=r=3$ and either $f=1$ and $q \equiv 1 \imod{9}$, or $f = 3^a>1$} \\
& & & \mbox{$n=r \geqs 5$, $f$ odd, $\a(1,1)$} \\ 
& & {\rm GU}_n(q^{1/2}) & \mbox{$(n,q,r) = (3,4,3)$, $G = {\rm PGL}_3(4) = T.3$} \\
& r_n & {\rm GL}_{n/t}(q^t) & \mbox{$n = t^a$, $t \geqs 3$ prime, $\a(n,1)$ and either $f >1$ is odd, or} \\
& & & \mbox{$f=1$ and either $|R|>r$, or $r \ne 2n+1$, or $-r = \boxtimes \imod{p}$} \\
& & & \\
{\rm U}_3(q) & p & P_1 & \\
& r_2 & {\rm GU}_1(q) \wr S_3 & \mbox{$r=3$ and either $f=1$ and $q \equiv -1 \imod{9}$, or $f = 3^a>1$} \\
& r_6 & {\rm L}_2(7) & (q,r) = (3,7) \\ 
& & {\rm GU}_{1}(q^3) & \mbox{$\b(3,-1)$ and either $f>1$, or $f=1$ and either $|R| > r$ or $r > 7$} \\
& & & \\
{\rm U}_n(q) & r_2 & {\rm GU}_1(q) \wr S_n & \mbox{$n=r \geqs 5$, $\b(1,-1)$} \\
& r_{2n-2} & {\rm GU}_{n-1}(q) \times {\rm GU}_1(q) & \mbox{$n \geqs 4$ even, $\b(n-1,-1)$ and either $f>1$, or} \\
& & & \mbox{$f=1$ and either $|R|>r$, or $r \ne 2n-1$, or $-r = \square \imod{p}$} \\
& r_{2n} & {\rm L}_2(11) & (n,q,r) = (5,2,11) \\
& & {\rm GU}_{n/t}(q^t) & \mbox{$n = t^a$, $t \geqs 3$ prime, $\b(n,-1)$ and either $f >1$, or} \\
& & & \mbox{$f=1$ and either $|R|>r$, or $r \ne 2n+1$, or $-r = \square \imod{p}$} \\ 
& & & \\
{\rm PSp}_n(q) & r_2 & {\rm O}_n^{-}(q) & \mbox{$(n,q,r) = (6,2,3)$} \\ 
& r_n & {\rm Sp}_{n/2}(q^2) & 
\mbox{$n = 2^a \geqs 4$, $q$ odd, $\a(n,1)$ and either $f>2$, or} \\
& & & \mbox{$f=2$ and either $|R|>r$, or $r \ne 2n+1$, or $r = \square \imod{p}$, or} \\ 
& & & \mbox{$f=1$ and either $|R|>r$, or $r > 2n+1$, or $r=2n+1$ and $r = \boxtimes \imod{p}$} \\
& & & \\
\O_n(q) & r_2 & {\rm O}_{n-1}^{-}(q) \times {\rm O}_1(q) & \mbox{$n=2r+1 \geqs 7$, $\a(1,-1)$ and either $r>3$, or $f>1$, or $q \equiv -1 \imod{9}$} \\ 
& r_i & {\rm O}_{n-1}^{-}(q) \times {\rm O}_1(q) & \mbox{$4 \leqs i \leqs n-3$ even, $\a(i,1)$, $n-1 = r^ai$ with $a \geqs 1$} \\
& r_{n-1} & {\rm O}_{n-1}^{-}(q) \times {\rm O}_1(q) & \mbox{$n \geqs 9$, $\a(n-1,1)$ and either $r>2n-1$, or $|R|>r$, or $r =2n-1$ and either} \\
& & & \mbox{$f=2$ and $r = \square \imod{p}$, or $f=1$ and $r = \boxtimes \imod{p}$} \\
& & & \\
{\rm P\O}_n^{-}(q) & r_n & {\rm GU}_{n/2}(q) & \mbox{$n=2a$, $a \geqs 5$ prime, $\b(n/2,-1)$ and either $|R|>r$ or $r>n+1$} \\
& & {\rm O}_{n/2}^{-}(q^2) & \mbox{$n=2^a \geqs 8$, $\b(n/2,-1)$ and either $|R|>r$ or $r>n+1$} \\ \hline
\end{array}
\]
\caption{The triples $(G,r,H)$ in Theorem \ref{t:main1} with $T$ classical}
\label{tab:main1}
\end{table}}

{\scriptsize
\begin{table}
\renewcommand\thetable{C}
\[
\begin{array}{llll} \hline
T & r & \mbox{Type of $H$} & \mbox{Conditions} \\ \hline
{}^2B_2(q) & r_4 & q \pm \sqrt{2q} +1 & \mbox{$(q^{1/k})^2 \not\equiv -1 \imod{r}$ for all $k \in \pi(f) \setminus \{r,f\}$} \\
& & & \\
{}^2G_2(q) & 3 & [q^3]{:}(q-1) & \\
& r_6 & q \pm \sqrt{3q} +1 & \a(3,-1) \\
& & & \\
G_2(q) & r_{(3-\e)/2} & A_2^{\e}(q) & \mbox{$r=3$ and $f = 3^a \geqs 1$, with $q \equiv \e \imod{9}$ if $p \geqs 5$ and $f=1$} \\  
& r_{3(3-\e)/2} & A_2^{\e}(q) & \mbox{$\a(3,\e)$ and either $p=2$ and $(\e,q) \ne (-1,4)$, or} \\
& & & \mbox{$p \geqs 5$ and either $f>3$, or $|R|>r$, or $r>13$, or} \\
& & & \mbox{$f=3$ and either $r=13$ or $p \equiv \pm 1, \pm 3 \imod{9}$, or} \\
& & & \mbox{$f=2$ and $p = \square \imod{13}$, or} \\
& & & \mbox{$f=1$, $r=13$ and $p = \boxtimes \imod{13}$} \\
& & & \\
{}^3D_4(q) & r_{(3-\e)/2} & A_2^{\e}(q) \times (q^2+ \e q+1) & \mbox{$r=3$, $f = 3^a \geqs 1$} \\
& r_{12} & q^4-q^2+1 & \mbox{$(q^{1/k})^6 \not\equiv -1 \imod{r}$ for all $k \in \pi(f) \setminus \{3,r\}$} \\
& & & \\
{}^2F_4(q)' & r_{12} & q^2 \pm \sqrt{2q^3}+q \pm \sqrt{2q}+1 & \mbox{$f \geqs 3$, $\a(6,-1)$} \\
& & & \\
F_4(q) & r_8 & \O_9(q) & \mbox{$p \geqs 3$, $\a(4,-1)$ and either $f>2$, or $r>17$, or $|R|>r$, or} \\
& & & \mbox{$f=2$ and either $p=3$ or $p = \square \imod{17}$, or} \\
& & & \mbox{$f=1$ and $p = \boxtimes \imod{17}$} \\
& r_{12} & {}^3\!D_4(q) & \mbox{$p \geqs 3$, $\a(6,-1)$ and either $f \ne 3$, or $r>13$, or $|R|>r$, or} \\
& & & \mbox{$f=3$ and $p \equiv \pm 1 \imod{7}$} \\
& & & \\
E_6(q) & r_9 & A_2(q^3) & \mbox{$\a(9,1)$ and either $f>2$, or $r>19$, or $|R|>r$, or} \\
& & & \mbox{$f = 2$ and $p = \square \imod{5}$} \\
& & & \mbox{$f=1$ and either $p = \boxtimes \imod{5}$ or $p = \boxtimes \imod{19}$} \\
& & & \\
{}^2E_6(q) & r_{18} & A_2^{-}(q^3) & \mbox{$\b(9,1)$ and either $f>1$, or $r>19$, or $|R|>r$, or} \\
& & & \mbox{$f=1$ and either $p = \boxtimes \imod{5}$ or $p = \square \imod{19}$} \\
& & & \\
E_7(q) & r_{18} & {}^2\!E_6(q) \times (q+1) & \mbox{$\a(18,1)$ and either $f>2$, or $r>37$, or $|R|>r$, or} \\
& & & \mbox{$f=2$ and $p  = \square \imod{37}$, or} \\
& & & \mbox{$f=1$ and either $(q,r)=(2,19)$, or $r=37$ and $p = \boxtimes \imod{37}$} \\ 
& & & \\
E_8(q) & r_{15(3-\e)/2} & q^8-\e q^7+\e q^5-q^4+\e q^3-\e q+1 & \mbox{$\a(30,1)$ and either $r>61$, or $|R|>r$, or $|R|=r=61$ and} \\
& & & \mbox{either $f>2$, or $f=2$, $i=15$ and $d_p(r) \in \{15,30\}$} \\
 \hline
\end{array}
\]
\caption{The triples $(G,r,H)$ in Theorem \ref{t:main1} with $T$ exceptional}
\label{tab:main2}
\end{table}}

{\scriptsize
\begin{table}
\renewcommand\thetable{D}
\[
\begin{array}{lllclll} \hline
G & r & H & & & & \\ \hline
{\rm M}_{11} & 11 & {\rm L}_2(11) & & {\rm HN} & 19 & {\rm U}_{3}(8){:}3_1 \\
{\rm M}_{22} & 11 & {\rm L}_2(11) & & {\rm J}_4 & 29 & 29{:}28 \\
{\rm M}_{23} & 23 & 23{:}11 & & & 43 & 43{:}14 \\ 
{\rm He} & 17 & {\rm Sp}_{4}(4){:}2 & & {\rm Ly} & 37 & 37{:}18 \\
{\rm Ru} & 29 & {\rm L}_2(29) & & & 67 & 67{:}22 \\
{\rm Co}_{2} & 23 & {\rm M}_{23} & & \mathbb{B} & 47 & 47{:}23 \\
{\rm Co}_{3} & 23 & {\rm M}_{23} & & \mathbb{M} & 47 & 2.\mathbb{B} \\
{\rm J}_1 & 19 & 19{:}6 & & & 59 & {\rm L}_2(59) \\
{\rm J}_3 & 3 & 3^2.3^{1+2}{:}8 & & & 71 & {\rm L}_2(71) \\
{\rm Fi}_{24}' & 29 & 29{:}14 & & & & \\ \hline
\end{array}
\]
\caption{The triples $(G,r,H)$ in Theorem \ref{t:main1} with $T$ sporadic}
\label{tab:spor}
\end{table}}

{\scriptsize
\begin{table}
\renewcommand\thetable{E}
\[
\begin{array}{llll} \hline
T &  r & \mbox{Type of $N_G(R_0)$} & \mbox{Conditions} \\ \hline
A_r & r & {\rm AGL}_1(r) \cap G & \mbox{$r \geqs 13$, $r \ne 23$, $r \not\in \mathcal{P}$} \\
{\rm L}_2(q) & p & P_1 & \\
& 2 & {\rm GL}_1(q) \wr S_2 & \mbox{$q=9$ and $G \not\leqs {\rm P\Sigma L}_2(q)$, or $q=p = 2^k+1 \geqs 17$} \\
& 2& {\rm GL}_1(q^2) & \mbox{$G = {\rm PGL}_2(7)$ or $q=p =2^k-1 \geq 31$}  \\
& r_2 & {\rm GL}_1(q^2) & \mbox{$f \leqs 2$ and either $r >5$ or $|R| > r$} \\
& & & \mbox{$f>2$ and either $\a(1,-1)$, or} \\
& & & \mbox{$(r,p) = (3,2)$ and 
$q^{1/k} \not\equiv -1 \imod{3}$ for all $k \in \pi(f) \setminus \{3,f\}$}  \\
{\rm L}_3(q) & 2 & P_{1,2} & \mbox{$p =2$, $G \not\leqs {\rm P\Gamma L}_3(q)$}  \\
 & 3 & {\rm GU}_3(q^{1/2}) & G = {\rm PGL}_3(4)  \\
{\rm U}_3(q) & p & P_1 &  \\ 
& 3 & {\rm GU}_1(q) \wr S_3 & q = 8  \\
 & r_{6} & {\rm GU}_1(q^3) & \mbox{$\b(3,-1)$ and either $f >1$, or $|R| >r$, or $r >7$} \\
  {\rm PSp}_4(q) & 2 &   P_{1,2} & \mbox{$p=2$, $q \geq 4$, $G \not\leqs {\rm P\Gamma Sp}_4(q)$} \\
{\rm L}_n(q) & r_n & {\rm GL}_1(q^n) & \mbox{$n \geqs 3$ prime, $\a(n,1)$ and either $f >1$ is odd, or}  \\
 & & & \mbox{$f=1$ and either $|R|>r$, or $r \ne 2n+1$, or $-r = \boxtimes \imod{p}$} \\
{\rm U}_n(q) & r_{2n} & {\rm GU}_1(q^n) & \mbox{$n \geqs 3$ prime, $\b(n,-1)$ and either $f >1$, or}  \\
 & & & \mbox{$f=1$ and either $|R|>r$, or $r \ne 2n+1$, or $-r = \square \imod{p}$}  \\
{}^2B_2(q) & 2 & q^{1+1}{:}(q-1) &  \\
& r_4 & q \pm \sqrt{2q}+1 &   \mbox{$(q^{1/k})^2 \not\equiv -1 \imod{r}$ for all $k \in \pi(f) \setminus \{r,f\}$} \\
{}^2G_2(q) & 3 & [q^3]{:}(q-1) &  \\
&r_6 &  q \pm \sqrt{3q} +1 & \a(3,-1)  \\
{}^3D_4(q) & r_{12} & q^4-q^2+1 &   \mbox{$(q^{1/k})^6 \not\equiv -1 \imod{r}$ for all $k \in \pi(f) \setminus \{3,r\}$} \\
{}^2F_4(q) & r_{12} & q^2 \pm \sqrt{2q^3}+q \pm \sqrt{2q}+1 & \mbox{$f \geqs 3$, $\a(6,-1)$} \\
E_8(q) & r_{15(3-\e)/2} & q^8 - \e q^7 + \e q^5 - q^4 + \e q^3 - \e q +1 & 
 \mbox{$\a(30,1)$ and either $r>61$, or $|R|>r$, or $|R|=r=61$ and}  \\
& & & \mbox{either $f>2$, or $f=2$, $i=15$ and $d_p(r) \in \{15,30\}$} \\ 
{\rm M}_{23} & 23 & 23{:}11 &  \\ 
{\rm J}_{1} & 19 & 19{:}6 & \\ 
{\rm J}_{3} & 3 & 3^2.3^{1+2}{:}8 &  \\ 
{\rm J}_{4} & 29 & 29{:}28 &  \\ 
 & 43 & 43{:}14 & \\ 
{\rm Ly} & 37 & 37{:}18 &  \\
& 67 & 67{:}22 & \\
{\rm Fi}_{24}' & 29 & 29{:}14 &  \\ 
\mathbb{B} & 47 & 47{:}23 &  \\ \hline
\end{array}
\]
\caption{The pairs $(G,r)$ with $\mathcal{M}(R) = \{N_G(R_0)\}$ in Corollary  \ref{c:NGR}}
\label{tab:NGR}
\end{table}}

{\scriptsize
\begin{table}
\renewcommand\thetable{F}
\[
\begin{array}{llll} \hline
T &  r & \mbox{Type of $H$} & \mbox{Conditions} \\ \hline
A_9 & 3 & S_3 \wr S_3 & \\
{\rm L}_2(q) & 2 & {\rm GL}_1(q) \wr S_2 & \mbox{$q \equiv 1 \imod{4}$, $q \geqs 13$ and either $f = 2^a >1$ and $G \not\leqs {\rm P\Sigma L}_2(q)$,}  \\
& & & \mbox{or $q=p$ not Fermat and $|R| \geqs 2^4$} \\
& 2 & {\rm GL}_1(q^2) & \mbox{$q = p \equiv 3 \imod{4}$ not Mersenne and $|R| \geqs 2^4$} \\ 
{\rm L}_n(q) &  2 &  {\rm GL}_{n-1}(q) \times  {\rm GL}_1(q) & \mbox{$n = 2^k+1 \geqs 3$, $q =p \equiv 3 \imod{4}$, $G = T.2 \not\leqs {\rm P\Gamma L}_n(q)$} \\
              & r_1 & {\rm GL}_1(q) \wr S_n & \mbox{$n=r=3$ and either $f=1$ and $q \equiv 1 \imod{9}$, or $f = 3^a>1$} \\
& & & \mbox{$n=r \geqs 5$, $f$ odd, $\a(1,1)$} \\ 
{\rm U}_n(q)& 2 &{\rm GU}_{n-1}(q) \times {\rm GU}_1(q)& \mbox{$n = 2^k+1$, $q \equiv 1 \imod{4}$ and $f = 2^a \geqs 1$, with $q \geq 9$ if $n = 3$} \\ 

& r_2 & {\rm GU}_1(q) \wr S_n & \mbox{$n=r=3$ and either $f=1$ and $q \equiv -1 \imod{9}$, or $f = 3^a>1$  and $q \ne 8$} \\
& & & \mbox{$n=r \geqs 5$, $\b(1,-1)$} \\

 \O_n(q) & 2 & {\rm O}_{n-1}^{+}(q) \times {\rm O}_1(q) & \mbox{$n = 2^k+1 \geqs 9$ and $f = 2^a \geqs 1$, with $q \equiv \pm 1 \imod{8}$ if $f=1$}  \\
 {\rm P\O}_{n}^{+}(q) & 2 &{\rm O}_{n-2}^{+}(q) \times {\rm O}_2^{+}(q) & \mbox{$n = 2^k+2 \geqs 10$, $q=p \equiv 3 \imod{4}$, $G \not\leqs {\rm PO}_{n}^{+}(q)$}  \\
  {\rm P\O}_{n}^{-}(q) & 2 & {\rm O}_{n-2}^{+}(q) \times {\rm O}_2^{-}(q) & \mbox{$n = 2^k+2 \geqs 10$, $q \equiv 1 \imod{4}$, $f = 2^a \geqs 1$, $G \not\leqs T.\la \varphi \ra$}\\
  G_2(q) & r_{(3-\e)/2} & {\rm SL}_3^{\e}(q) & \mbox{$r=3$ and $f = 3^a \geqs 1$, with $q \equiv \e \imod{9}$ if $p \geqs 5$ and $f=1$} \\
{}^3D_4(q) & r_{(3-\e)/2} & A_2^{\e}(q) \times (q^2+ \e q+1) & \mbox{$r=3$, $f = 3^a \geqs 1$} \\ \hline
\end{array}
\]
\caption{The triples $(G,r,H)$ with $O_r(H) \ne 1$ in Corollary \ref{c:Or}(ii)}
\label{tab:Or}
\end{table}}

\clearpage


\begin{thebibliography}{999}

\bibitem{AB}
S. Aivazidis and A. Ballester-Bolinches, \emph{On the Frattini subgroup of a finite group}, J. Algebra \textbf{470} (2017), 254--262.

\bibitem{asch}
M. Aschbacher, \emph{On the maximal subgroups of the finite classical groups},  Invent. Math. \textbf{76} (1984), 469--514.

\bibitem{asch80}
M. Aschbacher, \emph{On finite groups of Lie type and odd characteristic}, J. Algebra \textbf{66} (1980), 400--424.

\bibitem{AS}  M. Aschbacher and L. Scott, \emph{Maximal subgroups of finite groups}, 
J. Algebra \textbf{92} (1985),  44--80.

\bibitem{BP08}
J. Bamberg and T. Penttila, \emph{Overgroups of cyclic Sylow subgroups of linear groups}, Comm. Algebra \textbf{36} (2008), 2503--2543.

\bibitem{BS}
P.T. Bateman and R.M. Stemmler, \emph{Waring's problem for algebraic number fields and primes of the form $(p^r-1)/(p^d-1)$}, Illinois J. Math. \textbf{6} (1962), 142--156.

\bibitem{Baum}
B. Baumann, \emph{Endliche nichtaufl\"{o}sbare Gruppen mit einer nilpotenten maximalen Untergruppe}, J. Algebra \textbf{38} (1976), 119--135.

\bibitem{magma} 
W. Bosma, J. Cannon and C. Playoust, \emph{The {\textsc{Magma}} algebra system I: The user language}, J. Symb. Comput. \textbf{24} (1997), 235--265.

\bibitem{BHR}
J.N. Bray, D.F. Holt and C.M. Roney-Dougal, \emph{The maximal subgroups of the low-dimensional finite classical groups}, London Math. Soc. Lecture Note Series, vol. 407, Cambridge University Press, 2013.

\bibitem{GAPCTL} 
T. Breuer, \emph{The \textsf{GAP} {C}haracter {T}able {L}ibrary, Version 1.3.3}, \textsf{GAP} package, \\ \texttt{http://www.math.rwth-aachen.de/\~{}Thomas.Breuer/ctbllib}, 2022.

\bibitem{BG}  
T.C. Burness and M. Giudici, \emph{Classical groups, derangements and primes}, Aust. Math. Soc. Lecture Series, vol. 25, Cambridge University Press, Cambridge, 2016.

\bibitem{BGS}
T.C. Burness, R.M. Guralnick and J. Saxl, \emph{Base sizes for $\mathcal{S}$-actions of finite classical groups}, Israel J. Math. \textbf{199} (2014), 711--756.

\bibitem{BTV}
T.C. Burness and H.P. Tong-Viet, \emph{Primitive permutation groups and derangements of prime power order}, Manuscripta Math. \textbf{150} (2016),  255--291.

\bibitem{CLSS}
A.M. Cohen, M.W. Liebeck, J. Saxl and G.M. Seitz, \emph{The local maximal subgroups of exceptional groups of Lie type, finite and algebraic}, Proc. London Math. Soc. \textbf{64} (1992), 21--48.
 
\bibitem{Craven}
D.A. Craven, \emph{The maximal subgroups of the exceptional groups $F_4(q)$, $E_6(q)$ and ${}^2E_6(q)$ and related almost simple groups}, Invent. Math. \textbf{234} (2023), 637--719.

\bibitem{Craven2}
D.A. Craven, \emph{On the maximal subgroups of $E_7(q)$ and related almost simple groups}, submitted (arXiv:2201.07081).

\bibitem{Craven4}
D.A. Craven, \emph{On the maximal subgroups of $E_8(q)$ and related almost simple groups}, in preparation.

\bibitem{Craven3}
D.A. Craven, \emph{On medium-rank Lie primitive and maximal subgroups of exceptional groups of Lie type}, Mem. Amer. Math. Soc. \textbf{288} (2023), no. 1434, v+213 pp.
 
\bibitem{DLP}
H. Dietrich, M. Lee and T. Popiel, \emph{The maximal subgroups of the Monster}, submitted (arXiv:2304.14646), 2023.
 
\bibitem{DiMuro}
J. DiMuro, \emph{On prime power order elements of general linear groups}, 
J. Algebra \textbf{367} (2012), 222--236.
 
\bibitem{Gas}
W. Gasch\"{u}tz, \emph{\"{U}ber die $\Phi$-Untergruppe endlicher Gruppen},
Math. Z. \textbf{58} (1953), 160--170.

\bibitem{Ginsberg}
H. Ginsberg, \emph{Groups with strongly $p$-embedded subgroups and cyclic Sylow $p$-subgroups}, J. Group Theory \textbf{17} (2014), 175--190.
 
\bibitem{Gross}
F. Gross, \emph{Automorphisms which centralize a Sylow $p$-subgroup}, 
J. Algebra \textbf{77} (1982), 202--233.

\bibitem{GMPS} 
S. Guest, J.  Morris, C.E.  Praeger and P. Spiga, \emph{On the maximum orders of elements of finite almost simple groups and primitive permutation groups},  Trans. Amer. Math. Soc. \textbf{367} (2015), 7665--7694.

\bibitem{GPP}  R.M. Guralnick, \emph{Subgroups of prime power index in a simple group}, J. Algebra \textbf{81} (1983),  304--311.
   
\bibitem{GM}
R.M. Guralnick and G. Malle, \emph{Variations on the Baer-Suzuki theorem},  Math. Z. \textbf{279} (2015), 981--1006.
  
\bibitem{GPPS}
R.M. Guralnick, T. Penttila, C.E. Praeger and J. Saxl, \emph{Linear groups with orders having certain large prime divisors}, Proc. London Math. Soc. \textbf{78} (1999), 167--214.
 
\bibitem{GR}
R.M. Guralnick and G.R. Robinson, \emph{On extensions of the Baer-Suzuki theorem}, Israel J. Math. \textbf{82} (1993), 281--297.
 
\bibitem{GS}
R.M. Guralnick and J. Saxl, \emph{Generation of finite almost simple groups by conjugates}, J. Algebra \textbf{268} (2003), 519--571.
 
\bibitem{GHT}
R.M. Guralnick, H.P. Tong-Viet and G. Tracey, \emph{Weakly subnormal subgroups and variations of the Baer-Suzuki theorem}, submitted (arXiv:2402.00804), 2024.
 
\bibitem{GT}
R.M. Guralnick and G. Tracey, \emph{Elements of finite groups contained in a unique maximal subgroup}, in preparation.
 
\bibitem{HM}
G. Hiss and G. Malle, \emph{Low-dimensional representations of quasi-simple groups}, LMS J. Comput. Math. \textbf{4} (2001), 22--63.

\bibitem{Is}
I.M. Isaacs, \emph{Finite group theory}, Graduate Studies in Mathematics, vol. 92, Amer. Math. Soc., Providence, RI, 2008.
   
\bibitem{Jones}  G.A. Jones, \emph{Primitive permutation groups containing a cycle}, Bull. Aust. Math. Soc. \textbf{89} (2014), 159--165.
  
\bibitem{Kan}
W.M. Kantor, \emph{Primitive permutation groups of odd degree, and an application to finite projective planes}, J. Algebra \textbf{106} (1987), 15--45.

\bibitem{KL}
P.B. Kleidman and M.W. Liebeck, \textit{The subgroup structure of the finite classical groups}, London Math. Soc. Lecture Note Series, vol 129, Cambridge University Press, 1990.

\bibitem{LPS}
M.W. Liebeck, C.E. Praeger and J. Saxl, \emph{A classification of the maximal subgroups of the finite alternating and symmetric groups}, J. Algebra \textbf{111} (1987), 365--383.

\bibitem{LS85}
M.W. Liebeck and J. Saxl, \emph{The primitive permutation groups of odd degree}, J. London Math. Soc. \textbf{31} (1985), 250--264.

\bibitem{LSS}
M.W. Liebeck, J. Saxl and G.M. Seitz, \emph{Subgroups of maximal rank in finite exceptional groups of Lie type}, Proc. London Math. Soc. \textbf{65} (1992), 297--325.

\bibitem{LS98}
M.W. Liebeck and G.M. Seitz, \emph{On the subgroup structure of exceptional groups of Lie type}, Trans. Amer. Math. Soc. \textbf{350} (1998), 3409--3482.

\bibitem{LS99} 
M.W. Liebeck and G.M. Seitz, \emph{On finite subgroups of exceptional algebraic groups}, J. reine angew. Math. \textbf{515} (1999), 25--72.

\bibitem{LS03} 
M.W. Liebeck and G.M. Seitz, \emph{A survey of of maximal subgroups of exceptional groups of Lie type}, in Groups, combinatorics \& geometry (Durham, 2001), 139--146, World Sci. Publ., River Edge, NJ, 2003.

\bibitem{Lu}  
F. L\"ubeck, \emph{Small degree representations of finite Chevalley groups in defining characteristic}, LMS J. Comput. Math. \textbf{4} (2001), 135--169.

\bibitem{Malle}
G. Malle, \emph{The maximal subgroups of ${}^2F_4(q^2)$}, J. Algebra \textbf{139} (1991), 52--69.

\bibitem{Tho}
J.G. Thompson, \emph{Finite groups with fixed-point-free automorphisms of prime order}, Proc. Nat. Acad. Sci. U.S.A. \textbf{45} (1959), 578--581.

\bibitem{T}
P.H. Tiep, \emph{Low dimensional representations of finite quasisimple groups}, in Groups, combinatorics \& geometry (Durham, 2001), 277--294, World Sci. Publ., River Edge, NJ, 2003.
 
\bibitem{Wil}
R.A. Wilson, \emph{Maximal subgroups of sporadic groups}, in Finite simple groups: thirty years of the Atlas and beyond, 57--72, Contemp. Math., 694, Amer. Math. Soc., Providence, RI, 2017.

\bibitem{Zsig} 
K. Zsigmondy, \emph{Zur {T}heorie der {P}otenzreste}, Monatsh. Math. Phys.
\textbf{3} (1892), 265--284.
  
\end{thebibliography}
\end{document}